\documentclass[10pt, a4paper, english]{smfart}

\usepackage[utf8]{inputenc}
\usepackage[english]{babel}
\usepackage[T1]{fontenc}
\usepackage{lmodern}

\usepackage[hidelinks,pdfencoding=unicode]{hyperref}
\usepackage[shortcuts]{extdash}
\usepackage{xpatch}

\usepackage{amsmath}
\usepackage{amssymb}
\usepackage{stmaryrd}
\usepackage{mathtools} % \MoveEqLeft

\usepackage[all,2cell]{xy}
\UseAllTwocells
\SilentMatrices
\SelectTips{cm}{10}

\usepackage{enumitem}
\setlist{nosep}
\setlist[enumerate, 1]{label={\rm (}\emph{\alph*}{\rm )}}
\setlist[enumerate, 2]{label={\rm (}\emph{\alph{enumi}.\arabic*}{\rm )}}

\makeatletter

% pas de passage à la ligne dans les formules
\relpenalty=10000
\binoppenalty=10000

% ajoute l'heure et la date de compilation sur chaque page
% (à commenter pour la version finale)
% \usepackage[ddmmyyyy,hhmmss]{datetime}
% \def\@oddfoot{\hfil brouillon --- \today{} --- \currenttime\hfil}
% \def\@evenfoot{\@oddfoot}

% vielle version de l'environnement thebibliography de smfart.cls (la nouvelle
% n'est pas compatible avec hyperref)

% fait mercher les liens internes vers les théorèmes avec smfart.cls
\patchcmd\@begintheorem
  {\ignorespaces}
  {\hypertarget{\csname @currentHref\endcsname}{}\ignorespaces}
  {}{}

\patchcmd{\smf@captionsenglish}{2000}{2020}{}{}

\makeatother

\usepackage{xspace}

\newif\iffr

\newcommand\fren[2]{\iffr #1\else #2\fi}

% for bibtex entries

\newcommand\journal\emph

\renewcommand\and{\fren{et}{and}\xspace}

% theorem environments
\theoremstyle{plain}
\newtheorem{theorem}{Theorem}[section]
\newtheorem*{theorem*}{Theorem}
\newtheorem{proposition}[theorem]{Proposition}
\newtheorem{lemma}[theorem]{Lemma}
\newtheorem{corollary}[theorem]{Corollary}
\theoremstyle{remark}
\newtheorem{remark}[theorem]{Remark}

\newtheorem{example}[theorem]{Example}
\newtheorem{examples}[theorem]{Examples}
\theoremstyle{definition}

\let\paragraph\undefined
\newtheorem{paragraph}[theorem]{}

\theoremstyle{plain}

\numberwithin{equation}{theorem}

\makeatletter
\newif\ifsection
\preto\section{\sectiontrue}
\preto\subsection{\sectionfalse}
\xapptocmd\@sect{%
  \ifsection
    \numberwithin{theorem}{section}
  \else
    \numberwithin{theorem}{subsection}
  \fi
  \setcounter{theorem}{0}\relax}
  {}{}
\makeatother

% présentation
\let\forlang\emph
\let\ndef\emph
\let\nbd\nobreakdash
\newcommand\noemph\emph

\def\xpoint{\futurelet\@let@token\@xpoint}
\def\@xpoint{%
  \ifx\@let@token.\else
    .%
  \fi
  \xspace}

\newcommand\resp[1]{{\rm (resp.}\ #1{\rm )}}
\newcommand\respd[2]{{\rm (resp.}\ #1, {\rm resp.}\ #2{\rm )}}

\newcommand\zbox[1]{\makebox[0pt][l]{#1}}
\newcommand\pbox[1]{\zbox{\quad#1}}

% quad
\newcommand\quadtext[1]{\quad\text{#1}\quad}

\newcommand\quadand{\quadtext{and}}
\newcommand\quador{\quadtext{or}}

\newcommand\quadiff{\quadtext{if and only if}}

% redéfinitions mathématiques
\renewcommand\le\leqslant
\renewcommand\ge\geqslant
\renewcommand\epsilon\varepsilon
\newcommand\e\epsilon
\renewcommand\phi\varphi

% diagrammes

\let\hookto\hookrightarrow
\let\xto\xrightarrow
\let\xot\xleftarrow
\newcommand\tod\Rightarrow
\newcommand\tot\Rrightarrow
\makeatletter
\providecommand*{\twoheadrightarrowfill@}{%
  \arrowfill@\relbar\relbar\twoheadrightarrow
}
\providecommand*{\twoheadleftarrowfill@}{%
  \arrowfill@\twoheadleftarrow\relbar\relbar
}
\providecommand*{\xtwoheadrightarrow}[2][]{%
  \ext@arrow 0579\twoheadrightarrowfill@{#1}{#2}%
}
\providecommand*{\xtwoheadleftarrow}[2][]{%
  \ext@arrow 5097\twoheadleftarrowfill@{#1}{#2}%
}
\makeatother
\newcommand\trivfib{\xtwoheadrightarrow{\raisebox{-1.5pt}[0pt][0pt]{$\scriptstyle
\sim$}}}

% commentaires

% infinity
\newcommand\oo[1]{$\omega$\=/}
\newcommand\mn[3]{$(#1,#2)$\=/}
\newcommand\oon[2]{\mn{\omega}{#1}{#2}}

\newcommand\pdfmn[2]{$(#1, #2)$}

%%%
% catégorie monoïdale
\newcommand{\Homi}{\operatorname{\kern.5truept\underline{\kern-.5truept\mathsf{Hom}\kern-.5truept}\kern1truept}}
\newcommand\Homir{\Homi^{\mathrm{r}}}
\newcommand\Homil{\Homi^{\mathrm{l}}}
\newcommand\boxprod{\otimes'}

\newcommand\boxjoin{\join'}
\newcommand\Lotimes{\otimes^{\mathbb{L}}}
\newcommand\Ljoin{\join^{\mathbb{L}}}
\newcommand\RHomil{\mathbb{R}\!\Homi^{\mathrm{l}}}
\newcommand\RHomir{\mathbb{R}\!\Homi^{\mathrm{r}}}

% categories
\newcommand{\Cat}{{\mathcal{C}\mspace{-2.mu}\it{at}}}
\newcommand{\Gpd}{{\mathcal{G}\mspace{-2.mu}\it{pd}}}
\newcommand{\nCat}[1]{%
\mathchoice
  {\hbox{$#1$-\kern1pt$\Cat$}}
  {\hbox{$#1$-\kern1pt$\Cat$}}
  {\hbox{$\scriptstyle #1$\raisebox{-0.5pt}{-}$\scriptstyle \Cat$}}
  {TODO}%
}
\newcommand{\nGpd}[1]{%
\mathchoice
  {\hbox{$#1$-\kern1pt$\Gpd$}}
  {\hbox{$#1$-\kern1pt$\Gpd$}}
  {\hbox{$\scriptstyle #1$\raisebox{-0.5pt}{-}$\scriptstyle \Gpd$}}
  {TODO}%
}
\newcommand{\ooCat}{\nCat{\omega}}
\newcommand{\mnCat}[2]{\nCat{(#1,#2)}}

\newcommand{\ooGpd}{\nGpd{\omega}}
\renewcommand\C{\mathcal{C}}
\newcommand\D{\mathcal{D}}
\newcommand\M{\mathcal{M}}
\newcommand\W{\mathcal{W}}
\newcommand\Ct{\widetilde{\mathcal{C}}}
\newcommand\Wt{\widetilde{\mathcal{W}}}
\newcommand\Kt{\widetilde{K}}
\newcommand\Ft{\widetilde{F}}
\newcommand\twAr{\operatorname{\mathrm{Tw}}}
\newcommand\comma\downarrow

% ensembles
\newcommand\N{\mathbb{N}}
\newcommand\Z{\mathbb{Z}}

% Steiner
\newcommand\ADC{\mathcal{C}_{\mathrm{ad}}}
\newcommand\atom[1]{\langle #1 \rangle}
\newcommand\Stf{{\mathcal{S}t}_{\mathrm{f}}}

% théorie des ensembles
\newcommand\restrict[2]{{#1}_{|#2}}

% théorie des catégories
\newcommand{\Hom}{\operatorname{\mathsf{Hom}}}
\newcommand\limind\varinjlim
\newcommand\limproj\varprojlim
\newcommand\op{\mathrm{op}}
\newcommand\id[1]{1_{#1}}

% théorie des oo-catégories
\newcommand\co{\mathrm{co}}
\newcommand\HomOpLax{\Homi_{\mathrm{oplax}}}
\newcommand\HomLax{\Homi_{\mathrm{lax}}}
\newcommand\join\star
\newcommand\rotimes[2]{\otimes_{#1,#2}}
\newcommand\rjoin[1]{\join_{#1}}
\renewcommand\emptyset\varnothing
\newcommand{\cotr}[2]{\mathchoice
  {\raise -1.8pt\vbox{\hbox{$#2\backslash$}}#1}
  {\raise -1.8pt\vbox{\hbox{$#2\backslash$}}#1}
  {\raise -1.8pt\vbox{\hbox{$\scriptstyle#2\backslash$}}#1}
  {\raise -1.8pt\vbox{\hbox{$\scriptscriptstyle#2\backslash$}}#1}}
\newcommand{\tr}[2]{\mathchoice
  {#1\raise -1.8pt\vbox{\hbox{$\kern -.8pt/#2$}}}
  {#1\raise -1.8pt\vbox{\hbox{$\kern -.8pt/#2$}}\kern .8pt}
  {#1\raise -1.8pt\vbox{\hbox{$\scriptstyle\kern -.8pt /#2$}}}
  {#1\raise -1.8pt\vbox{\hbox{$\scriptscriptstyle\kern -.8pt /#2$}}}}
\newcommand{\cotrp}[2]{\mathchoice
  {\raise -1.8pt\vbox{\hbox{$#2\backslash\mkern-3mu'\mkern1mu$}}#1}
  {\raise -1.8pt\vbox{\hbox{$#2\backslash'$}}#1}
  {\raise -1.8pt\vbox{\hbox{$\scriptstyle#2\backslash'$}}#1}
  {\raise -1.8pt\vbox{\hbox{$\scriptscriptstyle#2\backslash'$}}#1}}
\newcommand{\trp}[2]{\mathchoice
  {#1\raise -1.8pt\vbox{\hbox{$\kern -.8pt/'\mkern-2mu#2$}}}
  {#1\raise -1.8pt\vbox{\hbox{$\kern -.8pt/'#2$}}\kern .8pt}
  {#1\raise -1.8pt\vbox{\hbox{$\scriptstyle\kern -.8pt /'#2$}}}
  {#1\raise -1.8pt\vbox{\hbox{$\scriptscriptstyle\kern -.8pt /'#2$}}}}
\newcommand{\trm}[2]{\mathchoice
  {#1\raise -1.8pt\vbox{\hbox{$\kern -.8pt\!\stackrel{\,\,\,\rm co}{/}\!\!#2$}}}
  {#1\raise -1.8pt\vbox{\hbox{$\kern -.8pt\!\stackrel{\,\,\,\rm co}{/}\!\!#2$}}\kern .8pt}
  {#1\raise -1.8pt\vbox{\hbox{$\scriptstyle\kern -.8pt\!\!\stackrel{\,\,\rm co}{/}\!\!#2$}}\kern .8pt}
  {TODO}}
\newcommand\Cyl{\operatorname{\Gamma}}
\newcommand\CylRev{\operatorname{\Gamma_{\!\mathrm{rev}}}}
\newcommand\CylInv{\operatorname{\Gamma_{\!\mathrm{inv}}}}
\newcommand\Dn[1]{\mathrm{D}_{#1}}
\newcommand\bDn[1]{\partial\Dn{#1}}
\newcommand\Deltan[1]{\varDelta_{#1}}
\newcommand\Rn[1]{\operatorname{R}_{#1}}
\newcommand\Jn[1]{\operatorname{J}_{#1}}
\newcommand\In[1]{\operatorname{I}_{#1}}

\newcommand\J{\Jn{1}}
\newcommand\winv{\bar}
\newcommand\comp\ast
\newcommand\compv{\comp_v}
\newcommand\reflect{r}

% catégories de modèles
\newcommand\Ho{\operatorname{\mathrm{Ho}}}

% ad hoc
\newcommand\funcell[1]{\langle #1 \rangle}

% TOCHECK
\frfalse

\author{Dimitri Ara}
\address{Aix~Marseille~Univ,~CNRS,~Centrale~Marseille,~I2M,~Marseille,~France}
\email{dimitri.ara@univ-amu.fr}
\urladdr{\href{http://www.i2m.univ-amu.fr/perso/dimitri.ara/}{http://www.i2m.univ-amu.fr/perso/dimitri.ara/}}

\author{Maxime Lucas}
\address{Galinette Team, Inria Rennes, LS2N, Nantes, France}
\email{maxime.lucas@inria.fr}
\urladdr{\href{http://lucas-webpage.gforge.inria.fr/}{http://lucas-webpage.gforge.inria.fr/}}

\title[The folk model category structure on
\oo-categories is monoidal]{The folk model category structure on\\ strict
\oo-categories is monoidal}

\begin{document}

\frontmatter

\begin{abstract}
  We prove that the folk model category structure on the category of strict
  \oo-categories, introduced by Lafont, Métayer and Worytkiewicz, is
  monoidal, first, for the Gray tensor product and, second, for the join of
  \oo-categories, introduced by the first author and Maltsiniotis. We
  moreover show that the Gray tensor product induces, by adjunction, a
  tensor product of strict $(m,n)$-categories and that this tensor product
  is also compatible with the folk model category structure. In particular,
  we get a monoidal model category structure on the category of strict
  \oo-groupoids. We prove that this monoidal model category structure
  satisfies the monoid axiom, so that the category of Gray monoids, studied
  by the second author, bears a natural model category structure.
\end{abstract}

\subjclass{18M05, 18N30, 18N40, 55U35}

\keywords{augmented directed complexes, folk model category structure, Gray
  tensor product, join, locally biclosed monoidal categories, monoidal model
  categories, oplax transformations, strict \oo-categories, strict
  \oo-groupoids, strict $(m, n)$-categories}
\maketitle

\mainmatter

\section*{Introduction}

The category $\ooCat$ of strict \oo-categories, that we shall simply call
\oo-categories in this paper, is endowed with a model category structure,
introduced by Lafont, \hbox{Métayer} and Worytkiewicz \cite{Folk}, known as
the \emph{folk} model category structure. The weak equivalences of this
structure are the equivalences of \oo-categories, higher dimensional
generalization of the equivalences of categories or of $2$-categories; the
cofibrant objects are the \oo-categories that are free in the sense of
polygraphs~\cite{MetCof}. This model category structure, which is also called
the canonical model category structure, is in some sense intrinsic to the
notion of \oo-categories.

On the other hand, the category $\ooCat$ is endowed with two non-trivial
monoidal category structures. The first one is the Gray tensor product
$\otimes$, sometimes called the lax Gray tensor product, first introduced by
Al-Agl and Steiner \cite{AlAglSteiner}, and then studied by Crans
\cite{CransThese}. This tensor product generalizes the tensor product of
$2$-categories introduced by Gray in \cite{GrayFCT}, hence its name. It is
somehow a lax version of the cartesian product. For instance, one has
  \[
    \Dn{1} \otimes \Dn{1} =
    \raisebox{1.75pc}{
    \xymatrix{%@C=3pc@R=3pc{
      \bullet \ar[r] \ar[d] &
      \bullet \ar[d] \\
      \bullet \ar[r] & \bullet
      \ar@{}[u];[l]_(.30){}="s"
      \ar@{}[u];[l]_(.70){}="t"
      \ar@2"s";"t"
      \pbox{,}
    }}
   \qquad
   \qquad
    \Dn{1} \otimes \Deltan{2} =
    \raisebox{1.75pc}{
     \xymatrix{%@C=3pc@R=3pc{
       \bullet
       \ar[r]^{}
       \ar[d]_{}
       &
       \bullet
       \ar[r]^{}
       \ar[d]_{}
       \ar@{}[ld]|-{}
       &
       \bullet
       \ar[d]^{}
       \\
       \bullet
       \ar[r]_{}
       &
       \bullet
       \ar[r]_{}
       &
       \bullet \pbox{,}
       \ar@{}[lu];[ll]_(0.30){}="s1"^(0.70){}="t1"
       \ar@2"s1";"t1"_{}
       \ar@{}[u];[l]_(0.30){}="s2"^(0.70){}="t2"
       \ar@2"s2";"t2"_{}
     }}
   \]
   \[
    \Dn{1} \otimes \Dn{2} =
    \raisebox{2.25pc}{
    \xymatrix@C=3pc@R=3pc{
      \bullet
      \ar@/^2ex/[r]^(0.70){}_{}="0"
      \ar@/_2ex/[r]_(0.70){}_{}="1"
      \ar[d]_{}="f"
      \ar@2"0";"1"
      &
      \bullet
      \ar[d] \\
      \bullet
      \ar@{.>}@/^2ex/[r]^(0.30){}_{}="0"
      \ar@/_2ex/[r]_(0.30){}_{}="1"
      \ar@{:>}"0";"1"_{}
      &
      \bullet
      \ar@{}[u];[l]_(.40){}="x"
      \ar@{}[u];[l]_(.60){}="y"
      \ar@<-1.5ex>@/_1ex/@{:>}"x";"y"_(0.60){}_{}="0"
      \ar@<1.5ex>@/^1ex/@2"x";"y"^(0.40){}_{}="1"
      \ar@{}"1";"0"_(.05){}="z"
      \ar@{}"1";"0"_(.95){}="t"
      \ar@3{>}"z";"t"_{}
      \pbox{,}
    }}
   \]
   where
   \[
   \Dn{1} = \xymatrix{\bullet \ar[r] & \bullet},
   \quad
   \Deltan{2} = \xymatrix{\bullet \ar[r] & \bullet \ar[r] & \bullet}
   \quadand
    \Dn{2} = \xymatrix@C=3pc@R=3pc{\bullet \ar@/^2.5ex/[r]_{}="0"
      \ar@/_2.5ex/[r]_{}="1" \ar@2"0";"1" & \bullet \pbox{.}}
   \]
In general, by iterating $n$ times the Gray tensor product with $\Dn{1}$
starting from $\Dn{0}$, one gets a lax cube of dimension $n$.
This (non-symmetric) tensor product defines a biclosed monoidal category
structure and the two associated internal $\Hom$ are related to higher lax
and oplax transformations. The second monoidal category structure is given
by the join of \oo-categories $\join$, introduced by the first author and
Maltsiniotis in~\cite{AraMaltsiJoint} to study slice \oo-categories in a
similar way as Joyal did for quasi-categories (see the introduction of
\cite{AraMaltsiJoint} for more details). This operation, inspired by the
topological join, is a higher dimensional lax version of the classical join
of categories. For instance, one has
  \[
      \Dn{0} \join \Dn{0} = \xymatrix{\bullet \ar[r] & \bullet} = \Dn{1}
      \pbox{,}
      \qquad
      \qquad
      \Dn{0} \join \Dn{1} =
      \raisebox{2.0pc}{
      \xymatrix@R=1pc{
        & \bullet \ar[dd] \\
        \bullet \ar[ru] \ar[rd]_(0.50){}="s" \\
        & \bullet \pbox{,}
        \ar@{}"s";[uu]_(0.15){}="ss"_(0.65){}="tt"
        \ar@2"ss";"tt"
      }}
  \]
  \[
      \Dn{1} \join \Dn{1} =
    \raisebox{1.5pc}{
    \xymatrix{
      \bullet \ar[r]_(0.60){}="03" \ar[d] \ar[dr]_{}="02"_(0.60){}="02'" &
      \bullet
      &
      \bullet \ar[r]_(0.40){}="03'" \ar[d]_{}="t3"
        &
      \bullet
      \\
      \bullet \ar[r] & \bullet \ar[u]_{}="s3"
      &
      \bullet \ar[r] \ar[ur]_{}="13"_(0.40){}="13'" & \bullet \ar[u] \pbox{.}
      \ar@{}"s3";"t3"_(0.20){}="ss3"_(0.80){}="tt3"
      \ar@3"ss3";"tt3"
      \ar@{}"13";[]_(0.05){}="s123"_(0.85){}="t123"
      \ar@2"s123";"t123"
      \ar@{}"02";[lll]_(0.05){}="s012"_(0.85){}="t012"
      \ar@2"s012";"t012"
      \ar@{}"03'";"13'"_(0.05){}="s013"_(0.85){}="t013"
      \ar@2"s013";"t013"
      \ar@{}"03";"02'"_(0.05){}="s023"_(0.85){}="t023"
      \ar@2"s023";"t023"
      \\
    }}
  \]
More generally, by iterating $n$ times the join with $\Dn{0}$ starting from
$\Dn{0}$, one gets Street's $n$-th oriental $\mathcal{O}_n$
\cite{StreetOrient}. The join only admits ``local
internal $\Hom$'', in some appropriate sense, that are given by
``generalized slice \oo-categories''. The Gray tensor product and the join
are two fundamental structures of the theory of \oo-categories.

\medskip

The main goal of this paper is to prove that both the Gray tensor product
and the join interact well with the folk model category structure or, more
precisely, that they both define a monoidal model category structure on
$\ooCat$ endowed with the folk model category structure. Concretely, this
means that if $i : X \to Y$ and $j : Z \to T$ are two folk cofibrations,
then their pushout-product, that is, the \oo-functor
\[
  i \boxprod j : Y \otimes Z \amalg_{X \otimes Z} X \otimes T
  \to Y \otimes T
\]
induced by the commutative square
\[
  \xymatrix{
    X \otimes Z \ar[d]_{i \otimes Z} \ar[r]^{X \otimes j}
    &
    X \otimes T \ar[d]^{i \otimes T}
    \\
    Y \otimes Z \ar[r]_{Y \otimes j}
    &
    Y \otimes T \pbox{,}
  }
\]
is a folk cofibration, and a folk trivial cofibration if moreover either $i$
or $j$ is a folk trivial cofibration; and likewise for the join. This
implies in particular that the Gray tensor product and the join can be
left-derived as functors of two variables.

Note that the fact that the pushout-product, for the Gray tensor product, of
two folk cofibrations is a folk cofibration was already proved by the second
author in~\cite{LucasThesis} by means of cubical \oo-categories. Moreover,
the particular case saying that the Gray tensor product of two cofibrant
\oo-categories is cofibrant was also established by \hbox{Hadzihasanovic}
in~\cite{HadziThesis}. Our proof, which is based on Steiner's theory of
augmented directed complexes~\cite{Steiner} and results of the first author
and Maltsiniotis about pushouts of these~\cite[Chapter 3]{AraMaltsiJoint},
is completely different and has the advantage to adapt easily to the case of
the join. The hard part in showing the compatibility of the
Gray tensor product and the join with the folk model category structure is
then to prove that the pushout-product of a folk cofibration and a folk trivial
cofibration is a folk trivial cofibration.

In the case of the Gray tensor product, we prove a more general result. Let
$\mnCat{m}{n}$, for $0 \le n \le m \le \omega$, be the category of $(m,
n)$-categories, that is, the category of (strict) $m$-categories whose
$k$-cells are strictly invertible as soon as~$k > n$. Denote by $r : \ooCat
\to \mnCat{m}{n}$ the left adjoint to the inclusion functor
$\mnCat{m}{n} \hookto \ooCat$. It follows from \cite{Folk} and
\cite{AraMetGpd} that the folk model category structure can be transferred
along this adjunction to $\mnCat{m}{n}$. We prove, first, that the Gray tensor
product induces, using $r$, a biclosed monoidal product for $(m,
n)$\nbd-categories and, second, that this Gray tensor product of $(m,
n)$-categories is compatible with the transferred model category structure
on $\mnCat{m}{n}$. In particular, in the case $n = 0$, we get a monoidal
model category structure on the category of (strict) $m$-groupoids. We prove
that this structure is symmetric and satisfies the so-called monoid axiom of
Schwede and Shipley \cite{SchwedeShipley}. This implies that the category of
Gray monoids, that is, of monoid objects in the category of \oo-groupoids
endowed with the Gray tensor product, bears a canonical model category
structure. This result was one of the motivating starting point of this
work, as the second author showed in~\cite{LucasThesis} that Gray monoids
provide a good framework for higher dimensional rewriting.

On our way to show these results, we prove several properties of independent
interest related to the Gray tensor product:
\begin{itemize}
  \item We prove that if $x$ is an $m$-cell of an \oo-category $X$ and $y$
    is an $n$-cell of an \oo-category $Y$, then the associated $(m+n)$-cell
    $x \otimes y$ is reversible (that is, weakly invertible) if either $x$
    or $y$ is reversible.
  \item We show that the analogous statement for strictly invertible cells
    holds. This implies that the tensor product of two \oo-groupoids is an
    \oo-groupoid.
  \item We prove that if $X$ is a cofibrant \oo-category, then $\J \otimes
    X$, where $\J$ is the \oo-category obtained by factorizing the
    codiagonal of the terminal \oo-category into a folk cofibration followed
    by a folk trivial fibration, is a cylinder object for~$X$ in the folk
    model category structure.
  \item We show that the invertible cells of the \oo-category $\HomOpLax(X,
    Y)$, defined by the adjunction
    \[
      \Hom_{\ooCat}(T \otimes X, Y) \simeq \Hom_{\ooCat}(T, \HomOpLax(X,
      Y)),
    \]
    are precisely the component-wise invertible higher oplax
    transformations. This implies that if $Y$ is an $(m, n)$-category, then
    so is $\HomOpLax(X, Y)$.
    \item We construct an \oo-functor
    \[ X \otimes (Y \join Z) \to (X \otimes Y) \join Z, \]
    natural in $X$, $Y$ and $Z$ in $\ooCat$, defining a tensorial strength
    on the functor ${-} \join Z$ for the Gray tensor product.
\end{itemize}

Finally, in an appendix, we prove that the ``local internal $\Homi$'' of the
join, the so-called generalized slices, can be right-derived as functors of
two variables. By ``local internal $\Homi$'', we mean the right adjoints of
the functors
\[
  \begin{split}
    \ooCat & \to \cotr{\ooCat}{X} \\
    Y & \mapsto (X \join Y, X \hookto X \join Y)
  \end{split}
  \qquad
  \begin{split}
    \ooCat & \to \cotr{\ooCat}{Y} \\
    X & \mapsto (X \join Y, Y \hookto X \join Y),
  \end{split}
\]
by opposition to the right adjoints of the functors
\[
  \begin{split}
    \ooCat & \to \ooCat \\
    Y & \mapsto X \join Y
  \end{split}
  \qquad \qquad
  \begin{split}
    \ooCat & \to \ooCat \\
    X & \mapsto X \join Y,
  \end{split}
\]
which do not exist in the case of the join. These two local internal
$\Homi$, like classical internal $\Homi$, can be promoted to functors of two
variables, but in the local case, we get functors from the twisted arrow
category:
\[
  \begin{split}
    \twAr(\ooCat) & \to \ooCat \\
    X \xto{u} Z & \mapsto \cotr{Z}{u}
  \end{split}
  \qquad \qquad
  \begin{split}
    \twAr(\ooCat) & \to \ooCat \\
    Y \xto{v} Z & \mapsto \trm{Z}{v}.
  \end{split}
\]
We prove, in the general setting of locally biclosed monoidal category
introduced in~\cite{AraMaltsiJoint}, that these functors can be
right-derived. This requires the use the theory of right simplicial
derivability structures of Kahn and Maltsiniotis \cite{KahnMaltsi} as the
twisted arrow category of a complete and cocomplete category is neither
finitely cocomplete nor finitely complete in general.

\medbreak
\goodbreak

We were unable to answer the following obvious question: is the tensor
product of two folk weak equivalences a folk weak equivalence? Of course, a
similar question can be asked for the join. We leave these two questions as
open problems.

\medskip

Our paper is organized as follows. In the first section, we recall the
definitions related to the folk model category structure on the category
$\ooCat$ of (strict) \oo-categories. In particular, we define reversible
cells (that is, weakly invertible cells). Using the Gray tensor product,
whose definition is recalled in the next section, we introduce oplax
transformations and reversible transformations. We recall the definition of
the \oo-category of cylinders and we end the section by introducing some
classical dualities of $\ooCat$.

The purpose of the second section is to recall the definition of the Gray
tensor product. We start by a summary of Steiner's theory of augmented
directed complexes~\cite{Steiner} and we use this theory to introduce,
following \cite{Steiner} and \cite{AraMaltsiJoint}, the Gray tensor
product and its associated internal $\Homi$, namely $\HomOpLax$ and
$\HomLax$.

The aim of the third section is to prove that the pushout-product, for
the Gray tensor product, of two folk cofibrations is a folk cofibration. We
start by recalling the notion of a rigid monomorphism of augmented directed
complexes with basis and some results from~\cite{AraMaltsiJoint} of
compatibility between pushouts of augmented directed complexes and pushouts
of \oo-categories. We then prove that the pushout-product, for the tensor
product of augmented directed complexes, of two rigid monomorphisms is a
rigid monomorphism. We then deduce the analogous result for \oo-categories
and folk cofibrations.

In the fourth section, we prove that if $X$ is a folk cofibrant \oo-category,
then $\J \otimes X$, where $\J$ is the \oo-category obtained by
factorizing the codiagonal of the terminal \oo-category into a cofibration
followed by a trivial fibration, is a cylinder object for $X$ in the folk
model category. On our way to do so, we prove that the tensor
product of a reversible (resp.~invertible) $m$-cell by any other $n$-cell is
reversible (resp.~invertible). We start by proving the case $m = 1$
providing explicit formulas and then prove the general case by induction.

In the fifth section, we end the proof of the fact that the Gray tensor
product makes of $\ooCat$ endowed with the folk model category structure a
monoidal model category. Our strategy is abstracted in a general lemma whose
main hypothesis, besides the fact that the pushout-product of two generating
cofibrations is a cofibration, is the fact that the tensor product of a
generating trivial cofibration and an object is a weak equivalence. We
prove this hypothesis for the Gray tensor product using results from the
previous section. We then prove some additional properties of the resulting
monoidal model category.

In the sixth section, we introduce the category $\mnCat{m}{n}$ of (strict)
$(m, n)$\=/categories and we study the interactions between the Gray tensor
product and these $(m, n)$\nbd-categories. We prove that the invertible
cells of the \oo-category $\HomOpLax(X, Y)$, defined by the adjunction
$\Hom_{\ooCat}(T \otimes X, Y) \simeq \Hom_{\ooCat}(T, \HomOpLax(X, Y))$,
are precisely the component-wise invertible higher oplax transformations. As
a consequence, we obtain that if $Y$ is an $(m, n)$-category, then so is
$\HomOpLax(X, Y)$. This implies by a result of Day \cite{DayRefl} that the
Gray tensor product induces, using the reflection functor $r : \ooCat \to
\mnCat{m}{n}$, a biclosed monoidal category structure on $\mnCat{m}{n}$. In
the case of $m$-groupoids (that is, the case $n = 0$), we show that the
resulting monoidal product is symmetric. We introduce the folk model
category structure on $\mnCat{m}{n}$, that is, the model category structure
obtained by transferring along $r$ the folk model category structure on
$\ooCat$, and we prove that the monoidal product on $\mnCat{m}{n}$ induced
by the Gray tensor product is compatible with this structure. In the case of
$m$-groupoids, we prove that the resulting monoidal model category
satisfies the monoid axiom of Schwede and Shipley \cite{SchwedeShipley}. As
a consequence, by results of Harper~\cite{Harper} and Muro~\cite{Muro}, we
obtain model category structures on the categories of algebras in $\ooGpd$
over a given non-symmetric operad in $\ooGpd$.

In the seventh section, we recall the definition of the join of
\oo-categories, introduced in \cite{AraMaltsiJoint}, and its associated
local internal $\Homi$, the generalized slices. We prove that the join makes of
$\ooCat$ endowed with the folk model category structure a monoidal model
category. The proof, that we only sketch, is very similar to the one for the
Gray tensor product, and only requires one additional tool: the existence of
an \oo-functor $X \otimes (Y \join Z) \to (X \otimes Y) \join Z$ defining a
tensorial strength on the functor ${-} \join Z$ for the Gray tensor product.

Finally, in an appendix, we recall the definition of a monoidal model
category and how in this setting the monoidal tensor and, in the biclosed
setting, the associated internal $\Homi$ can be derived as functors of two
variables. We then adapt this last result to the case of locally biclosed
monoidal products, introduced in~\cite{AraMaltsiJoint}, our example of
interest being the join of \oo-categories. More precisely, we prove that the
local internal $\Homi$ of such a monoidal product can be right-derived as
functors from the twisted arrow category. To do so, we endow the twisted
arrow category of a model category with a right simplicial derivability
structure in the sense of Kahn and Maltsiniotis~\cite{KahnMaltsi}, proving
that right simplicial derivability structures can be lifted along discrete
opfibrations.

\section{Preliminaries on the folk model category structure}

We will now describe the so-called ``folk'' model category structure
on $\ooCat$ introduced by Lafont, Métayer and Worytkiewicz in
\cite{Folk}. We start by describing the weak equivalences of this structure:
the equivalences of \oo-categories.

\begin{paragraph}
  We will denote by $\ooCat$ the category of strict \oo-categories and
  strict \oo-functors. As all the \oo-categories and \oo-functors in
  this paper will be strict, we will drop the adjective ``strict''
  from now on.  We will say that a cell of an \oo-category is
  \emph{trivial} if it is the identity on a cell of lower dimension. If
  $x$ is an $n$-cell of an \oo-category with~$n \ge 1$, we will denote
  by $1_x$ the identity on $x$, by $sx$ its source $(n-1)$-cell and by $tx$
  its target $(n-1)$-cell. If $x$ is an $n$-cell with $n \ge 0$, for $0 \le
  k \le n$, we will denote by $s_k(x)$ its iterated source $k$-cell and by
  $t_k(x)$ its iterated target $k$-cell.
\end{paragraph}

\begin{paragraph}\label{paragr:def_coind}
  Let $X$ be an \oo-category. By a \ndef{structure of reversibility} on
  $X$, we mean a set~$R$ of cells of $X$ such that, if $u : x \to y$ is an
  $n$-cell in $R$, then there exists an $n$-cell $\winv{u} : y \to x$
  and $(n+1)$-cells $\winv{r} \comp_n r \to \id{x}$ and $r \comp_n
  \winv{r} \to \id{y}$ all three in $R$. We say that an $n$-cell $u$ of $X$
  is \ndef{reversible} if $n \ge 1$ and if there exists a structure of
  reversibility $R$ on $X$ containing~$u$. A cell $\winv{u}$ in $R$ as in
  the definition of a structure of reversibility is then called \emph{a}
  \ndef{reverse} of $u$.

  If $C$ is a set of cells of $X$, to prove that every cell of $C$ is
  reversible, it suffices to produce, for every $n$-cell $u$ of $C$,
  a formula giving a reverse of $u$ \emph{assuming} that the $(n+1)$-cells
  of $C$ are reversible. Indeed, one can then consider the subcategory~$R$
  of~$X$ generated by the reversible cells of~$X$, the cells in $C$ and the
  cells given by the formulas, and show that the cells of $R$ form a
  structure of reversibility. This is sometimes called \ndef{reasoning by
  coinduction}.
\end{paragraph}

\begin{paragraph}
  An \oo-functor $f : X \to Y$ is an \ndef{equivalence of
  \oo-categories} or \ndef{folk weak equivalence} if:
  \begin{itemize}
  \item for every $0$-cell $y$ of $Y$, there exists a $0$-cell $x$ of
    $X$ and a reversible $1$-cell $f(x) \to y$ of $Y$,
  \item for every $n \ge 1$, every pair of parallel $(n-1)$-cells
    $x, x'$ of $X$ and every $n$-cell $v : f(x) \to f(x')$ of $Y$,
    there exists an $n$-cell $u : x \to x'$ of $X$ and a reversible
    $(n+1)$-cell $f(u) \to v$ of $Y$.
  \end{itemize}
\end{paragraph}

We now move on to the description of generating cofibrations and trivial
cofibrations of the folk model category structure.

\begin{paragraph}
  For every $n \ge 0$, we will denote by $\Dn{n}$ the free-standing
  $n$-cell in $\ooCat$. In other words, the \oo-category $\Dn{n}$
  corepresents the functor sending an \oo-category to its set of $n$-cells.
  This \oo-category $\Dn{n}$ is actually an $n$-category. It has a unique
  non-trivial $n$-cell that we will call its \ndef{principal
  cell}. Here are pictures of $\Dn{n}$ for small~$n$:
  \[
    \Dn{0} = \{ 0 \} \,,\quad \Dn{1} = \xymatrix{0 \ar[r] & 1}
    \,,\quad \Dn{2} = \xymatrix@C=3pc@R=3pc{0 \ar@/^2.5ex/[r]_{}="0"
      \ar@/_2.5ex/[r]_{}="1" \ar@2"0";"1" & 1} \quadand \Dn{3} =
    \xymatrix@C=3pc@R=3pc{0 \ar@/^3ex/[r]_(.47){}="0"^(.53){}="10"
      \ar@/_3ex/[r]_(.47){}="1"^(.53){}="11" \ar@<2ex>@2"0";"1"_{}="2"
      \ar@<-2ex>@2"10";"11"^{}="3" \ar@3"3";"2"_{} & 1 \pbox{.}}
  \]
  If $x$ is an $n$-cell of an \oo-category $X$, we will denote by
  $\funcell{x} : \Dn{n} \to X$ the corresponding \oo-functor.
\end{paragraph}

\begin{paragraph}

  Let $n \ge 0$. We will denote by $\bDn{n}$ the $(n-1)$-category obtained
  from $\Dn{n}$ by removing its principal cell. In other words, $\bDn{0}$ is
  the empty \oo-category (which is a $(-1)$-category!) and, for $n \ge 1$,
  $\bDn{n}$ is the free-standing pair of parallel $(n-1)$-cells in
  $\ooCat$. Here are pictures of $\bDn{n}$ for small $n$:
  \[
    \bDn{0} = \{ \, \}
    \,,\quad
    \bDn{1} = \{\xymatrix@C=0.5pc{0 & 1}\}
    \,,\quad
    \bDn{2} = \xymatrix@C=2.5pc@R=3pc{0 \ar@/^2.5ex/[r]_{}="0"
      \ar@/_2.5ex/[r]_{}="1"
    &  1}
    \quadand
    \bDn{3} = \xymatrix@C=2.5pc@R=3pc{0 \ar@/^3ex/[r]_(.47){}="0"^(.53){}="10"
      \ar@/_3ex/[r]_(.47){}="1"^(.53){}="11"
      \ar@<2ex>@2"0";"1"_{}="2" \ar@<-2ex>@2"10";"11"^{}="3"
    &  1 \pbox{.}}
  \]
  If $n \ge 1$ and $x, y$ are two parallel $(n-1)$-cells of an
  \oo-category $X$, we will denote by $\funcell{x,y} : \bDn{n} \to X$
  the corresponding \oo-functor.

  For every $n \ge 0$, we have a canonical inclusion
  \[ i_n : \bDn{n} \hookto \Dn{n}, \] and, for $n \ge 1$, two
  \oo-functors
  \[ s, t : \Dn{n-1} \to \bDn{n} \] corresponding to the source and
  target of the principal cell of $\Dn{n}$, respectively.
\end{paragraph}

\begin{paragraph}\label{paragr:def_cof}
  We will denote by $I$ the set
  \[ I = \{ i_n : \bDn{n} \hookto \Dn{n} \mid n \ge 0\}. \]
  As the category $\ooCat$ is locally presentable, this set generates a weak
  factorization system on $\ooCat$. The \oo-functors in the left class (that
  is, the retracts of transfinite compositions of pushouts of elements of $I$)
  will be called \ndef{folk cofibrations} or simply \ndef{cofibrations}; as
  for the \oo-functors in the right class (that is, the \oo-functors having
  the right lifting property with respect to $I$), they will be called
  \ndef{folk trivial fibrations} or simply \ndef{trivial fibrations}.
\end{paragraph}

\begin{paragraph}\label{paragr:def_J}
  Let $n \ge 1$. Consider the \oo-functor
  $\funcell{d,d} : \bDn{n} \hookto \Dn{n-1}$, where $d$ denotes the
  principal cell of $\Dn{n-1}$.  Fix a factorization
  \[ \bDn{n} \xto{k_n} \Jn{n} \xto{q_n} \Dn{n-1} \]
  of this \oo-functor into a folk cofibration $k_n$ followed by a folk
  trivial fibration $q_n$. As $i_n$ is a folk cofibration and $q_n$ is a
  folk trivial fibration, the commutative square
  \[
    \xymatrix{
      \bDn{n} \ar[d]_{i_n} \ar[r]^{k_n} & J_{n}  \ar[d]^{q_n} \\
      \Dn{n} \ar[r]_{\funcell{\id{d}}} \ar@{.>}[ur] & \Dn{n-1} \pbox{,}
    }
  \]
  where $d$ still denotes the principal cell of $\Dn{n-1}$, admits a lift. We fix
  such a lift
  \[ l_n : \Dn{n} \to \Jn{n}. \]
  By definition, the \ndef{principal cell of $J_n$} is the
  image of the principal cell of~$\Dn{n}$ by $l_n$.

  We will denote by
  \[ j_n : \Dn{n-1} \to \Jn{n} \]
  the composite
  \[ \Dn{n-1} \xto{s} \bDn{n} \xto{k_n} \Jn{n}, \]
  that is, the \oo-functor corresponding to the source of the
  principal cell of $\Jn{n}$, and by~$J$ the set
  \[ J = \{j_n : \Dn{n-1} \to \Jn{n} \mid n \ge 1\}. \]
\end{paragraph}

\begin{theorem}[Lafont--Métayer--Worytkiewicz]
  The category $\ooCat$ is endowed with a model category structure,
  cofibrantly generated by $I$ and $J$, whose weak equivalences are the folk
  weak equivalences and whose cofibrations are the folk cofibrations. All
  the \oo-categories are fibrant for this model category structure.
\end{theorem}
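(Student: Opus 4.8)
The plan is to deduce the statement from Kan's recognition theorem for cofibrantly generated model categories. Since $\ooCat$ is locally presentable, the domains of $I$ and $J$ are small and the small object argument applies to both sets; it therefore suffices to check that the folk weak equivalences $\W$ satisfy the $2$-out-of-$3$ property and are stable under retracts, that $I\text{-inj} \subseteq \W \cap J\text{-inj}$, that $J\text{-cell} \subseteq \W \cap I\text{-cof}$, and that one of the two coherence inclusions holds, say $\W \cap J\text{-inj} \subseteq I\text{-inj}$. Granting these, the cofibrations are exactly $I\text{-cof}$, the weak equivalences are the folk weak equivalences, and the structure is cofibrantly generated by $I$ and $J$, as claimed.

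Several of these verifications are elementary and I would dispatch them first. Stability of $\W$ under retracts is immediate, and the $2$-out-of-$3$ property follows by unwinding the definition of a folk weak equivalence, the only subtlety being the bookkeeping of reversible cells, for which one invokes the elementary closure properties of reversible cells proved by coinduction (paragraph \ref{paragr:def_coind}). That $I\text{-inj} \subseteq \W$ is direct: a trivial fibration $f$ lifts $i_0$, giving strict surjectivity on $0$-cells, and lifts each $i_n$, giving, for every pair of parallel cells in $X$ and every filler $v$ of its image, a strict filler $u$ with $f(u) = v$; taking all the required reversible cells to be identities exhibits $f$ as a folk weak equivalence. Since $J \subseteq I\text{-cof}$, every trivial fibration also lies in $J\text{-inj}$, whence $I\text{-inj} \subseteq \W \cap J\text{-inj}$.

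The inclusion $J\text{-cell} \subseteq I\text{-cof}$ is formal: writing $j_n = k_n \circ s$ with $k_n$ a cofibration and $s : \Dn{n-1} \to \bDn{n}$ a pushout of $i_{n-1}$, each $j_n$ is a cofibration, and $I\text{-cof}$ is closed under the relevant colimits. That each $j_n$ is moreover a weak equivalence follows from $q_n \circ j_n = \id{\Dn{n-1}}$ (indeed $q_n \circ k_n = \funcell{d,d}$ and $\funcell{d,d} \circ s$ sends the principal cell to $d$), together with $q_n \in I\text{-inj} \subseteq \W$ and $2$-out-of-$3$. The same identity $q_n \circ j_n = \id{\Dn{n-1}}$ shows that every object is fibrant: any $\funcell{a} : \Dn{n-1} \to X$ extends along $j_n$ via $\funcell{a} \circ q_n$, so $X \to \Dn{0}$ lies in $J\text{-inj}$.

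This leaves the two substantial points, which I expect to be the main obstacles. First, one must promote ``$j_n$ is a weak equivalence'' to $J\text{-cell} \subseteq \W$, that is, show $\W$ is stable under the pushouts and transfinite compositions that build relative $J$-cell complexes; the natural route is to analyse explicitly the effect on cells of attaching a copy of $j_n$, and to use that $\Dn{n}$ and $\bDn{n}$ are compact so that the defining conditions of a folk weak equivalence are detected at finite stages of the transfinite composite. Second, and hardest, one must establish $\W \cap J\text{-inj} \subseteq I\text{-inj}$: a fibration that is a folk weak equivalence is a trivial fibration. The idea is to upgrade the ``up to reversible cell'' fillers furnished by the weak equivalence condition into strict lifts against the $i_n$, transporting along the reversibility data by means of the right lifting property against $J$ and the fact that the principal cell of $\Jn{n}$ encodes precisely such a reversibility. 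Once this inclusion is known, the remaining coherence condition $\W \cap I\text{-cof} \subseteq J\text{-cof}$ follows by the usual retract argument (factor a trivial cofibration through $J$, apply $2$-out-of-$3$ and the inclusion just proved, then retract), which completes the hypotheses of the recognition theorem.
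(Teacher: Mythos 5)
The paper does not prove this theorem: its ``proof'' is the citation to \cite[Theorem 4.39 and Proposition 5.1]{Folk}, so there is no in-paper argument to compare against. Your overall architecture --- Kan's recognition theorem, with the easy verifications ($2$-out-of-$3$, retracts, $I\text{-inj}\subseteq\W$, $J\subseteq I\text{-cof}$ via $j_n=k_n\circ s$ with $s$ a pushout of $i_{n-1}$, the identity $q_n\circ j_n=\id{\Dn{n-1}}$ giving both that $j_n$ is a weak equivalence and that every object is fibrant) --- is correct and is essentially the route taken in \cite{Folk}. Those parts are fine.

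However, the two points you yourself flag as ``substantial'' are precisely the mathematical content of the theorem, and as written they are gaps rather than proofs. For $J\text{-cell}\subseteq\W$, the route you propose (explicit analysis of the cells created by attaching a copy of $j_n$, plus compactness of the disks to detect weak equivalences at finite stages of a transfinite composite) is not the right tool and I do not see how to make it work: pushouts do not preserve weak equivalences in general, and $\Jn{n}$ is only specified up to a choice of factorization, so a direct cell count is not available. The argument actually used in \cite{Folk} --- and recalled and reused in Section~\ref{sec:cyl_obj} of the present paper (see paragraph~\ref{paragr:def_retr}) --- is that each $j_n$ is a \emph{strong reversible transformation retract} (an ``immersion''), that such retracts are stable under pushout and transfinite composition (\cite[Lemma 17]{Folk}), and that they are weak equivalences because reversible transformations are right homotopies for the path object $\CylRev$. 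Your sketch needs to be replaced by this deformation-retract argument. Likewise, the inclusion $\W\cap J\text{-inj}\subseteq I\text{-inj}$ is the technical heart of \cite[Theorem 4.39]{Folk}; your one-sentence description of ``transporting along the reversibility data by means of the right lifting property against $J$'' is the correct idea but is only a gesture, and the induction that upgrades fillers-up-to-reversible-cells to strict lifts (the ``division lemma'' machinery of \loccit) would have to be carried out. The concluding retract argument for $\W\cap I\text{-cof}\subseteq J\text{-cof}$ is standard and fine once that inclusion is in place.
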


\begin{proof}
  This is \cite[Theorem 4.39 and Proposition 5.1]{Folk}.
\end{proof}

The model category structure of the previous theorem is known as the
\ndef{folk model category structure} on $\ooCat$.
We will now describe a
path object for this structure. We start by some preliminaries on oplax
transformations.

\begin{paragraph}
  If $X$ and $Y$ are two \oo-categories, we will denote by $X \otimes Y$
  their Gray tensor product. We refer the reader to
  Section~\ref{sec:Gray} for more details and a precise definition of
  this tensor product, based on Steiner's work \cite{Steiner}.
  Let us only recall that the Gray tensor product defines a
  (non-symmetric) biclosed monoidal category structure whose unit is the
  terminal \oo-category $\Dn{0}$. Its right and left internal $\Hom$ will be
  denoted by $\HomOpLax$ and $\HomLax$, respectively, so that if $X$, $Y$
  and $Z$ are three \oo-categories, we have natural bijections
  \[
    \Hom_{\ooCat}(X, \HomOpLax(Y, Z)) \simeq \Hom_{\ooCat}(X \otimes
    Y, Z) \simeq \Hom_{\ooCat}(Y, \HomLax(X, Z)).
  \]
\end{paragraph}

\begin{paragraph}
  Let $X$ and $Y$ be two \oo-categories. By adjunction, the set of $0$-cells
  of $\HomOpLax(X, Y)$ can be identified with the set of \oo-functors
  $\Hom_{\ooCat}(X, Y)$. If $f, g : X \to Y$ are two \oo-functors, an
  \ndef{oplax transformation} $\alpha : f \tod g$ is a $1$-cell of
  $\HomOpLax(X, Y)$ from $f$ to
  $g$. Such an oplax transformation can be identified with a functor
  \[ h : \Dn{1} \otimes X \to Y \] making the diagram
  \[
    \xymatrix@R=1.5pc{
      X \ar@/^2.5ex/[drr]^f \ar[dr]_-{\funcell{0} \otimes X} \\
      & \Dn{1} \otimes X \ar[r]^h & Y \\
      X \ar@/_2.5ex/[urr]_g \ar[ur]^-{\funcell{1} \otimes X} & &
      \pbox{,} }
  \]
  where $X$ is identified with $\Dn{0} \otimes X$,
  commute. Alternatively, again by adjunction, such an oplax
  transformation can be seen as an \oo-functor
  \[ k : X \to \Cyl(Y), \] where $\Cyl(Y) = \HomLax(\Dn{1}, Y)$,
  making the diagram
  \[
    \xymatrix@R=1.5pc{
      & & Y \\
      X \ar@/^2.5ex/[rru]^f \ar@/_2.5ex/[rrd]_g \ar[r]^-{k} & \Cyl(Y)
      \ar[ur]_{\pi^-}  \ar[dr]^{\pi^+} \\
      & & Y \pbox{,} }
  \]
  where $Y$ is identified with $\HomLax(\Dn{0}, Y)$ and
  \[
    \pi^- = \HomLax(\funcell{0}, Y) \quadand \pi^+ =
    \HomLax(\funcell{1}, Y),
  \]
  commute.

  One can define lax transformations in a similar way.
\end{paragraph}

\begin{paragraph}\label{paragr:tens_cell}
  Let $x$ be an $m$-cell of an \oo-category $X$ and let $y$ be an
  $n$-cell of an \oo-category~$Y$. One defines an $(m+n)$-cell
  $x \otimes y$ of $X \otimes Y$ in the following way. The \oo-category
  $\Dn{m} \otimes \Dn{n}$ is an $(m+n)$-category that admits a unique
  non-trivial $(m+n)$\nbd-cell.
  We will call this cell the \ndef{principal cell} of $\Dn{m} \otimes
  \Dn{n}$. The $(m+n)$-cell $x \otimes y$ is the cell corresponding to the
  \oo-functor
  \[
    \Dn{m+n} \xto{\funcell{p}} \Dn{m} \otimes \Dn{n} \xto{\funcell{x}
      \otimes \funcell{y}} X \otimes Y,
  \]
  where $p$ denotes the principal cell of $\Dn{m} \otimes \Dn{n}$.
\end{paragraph}

\begin{paragraph}\label{paragr:def_components}
  Let $f, g : X \to Y$ be two \oo-functors and let $\alpha : f \tod g$
  be an oplax transformation. Denote by $h : \Dn{1} \otimes X \to Y$ the
  corresponding \oo-functor and by $(01)$ the principal cell of
  $\Dn{1}$. If $x$ is an $n$-cell of $X$, the \ndef{component} of
  $\alpha$ at $x$ is the $(n+1)$-cell of $Y$
  \[ \alpha_x = h((01) \otimes x). \]
  As the \oo-category $\Dn{1} \otimes X$ is generated by cells of the form
  $0 \otimes x$, $1 \otimes x$ and $(01) \otimes x$, with $x$ a cell of $X$,
  the transformation $\alpha$ is entirely determined by its components.
  Furthermore, oplax transformations can be defined purely in terms of their
  components (see \cite[paragraph 1.9 and Section B.2]{AraMaltsiJoint}).
\end{paragraph}

\begin{paragraph}\label{paragr:desc_cylinder}
  If $Y$ is an \oo-category, the $n$-cells of $\Cyl(Y)$ are called
  \ndef{$n$-cylinders} in $Y$. By adjunction, they correspond to \oo-functors
  $c : \Dn{1} \otimes \Dn{n} \to Y$. If $c$ is such an $n$-cylinder, we can set
  \[ x = c(0 \otimes d) \quadand y = c(1 \otimes d)
  \]
  \goodbreak\noindent
  and, for $0 \le k \le n$,
  \[
    \alpha_k^- = c((01) \otimes s_k(d)) \quadand \alpha_k^+ = c((01)
    \otimes t_k(d)),
  \]
  where $(01)$ denotes the principal cell of $\Dn{1}$ and $d$ the one
  of $\Dn{n}$. Note that $\alpha_n^- = \alpha_n^+$ and we will often write
  $\alpha_n$ for this cell. These cells completely determine $c$ and we will
  often write $c = (x, y, \alpha)$. Moreover, by
  \cite[Proposition~B.1.6]{AraMaltsiJoint}, $n$-cells $x$ and $y$ and
  $(k+1)$-cells $\alpha_k^-, \alpha_k^+$, for $0 \le k \le n$, with
  $\alpha_n^- = \alpha_n^+$, determine an $n$-cylinder if and only if one has
  \[
    \alpha_k^\epsilon: \alpha_{k-1}^+ \comp_{k-1} \alpha_{k-2}^+ \comp_{k-2}
    \cdots \comp_1 \alpha_0^+ \comp_0 x_k^\epsilon \to y_k^\epsilon \comp_0 \alpha_0^-
    \comp_1 \cdots \comp_{k-1} \alpha_{k-1}^-,
  \]
  for $\epsilon = \pm$, where $x_k^- = s_k(x)$ and $x_k^+ = t_k(x)$,  and
  similarly for $y$.

  If  $c = (x, y, \alpha)$ is an $n$-cylinder, the cell $\alpha_n^- = \alpha_n^+$ is called the \ndef{principal cell} of $c$. We say that $c$ is \ndef{reversible} if
  all the cells $\alpha_k^\epsilon$ for $0 \le k \le n$ and $\epsilon = \pm$
  are reversible. It follows from the explicit formulas describing
  the operations of the \oo-category $\Cyl(Y)$ (see \cite[Appendix A]{Folk} or
  \cite[Proposition B.1.15]{AraMaltsiJoint}) that the graded subset
  $\CylRev(Y)$ of~$\Cyl(Y)$ consisting of reversible cylinders is actually a
  sub-\oo-category.
\end{paragraph}

\begin{paragraph}
  Let $f, g : X \to Y$ be two \oo-functors and let $\alpha : f \tod g$
  be an oplax transformation. The transformation $\alpha$ is said to
  be \ndef{reversible} if, for every cell $x$ of $X$, the component
  $\alpha_x$ is a reversible cell of $Y$. A reversible oplax
  transformation will be simply called a \ndef{reversible
    transformation}.

  Essentially by definition, the transformation $\alpha$ is reversible
  if and only if the corresponding \oo-functor $X \to \Cyl(Y)$
  factors through the inclusion $\CylRev(Y) \hookto \Cyl(Y)$. In other words,
  the data of a reversible transformation $\alpha : f \tod g$
  corresponds to the data of an \oo-functor
  \[ k : X \to \CylRev(Y) \]
  making the obvious diagram
  \[
    \xymatrix@R=1.5pc{
      & & Y \\
      X \ar@/^2.5ex/[rru]^f \ar@/_2.5ex/[rrd]_g \ar[r]^-{k} &
      \CylRev(Y)
      \ar[ur]_{\pi^-}  \ar[dr]^{\pi^+} \\
      & & Y
    }
  \]
  commute.
\end{paragraph}

\begin{paragraph}
  Let $f : X \to Y$ be an \oo-functor. The identity on $f$ seen as a
  $0$-cell of $\HomOpLax(X, Y)$ defines an oplax transformation
  $\id{f} : f \tod f$.  This transformation is easily seen to be
  reversible (its components are identities).

  In particular, by applying this to the identity \oo-functor
  $\id{X} : X \to X$, we get a commutative diagram
  \[
    \xymatrix@R=1.5pc{
      & & X \\
      X \ar@/^2.5ex/[rru]^{\id{X}} \ar@/_2.5ex/[rrd]_{\id{X}}
      \ar[r]^-{\iota} & \CylRev(X)
      \ar[ur]_{\pi^-}  \ar[dr]^{\pi^+} \\
      & & X }
  \]
  or, in other words, a factorization
  \[ X \xto{\iota} \CylRev(X) \xto{\pi} X \times X \] of the diagonal
  functor.
\end{paragraph}

\begin{theorem}[Lafont--Métayer--Worytkiewicz]\label{thm:path_obj}
  For every \oo-category $X$, the factorization
  \[ X \xto{\iota} \CylRev(X) \xto{\pi} X \times X \] of the diagonal
  is a path object for the folk model category structure, in the sense
  that $\iota$ a weak equivalence and that $\pi$ is a fibration.
\end{theorem}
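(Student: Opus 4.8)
The plan is to prove the two assertions—that $\iota$ is a folk weak equivalence and that $\pi$ is a folk fibration—separately. For the first, I would start from the identities $\pi^- \circ \iota = \id{X} = \pi^+ \circ \iota$, which hold because the cylinder $\iota(x)$ has both ends equal to $x$. Since the identity is a weak equivalence, the $2$-out-of-$3$ property reduces the claim about $\iota$ to showing that the end-evaluation $\pi^- \colon \CylRev(X) \to X$ is a weak equivalence, and I would prove the stronger statement that $\pi^-$ is a folk \emph{trivial} fibration, that is, that it has the right lifting property against every generating cofibration $i_n \colon \bDn{n} \hookto \Dn{n}$. Concretely, this amounts to the following filling problem: given two parallel reversible $(n-1)$-cylinders $c_0, c_1$ (the prescribed source and target) and an $n$-cell $v$ of $X$ filling their common $\pi^-$-boundary, produce a reversible $n$-cylinder $u$ with $su = c_0$, $tu = c_1$ and with $\pi^-(u) = v$.

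I would solve this using the explicit description of cylinders recalled in~\ref{paragr:desc_cylinder}: an $n$-cylinder is determined by its two ends, its lateral components $\alpha_k^\epsilon$ and its principal cell, subject to the source–target compatibilities listed there, and one keeps enough freedom in the unspecified end and the top-dimensional component to realise $v$ on the nose as the $\pi^-$-face. That the cylinder so obtained is again reversible is then checked by the coinductive argument of~\ref{paragr:def_coind}, writing down reverses for the newly introduced components out of $v$ and the reverses of the given data. For the second assertion I would use that the folk fibrations are exactly the \oo-functors with the right lifting property against the generating trivial cofibrations $J = \{ j_n \colon \Dn{n-1} \to \Jn{n} \}$. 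Unwinding the construction of $\Jn{n}$ in~\ref{paragr:def_J}, where $j_n$ picks out the source of the principal cell, a lifting problem for $j_n$ against $\pi$ translates into the following: given a reversible $(n-1)$-cylinder $c$ together with a pair of reversible $n$-cells of $X$ whose sources are the two faces $\pi^-(c)$ and $\pi^+(c)$ of $c$, extend $c$ to a reversible $n$-cylinder mapping under $\pi$ to this pair.

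Again I would build the required cylinder from the formulas of~\ref{paragr:desc_cylinder}, taking the two prescribed reversible $n$-cells of $X$ as the new faces and manufacturing the missing components and their reverses by coinduction as in~\ref{paragr:def_coind}. The hard part will be the bookkeeping inside these two cylinder-filling constructions: one must exhibit explicit reverses for every newly introduced cell and verify, dimension by dimension, the compatibility conditions of~\ref{paragr:desc_cylinder}, and this is where the real content lies, the reductions and the appeal to $2$-out-of-$3$ being purely formal. Everything else rests only on the explicit combinatorics of reversible cylinders from \cite[Appendix A]{Folk} and \cite[Proposition B.1.15]{AraMaltsiJoint} and on the coinductive construction of reverses.
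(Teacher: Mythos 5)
The paper gives no argument of its own here: it cites \cite[Proposition 4.45]{Folk}, so your proposal is measured against the proof in that reference, which it partly reconstructs. The half concerning $\iota$ is correct and is exactly the argument of \cite{Folk}: $\pi^-\circ\iota=\id{X}$, and $\pi^-\colon\CylRev(X)\to X$ is indeed a folk trivial fibration. Your unwinding of the lifting problem against $i_n$ is accurate --- a map $\bDn{n}\to\CylRev(X)$ is precisely a parallel pair of reversible $(n-1)$-cylinders, and a lift is a single $n$-cell of $\CylRev(X)$ with prescribed source, target and $\pi^-$-face --- and the free parameters (the $\pi^+$-face and the principal cell $\alpha_n$) can be manufactured from the reversibility of the lower components, as you say.

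The gap is in the sentence ``a lifting problem for $j_n$ against $\pi$ translates into the following''. It does not. First, a morphism $\Jn{n}\to X\times X$ is much more data than a pair of reversible $n$-cells: it records images of \emph{all} the cells of $\Jn{n}$, i.e.\ coherent towers of reversibility witnesses. Second, and more seriously, a lift must be a morphism defined on all of $\Jn{n}$, whereas your filling problem only asks for a single $n$-cell of $\CylRev(X)$, that is, for a \emph{reversible $n$-cylinder} in the sense of paragraph~\ref{paragr:desc_cylinder}. To extend a map over $\Jn{n}$ one needs that cell to be reversible \emph{as a cell of the \oo-category $\CylRev(X)$}, with a full system of witnesses in $\CylRev(X)$ lying over the witnesses prescribed in $X\times X$; ``being a reversible cylinder'' (membership in $\CylRev(X)$) and ``being a reversible cell of $\CylRev(X)$'' are different conditions, and your reduction conflates them. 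What one actually proves --- and what \cite{Folk} proves --- is the internal lifting property: for every $(n-1)$-cell $c$ of $\CylRev(X)$ and every reversible $n$-cell of $X\times X$ with source $\pi(c)$, there is a reversible $n$-cell of $\CylRev(X)$ with source $c$ lying over it; the identification of this condition with the right lifting property against $J$ is part of the content of the cofibrant-generation statement you are invoking. The gap is repairable --- the required reverses can be written down by a coinductive computation with the explicit composition formulas in $\Cyl(X)$, in the spirit of Lemma~\ref{lem:Gamma_rev_is_rev} --- but as stated your filling problem is strictly weaker than the lifting problem it is supposed to encode.
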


\begin{proof}
  This is \cite[Proposition 4.45]{Folk}.
\end{proof}

\begin{remark}\label{rem:trans_right_homot}
  A right homotopy with respect to the path object of the previous
  theorem is precisely a reversible transformation.
\end{remark}

We will now describe the cofibrant objects of the folk model category
structure.

\begin{paragraph}\label{paragr:def_freely_gen}

  Let $X$ be an \oo-category. For $m \ge
  -1$, we will denote by $X_{\le m}$ the $m$\nbd-category obtained from $X$ by
  removing the non-trivial $k$-cells for $k > m$. In particular, if $m =
  -1$, we have $X_{\le -1} = \emptyset$. There is an obvious inclusion
  \oo-functor $X_{\le m} \hookto X_{\le m+1}$.

  Let $B$ be a set of cells of $X$. We will say that $X$ is \ndef{freely
  generated by $B$} if, for every $n \ge 0$, the commutative square
  \[
    \xymatrix@C=3pc@R=3pc{
      \coprod_{x \in B_n} \bDn{n} \ar[d]_{\coprod_{x \in B_n} i_n}
      \ar[r]^-{\funcell{sx, tx}}
      & X_{\le n-1} \ar[d] \\
      \coprod_{x \in B_n} \Dn{n} \ar[r]_-{\funcell{x}} & X_{\le n}
      \pbox{,} }
  \]
  where $B_n$ denotes the set of $n$-cells in $B$ and the right
  vertical arrow is the canonical inclusion, is a pushout square.

  One says that an \oo-category is \ndef{free in the sense of
  polygraphs} if it admits a set of cells that freely generates it.
\end{paragraph}

\begin{theorem}[Métayer]
  The cofibrant objects of the folk model category structure are the
  \oo-categories that are free in the sense of polygraphs.
\end{theorem}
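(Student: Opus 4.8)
The plan is to establish the two inclusions separately: I will show that $X$ is cofibrant, i.e.\ that $\emptyset \to X$ is a folk cofibration, if and only if $X$ is free in the sense of polygraphs. The easy direction is that a free \oo-category is cofibrant. If $X$ is freely generated by a set $B$ of cells, the tower of inclusions
\[
  \emptyset = X_{\le -1} \hookto X_{\le 0} \hookto X_{\le 1} \hookto \cdots
\]
exhibits $X$ as its colimit, since every cell of $X$ has finite dimension. By the defining pushout squares of paragraph~\ref{paragr:def_freely_gen}, each inclusion $X_{\le n-1} \hookto X_{\le n}$ is a pushout of $\coprod_{x \in B_n} i_n$, hence a pushout of a coproduct of generating cofibrations, hence a cofibration. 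As $\emptyset$ is initial and transfinite compositions of cofibrations are cofibrations, it follows that $\emptyset \to X$ is a cofibration, so $X$ is cofibrant.

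The substantial direction is that a cofibrant \oo-category is free. By the definition of the left class of the weak factorization system generated by $I$ (paragraph~\ref{paragr:def_cof}), any cofibrant $X$ is a retract of an \emph{$I$-cellular} object, that is, of an \oo-category obtained from $\emptyset$ by transfinitely composing pushouts of maps in $I$. It therefore suffices to prove two facts: (i) every $I$-cellular object is free; and (ii) a retract of a free \oo-category is free.

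For (i), the idea is to reorganize an arbitrary cellular presentation into one that proceeds dimension by dimension. Since the attaching map $\bDn{n} \to A$ of a cell factors through the cells of dimension $< n$, the successive pushouts can be sorted by the dimension of the attached cell without changing the resulting \oo-category; collecting, for each $n$, all the $n$-dimensional attachments into a single pushout of $\coprod i_n$ along a map $\bDn{n} \to X_{\le n-1}$ then yields precisely the squares required in the definition of freely generated, with $B_n$ the set of attached $n$-cells. One must check that such a pushout genuinely adds \emph{free} generators, namely new indeterminate cells with prescribed source and target; this follows from the explicit description of pushouts of $i_n$.

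Finally, (ii) is where I expect the real difficulty to lie. Given a retract $s : X \to Y$, $r : Y \to X$ with $rs = \id{X}$ and $Y$ free on a basis $B$, one cannot simply take $\{r(b) : b \in B\}$ as a basis for $X$: the functor $r$, being an \oo-functor, preserves identities and compositions, so these cells do generate $X$, but they need not be independent and may well be decomposable. The resolution is to use that the basis of a free \oo-category is \emph{intrinsic}: a cell lies in the basis precisely when it is indecomposable, i.e.\ neither an identity nor a nontrivial $\comp_k$-composite, and every cell admits a unique expression in terms of the basis. Using this characterization together with the idempotent $e = sr$ on $Y$, one extracts from $r(B)$ a genuine polygraphic basis of $X$ and verifies the defining pushout squares. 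The bookkeeping needed to transport the free structure across the retract, relying on the uniqueness of the polygraphic decomposition, is the technical heart of the argument and is precisely Métayer's main contribution.
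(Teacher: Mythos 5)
Your overall strategy is exactly Métayer's (the paper itself offers no proof and simply cites \cite{MetCof}): free implies cofibrant via the skeletal tower; a cofibrant object is a retract of an $I$-cellular one; $I$-cellular objects are free by sorting the attachments by dimension; and retracts of free \oo-categories are free. Everything up to and including your step (i) is fine.

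The genuine gap is step (ii), which you rightly identify as the heart of the matter but do not prove, and the two facts you invoke in its support are false as stated. First, a cell of a free \oo-category that is ``neither an identity nor a nontrivial $\comp_k$-composite'' need not be a generator: if $\alpha$ is a generating $2$-cell and $u$ a non-identity $1$-cell composable with its $0$-source, the whiskering $\id{u} \comp_0 \alpha$ is not an identity and every way of writing it as a binary composite has a trivial factor, yet it is not in the basis. Second, cells of a free \oo-category do not admit a \emph{unique} expression in terms of the basis: the interchange law already yields distinct decompositions of the same $2$-cell. What is true --- and what Métayer actually establishes, via a linearization of \oo-categories into chain complexes --- is that the generating set itself is unique; and even granting that, extracting a polygraphic basis of $X$ from the idempotent $sr$ on $Y$ and verifying the pushout squares of paragraph~\ref{paragr:def_freely_gen} is a substantial inductive argument that your sketch does not supply. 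As written, the proposal reduces the theorem to its hardest lemma and then defers that lemma to the very result being proved.
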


\begin{proof}
  This is the main result of \cite{MetCof}.
\end{proof}

We end the section by introducing important dualities of $\ooCat$ and some
of their properties.

\begin{paragraph}\label{paragr:dualities}
  If $X$ is an \oo-category, we will denote by $X^\op$ \resp{by
  $X^\co$} the \oo-category obtained from $X$ by reversing the
  direction of the cells of odd \resp{even} dimension. The assignments
  $X \mapsto X^\op$ and $X \mapsto X^\co$ are both involutive
  automorphisms of the category~$\ooCat$. Moreover, they are
  anti-monoidal in the sense that the assignment $x \otimes y \mapsto y
  \otimes x$ defines isomorphisms
  \[
    (X \otimes Y)^\op \simeq Y^\op \otimes X^\op \quadand (X \otimes
    Y)^\co \simeq Y^\co \otimes X^\co.
  \]
  Furthermore, there are canonical isomorphisms
  \[
    \begin{split}
      \HomOpLax(X,Y)^\op & \simeq \HomLax(X^\op, Y^\op), \\
      \HomOpLax(X,Y)^\co & \simeq \HomLax(X^\co, Y^\co)
    \end{split}
  \]
  (see for instance \cite[Propositions A.22 and A.23]{AraMaltsiJoint}).

  The symmetry of the definition of a reversible cell shows that a
  cell is reversible in~$X$ if and only if the corresponding cell is
  reversible in~$X^\op$ \resp{in $X^\co$}. This easily implies that an
  \oo-functor $f : X \to Y$ is a folk weak equivalence if and
  only if $f^\op : X^\op \to Y^\op$ \resp{$f^\co : X^\co \to Y^\co$}
  is. Moreover, for every $n \ge 0$, the \oo-functor $i_n^\op$
  \resp{$i_n^\co$} can be identified with the \oo-functor
  $i_n : \bDn{n} \hookto \Dn{n}$. This implies that an \oo-functor $i$ is a
  folk cofibration if and only if $i^\op$ \resp{$i^\co$} is, and hence that
  $j$ is a folk trivial cofibration if and only if $j^\op$ \resp{$j^\co$}
  is.
\end{paragraph}

\section{Preliminaries on the Gray tensor product}
\label{sec:Gray}

The purpose of this section is to define the Gray tensor product of
\oo-categories. This tensor product was introduced by Al-Agl and Steiner
\cite{AlAglSteiner} as a generalization of Gray's tensor product of
$2$-categories \cite{GrayFCT}, and is somehow a lax version of the
cartesian product. The definition we will give in this section is based on
Steiner's theory of augmented directed complexes \cite{Steiner}. The
strategy, due to Steiner, is the following. Steiner's complexes are a tool
to describe a large subclass of the class of free \oo-categories in the
sense of polygraphs. The usual tensor product of chain complexes induces a
tensor product on these free \oo-categories. The general Gray tensor product
is then obtained by density of this subclass in the category of
\oo-categories.

\medskip

We start by briefly recalling Steiner's theory.

\begin{paragraph}
  An \ndef{augmented directed complex} is an augmented chain complex
  of abelian groups in nonnegative degree
  \[
    \cdots \xto{d} K_n \xto{d} K_{n-1} \xto{d} \cdots \xto{d} K_0
    \xto{e} \Z,
  \]
  endowed with, for every $n \ge 0$, a submonoid $K_n^\ast$ of $K_n^{}$
  of so-called \ndef{positive elements}. If $K$ and $L$ and two
  augmented directed complexes, a \ndef{morphism} $f : K \to L$ is a
  morphism of the underlying augmented chain complexes respecting the
  positive elements, that is, such that, for every $n \ge 0$, we have
  $f(K_n^\ast) \subset L_n^\ast$. We will denote by $\ADC$ the
  category of augmented directed complexes.
\end{paragraph}

\begin{paragraph}
  In \cite{Steiner}, Steiner defines a functor
  \[ \nu : \ADC \to \ooCat. \]
  We refer the reader to \cite[Definition 1.6]{Steiner} (or \cite[paragraph
  2.4]{AraMaltsiJoint}) for a detailed definition. Let us just mention that
  if $K$ is an augmented directed complex, then the $n$-cells of the
  \oo-category $\nu(K)$ are given by tables
  \[
    \begin{pmatrix}
      x_0^- & \cdots & x_n^- \\
      \noalign{\vskip 3pt}
      x_0^+ & \cdots & x_n^+ \\
    \end{pmatrix},
  \]
  where
  \begin{itemize}
  \item $x_i^-$ and $x_i^+$ are in $K_i^\ast$, for $0 \le i \le n$,
  \item $x^-_n = x^+_n$,
  \item $d(x^-_i) = x^+_{i-1} - x^-_{i-1} = d(x^+_i)$, for
    $0 < i \le n$,
  \item $e(x_0^-) = 1$ and $e(x_0^+) = 1$.
  \end{itemize}

\end{paragraph}

\begin{paragraph}
  If $K$ is an augmented directed complex, a \ndef{basis} of $K$ is a graded
  set $(B_n)_{n \ge 0}$ such that, for every $n \ge 0$,
  \begin{itemize}
  \item $B_n$ is a basis of the $\Z$-module $K_n$,
  \item $B_n$ generates the submonoid $K_n^\ast$.
  \end{itemize}
  One shows that if such a basis exists, then it is unique.
\end{paragraph}

\begin{paragraph}
  If $K$ is an augmented directed complex with basis $(B_n)$, then for
  every $n$\nbd-chain~$x$, one can write $x = \sum_{b \in B_n} n_b b$,
  where the $n_b$ are integers, in a unique way, and we set
  \[
    x^- = \sum_{\substack{b \in B_n\\ n_b < 0}} (-n_b) b \quadand x^+ =
    \sum_{\substack{b \in B_n\\ n_b > 0}} n_b b.
  \]
  If $x$ is an $n$-chain with $n > 0$, we set
  \[ d^-x = (dx)^- \quadand d^+x = (dx)^+. \]
\end{paragraph}

\begin{paragraph}
  Let $K$ be an augmented directed complex with basis. For every
  $n$-chain $x$ in the basis, we define a table
  \[
    \atom{x} =
    \begin{pmatrix}
      x_0^- & \cdots & x_n^- \\
      \noalign{\vskip 3pt}
      x_0^+ & \cdots & x_n^+ \\
    \end{pmatrix},
  \]
  by induction, setting
  \begin{itemize}
  \item $x^-_n = x$ and $x^+_n = x$,
  \item $x^-_i = d^-(x^-_{i+1})$ and $x^+_i = d^+(x^+_{i+1})$, for
    $0 \le i < n$.
  \end{itemize}
  This table is an $n$-cell of $\nu(K)$ if and only if $e(x^-_0) = 1$
  and $e(x^+_0) = 1$. In this case, one says that the $n$-cell
  $\atom{x}$ is the \ndef{atom} associated to $x$.

  The augmented directed complex with basis $K$ is said to be
  \ndef{unital} if, for every element $x$ of the basis of $K$, one has
  $e(x^-_0) = 1$ and $e(x^+_0) = 1$.
\end{paragraph}

\begin{paragraph}
  One says that an augmented directed complex $K$ with basis $(B_n)$
  is \ndef{strongly loop-free} if there exists a partial order
  $\preceq$ on $\coprod_{n \ge 0} B_n$ such that, for every $n > 0$, every
  $x$ in $B_n$, and every $y$ and $z$ in the support (according to the
  basis~$B_{n-1}$) of $d^-x$ and $d^+x$, respectively, one has
  \[ y \preceq x \preceq z. \]
\end{paragraph}

\begin{paragraph}
  A \ndef{strong Steiner complex} is an augmented directed complex
  with basis that is both unital and strongly loop-free. We will
  denote by $\Stf$ the full subcategory of~$\ADC$ consisting of
  Steiner complexes.
\end{paragraph}

\begin{theorem}[Steiner]\label{thm:Steiner}
  The functor $\restrict{\nu}{\Stf} : \Stf \to \ooCat$ is fully
  faithful. Moreover, if $K$ is a strong Steiner complex, then
  $\nu(K)$ is freely generated (see paragraph~\ref{paragr:def_freely_gen})
  by its atoms.
\end{theorem}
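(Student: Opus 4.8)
The plan is to prove the two assertions in sequence, starting with the description of $\nu(K)$ as a freely generated $\omega$-category, since full faithfulness will essentially follow from a good understanding of the cells of $\nu(K)$. First I would fix a strong Steiner complex $K$ with basis $(B_n)$ and analyze the cells of $\nu(K)$, as described in the paragraph giving the table presentation. The key technical input is the notion of \emph{atom} $\atom{x}$ associated to a basis element $x \in B_n$: unitality guarantees precisely that every such $\atom{x}$ is a genuine $n$-cell of $\nu(K)$. So I would first check that $K$ being a strong Steiner complex makes the atoms well-defined cells, and that the partial order $\preceq$ coming from strong loop-freeness controls how the tables $\atom{x}$ decompose under source and target. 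The heart of this part is to show that an arbitrary $n$-cell of $\nu(K)$, presented as a table $\begin{pmatrix} x_0^- & \cdots & x_n^- \\ x_0^+ & \cdots & x_n^+ \end{pmatrix}$, can be written uniquely as a composite of atoms $\atom{b}$ for $b \in B$, with the combinatorics of the composition dictated by the positivity condition $x_i^\pm \in K_i^\ast$ together with the boundary formulas $d(x_i^-) = x_{i-1}^+ - x_{i-1}^-$.

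To establish that $\nu(K)$ is freely generated by its atoms in the sense of paragraph~\ref{paragr:def_freely_gen}, I would verify, for each $n \ge 0$, that the relevant square is a pushout. Concretely, I would show that attaching the atoms of dimension $n$ to the $(n-1)$-truncation $\nu(K)_{\le n-1}$ along their sources and targets produces exactly $\nu(K)_{\le n}$. The forward direction (every $n$-cell is built from atoms) uses the loop-free order to argue by induction on $\preceq$ that any positive $n$-chain is a sum of basis elements each contributing an atom, and these assemble into a unique composite; the pushout universal property then amounts to the uniqueness of this decomposition, which is where strong loop-freeness is indispensable---it prevents cyclic dependencies that would obstruct uniqueness. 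I expect this to require the explicit compositional formulas for $\nu(K)$ from Steiner's definition, so I would cite \cite{Steiner} for those and focus on the ordering argument.

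For full faithfulness of $\restrict{\nu}{\Stf}$, I would argue that a morphism $f : K \to L$ of augmented directed complexes with basis is determined by, and can be freely prescribed on, the basis elements, because $f$ is $\Z$-linear and must respect positivity. On the other side, an $\omega$-functor $F : \nu(K) \to \nu(L)$ between the freely generated $\omega$-categories is determined by its values on the generating atoms $\atom{b}$, $b \in B$, by the freeness just established. The crux is to match these two data: I would show that $F$ sends each atom $\atom{b}$ to an atom (or at least to a cell whose table entries are basis elements of $L$), so that $F$ reconstructs a chain map respecting positivity, giving injectivity of $\nu$ on morphisms; and conversely that any assignment of atoms compatible with sources and targets extends to a chain map, giving surjectivity. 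The main obstacle here, and in the theorem overall, is precisely controlling how $F$ interacts with the atomic decomposition: one must rule out that $F$ collapses or mixes atoms in a way not induced by a linear map, and this is exactly where the combination of unitality (atoms exist and have augmentation $1$) and strong loop-freeness (the decomposition into atoms is unique and order-respecting) is used. For the detailed verification of these compatibilities I would refer to Steiner's original arguments in \cite{Steiner}.
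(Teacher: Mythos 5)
The paper gives no proof of this statement: it is quoted from Steiner, and the ``proof'' is the citation \cite[Proposition 3.7, Theorem 5.6, Theorem 6.1]{Steiner}, so there is nothing internal to compare your argument against. Your outline of the freeness half is consistent with Steiner's development (unitality makes the atoms genuine cells; strong loop-freeness is what makes the generation free, and the pushout condition of paragraph~\ref{paragr:def_freely_gen} is indeed what must be checked). One caveat: ``written uniquely as a composite of atoms'' is stronger than what is true or needed --- a cell of a free $\omega$-category admits many formal decompositions as a composite of generators, related by interchange and associativity; what one proves is the universal property, not literal uniqueness of the word.

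The genuine gap is in your route to full faithfulness. You propose to show that an $\omega$-functor $F : \nu(K) \to \nu(L)$ sends each atom $\atom{b}$ to an atom, ``or at least to a cell whose table entries are basis elements of $L$.'' This is false and cannot be repaired in that form: a morphism $f : K \to L$ of augmented directed complexes only respects positivity, so it may send a basis element to a sum of several basis elements or to $0$; correspondingly $\nu(f)$ sends $\atom{b}$ to a cell whose table entries are arbitrary positive chains --- typically a composite of several atoms, or an identity --- and all such functors must be shown to lie in the image of $\nu$. For the same reason a morphism of complexes cannot be ``freely prescribed'' on basis elements, since it must commute with $d$ and $e$. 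The standard argument, and Steiner's, instead uses the left adjoint $\lambda$ of $\nu$ (whose existence the paper recalls via \cite[Theorem 2.11]{Steiner}): one proves that for $K$ with a unital loop-free basis the counit $\lambda\nu(K) \to K$ is an isomorphism, whence
\[
\Hom_{\ooCat}(\nu K, \nu L) \simeq \Hom_{\ADC}(\lambda\nu K, L) \simeq \Hom_{\ADC}(K, L).
\]
If you insist on a direct argument, the correct statement is that the top table entries of the cells $F(\atom{b})$, for $b$ running over the basis, assemble into a chain map $K \to L$ inducing $F$; establishing this still requires the freeness statement plus an analysis of sources and targets of tables, so it is no lighter than the adjunction route.
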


\begin{proof}
  This follows from \cite[Proposition 3.7, Theorem 5.6 and Theorem
  6.1]{Steiner}.
\end{proof}

We will now define the Gray tensor product of \oo-categories, starting with
the tensor product of augmented directed complexes.

\begin{paragraph}\label{paragr:def_tens_ADC}
  The \ndef{tensor product} $K \otimes L$ of two augmented directed
  complexes $K$ and $L$ is defined in the following way:
  \begin{itemize}
  \item The underlying augmented complex of $K \otimes L$ is the
    usual one:
    \begin{itemize}
    \item for $n \ge 0$, we have
      \[ (K \otimes L)_n = \bigoplus_{i + j = n} K_i \otimes L_j, \]
    \item for $x$ in $K_i$ and $y$ in $K_j$, we have
      \[ d(x \otimes y) = dx \otimes y + (-1)^i x \otimes dy, \] where
      by convention $dz = 0$ if the degree of $z$ is $0$,
    \item for $x$ in $K_0$ and $y$ in $L_0$, we have
      \[ e(x \otimes y) = e(x)e(y). \]
    \end{itemize}
  \item The submonoid $(K \otimes L)^\ast_n$ is defined to be
    generated by the subset
    \[ \bigoplus_{i + j = n} K^\ast_i \otimes L^\ast_j \] of
    $(K \otimes L)_n$.
  \end{itemize}
  The tensor product defines a (non-symmetric) monoidal category structure
  on the category of augmented directed complexes. Its unit, that we will
  denote by $\Z$, is the complex concentrated in degree $0$ of value $\Z$
  with the identity augmentation and $\N$ as the submonoid of positive
  elements of degree $0$. Steiner proved (see \cite[Example~3.10]{Steiner})
  that this monoidal category structure restricts to the full subcategory of
  strong Steiner complexes.
\end{paragraph}

\begin{theorem}[Steiner, Ara--Maltsiniotis]\label{thm:def_tens}
  There exists a unique, up to unique isomorphism, biclosed monoidal
  category structure on $\ooCat$ making the functor
  $\nu_{|\Stf} : \Stf \to \ooCat$ a monoidal functor, where $\Stf$ is
  endowed with the monoidal category structure given by the tensor
  product.
\end{theorem}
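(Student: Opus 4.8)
The goal is to transport the monoidal structure on $\Stf$ (Theorem~\ref{thm:def_tens}, input data) along the fully faithful, dense functor $\nu_{|\Stf}$ to a biclosed monoidal structure on $\ooCat$. The plan is to use the standard density/left-Kan-extension machinery for monoidal structures, combined with the biclosedness criterion of Day.

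\emph{Existence of the tensor product.}
First I would recall from Theorem~\ref{thm:Steiner} that $\nu_{|\Stf} : \Stf \to \ooCat$ is fully faithful, and observe that its essential image is dense in $\ooCat$: every \oo-category is a canonical colimit of the $\Dn{n}$, which lie in (the image of) $\Stf$. Concretely, I would define, for $X, Y$ in $\ooCat$,
\[
  X \otimes Y = \limind_{\substack{\Dn{m} \to X \\ \Dn{n} \to Y}} \nu(\Dn{m}^{\mathrm{St}} \otimes \Dn{n}^{\mathrm{St}}),
\]
the colimit being taken over the relevant category of cells of $X$ and $Y$, where $\Dn{n}^{\mathrm{St}}$ denotes the strong Steiner complex with $\nu(\Dn{n}^{\mathrm{St}}) \simeq \Dn{n}$, and where the tensor product inside is the one on $\Stf$ from paragraph~\ref{paragr:def_tens_ADC}. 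This is simply the left Kan extension of $\nu(({-}) \otimes ({-}))$ along $\nu_{|\Stf} \times \nu_{|\Stf}$, and it preserves colimits in each variable by construction. That it restricts on $\Stf$ to the prescribed product (making $\nu_{|\Stf}$ monoidal) follows from full faithfulness together with the fact that each strong Steiner complex is a canonical colimit of its atoms.

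\emph{Monoidal coherence.}
The associativity and unit constraints would be obtained by the same density argument: the associator on $\Stf$, transported along $\nu_{|\Stf}$, extends uniquely to a natural associator on $\ooCat$ because all the functors involved preserve colimits in each variable and the constraints are determined by their values on the dense subcategory. The coherence (pentagon and triangle) axioms then hold on all of $\ooCat$ since they hold on the dense subcategory $\Stf$ and both sides of each coherence diagram are colimit-preserving in each variable, so equality on representables propagates. The unit is $\Dn{0} = \nu(\Z)$, inherited from the unit $\Z$ of $\Stf$.

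\emph{Biclosedness and uniqueness.}
For biclosedness I would invoke the criterion that a monoidal structure whose tensor product preserves colimits in each variable on a locally presentable category is automatically biclosed: each functor $X \otimes ({-})$ and $({-}) \otimes Y$ preserves all colimits between locally presentable categories and hence is a left adjoint by the adjoint functor theorem, yielding the internal homs $\HomOpLax$ and $\HomLax$. The main obstacle I expect is uniqueness: I would argue that any biclosed monoidal structure making $\nu_{|\Stf}$ monoidal must have a colimit-preserving tensor (as each variable is a left adjoint by biclosedness), and must agree with the prescribed product on the dense subcategory $\Stf$; two such functors agreeing on a dense subcategory and preserving colimits in each variable are canonically isomorphic, and this isomorphism is compatible with the constraints, giving uniqueness up to unique monoidal isomorphism.
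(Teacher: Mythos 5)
Your overall strategy (transport along a dense fully faithful functor, then biclosedness from cocontinuity plus local presentability, then uniqueness from density) is the right one and is indeed the approach of the sources the paper cites, but two steps as written do not go through. First, the indexing of your defining colimit is wrong: the globes $\Dn{n}$ do \emph{not} form a dense subcategory of $\ooCat$ (already in $\Cat$ the canonical cocone $\limind_{\Dn{m} \to X} \Dn{m} \to X$ fails to be an isomorphism, since one cannot recover composition from the free-standing cells alone). One must index over all of $\Stf$, or at least over a dense subcategory of its image such as Joyal's category $\Theta$ of globular sums, as in the formula of paragraph~\ref{paragr:def_tens}. With only globes, your formula does not compute the correct object, and the claim that it restricts to $\nu(K \otimes L)$ on $\Stf$ no longer follows from full faithfulness, since you are no longer taking the pointwise left Kan extension along $\nu_{|\Stf} \times \nu_{|\Stf}$.

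Second, and more seriously, the assertion that the extended tensor product ``preserves colimits in each variable by construction'' is unjustified, and it is precisely the hard point of the theorem. The functor $X \mapsto \limind_{\nu(K) \to X} G(K)$ factors through the $\nu_{|\Stf}$-nerve $X \mapsto \Hom_{\ooCat}(\nu(-), X)$, which is not cocontinuous, so a pointwise left Kan extension of a two-variable functor along a dense inclusion is in general \emph{not} cocontinuous in each variable. One must verify a genuine compatibility condition, namely that for each fixed strong Steiner complex $K$ the functor $L \mapsto \nu(K \otimes L)$ carries the canonical density cocones to colimit cocones (and symmetrically); this is the content of the hypotheses of \cite[Theorem A.15]{AraMaltsiJoint}, and checking them for the tensor product of augmented directed complexes is the part of the argument that was missing from \cite[Section 7]{Steiner}. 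Once cocontinuity in each variable is actually established, your remaining steps (coherence by density, biclosedness by the adjoint functor theorem, uniqueness) are fine --- note, though, that you identify uniqueness as ``the main obstacle'' when it is in fact the easy part.
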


\begin{proof}
  See \cite[Section 7]{Steiner}, whose proof was completed by
  \cite[Theorem A.15]{AraMaltsiJoint}.
\end{proof}

\begin{paragraph}\label{paragr:def_tens}
  We define the \ndef{Gray tensor product} to be the tensor product given by
  the previous theorem. If $X$ are $Y$ are two \oo-categories, their Gray
  tensor product will be denoted by $X \otimes Y$. Explicitly, one has
  \[
    X \otimes Y =
    \limind_{\substack{\nu(K) \to X, \, K \in \Stf\\
      \nu(L) \to Y, \, L \in \Stf}}
      \nu(K \otimes L).
  \]
  The unit of the Gray tensor product is the terminal \oo-category
  $\Dn{0}$.

  The right and left internal $\Hom$ of the Gray tensor product will be
  denoted by $\HomOpLax$ and $\HomLax$, respectively, so that if
  $X$, $Y$ and $Z$ are three \oo-categories, we have natural bijections
  \[
    \Hom_{\ooCat}(X, \HomOpLax(Y, Z))
    \simeq
    \Hom_{\ooCat}(X \otimes Y, Z)
    \simeq
    \Hom_{\ooCat}(Y, \HomLax(X, Z)).
  \]
\end{paragraph}

\begin{examples}
  Here are some examples of Gray tensor products of \oo-categories:
  \[
    \Dn{1} \otimes \Dn{1} =
    \raisebox{1.75pc}{
    \xymatrix{%@C=3pc@R=3pc{
      \bullet \ar[r] \ar[d] &
      \bullet \ar[d] \\
      \bullet \ar[r] & \bullet
      \ar@{}[u];[l]_(.30){}="s"
      \ar@{}[u];[l]_(.70){}="t"
      \ar@2"s";"t"
      \pbox{,}
    }}
   \qquad
   \qquad
    \Dn{1} \otimes \Deltan{2} =
    \raisebox{1.75pc}{
     \xymatrix{%@C=3pc@R=3pc{
       \bullet
       \ar[r]^{}
       \ar[d]_{}
       &
       \bullet
       \ar[r]^{}
       \ar[d]_{}
       \ar@{}[ld]|-{}
       &
       \bullet
       \ar[d]^{}
       \\
       \bullet
       \ar[r]_{}
       &
       \bullet
       \ar[r]_{}
       &
       \bullet \pbox{,}
       \ar@{}[lu];[ll]_(0.30){}="s1"^(0.70){}="t1"
       \ar@2"s1";"t1"_{}
       \ar@{}[u];[l]_(0.30){}="s2"^(0.70){}="t2"
       \ar@2"s2";"t2"_{}
     }}
   \]
   \[
    \Dn{1} \otimes \Dn{2} =
    \raisebox{2.25pc}{
    \xymatrix@C=3pc@R=3pc{
      \bullet
      \ar@/^2ex/[r]^(0.70){}_{}="0"
      \ar@/_2ex/[r]_(0.70){}_{}="1"
      \ar[d]_{}="f"
      \ar@2"0";"1"
      &
      \bullet
      \ar[d] \\
      \bullet
      \ar@{.>}@/^2ex/[r]^(0.30){}_{}="0"
      \ar@/_2ex/[r]_(0.30){}_{}="1"
      \ar@{:>}"0";"1"_{}
      &
      \bullet
      \ar@{}[u];[l]_(.40){}="x"
      \ar@{}[u];[l]_(.60){}="y"
      \ar@<-1.5ex>@/_1ex/@{:>}"x";"y"_(0.60){}_{}="0"
      \ar@<1.5ex>@/^1ex/@2"x";"y"^(0.40){}_{}="1"
      \ar@{}"1";"0"_(.05){}="z"
      \ar@{}"1";"0"_(.95){}="t"
      \ar@3{>}"z";"t"_{}
      \pbox{,}
    }}
   \]
   where $\Deltan{2} = \xymatrix{\bullet \ar[r] & \bullet \ar[r] & \bullet}$.
\end{examples}

\begin{remark}
  If $x$ is an $m$-cell of an \oo-category $X$ and $y$ is an $n$-cell of an
  \oo-category~$Y$, we saw in paragraph~\ref{paragr:tens_cell} that one can
  define an $(m+n)$-cell $x \otimes y$ of~$X \otimes Y$. For instance, the tensor product
  of the principal cells of the disks appearing in the examples above is, in
  both cases, the unique non-trivial cell of maximal dimension. The formula that
  we gave as a definition for the Gray tensor product easily implies that the
  \oo-category $X \otimes Y$ is generated by the set of cells of the
  form~$x \otimes y$, with $x$ a cell of $X$ and $y$ a cell of $Y$.
\end{remark}

\begin{remark}
  The Gray tensor product used in this paper is what we like to call the
  \emph{oplax} Gray tensor product. The \ndef{lax} version is the functor
  $(X, Y) \mapsto Y \otimes X$ and is actually the one introduced by Gray in
  the $2$-categorical case \cite{GrayFCT}. The natural isomorphism
  $(X \otimes Y)^\op \simeq Y^\op \otimes X^\op$ and the stability of the
  data of the folk model category structure by the duality $Z \mapsto Z^\op$
  (see paragraph~\ref{paragr:dualities}) show that the results we prove in
  this paper for the oplax version of the Gray tensor product can be adapted
  to the lax version.
\end{remark}

\section{Compatibility of the tensor product with cofibrations}
\label{sec:Gray_cof}

The purpose of this section is to prove that $\ooCat$ endowed with the Gray
tensor product $\otimes$ satisfies the part of the axioms of monoidal model
categories (see paragraph~\ref{paragr:def_monoidal_model}) dealing with
cofibrations. In other words, given two folk cofibrations
\[ i : X \to Y \quadand j : Z \to T, \]
we will prove that the \oo-functor
\[
  i \boxprod j : Y \otimes Z \amalg_{X \otimes Z} X \otimes T \to Y
  \otimes T,
\]
is also a folk cofibration. This immediately follow from the case of
generating cofibrations, for which we will use Steiner's theory.

\medbreak

We start by some supplements on pushouts of strong Steiner complexes.

\begin{paragraph}\label{paragr:conv_B_K}
  If $K$ is an augmented directed complex with basis, we will denote
  its basis by~$B_K$.

  Let $f : K \to L$ be a monomorphism of augmented directed complexes with
  basis. One says that $f$ is a \ndef{rigid monomorphism} if it sends
  elements of the basis $B_K$ of $K$ to elements of the basis
  $B_L$ of $L$.
\end{paragraph}

\begin{proposition}[Ara--Maltsiniotis]\label{prop:pushout_ADC}
  Consider a pushout square
  \[
    \xymatrix{
      K \ar[d]_f \ar[r]^u & M \ar[d]^g \\
      L \ar[r]_v & N }
  \]
  in the category of augmented directed complexes such that:
  \begin{itemize}
  \item $K$, $L$, $M$, $N$ are strong Steiner complexes,
  \item $f$ and $u$ are rigid monomorphisms.
  \end{itemize}
  Then
  \begin{itemize}
  \item we have $B_N = B_L \amalg_{B_K} B_M$ (as sets),
  \item the morphisms $g$ and $v$ are rigid monomorphisms,
  \item the functor $\nu : \ADC \to \ooCat$ sends this square to a
    pushout square in $\ooCat$.
  \end{itemize}
\end{proposition}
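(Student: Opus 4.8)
The plan is to reduce everything to a statement purely about bases and the combinatorics of the tensor and augmentation, using the universal property of pushouts. Since $f$ and $u$ are rigid monomorphisms of strong Steiner complexes, they identify $B_K$ with subsets of $B_L$ and $B_M$; the natural candidate for the basis of $N$ is the amalgamated sum $B_L \amalg_{B_K} B_M$ of graded sets. First I would construct, in each degree $n$, the free $\Z$-module on $B_L \amalg_{B_K} B_M$ together with the obvious submonoid generated by that set, and equip it with a differential and augmentation induced by those of $L$ and $M$ (which agree on the image of $K$, precisely because $f$ and $u$ are morphisms of complexes coinciding on $K$). The point is that, as $\Z$-modules, $\Z[B_L] \oplus_{\Z[B_K]} \Z[B_M] \cong \Z[B_L \amalg_{B_K} B_M]$ since taking free abelian groups is a left adjoint and hence preserves pushouts; this gives a candidate augmented directed complex $N'$ with an evident basis, together with maps $g' : M \to N'$ and $v' : L \to N'$.

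Next I would verify that $N'$ is a strong Steiner complex and that $g', v'$ are rigid monomorphisms. Rigidity and injectivity are immediate from the construction, since $g'$ and $v'$ send basis elements to basis elements. Unitality is inherited: every basis element of $N'$ comes from $L$ or from $M$, where the augmentation condition $e(x_0^-) = e(x_0^+) = 1$ already holds, and these conditions are computed inside the complex and are preserved by the rigid inclusions. For strong loop-freeness I would need to produce a partial order $\preceq$ on $\coprod_n (B_L \amalg_{B_K} B_M)$; the natural move is to glue the partial orders $\preceq_L$ and $\preceq_M$ along their common restriction to (the image of) $B_K$. The main obstacle is precisely here: the transitive closure of the union of two partial orders agreeing on a common part need not be antisymmetric in general, so one must argue that no cycles are created. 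I expect this to follow from the strong loop-freeness of $L$ and $M$ separately together with the fact that $f$ and $u$ are rigid, so that any alternating chain of relations passing back and forth between $L$ and $M$ must route through basis elements of $K$, where the two orders coincide; a careful bookkeeping argument shows such a chain cannot close up into a cycle.

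Having established that $N'$ is a strong Steiner complex with $B_{N'} = B_L \amalg_{B_K} B_M$ and that $g', v'$ are rigid, I would check that the square formed by $f, u, g', v'$ is a pushout in $\ADC$. This again reduces to a degreewise check: a pushout in $\ADC$ is computed on underlying graded abelian groups together with the induced submonoids, differentials and augmentations, and by the free-module computation above $N'_n$ is exactly the pushout $L_n \amalg_{K_n} M_n$ with the correct positive submonoid. By uniqueness of pushouts, $N \cong N'$, which simultaneously yields the first two bulleted conclusions. For the third conclusion, that $\nu$ sends the square to a pushout in $\ooCat$, I would invoke Theorem~\ref{thm:Steiner}: since all four complexes are strong Steiner, $\nu$ restricted to $\Stf$ is fully faithful, and $\nu(N)$ is freely generated by its atoms indexed by $B_N = B_L \amalg_{B_K} B_M$. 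The pushout of the freely generated \oo-categories $\nu(L)$ and $\nu(M)$ over $\nu(K)$ is then freely generated by the amalgamated set of atoms (using the description of freely generated \oo-categories in paragraph~\ref{paragr:def_freely_gen} and the fact that free functors preserve pushouts), and one identifies this with $\nu(N)$ by matching generators and relations. The delicate part of this last step is checking that the cellular structure, namely the source and target assignments of the atoms, is compatible across the gluing, which again rests on the rigidity of $f$ and $u$.
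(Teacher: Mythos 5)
Your degreewise computation of the pushout --- free abelian groups on the amalgamated graded set, with the induced submonoids, differential and augmentation --- is sound, and it does yield the first two conclusions (it is essentially the argument the paper uses for the analogous Proposition~\ref{prop:boxprod_basis}). Note, though, that the paper does not reprove the present statement at all: it cites \cite[Propositions 3.6 and 3.12 and Theorem 3.8]{AraMaltsiJoint}.

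The genuine gap is your treatment of strong loop-freeness. You have misread the statement: \emph{all four} corners $K$, $L$, $M$, $N$ are assumed to be strong Steiner complexes, so there is nothing to prove about loop-freeness of the pushout --- and indeed nothing can be proved, because the pushout of strong Steiner complexes along rigid monomorphisms need not be strongly loop-free. Take $K$ with basis two $0$\nobreakdash-chains $a, b$ and nothing in higher degree, $L$ with basis $\{a, b, f\}$ where $df = b - a$, and $M$ with basis $\{a, b, g\}$ where $dg = a - b$; all three are strong Steiner complexes and the two inclusions of $K$ are rigid, but in the pushout the defining relations force $a \preceq f \preceq b \preceq g \preceq a$, so no partial order as in the definition exists. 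This also refutes the specific mechanism you propose: any admissible order on $L$ satisfies $a \preceq_L b$ while any admissible order on $M$ satisfies $b \preceq_M a$, so the restrictions to $B_K$ need not agree, the gluing is impossible, and the ``careful bookkeeping argument'' does not exist. This is exactly why, in the application in Section~\ref{sec:Gray_cof}, the paper verifies \emph{separately} that the relevant pushout is a strong Steiner complex, by exhibiting it as a subcomplex of $L \otimes N$ generated by a subset of the basis. Finally, the third conclusion --- that $\nu$ carries the square to a pushout of \oo-categories --- is the substantial part of the proposition; your sketch names the right ingredients (freeness of $\nu(N)$ on its atoms, rigidity ensuring that atoms and their sources and targets are preserved), but ``matching generators and relations'' between $\nu(L) \amalg_{\nu(K)} \nu(M)$ and $\nu(N)$ is precisely where the work lies, and it is outsourced by the paper to \cite[Theorem 3.8]{AraMaltsiJoint} rather than being a formality.
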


\begin{proof}
  The first assertion is a particular case of \cite[Proposition
  3.6]{AraMaltsiJoint}. The second one follows from \cite[Proposition
  3.12]{AraMaltsiJoint}. As for the third one, it is a special case of
  \cite[Theorem 3.8]{AraMaltsiJoint}.
\end{proof}

\begin{remark}
  The proposition remains true if one only assumes that the complexes
  are Steiner complexes (named ``augmented directed complexes with a loop-free
  unital basis'' in \cite{AlAglSteiner}) as opposed to strong Steiner
  complexes. We stated the more restrictive result only because we did not
  include the definition of a Steiner complex in this paper.
\end{remark}

\begin{paragraph}
  If $K$ and $L$ are two augmented directed complexes with basis, one
  immediately checks that $K \otimes L$ is an augmented directed
  complex with basis
  \[
    B_{K \otimes L} = B_K \otimes B_L = \{ x \otimes y \mid x \in B_K,
    \, y \in B_L\}.
  \]
\end{paragraph}

\begin{proposition}\label{prop:boxprod_basis}
  Let $i : K \to L$ and $j : M \to N$ be two rigid monomorphisms
  between augmented directed complexes with basis. Then the morphism
  \[ i \boxprod j : L \otimes M \amalg_{K \otimes M} K \otimes N \to 
     L \otimes N \]
  is a rigid monomorphism between augmented directed
  complexes with basis which identifies
  $L \otimes M \amalg_{K \otimes M} K \otimes N$ with the subcomplex
  generated by $B_L \otimes B_M \cup B_K \otimes B_N$.
\end{proposition}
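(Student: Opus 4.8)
The plan is to work entirely at the level of augmented directed complexes with basis, reducing the statement to a combinatorial/linear-algebraic computation about the tensor product of chain complexes. The key observation is that for augmented directed complexes with basis, the tensor product has an explicit basis, namely $B_{K \otimes L} = B_K \otimes B_L$, as recorded in the paragraph preceding the statement. So all the objects appearing here—$L \otimes M$, $K \otimes N$, $K \otimes M$, and $L \otimes N$—come with canonical bases built out of the bases of $K$, $L$, $M$, $N$.

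First I would analyze the pushout $L \otimes M \amalg_{K \otimes M} K \otimes N$ itself. Since $i : K \to L$ and $j : M \to N$ are rigid monomorphisms, the induced maps $i \otimes M : K \otimes M \to L \otimes M$ and $K \otimes j : K \otimes M \to K \otimes N$ are again rigid monomorphisms of augmented directed complexes with basis: on bases they act as $x \otimes y \mapsto i(x) \otimes y$ and $x \otimes y \mapsto x \otimes j(y)$, which send basis elements to basis elements injectively. Then I would invoke Proposition~\ref{prop:pushout_ADC} applied to the pushout square defining $i \boxprod j$ (with corners $K \otimes M$, $L \otimes M$, $K \otimes N$, and the pushout $P := L \otimes M \amalg_{K \otimes M} K \otimes N$). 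This immediately gives that $P$ is a strong Steiner complex with basis $B_P = B_{L \otimes M} \amalg_{B_{K \otimes M}} B_{K \otimes N}$, and that the two legs into $P$ are rigid monomorphisms. Identifying the basis via $B_{K \otimes L} = B_K \otimes B_L$, this coproduct-over-intersection of bases is precisely $(B_L \otimes B_M) \cup (B_K \otimes B_N)$ inside $B_L \otimes B_N$, where the overlap $B_K \otimes B_M$ is shared.

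Next I would treat the comparison map $i \boxprod j : P \to L \otimes N$. Both $i \otimes N : L \otimes M \to L \otimes N$ and $L \otimes j$, $i \otimes N$ on the two pieces are rigid, and by the universal property of the pushout they glue to $i \boxprod j$. On basis elements this map acts as $x \otimes y \mapsto i(x) \otimes y$ on the $L \otimes M$-part and $x \otimes y \mapsto x \otimes j(y)$ on the $K \otimes N$-part, landing in $B_L \otimes B_N$. To see that $i \boxprod j$ is a \emph{monomorphism} and a \emph{rigid} one identifying $P$ with the subcomplex generated by $B_L \otimes B_M \cup B_K \otimes B_N$, I would check that the images of these basis elements in $B_L \otimes B_N$ are distinct and form a subset, i.e.\ that the two pieces $i(B_K) \otimes B_N$ and $B_L \otimes j(B_M)$ overlap exactly in $i(B_K) \otimes j(B_M)$ (the image of $B_K \otimes B_M$), with no spurious coincidences. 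This is where rigidity and injectivity of $i$ and $j$ on bases do the work: since $i$ and $j$ are injective and send basis to basis, $i(x) \otimes y = x' \otimes j(y')$ forces $i(x) \in i(B_K)$ and $y \in j(B_M)$, so the pullback of the two subsets of $B_L \otimes B_N$ is exactly $i(B_K) \otimes j(B_M)$.

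The main obstacle is the final verification that the canonical map from the pushout $P$ to $L \otimes N$ is injective on the amalgamated basis, i.e.\ that $B_P \to B_L \otimes B_N$ is a bijection onto $(i(B_K) \otimes B_N) \cup (B_L \otimes j(B_M))$. The content is a clean set-theoretic fact about products of injections: for injective maps $i : B_K \hookto B_L$ and $j : B_M \hookto B_N$, the two subsets $i(B_K) \times B_N$ and $B_L \times j(B_M)$ of $B_L \times B_N$ have intersection exactly $i(B_K) \times j(B_M)$, and their union is their pushout over this intersection. Once this set-level identification is in place, compatibility with the differential and the positivity submonoids is automatic because tensor products of chain complexes and the generated positive submonoids are defined componentwise on the bases, and rigidity has already been established for each leg; so $i \boxprod j$ restricts to an isomorphism of $P$ onto the subcomplex of $L \otimes N$ generated by $B_L \otimes B_M \cup B_K \otimes B_N$, as claimed.
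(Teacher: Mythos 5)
There is a genuine gap at the first and most important step: you justify the computation of the basis of the pushout $P = L \otimes M \amalg_{K \otimes M} K \otimes N$ by invoking Proposition~\ref{prop:pushout_ADC}, but that proposition does not apply here. First, its hypotheses require all four corners of the square — including the pushout itself — to be \emph{strong Steiner complexes}, whereas the present statement concerns arbitrary augmented directed complexes with basis. Second, even in the Steiner setting, Proposition~\ref{prop:pushout_ADC} \emph{assumes} that the pushout is a strong Steiner complex (in particular, that it has a basis) and then identifies that basis; it does not produce the basis from nothing. In the paper's development the implication actually runs the other way: the present proposition is proved first by a direct computation, and it is then used (in the proposition that follows it) to verify that $L \otimes M \amalg_{K \otimes M} K \otimes N$ is a strong Steiner complex, so that Proposition~\ref{prop:pushout_ADC} becomes applicable. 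Using Proposition~\ref{prop:pushout_ADC} to establish the basis of $P$ is therefore both out of scope and circular.

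The repair is exactly the argument the paper gives, and it is compatible with the rest of your plan: colimits in $\ADC$ are computed degreewise, and the free abelian group functor (and likewise the free commutative monoid functor for the positivity submonoids) commutes with colimits; hence in each degree $n$ the group $P_n$ is free on the pushout of sets $(B_L \otimes B_M)_n \amalg_{(B_K \otimes B_M)_n} (B_K \otimes B_N)_n$, which — by the set-theoretic observation you correctly make about products of injections — is the union $(B_L \otimes B_M)_n \cup (B_K \otimes B_N)_n$ inside $(B_L \otimes B_N)_n$. With that in place, your remaining steps (identifying $(i \boxprod j)_n$ with the image under the free abelian group functor of the inclusion of bases, hence a monomorphism, with rigidity immediate) coincide with the paper's proof.
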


\begin{proof}
  Colimits in the category of augmented directed complexes are
  computed degreewise (see \cite[paragraph 3.1]{AraMaltsiJoint}). Let
  $n \ge 0$. If $B$ is the basis of an augmented directed complex, we
  will denote by $B_n$ the set of $n$-chains in $B$.  As the free
  abelian group functor commutes with colimits, the abelian group
  $(L \otimes M \amalg_{K \otimes M} K \otimes N)_n$ is free with
  basis
  $(B_L \otimes B_M)_n \amalg_{(B_K \otimes B_M)_n} (B_K \otimes
  B_N)_n = (B_L \otimes B_M)_n \cup (B_K \otimes B_N)_n$.  Similarly,
  the submonoid
  $(L \otimes M \amalg_{K \otimes M} K \otimes N)^\ast_n$ is generated
  by this basis.  This proves that
  $L \otimes M \amalg_{K \otimes M} K \otimes N$ is free with basis
  $B_L \otimes B_M \cup B_K \otimes B_N$.  Moreover, this shows that
  the map
  $(i \boxprod j)_n : (L \otimes M \amalg_{K \otimes M} K \otimes N)_n
  \to (L \otimes N)_n$ can be identified with the image of the map
  $(B_L \otimes B_M)_n \cup (B_K \otimes B_N)_n \hookto (B_L \otimes B_N)_n$
  by the free abelian group functor. As this functor preserves
  monomorphisms, this implies that $i \boxprod j$ is a monomorphism. The
  fact that it is rigid being obvious, this ends the proof.
\end{proof}

\begin{proposition}
  Let $i : K \to L$ and $j : M \to N$ be two rigid monomorphisms
  between strong Steiner complexes.  Then the pushout square
  associated to
  \[
    \xymatrix@C=2.5pc{ L \otimes M & K \otimes M \ar[l]_-{i \otimes M}
      \ar[r]^-{K \otimes j} & K \otimes N }
  \]
  satisfies the hypotheses of Proposition~\ref{prop:pushout_ADC}.
\end{proposition}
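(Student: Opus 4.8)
The plan is to verify the four bullet-point hypotheses of Proposition~\ref{prop:pushout_ADC} for the pushout square associated to the span $L \otimes M \xleftarrow{i \otimes M} K \otimes M \xrightarrow{K \otimes j} K \otimes N$. There are two families of conditions: the four complexes at the corners should be strong Steiner complexes, and the two maps out of the pushout corner $K \otimes M$ should be rigid monomorphisms.

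First I would dispose of the rigidity and monomorphism conditions on $i \otimes M$ and $K \otimes j$. Since $i$ and $j$ are rigid monomorphisms, they send basis elements to basis elements; given the description $B_{K \otimes M} = B_K \otimes B_M$ recalled just before Proposition~\ref{prop:boxprod_basis}, the map $i \otimes M$ sends a basis element $x \otimes y$ (with $x \in B_K$, $y \in B_M$) to $i(x) \otimes y$, which is again a basis element of $L \otimes M$ since $i(x) \in B_L$. Hence $i \otimes M$ is rigid, and it is a monomorphism because the free abelian group functor preserves monomorphisms (colimits and the tensor are computed degreewise, as noted in the proof of Proposition~\ref{prop:boxprod_basis}). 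The same argument applies verbatim to $K \otimes j$.

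The remaining task is to check that the four corners $L \otimes M$, $K \otimes M$, $K \otimes N$, and the pushout $N' := L \otimes M \amalg_{K \otimes M} K \otimes N$ are all strong Steiner complexes. For the three tensor products this is immediate from the fact, recorded in paragraph~\ref{paragr:def_tens_ADC} (Steiner's Example~3.10), that the tensor product of augmented directed complexes restricts to the full subcategory of strong Steiner complexes: since $K$, $L$, $M$, $N$ are strong Steiner by hypothesis, so are $K \otimes M$, $L \otimes M$ and $K \otimes N$. \textbf{The main obstacle} is the fourth corner, the pushout $N'$ itself, because Proposition~\ref{prop:pushout_ADC} lists ``$N$ is a strong Steiner complex'' among its \emph{hypotheses}, not its conclusions, so we are not entitled to deduce it from the proposition. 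Here I would invoke Proposition~\ref{prop:boxprod_basis}, which already identifies $N'$ concretely as the subcomplex of $L \otimes N$ generated by $B_L \otimes B_M \cup B_K \otimes B_N$, together with the fact that it has a basis. The point is then that $L \otimes N$ is a strong Steiner complex and that $N'$ is a basis subcomplex of it; being unital and strongly loop-free are inherited by passing to a subcomplex whose basis is a subset of the ambient basis (the augmentations $e(x_0^-)$, $e(x_0^+)$ of a basis element are computed intrinsically, and the partial order $\preceq$ witnessing strong loop-freeness of $L \otimes N$ restricts to the sub-basis). This shows $N'$ is a strong Steiner complex and completes the verification of all hypotheses.
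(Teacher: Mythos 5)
Your proposal is correct and follows essentially the same route as the paper: stability of strong Steiner complexes and rigid monomorphisms under tensor product handles the three tensor-product corners and the two legs, and Proposition~\ref{prop:boxprod_basis} identifies the pushout with a basis-generated subcomplex of the strong Steiner complex $L \otimes N$, whence it is itself strong Steiner. The only difference is cosmetic: you re-derive the rigidity and injectivity of $i \otimes M$ and $K \otimes j$ by hand where the paper cites a stability result, and you spell out the inheritance of unitality and strong loop-freeness that the paper leaves implicit.
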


\begin{proof}
  Strong Steiner complexes and rigid monomorphisms are both stable
  under tensor product by \cite[Example 3.10]{Steiner} and
  \cite[Proposition A.6]{AraMaltsiJoint}. It thus suffices to prove
  that $L \otimes M \amalg_{K \otimes M} K \otimes N$ is a strong
  Steiner complex. This follows immediately from the fact that, by the
  previous proposition, $L \otimes M \amalg_{K \otimes M} K \otimes N$
  is a subcomplex of the strong Steiner complex $L \otimes N$
  generated by a subset of its basis.
\end{proof}

\begin{proposition}\label{prop:boxprod_rigid_mono}
  Let $i : K \to L$ and $j : M \to N$ be two rigid monomorphisms
  between strong Steiner complexes.  Then the \oo-functor
  \[
    \nu(i) \boxprod \nu(j) : \nu(L) \otimes \nu(M) \amalg_{\nu(K)
      \otimes \nu(M)} \nu(K) \otimes \nu(N) \to \nu(L) \otimes \nu(N)
  \]
  is a folk cofibration.
\end{proposition}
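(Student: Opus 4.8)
The plan is to reduce, by means of the two preceding propositions and the monoidality of $\nu_{|\Stf}$, to the single statement that $\nu$ sends a rigid monomorphism between strong Steiner complexes to a folk cofibration, and then to prove this by exhibiting such a $\nu$-image as a relative cell complex built from the generating cofibrations $i_n$.

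First I would apply the preceding proposition: the pushout square associated to $L \otimes M \xot{i \otimes M} K \otimes M \xto{K \otimes j} K \otimes N$ satisfies the hypotheses of Proposition~\ref{prop:pushout_ADC}, so $\nu$ sends it to a pushout square in $\ooCat$. Since $\nu_{|\Stf}$ is monoidal (Theorem~\ref{thm:def_tens}), whence $\nu(A \otimes B) \simeq \nu(A) \otimes \nu(B)$, this identifies $\nu(L \otimes M \amalg_{K \otimes M} K \otimes N)$ with $\nu(L) \otimes \nu(M) \amalg_{\nu(K) \otimes \nu(M)} \nu(K) \otimes \nu(N)$ and, under this identification, the \oo-functor $\nu(i \boxprod j)$ with $\nu(i) \boxprod \nu(j)$. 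By Proposition~\ref{prop:boxprod_basis} the morphism $i \boxprod j$ is a rigid monomorphism between strong Steiner complexes (its target being $L \otimes N$, its source being strong Steiner by the preceding proposition). It thus suffices to prove that, for any rigid monomorphism $f : K \to L$ between strong Steiner complexes, the \oo-functor $\nu(f)$ is a folk cofibration.

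To this end I would use that, by Theorem~\ref{thm:Steiner}, both $\nu(K)$ and $\nu(L)$ are freely generated by their atoms, and that $f$ identifies the basis $B_K$ with a subset of $B_L$. I would filter $\nu(L)$ by the sub-\oo-categories $L_n$ generated by the image of $\nu(K)$ together with the atoms of $B_L$ of dimension at most $n$, so that $L_{-1} = \nu(K)$ and $\limind_n L_n = \nu(L)$, and check that each inclusion $L_{n-1} \hookto L_n$ is a pushout of the coproduct $\coprod i_n$ indexed by the atoms of $B_L \setminus B_K$ of dimension $n$, the attaching map on the factor of an atom $x$ being $\funcell{sx, tx}$. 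As folk cofibrations are stable under coproduct, pushout and transfinite composition, this exhibits $\nu(f)$ as a folk cofibration.

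The main obstacle is this last verification, which is a relative version of the standard proof that the $\nu$-image of a strong Steiner complex is cofibrant. Two points require care: that the attaching maps are well defined, i.e.\ that the source and target $(n-1)$-cells $sx$ and $tx$ of a new $n$-atom $x$ already lie in $L_{n-1}$ — this follows from the description of $\nu$ recalled in Section~\ref{sec:Gray}, since $sx$ and $tx$ are built from atoms of dimension $n-1$, all of which belong to $L_{n-1}$; and that adjoining these cells freely yields exactly $L_n$ with no unexpected relations, which follows from the freeness of $\nu(L)$ via the characterization of freely generated \oo-categories of paragraph~\ref{paragr:def_freely_gen}. The only genuinely new ingredient over the absolute case is that the sub-basis $B_K$ is preserved throughout the filtration.
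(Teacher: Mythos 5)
Your proposal is correct and takes essentially the same route as the paper: after the identical reduction of $\nu(i)\boxprod\nu(j)$ to $\nu(i\boxprod j)$ via Proposition~\ref{prop:pushout_ADC} and monoidality of $\nu_{|\Stf}$, the paper likewise concludes by observing that both source and target are freely generated by their atoms (Theorem~\ref{thm:Steiner}) and that $\nu(L\otimes N)$ is obtained from the source by freely attaching the cells indexed by $(B_L\otimes B_N)\setminus(B_L\otimes B_M\cup B_K\otimes B_N)$ along the maps $i_n$. The only difference is presentational: you isolate the general statement that $\nu$ sends any rigid monomorphism of strong Steiner complexes to a folk cofibration and spell out the dimension filtration, whereas the paper performs the same attachment argument in situ for this particular map.
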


\begin{proof}
  By applying Proposition~\ref{prop:pushout_ADC} to the pushout square
  of the previous proposition and using the fact that the functor
  $\nu_{|\Stf} : \Stf \to \ooCat$ is monoidal for the tensor product
  (Theorem~\ref{thm:def_tens}), one gets that the \oo-functor
  \[
    \nu(i) \boxprod \nu(j) : \nu(L) \otimes \nu(M) \amalg_{\nu(K)
      \otimes \nu(M)} \nu(K) \otimes \nu(N) \to \nu(L) \otimes \nu(N)
  \]
  can be identified with
  \[ \nu(i \boxprod j) : \nu(L \otimes M \amalg_{K \otimes M} K
    \otimes N) \to \nu(L \otimes N). \]
  As the functor $\nu$ respects
  monomorphisms (this follows from its concrete description but also
  from the fact that it admits a left adjoint, see \cite[Theorem
  2.11]{Steiner}), $\nu(L \otimes M \amalg_{K \otimes M} K \otimes N)$
  can be identified with a sub-\oo-category of $\nu(L \otimes
  M)$. Moreover, by Steiner's Theorem~\ref{thm:Steiner}, these two
  \oo-categories are freely generated by their atoms, which, by
  Proposition~\ref{prop:boxprod_basis}, are in bijection with
  $B_L \otimes B_M \cup B_K \otimes B_N$ and $B_L \otimes B_N$,
  respectively.
  The \oo-category $\nu(L \otimes N)$ can thus be obtained from
  $\nu(L \otimes M \amalg_{K \otimes M} K \otimes N)$ by freely adding
  cells (in the sense of taking pushouts along some
  $i_n : \bDn{n} \hookto \Dn{n}$) indexed by
  $(B_L \otimes B_N) \backslash (B_L \otimes B_M \cup B_K \otimes
  B_N)$.  The inclusion morphism is therefore a cofibration.
\end{proof}

To apply the previous proposition to the generating cofibrations, we
need the following lemma:

\begin{lemma}\label{lemma:gen_cof_rig}
  Let $n \ge 0$. The \oo-functor $i_n : \bDn{n} \hookto \Dn{n}$
  can be written $\nu(\lambda(i_n))$, where $\lambda(i_n) : \lambda(\bDn{n})
  \hookto \lambda(\Dn{n})$ is a rigid monomorphism between strong Steiner
  complexes.
\end{lemma}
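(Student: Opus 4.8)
The plan is to exhibit explicit strong Steiner complexes realizing $\Dn{n}$ and $\bDn{n}$ together with an explicit rigid monomorphism between them, and then to invoke Steiner's Theorem~\ref{thm:Steiner} to identify the images under $\nu$ with $i_n$. Concretely, $\lambda(\Dn{n})$ is the augmented directed complex whose basis consists of elements $a_k^-, a_k^+$ for $0 \le k \le n-1$ together with a single element $a_n$ in degree $n$, with differential $d\, a_k^\epsilon = a_{k-1}^+ - a_{k-1}^-$ for $1 \le k \le n-1$ and $d\, a_n = a_{n-1}^+ - a_{n-1}^-$, with augmentation $e(a_0^-) = e(a_0^+) = 1$, and with positive submonoid in each degree generated by the basis elements of that degree. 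The complex $\lambda(\bDn{n})$ is the subcomplex obtained by discarding the top generator $a_n$ (so that $\lambda(\bDn{0})$ is the zero complex and $\lambda(\bDn{1})$ is $\Z^2$ concentrated in degree $0$), and $\lambda(i_n)$ is the evident inclusion. Since this inclusion carries each basis element to a basis element and is visibly injective, it is a rigid monomorphism; this part is immediate.

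Next I would check that $\lambda(\Dn{n})$, and a fortiori its subcomplex $\lambda(\bDn{n})$, is a strong Steiner complex. For unitality, a short induction on the atom table shows that $d^- x = a_{k-1}^-$ and $d^+ x = a_{k-1}^+$ for every basis element $x$ of degree $k > 0$, so that iterating $d^-$ (resp.\ $d^+$) lands in degree $0$ on $a_0^-$ (resp.\ $a_0^+$); hence the bottom components of $\atom{x}$ have augmentation $1$, as required. For strong loop-freeness I would exhibit the total order
\[
  a_0^- < a_1^- < \cdots < a_{n-1}^- < a_n < a_{n-1}^+ < \cdots < a_1^+ < a_0^+ .
\]
As the support of $d^- x$ is $\{a_{k-1}^-\}$ and that of $d^+ x$ is $\{a_{k-1}^+\}$, the required inequalities $y \preceq x \preceq z$ reduce to $a_{k-1}^- < x < a_{k-1}^+$, which hold because every ``$-$'' generator and $a_n$ precede every ``$+$'' generator, while within each chain the indices are ordered appropriately. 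A total order is in particular a partial order, and it restricts to one on $\lambda(\bDn{n})$, so loop-freeness holds for both complexes.

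Finally I would identify $\nu(\lambda(\Dn{n}))$ with $\Dn{n}$ and $\nu(\lambda(\bDn{n}))$ with $\bDn{n}$ compatibly with the inclusions. By Theorem~\ref{thm:Steiner}, $\nu(\lambda(\Dn{n}))$ is freely generated by its atoms $\atom{b}$, one for each basis element $b$; unwinding the atom tables via the $d^\pm$-recursion, $\atom{a_n}$ is an $n$-cell whose iterated $k$-source and $k$-target are $\atom{a_k^-}$ and $\atom{a_k^+}$, so the whole \oo-category is freely generated, in the sense of paragraph~\ref{paragr:def_freely_gen}, by a single $n$-cell, i.e.\ it is the free-standing $n$-cell $\Dn{n}$. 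The same computation, stopping one dimension lower, identifies $\nu(\lambda(\bDn{n}))$ with the free-standing parallel pair of $(n-1)$-cells $\bDn{n}$, and under these identifications $\nu(\lambda(i_n))$ becomes the canonical inclusion $i_n$. I expect the main obstacle to be exactly this last bookkeeping: reconciling the globular source/target structure of $\Dn{n}$ with the atom tables produced by Steiner's construction, together with treating the degenerate cases $n = 0$ and $n = 1$ directly; everything else is routine.
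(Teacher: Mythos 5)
Your proposal is correct and follows essentially the same route as the paper: take $\lambda(\Dn{n})$ to be the free augmented directed complex on the non-trivial cells of $\Dn{n}$, let $\lambda(\bDn{n})$ be the subcomplex spanned by the non-principal generators, and observe that the inclusion is a rigid monomorphism of strong Steiner complexes whose image under $\nu$ is $i_n$. The only difference is that the paper delegates the verifications (unitality, strong loop-freeness via exactly your total order, and the identification $\nu(\lambda(\Dn{n})) \simeq \Dn{n}$) to \cite[Example 4.7]{Steiner} and \cite[paragraph 4.10]{AraMaltsiJoint}, whereas you carry them out explicitly.
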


\begin{proof}
  Let $\lambda(\Dn{n})$ be the free augmented directed complex with
  basis the set of non-trivial cells of $\Dn{n}$ (see \cite[paragraph
  4.10]{AraMaltsiJoint} for an explicit description), let
  $\lambda(\bDn{n})$ be the augmented directed subcomplex generated by
  the subset of non-trivial cells of $\bDn{n}$ and let $\lambda(i_n)$
  be the resulting inclusion morphism. One easily checks that $i_n$
  can be identified with $\nu(\lambda(i_n))$, that $\lambda(\Dn{n})$
  is indeed a strong Steiner complex (see \cite[Example~4.7]{Steiner}
  or~\cite[paragraph~4.10]{AraMaltsiJoint}) and thus that any subset
  of its basis defines a strong Steiner subcomplex whose inclusion
  morphism is a rigid monomorphism.
\end{proof}

\begin{theorem}\label{thm:tens_comp_cof}
  If
  \[ i : X \to Y \quadand j : Z \to T \] are two folk cofibrations,
  then the \oo-functor
  \[
    i \boxprod j : Y \otimes Z \amalg_{X \otimes Z} X \otimes T \to Y
    \otimes T
  \]
  is also a folk cofibration.
\end{theorem}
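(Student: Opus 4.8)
The plan is to deduce the general statement from the case of the generating cofibrations, already treated in Proposition~\ref{prop:boxprod_rigid_mono}, by means of the standard closure properties of the pushout-product. Recall from paragraph~\ref{paragr:def_cof} that the folk cofibrations are precisely the maps in the saturation of $I = \{i_n : \bDn{n} \hookto \Dn{n} \mid n \ge 0\}$, that is, the retracts of transfinite compositions of pushouts of elements of $I$; I will write $\mathrm{cof}(S)$ for the saturation of a set of maps $S$, so that the folk cofibrations are exactly $\mathrm{cof}(I)$. The key formal input I would invoke is the inclusion
\[ \mathrm{cof}(I) \boxprod \mathrm{cof}(J) \subseteq \mathrm{cof}(I \boxprod J), \]
valid for any two sets of maps $I$ and $J$, where $I \boxprod J = \{a \boxprod b \mid a \in I,\ b \in J\}$.

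This closure property holds because the Gray tensor product is biclosed (Theorem~\ref{thm:def_tens}) and hence preserves colimits separately in each variable, so that, in particular, the non-symmetry of $\otimes$ plays no role. Indeed, for a fixed map $b$ the endofunctor ${-}\boxprod b$ of the arrow category is assembled from $\otimes$ and pushouts, so it sends pushouts, transfinite compositions and retracts of maps to maps of the same kind; fixing $a \in I$ and letting the second variable range over $\mathrm{cof}(J)$ gives $I \boxprod \mathrm{cof}(J) \subseteq \mathrm{cof}(I \boxprod J)$, and then fixing $g \in \mathrm{cof}(J)$ and letting the first variable range over $\mathrm{cof}(I)$ upgrades this to the displayed inclusion. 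Applying it with $I = J$ the set of generating folk cofibrations, and using that $i$ and $j$ both lie in $\mathrm{cof}(I)$, I obtain that $i \boxprod j$ lies in $\mathrm{cof}(I \boxprod I)$. It therefore suffices to prove that each generator $i_m \boxprod i_n$, for $m, n \ge 0$, is a folk cofibration, since then $I \boxprod I$ is contained in $\mathrm{cof}(I)$ and hence $\mathrm{cof}(I \boxprod I) \subseteq \mathrm{cof}(I)$.

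This last point is where the work of the section pays off. By Lemma~\ref{lemma:gen_cof_rig}, I may write $i_m = \nu(\lambda(i_m))$ and $i_n = \nu(\lambda(i_n))$ with $\lambda(i_m)$ and $\lambda(i_n)$ rigid monomorphisms between strong Steiner complexes. As the pushout-product is functorial on arrow categories, $i_m \boxprod i_n$ is identified with $\nu(\lambda(i_m)) \boxprod \nu(\lambda(i_n))$, which is a folk cofibration by Proposition~\ref{prop:boxprod_rigid_mono}. The genuinely substantive step of the whole argument is that proposition, which is already in hand; what remains here is purely formal. The only delicate point is the closure property above, whose verification rests entirely on $\otimes$ commuting with colimits in each variable, a feature guaranteed by biclosedness rather than by mere monoidality.
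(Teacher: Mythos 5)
Your proposal is correct and follows essentially the same route as the paper: reduce to the generating cofibrations via the standard saturation argument (which the paper delegates to Lemma~\ref{lemma:monoidal_model_gen}, and which you instead unpack by hand using that the biclosed tensor product commutes with colimits in each variable), and then conclude from Lemma~\ref{lemma:gen_cof_rig} and Proposition~\ref{prop:boxprod_rigid_mono}.
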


\begin{proof}
  By the classical Lemma~\ref{lemma:monoidal_model_gen}, it suffices
  to prove the result when $i$ and~$j$ are generating
  cofibrations. But this case follows from
  Proposition~\ref{prop:boxprod_rigid_mono} by the previous lemma.
\end{proof}

\begin{remark}
  The previous result was first established by the second author (see
  \cite[Proposition 5.1.2.7]{LucasThesis}) using cubical sets. The
  advantage of the method of the present paper is that it will adapt
  directly to the join of \oo-categories (see Section~\ref{sec:join}).
\end{remark}

\begin{corollary}
  The tensor product of two cofibrant \oo-categories is a cofibrant
  \oo-category.
\end{corollary}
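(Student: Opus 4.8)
The plan is to deduce this directly from Theorem~\ref{thm:tens_comp_cof} by forming the pushout-product of the two cofibrations that witness cofibrancy. Recall that an \oo-category $X$ is cofibrant precisely when the unique \oo-functor $\emptyset \to X$ from the initial object --- the empty \oo-category $\emptyset = \bDn{0}$ --- is a folk cofibration. So, given cofibrant \oo-categories $X$ and $Y$, I would apply the theorem to the two folk cofibrations $i : \emptyset \to X$ and $j : \emptyset \to Y$.

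The key step is then to identify the source of the pushout-product $i \boxprod j$, which by definition is
\[ X \otimes \emptyset \amalg_{\emptyset \otimes \emptyset} \emptyset \otimes Y. \]
Here I would invoke the biclosedness of the Gray tensor product (paragraph~\ref{paragr:def_tens}): since the endofunctors $X \otimes {-}$ and ${-} \otimes Y$ are left adjoints, they preserve all colimits, and in particular the initial object. Hence $X \otimes \emptyset$, $\emptyset \otimes Y$ and $\emptyset \otimes \emptyset$ are all canonically isomorphic to $\emptyset$, and the pushout $\emptyset \amalg_{\emptyset} \emptyset$ is again $\emptyset$.

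Consequently the pushout-product $i \boxprod j$ is identified with the canonical \oo-functor $\emptyset \to X \otimes Y$, which Theorem~\ref{thm:tens_comp_cof} asserts to be a folk cofibration. This is exactly the assertion that $X \otimes Y$ is cofibrant, so the proof concludes.

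I do not expect any genuine obstacle here: the argument is the standard reduction of ``the monoidal product of cofibrant objects is cofibrant'' to the pushout-product of cofibrations established in Theorem~\ref{thm:tens_comp_cof}, and the only additional ingredient is that the Gray tensor product preserves the initial object separately in each variable, which is immediate from biclosedness.
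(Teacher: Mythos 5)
Your proof is correct and is essentially the paper's own argument: the paper likewise deduces the corollary from Theorem~\ref{thm:tens_comp_cof} applied to the folk cofibrations $\emptyset \to X$ and $\emptyset \to Y$. You merely make explicit the (correct) identification of the source of the pushout-product with $\emptyset$ via biclosedness, which the paper leaves implicit.
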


\begin{proof}
  Let $X$ and $Y$ be two \oo-categories. The corollary follows from the
  theorem applied to the \oo-functors $\emptyset \to X$ and
  $\emptyset \to Y$.
\end{proof}

\begin{remark}
  This corollary was first proved directly by Hadzihasanovic (see
  \cite[Theorem 1.35]{HadziThesis}).
\end{remark}

\section{A cylinder object for the folk model category structure}
\label{sec:cyl_obj}

In paragraph~\ref{paragr:def_J}, we introduced an \oo-category
$\J$. The goal of this section is to prove that if $X$ is a cofibrant
\oo-category, then $\J \otimes X$ is a cylinder object for $X$ in the
folk model category structure.

\medbreak

We will start by showing that the tensor product of a reversible cell by any
other $n$-cell is reversible, dealing first with the case $n = 1$. The
proof will be a bit involved and we begin by the following technical lemmas:

\begin{lemma}\label{lem:exist_adjoint_reverse}
  Let $u: x \to y$ be a reversible $n$-cell of an \oo-category
  $X$. Fix $\bar u$ a reverse of $u$ and
  $\e : u \comp_{n-1} \bar u \to 1_y$ a reversible cell. Then there
  exists a reversible $n$-cell
  $\eta : 1_x \to \bar u \comp_{n-1} u$ for which there exists a
  reversible $(n+1)$-cell
  $(\e \comp_{n-1} u) \comp_n (u \comp_{n-1} \eta) \to 1_{u}$.
\end{lemma}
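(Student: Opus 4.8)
The plan is to promote the pair $(\winv u, \e)$ to half of an adjoint equivalence by correcting a first, naive candidate for $\eta$, in exactly the way one upgrades an equivalence to an adjoint equivalence in a bicategory, but carried out internally to $X$ and only up to reversible cells. Throughout I would use that a reverse of a reversible cell is again reversible and that whiskerings and $\comp_n$\nobreakdash-composites of reversible cells are reversible; all of these follow by reasoning by coinduction as in paragraph~\ref{paragr:def_coind}. Note that the statement only asks for the single triangle-type cell, so I never need to address the second triangle identity.

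First I would produce a candidate. Since $\winv u$ is a reverse of $u$, there is by definition a reversible $(n+1)$-cell $\winv u \comp_{n-1} u \to \id{x}$; let $\eta_0 : \id{x} \to \winv u \comp_{n-1} u$ be a reverse of it, which is again reversible. I then form the zig-zag cell
\[ \phi := (\e \comp_{n-1} u) \comp_n (u \comp_{n-1} \eta_0) : u \to u, \]
an $(n+1)$-cell from $u$ to itself which is reversible, being a $\comp_n$-composite of whiskerings of the reversible cells $\e$ and $\eta_0$. Fix a reverse $\winv\phi$ of $\phi$, together with a reversible cell $\winv\phi \comp_n \phi \to \id{u}$ coming from the structure of reversibility witnessing that $\phi$ is reversible.

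The key step is to define the corrected cell
\[ \eta := (\winv u \comp_{n-1} \winv\phi) \comp_n \eta_0 : \id{x} \to \winv u \comp_{n-1} u, \]
which is reversible and has the prescribed boundary, and then to check that it annihilates the zig-zag. Expanding $u \comp_{n-1} \eta$ by distributing the whiskering over $\comp_n$ and using associativity of $\comp_{n-1}$, and then reassociating the outer $\comp_n$, one rewrites $(\e \comp_{n-1} u) \comp_n (u \comp_{n-1} \eta)$ as $\big[(\e \comp_{n-1} u) \comp_n ((u \comp_{n-1} \winv u) \comp_{n-1} \winv\phi)\big] \comp_n (u \comp_{n-1} \eta_0)$. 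The bracketed term is a $\comp_n$-composite of two whiskerings acting on the two disjoint factors $u \comp_{n-1} \winv u$ and $u$ of the $n$-cell $(u \comp_{n-1} \winv u) \comp_{n-1} u$; by the interchange law it equals the horizontal composite $\e \comp_{n-1} \winv\phi$, which in turn rewrites as $\winv\phi \comp_n (\e \comp_{n-1} u)$. Substituting this and using associativity of $\comp_n$ yields exactly
\[ (\e \comp_{n-1} u) \comp_n (u \comp_{n-1} \eta) = \winv\phi \comp_n \phi. \]
The reversible cell $\winv\phi \comp_n \phi \to \id{u}$ fixed above is then precisely the desired reversible cell from $(\e \comp_{n-1} u) \comp_n (u \comp_{n-1} \eta)$ to $\id{u}$.

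The main obstacle is the middle computation: keeping the many whiskerings and the composition conventions straight so that the interchange and associativity axioms of an \oo-category telescope the correction terms into $\winv\phi \comp_n \phi$. Once the formula for $\eta$ is in hand, the verification is a bookkeeping exercise, and the reversibility claims for $\eta_0$, $\phi$ and $\eta$ are routine coinductive arguments; the genuinely creative input is the guess of the correcting whiskering $\winv u \comp_{n-1} \winv\phi$.
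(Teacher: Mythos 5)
Your proof is correct, and while it implements the same underlying idea as the paper's proof---correcting a naive unit $\eta_0$ so that one triangle identity holds, exactly as in the promotion of an equivalence to an adjoint equivalence---the formula and the verification are genuinely different. The paper first reduces to $n=1$ (viewing $u$ as a $1$-cell in an \oo-category of cells) and then writes the fully explicit correction $\eta = (\bar u \comp_{0} u \comp_{0} \bar \eta') \comp_1 (\bar u \comp_{0} \bar \e \comp_{0} u) \comp_1 \eta'$, after which the triangle composite is compared to $1_u$ by a multi-step rewriting in which the identification is only achieved up to two auxiliary reversible cells. You instead stay in arbitrary dimension, bundle the entire defect into the single reversible cell $\phi = (\e \comp_{n-1} u) \comp_n (u \comp_{n-1} \eta_0)$, correct by an arbitrary reverse $\bar\phi$, and obtain the \emph{exact} equality $(\e \comp_{n-1} u) \comp_n (u \comp_{n-1} \eta) = \bar\phi \comp_n \phi$ from one application of the interchange law (your identification of the bracketed term with the horizontal composite $\e \comp_{n-1} \bar\phi$, hence with $\bar\phi \comp_n (\e \comp_{n-1} u)$, is the Godement law and checks out); the required reversible cell to $1_u$ then comes for free from the reversibility datum of $\phi$. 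The two constructions are compatible: choosing the evident reverse $\bar\phi = (u \comp_{n-1} \bar\eta_0) \comp_n (\bar\e \comp_{n-1} u)$ in your formula recovers the paper's $\eta$ on the nose. What your version buys is a shorter, more structural verification (one interchange identity instead of a page of computation); what the paper's buys is a closed formula for $\eta$ in terms of $\bar\e$ and $\bar\eta'$. Both arguments rest on the same coinductive background facts (reverses, whiskerings and composites of reversible cells are reversible), which you are right to use freely. A purely cosmetic remark: as in the paper's statement, your $\eta$ is of course an $(n+1)$-cell.
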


\begin{proof}
  Considering the $n$-cell $u : x \to y$, when $n > 1$, as a $1$-cell of the
  \oo-category of cells of $X$ from $sx$ to $ty$, we reduce to the case $n =
  1$.

  In this case, since  $u$ is reversible, there
  exists a reversible $2$-cell $\eta' : 1_x \to \bar u \comp_{0} u$. We set
  \[ \eta = (\bar u \comp_{0} u \comp_{0} \bar\eta') \comp_1
    (\bar u \comp_{0} \bar \e \comp_{0} u) \comp_1 \eta', \] where
  $\bar \eta'$ and $\bar \e$ denote reverses of $\eta'$ and $\e$,
  respectively. The cell $\eta$ is reversible, as a composite of
  reversible cells. Moreover, we have
  \[
    \begin{split}
      \MoveEqLeft
      (\e \comp_{0} u) \comp_1 (u \comp_{0} \eta) \\
      & = (\e \comp_{0} u) \comp_1 (u
      \comp_{0} \bar u \comp_{0} u \comp_0 \bar\eta') \comp_1 (u \comp_0 \bar u
      \comp_{0} \bar \e \comp_{0} u) \comp_1 (u \comp_{0} \eta')
      \\
      & = (u\comp_{0} \bar\eta') \comp_1 (\e \comp_{0} u \comp_{0} \bar u
      \comp_{0} u) \comp_1 (u \comp_0 \bar u
      \comp_{0} \bar \e \comp_{0} u) \comp_1 (u \comp_{0} \eta')
      \\
      & = (u \comp_{0} \bar \eta') \comp_1
      ((\bar \e \comp_1 \e) \comp_{0} u) \comp_1
      (u \comp_{0} \eta') \\
      & \to (u \comp_{0} \bar \eta') \comp_1 (u \comp_{0} \eta) = u
      \comp_{0} (\bar \eta' \comp_1 \eta) \to 1_u,
    \end{split}
  \]
  where the two arrows are reversible $2$-cells coming from the fact
  that $\bar\e$ and $\bar\eta'$ are reverses of $\e$ and $\eta'$,
  respectively, which concludes the proof of the lemma.
\end{proof}

\begin{lemma}\label{lem:Gamma_rev_is_rev}
  Let $X$ be an \oo-category and let $c$ be an $n$-cell of
  $\Cyl(X)$. If $c$ is reversible in $\Cyl(X)$, then the principal
  cell of $c$ is reversible in $X$.
\end{lemma}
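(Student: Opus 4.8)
The plan is to prove the reversibility of the principal cell of $c$ directly in $X$, by coinduction (paragraph~\ref{paragr:def_coind}), using a reverse of $c$ in $\Cyl(X)$ as the source of an explicit formula. Since no $0$-cell is reversible, there is nothing to prove for $n = 0$, and I assume $n \ge 1$. As $c$ is reversible, I fix a structure of reversibility on $\Cyl(X)$ containing $c$; it provides a reverse $\winv c$ of $c$ and, after invoking Lemma~\ref{lem:exist_adjoint_reverse} applied in $\Cyl(X)$, a coherent pair of reversible witness cells relating $\winv c \comp_{n-1} c$ and $c \comp_{n-1} \winv c$ to the relevant identities. Writing $c = (x, y, \alpha)$ as in paragraph~\ref{paragr:desc_cylinder}, the principal cell in question is $\alpha_n$, whose source and target are the composites $\alpha_{n-1}^+ \comp_{n-1} \cdots \comp_0 x$ and $y \comp_0 \cdots \comp_{n-1} \alpha_{n-1}^-$.

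The engine of the proof is the set of explicit formulas describing the operations of $\Cyl(X)$ (see \cite[Appendix A]{Folk} or \cite[Proposition~B.1.15]{AraMaltsiJoint}), read off at the level of principal cells: they express the principal cell of an identity cylinder as an identity, and the principal cell of a composite of cylinders as a composite of the principal cells of the factors, \emph{whiskered by the lower components}. Applying these formulas to $\winv c$ and to the witness cells, I would write down a candidate reverse of $\alpha_n$ of the form ``$\alpha_n$ of $\winv c$, suitably whiskered by the components $\alpha_k^\epsilon$ and by reverses of $x$ and $y$'' --- note that $x = \pi^-(c)$ and $y = \pi^+(c)$ are reversible, since $\pi^\pm$ are \oo-functors --- together with reversible cells witnessing that this candidate is indeed a reverse. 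One then feeds all of these cells into the coinductive principle: the formula for the reverse of a principal cell is allowed to refer to reverses of higher principal cells, which is exactly the coinductive hypothesis of paragraph~\ref{paragr:def_coind}.

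The main obstacle is the boundary mismatch: the principal cell of the reverse cylinder $\winv c$ does not carry the boundary $y \comp_0 \cdots \comp_{n-1} \alpha_{n-1}^- \to \alpha_{n-1}^+ \comp_{n-1} \cdots \comp_0 x$ required of a reverse of $\alpha_n$, and correcting this requires whiskering by the lower components $\alpha_k^\epsilon$, which need \emph{not} be reversible. This is precisely why the statement concerns only the principal (top) cell and not the whole cylinder. I expect the cleanest way to control these corrections is an induction on $n$: the case $n = 1$ is handled by the explicit low-dimensional formulas together with Lemma~\ref{lem:exist_adjoint_reverse} (much as in the proof of that lemma, where an $n$-cell is regarded as a $1$-cell in an auxiliary \oo-category of cells), while the inductive step uses the reversibility of the higher witness data supplied by the structure of reversibility on $\Cyl(X)$ to absorb the whiskering terms and match the boundaries.
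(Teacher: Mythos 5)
Your plan matches the paper's proof in all essentials: a coinductive argument using a reverse cylinder $(\bar x,\bar y,\beta)$ of $c=(x,y,\alpha)$, an application of Lemma~\ref{lem:exist_adjoint_reverse} in $\Cyl(X)$ to produce triangle-identity witnesses $(\e_x,\e_y,\Lambda)$ and $(\eta_x,\eta_y,\Gamma)$, and a candidate reverse of $\alpha_n$ built by whiskering the principal cell $\beta_n$ of the reverse cylinder (together with $\e_y$ and $\eta_x$) by the lower components; you also correctly locate the difficulty in the boundary mismatch and the non-reversibility of the $\alpha_k^\epsilon$, which the paper resolves exactly as you predict, by letting the coinduction supply the reversibility of the $(n+1)$-dimensional witnesses $\Lambda_{n+1}$ and $\Gamma_{n+1}$. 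The one place your plan diverges is the proposed induction on $n$ with base case $n=1$: the paper's argument is uniform in the cylinder dimension and involves no such induction (the reduction to dimension $1$ occurs inside the proof of Lemma~\ref{lem:exist_adjoint_reverse}, not here), there is no evident reduction from an $n$-cylinder to an $(n-1)$-cylinder, and what you describe as the ``inductive step'' is in fact just the coinduction on the dimension of the witness cells, so you should drop that layer and rely on the coinduction alone. Finally, be aware that the remaining work --- writing the explicit whiskered formula for the candidate reverse $\rho$ and carrying out the exchange-law computations showing that $\rho\comp_n\alpha_n$ and $\alpha_n\comp_n\rho$ are connected to identities by reversible cells --- constitutes the bulk of the paper's proof and is only gestured at in your outline.
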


\begin{proof}
  In this proof, we will use freely the explicit formulas for the structure
  of \oo-category of $\Cyl(X)$, as given for instance in \cite[Proposition
  B.1.15]{AraMaltsiJoint}.  We prove the lemma by coinduction (see
  paragraph~\ref{paragr:def_coind}). Let $c = (x, y, \alpha)$ be an $n$-cell
  of $\Cyl(X)$ (see paragraph~\ref{paragr:desc_cylinder}). We
  have
  \[
    \alpha_n: \alpha_{n-1}^+ \comp_{n-1} \alpha_{n-2}^+ \comp_{n-2}
    \cdots \comp_1 \alpha_0^+ \comp_0 x \to y \comp_0 \alpha_0^-
    \comp_1 \cdots \comp_{n-1} \alpha_{n-1}^-.
  \]
  Suppose that $(x,y,\alpha)$ is reversible in $\Cyl(X)$ and let $(\bar x,
  \bar y, \beta)$ be a reverse. Note that $\bar x$ and $\bar y$ are
  reverses of $x$ and $y$, respectively. The relationship between the source and target of
  $(x,y,\alpha)$ and $(\bar x, \bar y, \beta)$ implies that
  \begin{gather*}
    \beta_k^\e = \alpha_k^\e \qquad \text{if $0 \le k < n-1$ and $\e =
    \pm$,} \\
    \beta_{n-1}^- = \alpha_{n-1}^+ \quadand
    \beta_{n-1}^+ = \alpha_{n-1}^+, \\
    \beta_n: \alpha_{n-1}^- \comp_{n-1} \alpha_{n-2}^+
    \comp_{n-2} \cdots \comp_1 \alpha_0^+ \comp_0 \bar x \to \bar y
    \comp_0 \alpha_0^- \comp_1 \cdots \comp_{n-2} \alpha_{n-2}^-
    \comp_{n-1} \alpha_{n-1}^+.
  \end{gather*}
  By hypothesis, there exists a reversible cell
  \[
    (\e_x,\e_y,\Lambda) : (x,y,\alpha) \comp_{n-1} (\bar x, \bar
    y, \beta) \to \id{t(x, y, \alpha)}.
  \]
  In particular, the cells
  \[
    \e_x : x \comp_{n-1} \bar{x} \to \id{tx}
    \quadand
    \e_y : y \comp_{n-1} \bar{y} \to \id{ty}
  \]
  are reversible. The cell $(x,y,\alpha) \comp_{n-1} (\bar x, \bar y,
  \beta)$ is of the form $(x \comp_{n-1} \bar x, y \comp_{n-1} \bar y,
  \gamma)$, with
    \begin{gather*}
    \gamma_k^\e = \alpha_k^\e \qquad\text{if $0 \le k < n - 1$ and $\e =
    \pm$}, \\
    \gamma_{n-1}^- = \beta_{n-1}^- = \alpha_{n-1}^+
    \quadand \gamma_{n-1}^+ = \alpha_{n-1}^+, \\
    \gamma_n = (y \comp_0 \alpha_0^- \comp_1 \cdots \comp_{n-2}
    \alpha_{n-2}^- \comp_{n-1} \beta_n) \comp_n (\alpha_n
    \comp_{n-1} \alpha_{n-2}^+ \comp_{n-2} \cdots \comp_1 \alpha_0^+
    \comp_0 \bar x)
    \end{gather*}
  and we have
  \[
    \Lambda_{n+1}: \alpha_{n-1}^+ \comp_{n-1} \cdots \comp_1
    \alpha_0^+ \comp_0 \e_x \to \e_y \comp_0 \alpha_0^- \comp_1
    \cdots \comp_{n-2} \alpha_{n-2}^- \comp_{n-1} \alpha_{n-1}^+
    \comp_{n} \gamma_n.
  \]
   Applying Lemma \ref{lem:exist_adjoint_reverse} to the cell
   $(\e_x,\e_y,\Lambda)$ gives a reversible cell
  \[
    (\eta_x,\eta_y,\Gamma) : \id{s(x, y, \alpha)} \to
      (\bar x, \bar y, \beta) \comp_{n-1} (x,y,\alpha)
  \]
  and, using the projections $\pi^-$
  and~$\pi^+$, reversible cells
  \[
    (\e_x \comp_{n-1} x) \comp_n (x \comp_{n-1} \eta_x) \to 1_{x}
    \quadand
    (\e_y \comp_{n-1} y) \comp_n (y \comp_{n-1} \eta_y) \to 1_{y}.
  \]
  The cell $(\bar x, \bar y, \beta) \comp_{n-1} (x,y,\alpha)$ is of the form
  $(\bar x \comp_{n-1} x, \bar y \comp_{n-1} y, \delta)$ with
  \begin{gather*}
    \delta_k^\e = \alpha_k^\e \qquad\text{if $0 \le k < n - 1$ and $\e =
    \pm$}, \\
    \delta_{n-1}^- = \alpha_{n-1}^-
    \quadand \gamma_{n-1}^+ = \beta_{n-1}^+ = \alpha_{n-1}^-, \\
    \delta_n = (\bar y \comp_0 \alpha_0^- \comp_1 \cdots \comp_{n-2}
    \alpha_{n-2}^- \comp_{n-1} \alpha_n) \comp_n (\beta_n
    \comp_{n-1} \alpha_{n-2}^+ \comp_{n-2} \cdots \comp_1 \alpha_0^+
    \comp_0 x)
  \end{gather*}
  and we have
  \[
    \Gamma_{n+1} : \delta_n \comp_n \alpha_{n-1}^- \comp_{n-1}
    \alpha_{n-2}^+ \comp_{n-2} \cdots \comp_1 \alpha_0^+ \comp_0 \eta_x
    \to \eta_y \comp_0 \alpha_0^- \comp_1 \cdots \comp_{n-1}
    \alpha_{n-1}^-.
  \]

  We now define our candidate $n$-cell $\rho$ to be a reverse of
  $\alpha_n$:
  \[
    \begin{split}
      \rho & = \big((\e_y \comp_0 \alpha_0^- \comp_1 \cdots \comp_{n-2}
      \alpha_{n-2}^-) \comp_{n-1} \alpha_{n-1}^+ \comp_{n-1}
      (\alpha_{n-2}^+ \comp_{n-2} \cdots \comp_1 \alpha_0^+ \comp_0
      x)\big)
      \\
      & \quad\quad \comp_n \big((y \comp_0 \alpha_0^- \comp_1 \cdots
      \comp_{n-2} \alpha_{n-2}^-) \comp_{n-1} \beta_n
      \comp_{n-1} (\alpha_{n-2}^+ \comp_{n-2} \cdots \comp_1
      \alpha_0^+ \comp_0 x)\big)
      \\
      & \quad\quad \comp_n \big((y \comp_0 \alpha_0^- \comp_1 \cdots
      \comp_{n-1} \alpha_{n-1}^-) \comp_{n-1} (\alpha_{n-2}^+
      \comp_{n-2} \cdots \comp_1 \alpha_0^+ \comp_0 \eta_x)\big).
    \end{split}
  \]

  We first produce a reversible cell between $\rho \comp_n \alpha_n$
  and $\id{s(\alpha_n)}$. We have
  % \[
  {
    \allowdisplaybreaks
    \begin{align*}
      \MoveEqLeft
      \rho \comp_n \alpha_n \\*
      & = \big((\e_y \comp_0 \alpha_0^- \comp_1 \cdots \comp_{n-2}
      \alpha_{n-2}^-) \comp_{n-1} \alpha_{n-1}^+ \comp_{n-1}
      (\alpha_{n-2}^+ \comp_{n-2} \cdots \comp_1 \alpha_0^+ \comp_0
      x)\big)
      \\*
      & \quad\quad \comp_n \big((y \comp_0 \alpha_0^- \comp_1 \cdots
      \comp_{n-2} \alpha_{n-2}^-) \comp_{n-1} \beta_n
      \comp_{n-1} (\alpha_{n-2}^+ \comp_{n-2} \cdots \comp_1
      \alpha_0^+ \comp_0 x)\big)
      \\*
      & \quad\quad \comp_n \big((y \comp_0 \alpha_0^- \comp_1 \cdots
      \comp_{n-1} \alpha_{n-1}^-) \comp_{n-1} (\alpha_{n-2}^+
      \comp_{n-2} \cdots \comp_1 \alpha_0^+ \comp_0 \eta_x)\big)
      \\
      & \quad\quad \comp_n \alpha_n
      \\
      & = \big((\e_y \comp_0 \alpha_0^- \comp_1 \cdots \comp_{n-2}
      \alpha_{n-2}^-) \comp_{n-1} \alpha_{n-1}^+ \comp_{n-1}
      (\alpha_{n-2}^+ \comp_{n-2} \cdots \comp_1 \alpha_0^+ \comp_0
      x)\big)
      \\*
      & \quad\quad \comp_n \big((y \comp_0 \alpha_0^- \comp_1 \cdots
      \comp_{n-2} \alpha_{n-2}^-) \comp_{n-1} \beta_n
      \comp_{n-1} (\alpha_{n-2}^+ \comp_{n-2} \cdots \comp_1
      \alpha_0^+ \comp_0 x)\big)
      \\*
      & \quad\quad \comp_n \big(\alpha_n
      \comp_{n-1} (\alpha_{n-2}^+ \comp_{n-2} \cdots \comp_1
      \alpha_0^+ \comp_0 (\bar x \comp_{n-1} x))\big)
      \\*
      & \quad\quad
      \comp_n \big((\alpha_{n-1}^+ \comp_{n-1} \alpha_{n-2}^+ \comp_{n-2}
      \cdots \comp_1 \alpha_0^+ \comp_0 x) \comp_{n-1} (\alpha_{n-2}^+
      \comp_{n-2} \cdots \comp_1 \alpha_0^+ \comp_0 \eta_x)\big)
      \\
      & = \big((\e_y \comp_0 \alpha_0^- \comp_1 \cdots \comp_{n-2}
      \alpha_{n-2}^-) \comp_{n-1} \alpha_{n-1}^+ \comp_{n-1}
      (\alpha_{n-2}^+ \comp_{n-2} \cdots \comp_1 \alpha_0^+ \comp_0
      x)\big)
      \\*
      & \quad\quad \comp_n \big((y \comp_0 \alpha_0^- \comp_1 \cdots
      \comp_{n-2} \alpha_{n-2}^-) \comp_{n-1} \beta_n
      \comp_{n-1} (\alpha_{n-2}^+ \comp_{n-2} \cdots \comp_1
      \alpha_0^+ \comp_0 x)\big)
      \\*
      & \quad\quad \comp_n \big(\alpha_n
      \comp_{n-1} (\alpha_{n-2}^+ \comp_{n-2} \cdots \comp_1
      \alpha_0^+ \comp_0 \bar x) \comp_{n-1} (\alpha_{n-2}^+
      \comp_{n-2} \cdots \comp_1 \alpha_0^+ \comp_0 x)\big) \\
      & \quad\quad \comp_n \big(\alpha_{n-1}^+
      \comp_{n-1} \alpha_{n-2}^+ \comp_{n-2} \cdots \comp_1 \alpha_0^+
      \comp_0 (x \comp_{n-1} \eta_x)\big)
      \\
      & = \big((\e_y \comp_0 \alpha_0^- \comp_1 \cdots \comp_{n-2}
      \alpha_{n-2}^-) \comp_{n-1} \alpha_{n-1}^+ \comp_{n-1}
      (\alpha_{n-2}^+ \comp_{n-2} \cdots \comp_1 \alpha_0^+ \comp_0
      x)\big)
      \\*
      & \quad\quad \comp_n \big(
      \big[(y \comp_0 \alpha_0^- \comp_1 \cdots \comp_{n-2}
      \alpha_{n-2}^- \comp_{n-1} \beta_n)
      \\*
      & \quad\quad\quad\quad
      \comp_{n-1} (\alpha_{n-2}^+ \comp_{n-2} \cdots \comp_1 \alpha_0^+
      \comp_0 \bar x)\big]
      \comp_{n-1} (\alpha_{n-2}^+
      \comp_{n-2} \cdots \comp_1 \alpha_0^+ \comp_0 x)\big)
      \\*
      & \quad\quad \comp_n \big(\alpha_{n-1}^+
      \comp_{n-1} \alpha_{n-2}^+ \comp_{n-2} \cdots \comp_1 \alpha_0^+
      \comp_0 (x \comp_{n-1} \eta_x)\big)
      \\
      & = \big((\e_y \comp_0 \alpha_0^- \comp_1 \cdots \comp_{n-2}
      \alpha_{n-2}^-) \comp_{n-1} \alpha_{n-1}^+ \comp_{n-1}
      (\alpha_{n-2}^+ \comp_{n-2} \cdots \comp_1 \alpha_0^+ \comp_0
      x)\big)
      \\*
      & \quad\quad \comp_n \big(\gamma_n \comp_{n-1} (\alpha_{n-2}^+
      \comp_{n-2} \cdots \comp_1 \alpha_0^+ \comp_0 x)\big)
      \\*
      & \quad\quad \comp_n \big(\alpha_{n-1}^+
      \comp_{n-1} \alpha_{n-2}^+ \comp_{n-2} \cdots \comp_1 \alpha_0^+
      \comp_0 (x \comp_{n-1} \eta_x)\big)
      \\
      & = \big((\e_y \comp_0 \alpha_0^- \comp_1 \cdots \comp_{n-2}
      \alpha_{n-2}^- \comp_{n-1} \alpha_{n-1}^+ \comp_n \gamma_n)
      \comp_{n-1} (\alpha_{n-2}^+ \comp_{n-2} \cdots \comp_1
      \alpha_0^+ \comp_0 x)\big)
      \\*
      & \quad\quad \comp_n \big(\alpha_{n-1}^+
      \comp_{n-1} \alpha_{n-2}^+ \comp_{n-2} \cdots \comp_1 \alpha_0^+
      \comp_0 (x \comp_{n-1} \eta_x)\big).
    \end{align*}
  % \]
  }%
  By coinduction, the cell
  \[
    \Lambda_{n+1}: \alpha_{n-1}^+ \comp_{n-1} \cdots \comp_1
    \alpha_0^+ \comp_0 \e_x \to \e_y \comp_0 \alpha_0^- \comp_1
    \cdots \comp_{n-2} \alpha_{n-2}^- \comp_{n-1} \alpha_{n-1}^+
    \comp_{n} \gamma_n
  \]
  is reversible and we thus get a reversible cell between $\rho \comp_n
  \alpha_n$ and the cell
  \[
    \begin{split}
      \MoveEqLeft \big((\alpha_{n-1}^+ \comp_{n-1} \cdots \comp_1
      \alpha_0^+ \comp_0 \e_x) \comp_{n-1} (\alpha_{n-2}^+
      \comp_{n-2} \cdots \comp_1 \alpha_0^+ \comp_0 x)\big)
      \\
      \MoveEqLeft \quad\quad \comp_n \big(\alpha_{n-1}^+ \comp_{n-1}
      \alpha_{n-2}^+ \comp_{n-2} \cdots \comp_1 \alpha_0^+ \comp_0 (x
      \comp_{n-1} \eta_x)\big)
      \\
      & = \alpha_{n-1}^+ \comp_{n-1}
      \alpha_{n-2}^+ \comp_{n-2} \cdots \comp_1 \alpha_0^+ \comp_0
      ((\e_x \comp_{n-1} x) \comp_n (x \comp_{n-1} \eta_x))
    \end{split}
  \]
  and hence a reversible cell between $\rho \comp_n \alpha_n$ and the
  identity on
  \[
    \alpha_{n-1}^+ \comp_{n-1} \alpha_{n-2}^+
    \comp_{n-2} \cdots \comp_1 \alpha_0^+ \comp_0 x
    = s(\alpha_n).
  \]

  We now produce a reversible cell between $\alpha_n \comp_n \rho$
  and $\id{t(\alpha_n)}$. We have
  {
    \allowdisplaybreaks
    \begin{align*}
      \MoveEqLeft \alpha_n \comp_n \rho \\*
      & =
      \alpha_n \\*
      & \quad\quad \comp_n \big((\e_y \comp_0 \alpha_0^- \comp_1 \cdots
      \comp_{n-2} \alpha_{n-2}^-) \comp_{n-1} \alpha_{n-1}^+
      \comp_{n-1} (\alpha_{n-2}^+ \comp_{n-2} \cdots \comp_1
      \alpha_0^+ \comp_0 x)\big)
      \\*
      & \quad\quad \comp_n \big((y \comp_0 \alpha_0^- \comp_1 \cdots
      \comp_{n-2} \alpha_{n-2}^-) \comp_{n-1} \beta_n
      \comp_{n-1} (\alpha_{n-2}^+ \comp_{n-2} \cdots \comp_1
      \alpha_0^+ \comp_0 x)\big)
      \\*
      & \quad\quad \comp_n \big((y \comp_0 \alpha_0^- \comp_1 \cdots
      \comp_{n-1} \alpha_{n-1}^-) \comp_{n-1} (\alpha_{n-2}^+
      \comp_{n-2} \cdots \comp_1 \alpha_0^+ \comp_0 \eta_x)\big)
      \\
      & = \big((\e_y \comp_0 \alpha_0^- \comp_1 \cdots \comp_{n-2}
      \alpha_{n-2}^-) \comp_{n-1} (y \comp_0 \alpha_0^- \comp_1 \cdots
      \comp_{n-1} \alpha_{n-1}^-)\big)
      \\*
      & \quad\quad \comp_n \big(((y \comp_{n-1} \bar y) \comp_0 \alpha_0^-
      \comp_1 \cdots \comp_{n-2} \alpha_{n-2}^-) \comp_{n-1} \alpha_n\big)
      \\*
      & \quad\quad \comp_n \big((y \comp_0 \alpha_0^- \comp_1 \cdots
      \comp_{n-2} \alpha_{n-2}^-) \comp_{n-1} \beta_n
      \comp_{n-1} (\alpha_{n-2}^+ \comp_{n-2} \cdots \comp_1
      \alpha_0^+ \comp_0 x)\big)
      \\*
      & \quad\quad \comp_n \big((y \comp_0 \alpha_0^- \comp_1 \cdots
      \comp_{n-1} \alpha_{n-1}^-) \comp_{n-1} (\alpha_{n-2}^+
      \comp_{n-2} \cdots \comp_1 \alpha_0^+ \comp_0 \eta_x)\big)
      \\
      & = \big((\e_y \comp_0 \alpha_0^- \comp_1 \cdots \comp_{n-2}
      \alpha_{n-2}^-) \comp_{n-1} (y \comp_0 \alpha_0^- \comp_1 \cdots
      \comp_{n-1} \alpha_{n-1}^-)\big)
      \\*
      & \quad\quad \comp_n \big((y \comp_0 \alpha_0^- \comp_1 \cdots
      \comp_{n-1} \alpha_{n-2}^-) \comp_{n-1}
      \big[(\bar y \comp_0 \alpha_0^- \comp_1 \cdots \comp_{n-2}
      \alpha_{n-2}^- \comp_{n-1} \alpha_n)
      \\*
      & \qquad\qquad \comp_n (\beta_n
      \comp_{n-1} \alpha_{n-2}^+ \comp_{n-2} \cdots \comp_1 \alpha_0^+
      \comp_0 x)\big]\big)
      \\*
      & \quad\quad \comp_n \big((y \comp_0 \alpha_0^- \comp_1 \cdots
      \comp_{n-1} \alpha_{n-1}^-) \comp_{n-1} (\alpha_{n-2}^+
      \comp_{n-2} \cdots \comp_1 \alpha_0^+ \comp_0 \eta_x)\big)
      \\
      & = \big((\e_y \comp_0 \alpha_0^- \comp_1 \cdots \comp_{n-2}
      \alpha_{n-2}^-) \comp_{n-1} (y \comp_0 \alpha_0^- \comp_1 \cdots
      \comp_{n-1} \alpha_{n-1}^-)\big)
      \\*
      & \quad\quad \comp_n \big((y \comp_0 \alpha_0^- \comp_1 \cdots
      \comp_{n-1} \alpha_{n-2}^-) \comp_{n-1} \delta_n\big)
      \\*
      & \quad\quad \comp_n \big((y \comp_0 \alpha_0^- \comp_1 \cdots
      \comp_{n-1} \alpha_{n-1}^-) \comp_{n-1} (\alpha_{n-2}^+
      \comp_{n-2} \cdots \comp_1 \alpha_0^+ \comp_0 \eta_x)\big)
      \\
      & = \big((\e_y \comp_0 \alpha_0^- \comp_1 \cdots \comp_{n-2}
      \alpha_{n-2}^-) \comp_{n-1} (y \comp_0 \alpha_0^- \comp_1 \cdots
      \comp_{n-1} \alpha_{n-1}^-)\big)
      \\*
      & \quad\quad \comp_n \big((y \comp_0 \alpha_0^- \comp_1 \cdots
      \comp_{n-1} \alpha_{n-2}^-)
      \\*
      & \qquad\qquad \comp_{n-1} (\delta_n \comp_n
      \alpha_{n-1}^- \comp_{n-1} \alpha_{n-2}^+ \comp_{n-2} \cdots
      \comp_1 \alpha_0^+ \comp_0 \eta_x)\big).
    \end{align*}
  }%
  By coinduction, the cell
  \[
    \Gamma_{n+1} : \delta_n \comp_n \alpha_{n-1}^- \comp_{n-1}
    \alpha_{n-2}^+ \comp_{n-2} \cdots \comp_1 \alpha_0^+ \comp_0 \eta_x
    \to \eta_y \comp_0 \alpha_0^- \comp_1 \cdots \comp_{n-1}
    \alpha_{n-1}^-
  \]
  is reversible and we thus get a reversible cell from $\alpha_n \comp \rho$
  to
  \[
    \begin{split}
      \MoveEqLeft \big((\e_y \comp_0 \alpha_0^- \comp_1 \cdots
      \comp_{n-2} \alpha_{n-2}^-) \comp_{n-1} (y \comp_0 \alpha_0^-
      \comp_1 \cdots \comp_{n-1} \alpha_{n-1}^-)\big)
      \\
      \MoveEqLeft \quad\quad \comp_n \big((y \comp_0 \alpha_0^- \comp_1
      \cdots \comp_{n-1} \alpha_{n-2}^-) \comp_{n-1} (\eta_y
      \comp_0 \alpha_0^- \comp_1 \cdots \comp_{n-1} \alpha_{n-1}^-)\big)
      \\
      & = ((\e_y \comp_{n-1} y) \comp_n (y \comp_{n-1} \eta_y))
      \comp_0 \alpha_0^- \comp_1 \cdots \comp_{n-1} \alpha_{n-1}^-
    \end{split}
  \]
  and hence a reversible cell between $\alpha_n \comp_n \rho$ and the
  identity on
  \[
    y \comp_0 \alpha_0^- \comp_1 \cdots \comp_{n-1} \alpha_{n-1}^-
    =
    t(\alpha_n),
  \]
  hence the result.
\end{proof}

\begin{paragraph}
  We will denote by $\Rn{n}$ the free-standing reversible $n$-cell
  and by $r_n : \Dn{n} \to \Rn{n}$ the canonical \oo-functor.
  This \oo-category $\Rn{n}$ is freely generated by
  \begin{itemize}
    \item two $n$-cells
      \[ r : x \to y, \quad \winv{r} : y \to x, \]
    \item four $(n+1)$-cells
      \[
        \alpha : \winv{r} \comp_{n-1} r \to \id{x}, \,\,
        \winv{\alpha} : \id{x} \to \winv{r} \comp_{n-1} r,\,\, \beta : r
        \comp_{n-1} \winv{r} \to y,\,\, \winv{\beta} : y \to r \comp_{n-1} \winv{r},
      \]
    \item eight $(n+2)$-cells comparing $\winv{\alpha} \comp_n \alpha$,
      $\alpha \comp_n \winv{\alpha}$, $\winv{\beta} \comp_n \beta$
      and $\beta \comp_n \winv{\beta}$ to identities,
    \item etc.~(see the remark below for a formal description).
  \end{itemize}
  The \oo-functor $r_n : \Dn{n} \to \Rn{n}$ sends the principal cell of
  $\Dn{n}$ to $r$, \ndef{the principal cell of~$\Rn{n}$}. Note that this
  \oo-functor is a folk cofibration. By definition, an $n$-cell $x$ of an
  \oo-category $X$ is reversible if and only if the \oo-functor $\langle x
  \rangle : \Dn{n} \to X$ factors through~$r_n$. Note that such a
  factorization corresponds to a choice of witnesses that $r$ is reversible
  and is hence not unique.
\end{paragraph}

\begin{remark}
  More formally, the \oo-category $\Rn{n}$ is generated by two $n$-cells
      \[ r : x \to y, \quad \winv{r} : y \to x, \]
 and, for every $i > n$, two sets of $2^{i-n}$ $i$-cells
  \[ r^{}_{l_1, \dots, l_{i-n}} \quadand \winv{r}^{}_{l_1, \dots, l_{i-n}}, \]
  where $l_j = \pm1$ for $1 \le j \le i - n$, whose sources and targets are
  given by
  \begin{align*}
    r^{}_{l_1, \dots, l_{i-n-1}, -}
    &:&
    \winv{r}^{}_{l_1, \dots, l_{i-n-1}} \comp_{i-1} r^{}_{l_1, \dots, l_{i-n-1}}
    &\to
    \id{s(r_{l_1, \dots, l_{i-n-1}})}
    \\
    \winv{r}^{}_{l_1, \dots, l_{i-n-1}, -}
    &:&
    \id{s(r_{l_1, \dots, l_{i-n-1}})}
    &\to
    \winv{r}^{}_{l_1, \dots, l_{i-n-1}} \comp_{i-1} r^{}_{l_1, \dots, l_{i-n-1}}
    \\
    r^{}_{l_1, \dots, l_{i-n-1}, +}
    &:&
    r^{}_{l_1, \dots, l_{i-n-1}} \comp_{i-1} \winv{r}^{}_{l_1, \dots, l_{i-n-1}}
    &\to
    \id{t(r_{l_1, \dots, l_{i-n-1}})}
    \\
    \winv{r}^{}_{l_1, \dots, l_{i-n-1}, +}
    &:&
    \id{t(r_{l_1, \dots, l_{i-n-1}})}
    &\to
    r^{}_{l_1, \dots, l_{i-n-1}} \comp_{i-1} \winv{r}^{}_{l_1, \dots, l_{i-n-1}}
    \pbox{.}
  \end{align*}
  (With this notation, the $(n+1)$-cells $\alpha$ and $\beta$ of the previous
  paragraph are $\alpha = r_{-}$ and~$\beta = r_+$.)
\end{remark}

\begin{proposition}\label{prop:prod_rev_by_1cell}
  If $x$ is a $1$-cell of an \oo-category $X$ and $y$ is a reversible
  $n$-cell of an \oo-category $Y$, then $x \otimes y$ is reversible in
  $X \otimes Y$.
\end{proposition}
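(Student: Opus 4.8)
The plan is to exhibit $x \otimes y$ as the principal cell of a suitable reversible cylinder and then apply Lemma~\ref{lem:Gamma_rev_is_rev}, which was evidently set up for exactly this purpose. The decisive point is that tensoring on the left by the $1$\nbd-cell $x$ is encoded by the cylinder construction $\Cyl(-) = \HomLax(\Dn{1}, -)$; this is precisely why the hypothesis that $x$ has dimension~$1$ appears, and why this case can be handled before the general induction.

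First I would consider the \oo-functor $\funcell{x} \otimes Y : \Dn{1} \otimes Y \to X \otimes Y$ and transpose it, along the adjunction $\Hom_{\ooCat}(\Dn{1} \otimes Y, X \otimes Y) \simeq \Hom_{\ooCat}(Y, \HomLax(\Dn{1}, X \otimes Y))$ and the identification $\Cyl(X \otimes Y) = \HomLax(\Dn{1}, X \otimes Y)$, into an \oo-functor $F : Y \to \Cyl(X \otimes Y)$. Let $c = F(y)$, an $n$-cell of $\Cyl(X \otimes Y)$, that is, an $n$-cylinder in $X \otimes Y$. By naturality of the adjunction and functoriality of the tensor product, the \oo-functor $\Dn{1} \otimes \Dn{n} \to X \otimes Y$ associated to $c$ (in the sense of paragraph~\ref{paragr:desc_cylinder}) is exactly $\funcell{x} \otimes \funcell{y}$.

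Next I would check that the principal cell of the cylinder $c$ is $x \otimes y$. By paragraph~\ref{paragr:desc_cylinder}, this principal cell is the image under $\funcell{x} \otimes \funcell{y}$ of $(01) \otimes d$, where $(01)$ and $d$ denote the principal cells of $\Dn{1}$ and $\Dn{n}$; but $(01) \otimes d$ is the principal cell of $\Dn{1} \otimes \Dn{n}$, so its image is precisely $x \otimes y$ by the definition in paragraph~\ref{paragr:tens_cell}. It then remains only to observe that $c$ is reversible as a cell of the \oo-category $\Cyl(X \otimes Y)$: since $c = F(y)$ with $y$ reversible in $Y$, and since \oo-functors preserve the operations entering the definition of a structure of reversibility (paragraph~\ref{paragr:def_coind}), so that the image of a structure of reversibility is again one, the cell $c$ is reversible. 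Applying Lemma~\ref{lem:Gamma_rev_is_rev} to the reversible $n$-cell $c$ of $\Cyl(X \otimes Y)$ yields that its principal cell $x \otimes y$ is reversible in $X \otimes Y$, as desired.

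The only genuinely delicate part of the argument has already been isolated and discharged in Lemma~\ref{lem:Gamma_rev_is_rev}; granting that lemma, the present proof is purely formal. The main thing to get right is the bookkeeping of the two adjunctions, identifying the \oo-functor underlying $F(y)$ with $\funcell{x} \otimes \funcell{y}$ and its principal cell with $x \otimes y$. The reversibility of $c$ itself then comes for free from that of $y$, so I do not expect any serious obstacle here.
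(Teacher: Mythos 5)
Your proof is correct and takes essentially the same route as the paper's: both transpose along the adjunction between $\Dn{1} \otimes {-}$ and $\Cyl({-})$ to produce a reversible $n$-cell of $\Cyl(X \otimes Y)$ whose principal cell is $x \otimes y$, and then invoke Lemma~\ref{lem:Gamma_rev_is_rev}. The only cosmetic difference is that the paper certifies the reversibility of that cylinder by factoring $\funcell{y}$ through the free-standing reversible cell $\Rn{n}$ and tensoring with $\funcell{x}$, whereas you appeal directly to the fact that \oo-functors preserve reversible cells; these amount to the same thing.
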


\begin{proof}
  Since $y$ is reversible, $\langle y \rangle : \Dn{n} \to Y$ factors
  through $r_n : \Dn{n} \to \Rn{n}$. Tensoring by $\langle x \rangle$,
  we therefore get an \oo-functor 
  $\Dn{1} \otimes \Rn{n} \to X \otimes Y$, which corresponds by adjunction
  to an \oo-functor $\Rn{n} \to \Cyl(X \otimes Y)$ and hence to a reversible
  cell of~$\Cyl(X \otimes Y)$. By Lemma \ref{lem:Gamma_rev_is_rev}, the
  principal cell of this cylinder is reversible in $X \otimes Y$. But this
  principal cell corresponds to the composite
  \[
    \Dn{n+1} \xto{\funcell{p}} \Dn{1} \otimes \Dn{n} \xto{\langle x \rangle
      \otimes \langle y \rangle} X \otimes Y,
  \]
  where $p$ denotes the principal cell of $\Dn{1} \otimes \Dn{n}$,
  which is $\langle x \otimes y \rangle$. This shows that
  $x \otimes y$ is reversible.
\end{proof}

\begin{corollary}\label{coro:prod_rev_dual}
  If $x$ is a reversible $n$-cell of an \oo-category $X$ and $y$ is a
  $1$-cell of an \oo-category $Y$, then $x \otimes y$ is reversible in
  $X \otimes Y$.
\end{corollary}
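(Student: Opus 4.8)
The plan is to deduce this immediately from Proposition~\ref{prop:prod_rev_by_1cell} by means of one of the dualities of $\ooCat$ recalled in paragraph~\ref{paragr:dualities}. Recall that the duality $Z \mapsto Z^\op$ is anti-monoidal, in the sense that the assignment $x \otimes y \mapsto y \otimes x$ defines an isomorphism $(X \otimes Y)^\op \simeq Y^\op \otimes X^\op$, and that a cell is reversible in an \oo-category if and only if the corresponding cell is reversible in its opposite. The whole point is that this duality exchanges the two factors, turning the statement ``reversible $\otimes$ $1$-cell'' into the statement ``$1$-cell $\otimes$ reversible'', which is exactly what the previous proposition handles.

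First I would observe that, under the isomorphism $(X \otimes Y)^\op \simeq Y^\op \otimes X^\op$, the $(n+1)$-cell $x \otimes y$ of $X \otimes Y$ corresponds to the cell $y \otimes x$ of $Y^\op \otimes X^\op$, where $y$ is now regarded as a cell of $Y^\op$ and $x$ as a cell of $X^\op$. Since the dualities preserve the dimension of cells (they only reverse their direction), $y$ remains a $1$-cell of $Y^\op$; and since $x$ is reversible in $X$, the corresponding cell is reversible in $X^\op$. I would then apply Proposition~\ref{prop:prod_rev_by_1cell} to the $1$-cell $y$ of $Y^\op$ and the reversible $n$-cell $x$ of $X^\op$, concluding that $y \otimes x$ is reversible in $Y^\op \otimes X^\op$. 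Transporting this back along the isomorphism and using once more the stability of reversibility under the duality, we obtain that $x \otimes y$ is reversible in $X \otimes Y$. The same argument works verbatim with the duality $Z \mapsto Z^\co$ in place of $Z \mapsto Z^\op$.

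I do not expect any genuine obstacle here, the corollary being essentially a formal consequence of its proposition together with the anti-monoidality of the dualities. The single point requiring a line of care is to check that the anti-monoidal isomorphism does match the distinguished cell $x \otimes y$ of paragraph~\ref{paragr:tens_cell} with $y \otimes x$, rather than merely matching the underlying \oo-categories; this follows from the naturality of the isomorphism $(X \otimes Y)^\op \simeq Y^\op \otimes X^\op$ applied to the \oo-functors $\funcell{x}$ and $\funcell{y}$, which reduces the verification to the universal case $X = \Dn{n}$, $Y = \Dn{1}$, where it is immediate.
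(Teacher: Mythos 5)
Your proposal is correct and follows exactly the paper's own argument: apply the anti-monoidal duality $Z \mapsto Z^\op$, use that reversibility is preserved under this duality, and reduce to Proposition~\ref{prop:prod_rev_by_1cell} applied to the $1$-cell $y^\op$ of $Y^\op$ and the reversible cell $x^\op$ of $X^\op$. The extra care you take in matching the distinguished cell $x\otimes y$ with $y^\op\otimes x^\op$ via naturality is a welcome precision that the paper leaves implicit.
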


\begin{proof}
  We will use the duality $Z \mapsto Z^\op$ introduced in
  paragraph~\ref{paragr:dualities}, denoting by $z^\op$ the cell of
  $Z^\op$ corresponding to a cell $z$ of $Z$. By this same paragraph,
  if $x$ is a reversible cell of $X$ and $y$ is a $1$-cell of $Y$,
  then $x \otimes y$ is reversible in $X \otimes Y$ if and only if
  $(x \otimes y)^\op$ is reversible in $(X \otimes Y)^\op$ if and only
  if $y^\op \otimes x^\op$ is reversible in $Y^\op \otimes X^\op$. As
  $x^\op$ is reversible in $X^\op$, the result thus follows from the
  previous proposition.
\end{proof}

We will now show that the tensor product of a reversible cell by any cell is reversible.
We will need a specific \oo-functor from $\Dn{n-1} \otimes \Dn{1}$ to
$\Dn{n}$ that we now introduce.

\begin{paragraph}\label{paragr:prod_DnD1_to_Dn}
  Let $n \ge 1$. The \oo-category obtained from $\Dn{n-1} \otimes \Dn{1}$ by collapsing,
  independently, the sub-\oo-categories $\Dn{n-1} \otimes \{0\}$ and
  $\Dn{n-1} \otimes \{1\}$ is canonically isomorphic to~$\Dn{n}$. (This
  follows for instance from \cite[Corollary B.6.6]{AraMaltsiJoint} using
  the duality $X \mapsto X^\co$.) In particular, there is a canonical
  \oo-functor
  \[ \Dn{n-1} \otimes \Dn{1} \to \Dn{n} \]
  sending $\Dn{n-1} \otimes \{0\}$ to $0$, $\Dn{n-1} \otimes \{1\}$ to $1$, and
  the principal cell of $\Dn{n-1} \otimes \Dn{1}$ to the principal cell of
  $\Dn{n}$.

  By iterating this construction, we get an \oo-functor
  \[ \Dn{1}^{\otimes n} \to \Dn{n} \]
  sending \ndef{the principal cell of $\Dn{1}^{\otimes n}$}, that is, the tensor
  product of the principal cells of the $n$ copies of $\Dn{1}$, to the
  principal cell of $\Dn{n}$.
\end{paragraph}

\begin{lemma}\label{lem:prod_Rk_Dn}
  If $r$ is the principal cell of $\Rn{k}$ and $d$ is the principal
  cell of $\Dn{n}$, then $r \otimes d$ is reversible in
  $\Rn{k} \otimes \Dn{n}$.
\end{lemma}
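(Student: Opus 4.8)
The plan is to reduce the statement, via the canonical \oo-functor $\kappa : \Dn{1}^{\otimes n} \to \Dn{n}$ of paragraph~\ref{paragr:prod_DnD1_to_Dn}, to an iterated application of Corollary~\ref{coro:prod_rev_dual}. Recall that $\kappa$ sends the principal cell $p$ of $\Dn{1}^{\otimes n}$ to the principal cell $d$ of $\Dn{n}$.

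First I would use that reversibility is preserved by arbitrary \oo-functors. This follows from the characterization recalled when introducing $\Rn{n}$: a cell $u$ is reversible if and only if $\langle u \rangle$ factors through $r_{\dim u}$; hence, for any \oo-functor $f : X \to Y$ and any reversible cell $u$ of $X$, the equality $\langle f(u) \rangle = f \circ \langle u \rangle$ shows that $f(u)$ is reversible. Applying this to $f = \id{\Rn{k}} \otimes \kappa$ and using the naturality of the cell-wise tensor product, so that $(\id{\Rn{k}} \otimes \kappa)(r \otimes p) = r \otimes \kappa(p) = r \otimes d$, it suffices to prove that $r \otimes p$ is reversible in $\Rn{k} \otimes \Dn{1}^{\otimes n}$.

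I would establish this by induction on the number of $\Dn{1}$ factors. Write $p = e_1 \otimes \cdots \otimes e_n$, where $e_i$ denotes the principal (hence $1$-dimensional) cell of the $i$-th copy of $\Dn{1}$. Using the associativity of the tensor product to identify $\Rn{k} \otimes \Dn{1}^{\otimes(j+1)}$ with $(\Rn{k} \otimes \Dn{1}^{\otimes j}) \otimes \Dn{1}$, I claim that $r \otimes e_1 \otimes \cdots \otimes e_j$ is reversible for every $0 \le j \le n$. The case $j = 0$ is the reversibility of $r$ in $\Rn{k}$, which holds by construction. For the inductive step, the cell $r \otimes e_1 \otimes \cdots \otimes e_{j+1}$ is the tensor product of the reversible cell $r \otimes e_1 \otimes \cdots \otimes e_j$ by the $1$-cell $e_{j+1}$, so Corollary~\ref{coro:prod_rev_dual} yields its reversibility. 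Taking $j = n$ gives the reversibility of $r \otimes p$, which completes the argument.

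There is no hard computation here: the essential content is simply the iteration of Corollary~\ref{coro:prod_rev_dual}, peeling off one factor $\Dn{1}$ at a time. The points requiring care are purely formal — the naturality of the cell tensor product and its compatibility with the associativity isomorphisms (so that the unbracketed cell $r \otimes e_1 \otimes \cdots \otimes e_n$ is well defined and carried to $r \otimes d$ by $\id{\Rn{k}} \otimes \kappa$), together with the preservation of reversibility by \oo-functors and associators. The main thing to get right is therefore the reduction step of the second paragraph; once it is in place, the induction is immediate.
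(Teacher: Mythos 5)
Your proof is correct and follows essentially the same route as the paper: both arguments reduce to $\Dn{1}^{\otimes n}$ via the canonical \oo-functor $\Dn{1}^{\otimes n} \to \Dn{n}$ and then iterate Corollary~\ref{coro:prod_rev_dual} one $\Dn{1}$-factor at a time. The only difference is bookkeeping --- you work directly with cells and the preservation of reversibility under \oo-functors, whereas the paper assembles the same induction into a ladder of commutative squares of representing \oo-functors $\Rn{m+1} \to \Rn{m} \otimes \Dn{1}$.
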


\begin{proof}
  By the Corollary~\ref{coro:prod_rev_dual},
  for every $m \ge 0$, the tensor product of the principal cells of $\Rn{m}$
  and $\Dn{1}$ is reversible, showing that there exists an \oo-functor
  $p' : \Rn{m+1} \to \Rn{m} \otimes \Dn{1}$ making commutative the square
  \[
    \xymatrix@C=2.5pc{
      \Dn{m+1} \ar[d]_p \ar[r]^{r_{m+1}} & \Rn{m+1} \ar[d]^{p'} \\
      \Dn{m} \otimes \Dn{1} \ar[r]_{r_m \otimes \Dn{1}} & \Rn{m}
      \otimes \Dn{1} \pbox{,} }
  \]
  where $p$ corresponds to the principal cell of $\Dn{m} \otimes \Dn{1}$.
  Denote this square by $S_m$. By composing
  the $n$ squares $S_{n+k-1-m} \otimes \Dn{1}^{\otimes m}$
  for $0 \le m \le n-1$
  \[
    \xymatrix{
      \Dn{n+k} \ar[d] \ar[r] & \Rn{n+k} \ar[d] \\
      \Dn{n+k-1} \otimes \Dn{1} \ar[d] \ar[r] & \Rn{n+k-1} \otimes \Dn{1} \ar[d] \\
      (\Dn{n+k-2} \otimes \Dn{1}) \otimes \Dn{1} \ar@{.>}[d] \ar[r] &
      (\Rn{n+k-2}
      \otimes \Dn{1}) \otimes \Dn{1} \ar@{.>}[d] \\
      (\Dn{k} \otimes \Dn{1}) \otimes \Dn{1}^{\otimes n-1} \ar[r] &
      (\Rn{k} \otimes \Dn{1}) \otimes \Dn{1}^{\otimes n-1} \pbox{,} }
  \]
  we get a commutative square
  \[
    \xymatrix@C=2.5pc{
      \Dn{n+k} \ar[d] \ar[r]^{r_{n+k}} & \Rn{n+k}  \ar[d] \\
      \Dn{k} \otimes \Dn{1}^{\otimes n} \ar[r]_{r_k \otimes
        \Dn{1}^{\otimes n}} & \Rn{k} \otimes \Dn{1}^{\otimes n}
      \pbox{.}  }
  \]
  By composing this square with the \oo-functor $\Dn{1}^{\otimes n} \to
  \Dn{n}$ of the previous paragraph,
  we get a commutative square \[
    \xymatrix@C=2.5pc{
      \Dn{n+k} \ar[d] \ar[r]^{r_{n+k}} & \Rn{n+k}  \ar[d] \\
      \Dn{k} \otimes \Dn{n} \ar[r]_{r_k \otimes \Dn{n}} & \Rn{k}
      \otimes \Dn{n}
    }
  \]
  showing that $\funcell{r \otimes d}$ factors through $r_{n+k}$ and
  hence that $r \otimes d$ is reversible.
\end{proof}

\goodbreak

\begin{proposition}\label{prop:prod_Rm_Dn}
  Let $x$ be an $m$-cell of an \oo-category $X$ and let $y$ be an $n$-cell
  of an \oo-category $Y$. If either $x$ or $y$ is reversible, then $x
  \otimes y$ is reversible in $X \otimes Y$.
\end{proposition}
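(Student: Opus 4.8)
The plan is to reduce to the case where $x$ is reversible and then to read off the statement from Lemma~\ref{lem:prod_Rk_Dn} by functoriality, so that the real content will already have been established in that lemma.

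First I would dispose of the reduction using the duality $Z \mapsto Z^\op$ of paragraph~\ref{paragr:dualities}, exactly in the spirit of the proof of Corollary~\ref{coro:prod_rev_dual}. Since $(X \otimes Y)^\op \simeq Y^\op \otimes X^\op$ via the assignment $x \otimes y \mapsto y^\op \otimes x^\op$, and since a cell is reversible if and only if the corresponding cell of the opposite \oo-category is, the cell $x \otimes y$ is reversible if and only if $y^\op \otimes x^\op$ is. As $x$ (resp.\ $y$) is reversible in $X$ (resp.\ $Y$) if and only if $x^\op$ (resp.\ $y^\op$) is reversible in $X^\op$ (resp.\ $Y^\op$), the case where $y$ is reversible follows from the case where the \emph{left} factor is reversible, applied to $y^\op \otimes x^\op$. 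It therefore suffices to treat the case where $x$ is a reversible $m$-cell.

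So suppose $x$ is a reversible $m$-cell. By the characterization of reversibility through $\Rn{m}$, the \oo-functor $\funcell{x} : \Dn{m} \to X$ factors as
\[ \Dn{m} \xto{r_m} \Rn{m} \xto{g} X \]
for some \oo-functor $g$, and $g$ sends the principal cell $r$ of $\Rn{m}$ to $x$. Tensoring $g$ with $\funcell{y} : \Dn{n} \to Y$ produces an \oo-functor $g \otimes \funcell{y} : \Rn{m} \otimes \Dn{n} \to X \otimes Y$. By the naturality of the construction of paragraph~\ref{paragr:tens_cell}, this \oo-functor sends the cell $r \otimes d$, where $d$ is the principal cell of $\Dn{n}$, to $g(r) \otimes \funcell{y}(d) = x \otimes y$. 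Now Lemma~\ref{lem:prod_Rk_Dn} tells us that $r \otimes d$ is reversible in $\Rn{m} \otimes \Dn{n}$; since the direct image of a structure of reversibility along an \oo-functor is again a structure of reversibility, the cell $x \otimes y$ is reversible in $X \otimes Y$, as desired.

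There is no serious obstacle left at this stage: the genuine difficulty has been absorbed into Lemma~\ref{lem:prod_Rk_Dn} (which itself rests on Corollary~\ref{coro:prod_rev_dual} and the coinductive Lemma~\ref{lem:Gamma_rev_is_rev}). The only point requiring a modicum of care is the fact that \oo-functors carry reversible cells to reversible cells, which is immediate from the definition of a structure of reversibility, together with the verification that $g \otimes \funcell{y}$ indeed maps $r \otimes d$ to $x \otimes y$, which is a formal consequence of the naturality of $x \otimes y$ in $x$ and $y$.
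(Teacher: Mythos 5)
Your proof is correct and follows essentially the same route as the paper's: factor $\funcell{x}$ through $r_m : \Dn{m} \to \Rn{m}$, push the reversible cell $r \otimes d$ of Lemma~\ref{lem:prod_Rk_Dn} forward along the tensored \oo-functor, and dispose of the case where $y$ is reversible by the duality $Z \mapsto Z^\op$. The only (immaterial) difference is that you perform the duality reduction at the outset whereas the paper treats the $x$-reversible case first and then deduces the other case from it.
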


\begin{proof}
  We start with the case where $x$ is reversible. The cell
  $x \otimes y$ then corresponds to the composite
  \[
    \funcell{x \otimes y} : \Dn{m+n} \xto{\funcell{r \otimes d}}
    \Rn{m} \otimes \Dn{n} \xto{\funcell{x} \otimes \funcell{y}} X
    \otimes Y,
  \]
  where $r$ and $d$ denotes the principal cells of $\Rn{m}$ and $\Dn{n}$
  respectively. But by the previous lemma, $r \otimes d$ is reversible, and
  thus so is
  \[
    x \otimes y = (\funcell{x} \otimes \funcell{y})(r \otimes d).
  \]

  Suppose now that $y$ is reversible. Then $y^\op$ is reversible, and so is
  $y^\op \otimes x^\op$ in $Y^\op \otimes X^\op$ by the previous case. This
  proves that $x \otimes y = (y^\op \otimes x^\op)^\op$ is reversible.
\end{proof}

We will use the previous proposition to study the notion of
a $\J$-transformation that we now introduce.

\begin{paragraph}
  Let $f, g : X \to Y$ be two \oo-functors. A \ndef{$\J$-transformation}
  from $f$ to $g$ is an \oo-functor
  \[ h : \J \otimes X \to Y \]
  making commutative the diagram
  \[
    \xymatrix@R=1.5pc{
      X \ar@/^2.5ex/[drr]^f \ar[dr]_-{\funcell{0} \otimes X} \\
      & \J \otimes X \ar[r]^h & Y \\
      X \ar@/_2.5ex/[urr]_g \ar[ur]^-{\funcell{1} \otimes X} & &
      \pbox{,} }
  \]
  where we denoted by $0$ and $1$ the image by $l_1 : \Dn{1} \to \J$
  (see paragraph~\ref{paragr:def_J}) of the objects $0$ and $1$ of
  $\Dn{1}$.

  Note that if $h : \J \otimes X \to Y$ is a $\J$-transformation then
  the composite
  \[ \Dn{1} \otimes X \xto{l_1 \otimes X} \J \otimes X \xto{\,\,\,
      h\,\,\,} Y \] defines an oplax transformation from $f$ to $g$.
\end{paragraph}

\begin{proposition}\label{prop:J-trans_rev}
  The oplax transformation associated to a $\J$-transformation is
  reversible.
\end{proposition}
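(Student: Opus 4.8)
The plan is to reduce the statement to the reversibility of the principal cell of $\J$, which I will denote by $p$; it is a $1$-cell of $\J$. If $h : \J \otimes X \to Y$ is a $\J$-transformation, the associated oplax transformation is the composite $\Dn{1} \otimes X \xto{l_1 \otimes X} \J \otimes X \xto{h} Y$, and by paragraph~\ref{paragr:def_components} its component at a cell $x$ of $X$ is the image of $(01) \otimes x$, where $(01)$ is the principal cell of $\Dn{1}$. Since $l_1$ sends $(01)$ to $p$, functoriality of the tensor product on cells gives that this component is exactly $h(p \otimes x)$. Thus it suffices to show that $h(p \otimes x)$ is reversible for every cell $x$ of $X$.

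First I would prove that $p$ is reversible in $\J$. Recall from paragraph~\ref{paragr:def_J} that $q_1 : \J \to \Dn{0}$ is a folk trivial fibration and that $l_1 : \Dn{1} \to \J$ sends the principal cell of $\Dn{1}$ to $p$. As $\Dn{0}$ is the terminal \oo-category, the square
\[
  \xymatrix@C=2.5pc{
    \Dn{1} \ar[d]_{r_1} \ar[r]^{l_1} & \J \ar[d]^{q_1} \\
    \Rn{1} \ar[r] & \Dn{0}
  }
\]
whose bottom and right edges are the unique \oo-functors to $\Dn{0}$, commutes automatically. Since $r_1 : \Dn{1} \to \Rn{1}$ is a folk cofibration and $q_1$ is a folk trivial fibration, this square admits a lift $s : \Rn{1} \to \J$ satisfying $s \circ r_1 = l_1$. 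In particular, $s$ sends the principal cell of $\Rn{1}$, which is reversible in $\Rn{1}$ by construction, to $p$; as \oo-functors preserve reversibility, $p$ is reversible.

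Knowing that $p$ is reversible, Proposition~\ref{prop:prod_Rm_Dn} immediately yields that $p \otimes x$ is reversible in $\J \otimes X$ for every cell $x$ of $X$. Applying the \oo-functor $h$, which again preserves reversibility (for instance via the characterization of reversible cells as those whose classifying \oo-functor factors through some $r_n$), shows that $h(p \otimes x)$ is reversible in $Y$. Since these are precisely the components of the associated oplax transformation, that transformation is reversible by definition.

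The only genuinely new input is the reversibility of $p$; everything else is a direct appeal to Proposition~\ref{prop:prod_Rm_Dn} and to the preservation of reversible cells by \oo-functors. I expect the construction of the lifting square to be the step requiring the most care, since its commutativity hinges on $\Dn{0}$ being terminal, and one must also record that the \oo-functor $r_1$ is a folk cofibration so that the lift exists.
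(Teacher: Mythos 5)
Your proof is correct and follows exactly the paper's argument: the same lifting square against $q_1 : \J \to \Dn{0}$ using the cofibration $r_1 : \Dn{1} \to \Rn{1}$ to show the principal cell of $\J$ is reversible, followed by the same appeal to Proposition~\ref{prop:prod_Rm_Dn} and the preservation of reversible cells by \oo-functors. No differences worth noting.
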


\begin{proof}
  Let $h : \J \otimes X \to Y$ be a $\J$-transformation and let $x$ be
  an $n$-cell of~$X$. We have to show that $h((01) \otimes x)$,
  where $(01)$ denotes the principal cell of $\J$ (see
  paragraph~\ref{paragr:def_J}) is reversible. It suffices to show that
  $(01) \otimes x$ is reversible and hence, by
  Proposition~\ref{prop:prod_Rm_Dn}, that
  $(01)$ is reversible in $\J$. As the \oo-functor $r_1 : \Dn{1} \to \Rn{1}$
  is a cofibration and the \oo-functor $q_1 : \J \to \Dn{0}$ is a
  trivial fibration, the commutative square
  \[
    \xymatrix{
      \Dn{1} \ar[d]_{r_1} \ar[r]^{l_1} & \J  \ar[d]^{q_1} \\
      \Rn{1} \ar[r] \ar@{.>}[ur] & \Dn{0} }
  \]
  admits a lift, showing that $(01)$ is indeed reversible in $\J$.
\end{proof}

\begin{paragraph}\label{paragr:def_retr}
  We say that an \oo-functor $i : X \to Y$ is an \ndef{oplax transformation
    retract} \respd{a \ndef{reversible transformation retract}}{a
    \ndef{$\J$-transformation retract}} if it admits a retraction $r$,
  that is, an \oo-functor $r : Y \to X$ such that $ri = \id{X}$, and
  an oplax transformation \respd{a reversible transformation}{a
    $\J$-transformation} $\alpha : ir \tod \id{Y}$.

  It follows from the fact that reversible transformations are right
  homotopies for the folk model category structure (see
  Remark~\ref{rem:trans_right_homot}) that a reversible transformation
  retract is a folk weak equivalence.

  We say that a transformation retract $i : X \to Y$ (oplax,
  reversible or $\J$-) is \emph{strong} if $r$ and $\alpha$ as above
  can be chosen so that $\alpha \comp i = \id{i}$, in the sense that,
  if $\alpha$ is given by an \oo-functor
  \[ h : \Dn{1} \otimes Y \to Y \quador h' : \J \otimes Y \to Y, \]
  then the diagrams
  \[
    \xymatrix{ \Dn{1} \otimes X \ar[d]_{\Dn{1} \otimes i} \ar[r]^-p &
      X \ar[d]^{i \zbox{\qquad\text{or}}}
      \\
      \Dn{1} \otimes Y \ar[r]_-h & Y } \qquad \qquad \xymatrix{ \J
      \otimes X \ar[d]_{\J \otimes i} \ar[r]^-{p'} & X
      \ar[d]^{i}
      \\
      \J \otimes Y \ar[r]_-{h'} & Y \pbox{,} }
  \]
  where $p$ and $p'$ are the ``projection'' \oo-functors induced by
  $\Dn{1} \to \Dn{0}$ and $\J \to \Dn{0}$, commute.

  By \cite[Corollary 4.30]{Folk}, every folk trivial cofibration is a
  strong reversible transformation retract. (Note that strong
  reversible transformation retracts are called ``immersions'' in
  \cite{Folk}.)
\end{paragraph}

\begin{proposition}\label{prop:J1_immer_are_we}
  A $\J$-transformation retract is a reversible transformation retract
  and in particular a folk weak equivalence.
\end{proposition}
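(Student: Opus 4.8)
The plan is to observe that this follows almost immediately from Proposition~\ref{prop:J-trans_rev} together with the definitions recalled in paragraph~\ref{paragr:def_retr}. Let $i : X \to Y$ be a $\J$-transformation retract, equipped with a retraction $r : Y \to X$ such that $ri = \id{X}$ and a $\J$-transformation $\alpha : ir \tod \id{Y}$, given by an \oo-functor $h : \J \otimes Y \to Y$.

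First I would pass to the underlying oplax transformation: as explained just before Proposition~\ref{prop:J-trans_rev}, precomposing $h$ with $l_1 \otimes Y : \Dn{1} \otimes Y \to \J \otimes Y$ yields an oplax transformation from $ir$ to $\id{Y}$ (a short check using $l_1 \circ \funcell{0} = \funcell{0}$ and $l_1 \circ \funcell{1} = \funcell{1}$ confirms that the source and target are indeed $ir$ and $\id{Y}$). The crucial step is then to apply Proposition~\ref{prop:J-trans_rev}, which asserts precisely that this associated oplax transformation is reversible. Hence the retraction $r$ together with this reversible oplax transformation $ir \tod \id{Y}$ exhibit $i$ as a reversible transformation retract, which is the first assertion.

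For the second assertion, I would simply recall from paragraph~\ref{paragr:def_retr} that a reversible transformation retract is a folk weak equivalence, since reversible transformations are right homotopies for the folk model category structure by Remark~\ref{rem:trans_right_homot}. There is no genuine obstacle to overcome here: all the substance lies in Proposition~\ref{prop:J-trans_rev}, which itself reduces the reversibility of the transformation to the reversibility of the principal cell $(01)$ of $\J$, via the compatibility of the Gray tensor product with reversible cells established in Proposition~\ref{prop:prod_Rm_Dn}. Thus the present proposition is best regarded as the packaging of those earlier results into a statement at the level of transformation retracts.
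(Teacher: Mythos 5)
Your proof is correct and follows exactly the paper's argument: the paper's own proof consists of the single line ``This follows immediately from Proposition~\ref{prop:J-trans_rev}'', and your write-up is just a careful unpacking of that reduction together with the observation from paragraph~\ref{paragr:def_retr} that reversible transformation retracts are folk weak equivalences. Nothing to add.
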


\begin{proof}
  This follows immediately from Proposition~\ref{prop:J-trans_rev}.
\end{proof}

\begin{proposition}\label{prop:J_retr_we}
  For every \oo-category $X$, the \oo-functors $X \to \J \otimes X$,
  obtained by tensoring $\funcell{0}, \funcell{1} : \Dn{0} \to \J$ by
  $X$, are $\J$-transformation retracts and hence folk weak
  equivalences.
\end{proposition}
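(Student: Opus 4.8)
The plan is to realize each \oo-functor $\funcell{\e} \otimes X : X \to \J \otimes X$, for $\e \in \{0,1\}$, as a $\J$-transformation retract and then invoke Proposition~\ref{prop:J1_immer_are_we}. Write $Y = \J \otimes X$ and take as retraction the \oo-functor $r = q_1 \otimes X : Y \to X$, where $q_1 : \J \to \Dn{0}$ is the trivial fibration of paragraph~\ref{paragr:def_J}; since $q_1 \funcell{\e} = \id{\Dn{0}}$ (as $\Dn{0}$ is terminal), we get $r \circ (\funcell{\e} \otimes X) = \id{X}$. It then remains to produce a $\J$-transformation $(\funcell{\e} \otimes X)\circ r \tod \id{Y}$, that is (paragraph~\ref{paragr:def_retr}), an \oo-functor $h : \J \otimes Y \to Y$ restricting to $(\funcell{\e} \otimes X)\circ r = (\funcell{\e}\, q_1) \otimes X$ along $\funcell{0} \otimes Y$ and to $\id{Y}$ along $\funcell{1} \otimes Y$.

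I would obtain $h$ from an \oo-functor $\mu_\e : \J \otimes \J \to \J$ by setting $h = \mu_\e \otimes X$ and using the associativity isomorphism $\J \otimes Y = (\J \otimes \J) \otimes X$. Tracing through these identifications, the required boundary conditions on $h$ amount to
\[
  \mu_\e \circ (\funcell{0} \otimes \J) = \funcell{\e}\, q_1
  \quadand
  \mu_\e \circ (\funcell{1} \otimes \J) = \id{\J}.
\]
I would construct such a $\mu_\e$ as a lift in the square
\[
  \xymatrix{
    \J \amalg \J \ar[d]_{k_1 \otimes \J} \ar[r]^-{(\funcell{\e} q_1,\, \id{\J})} & \J \ar[d]^{q_1} \\
    \J \otimes \J \ar[r] \ar@{.>}[ur]|{\mu_\e} & \Dn{0} \pbox{,}
  }
\]
which commutes because $\Dn{0}$ is terminal. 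Here the left vertical map is the coproduct of $\funcell{0} \otimes \J$ and $\funcell{1} \otimes \J$, which, using $\bDn{1} = \Dn{0} \amalg \Dn{0}$ and $k_1 = \funcell{0} \amalg \funcell{1}$, is exactly $k_1 \otimes \J$. A lift exists as soon as $k_1 \otimes \J$ is a folk cofibration, and its two boundary values are then precisely those displayed above.

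The crux is therefore to check that $k_1 \otimes \J$ is a cofibration. The \oo-category $\Dn{0}$ is cofibrant (as $i_0 : \emptyset \hookto \Dn{0}$ is a generating cofibration), hence so is $\bDn{1} = \Dn{0} \amalg \Dn{0}$, and therefore so is $\J$, since $k_1$ is a cofibration by construction. Applying Theorem~\ref{thm:tens_comp_cof} to the cofibrations $k_1 : \bDn{1} \to \J$ and $\emptyset \to \J$, and observing that the pushout-product $k_1 \boxprod (\emptyset \to \J)$ reduces to $k_1 \otimes \J$ (because $\otimes$ preserves the initial object in each variable), we conclude that $k_1 \otimes \J$ is indeed a cofibration. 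This yields $\mu_\e$, hence $h$, and exhibits $\funcell{\e} \otimes X$ as a $\J$-transformation retract; Proposition~\ref{prop:J1_immer_are_we} then gives that it is a folk weak equivalence. The only delicate point is the bookkeeping relating the boundary conditions on $h = \mu_\e \otimes X$ to those on $\mu_\e$ through the associativity and unit isomorphisms of the Gray tensor product.
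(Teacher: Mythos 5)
Your proof is correct and follows essentially the same route as the paper: the paper first observes that $\J$-transformation retracts are stable under $- \otimes X$ and then exhibits $\funcell{\epsilon} : \Dn{0} \to \J$ as a $\J$-transformation retract via exactly the lifting problem you write down (with $k_1 \otimes \J$ a cofibration by Theorem~\ref{thm:tens_comp_cof} and $q_1$ a trivial fibration). Your version merely carries the tensoring by $X$ through the construction explicitly rather than invoking the stability up front.
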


\begin{proof}
  The class of $\J$-transformation retracts is clearly stable under
  tensor product by an object on the right. Therefore it suffices to
  prove that $\funcell{0}, \funcell{1}: \Dn{0} \to \J$ are
  $\J$-transformation retracts. Let $\epsilon = 0, 1$. The \oo-functor
  $r : \J \to \Dn{0}$ is clearly a retraction of
  $\funcell{\epsilon}$. Moreover, a $\J$-transformation
  $\funcell{\epsilon}r \tod \id{\J}$ is precisely a lift to the lifting
  problem
  \[
    \xymatrix@C=3.5pc{ \bDn{1} \otimes \J \ar[d]_{k_1 \otimes \J}
      \ar[r]^-{\funcell{\epsilon}r \, \amalg \, \J}
        & \J  \ar[d] \\
      \J \otimes \J \ar[r] \ar@{.>}[ur] & \Dn{0} \pbox{,} }
  \]
  where we identified $\bDn{1} \otimes \J$ with $\J \amalg \J$. As
  $k_1$ is a cofibration and $\J$ is a cofibrant object, it follows from
  Theorem~\ref{thm:tens_comp_cof} that the left vertical arrow is a
  cofibration. As the right vertical arrow is a trivial fibration
  by definition of $\J$, the desired lift exists, thereby proving the
  result.
\end{proof}

\begin{theorem}
  Let $X$ be an \oo-category.
  \begin{enumerate}
  \item The \oo-functor $\J \otimes X \to X$, obtained by tensoring
    $\J \to \Dn{0}$ by $X$, is a folk weak equivalence.
  \item If moreover, $X$ is cofibrant, then the factorization
    \[
      X \amalg X \to \J \otimes X \to X
    \]
    of the codiagonal, obtained by tensoring
    \[ \bDn{1} \xto{k_1} \J \to \Dn{0} \]
    by $X$, is a cylinder object for the folk model category structure, in
    the sense that the first arrow is a cofibration and the second one is a
    weak equivalence.
  \end{enumerate}
\end{theorem}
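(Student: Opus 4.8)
The plan is to derive both parts by combining the two principal results already established---Proposition~\ref{prop:J_retr_we} and Theorem~\ref{thm:tens_comp_cof}---together with the two-out-of-three property of weak equivalences; essentially all the substantial work has already been done.

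For~(a), I would start from the retraction underlying Proposition~\ref{prop:J_retr_we}. Since $\Dn{0}$ is terminal (and is the unit of the Gray tensor product, see paragraph~\ref{paragr:def_tens}), the composite $\Dn{0} \xto{\funcell{0}} \J \to \Dn{0}$ is the identity of $\Dn{0}$; tensoring by $X$ therefore exhibits
\[ X \xto{\funcell{0} \otimes X} \J \otimes X \to X \]
as a factorization of the identity of $X = \Dn{0} \otimes X$. By Proposition~\ref{prop:J_retr_we} the first \oo-functor is a folk weak equivalence, and the identity of $X$ is trivially one, so the two-out-of-three property forces $\J \otimes X \to X$ to be a folk weak equivalence. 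This is~(a).

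For~(b), the second arrow of the factorization is precisely the \oo-functor of~(a), hence a weak equivalence, and it only remains to see that the first arrow $X \amalg X \to \J \otimes X$ is a cofibration. First I would identify the source: as $\bDn{1} = \Dn{0} \amalg \Dn{0}$ and the functor $-\otimes X$ preserves colimits (the Gray tensor product being biclosed), one has $\bDn{1} \otimes X \simeq X \amalg X$ compatibly with the codiagonal, and under this identification the first arrow is $k_1 \otimes X$. The key observation is that $k_1 \otimes X$ is a pushout-product: feeding $k_1 : \bDn{1} \to \J$ and $\emptyset \to X$ into the pushout-product construction, and using $\J \otimes \emptyset = \bDn{1} \otimes \emptyset = \emptyset$, the corner map $k_1 \boxprod (\emptyset \to X)$ collapses exactly to $k_1 \otimes X$. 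Now $k_1$ is a folk cofibration by construction (paragraph~\ref{paragr:def_J}) and $\emptyset \to X$ is one since $X$ is cofibrant, so Theorem~\ref{thm:tens_comp_cof} shows this corner map is a folk cofibration, finishing~(b).

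I expect no genuine obstacle here, the content residing in the already-proved Proposition~\ref{prop:J_retr_we} and Theorem~\ref{thm:tens_comp_cof}. The only points needing care are the bookkeeping identifications: verifying that the tensored composite is truly $\id{X}$, that the isomorphism $\bDn{1} \otimes X \simeq X \amalg X$ is compatible with the fold maps on both sides, and that the two degenerate pushouts make the corner map coincide with $k_1 \otimes X$---all immediate once the colimit-preservation of $-\otimes X$ is invoked.
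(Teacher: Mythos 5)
Your proof is correct and follows essentially the same route as the paper: part (a) is obtained by observing that $\J \otimes X \to X$ is a retraction of the weak equivalence $X \to \J \otimes X$ of Proposition~\ref{prop:J_retr_we} (hence a weak equivalence by two-out-of-three), and part (b) by recognizing $k_1 \otimes X$ as the pushout-product $k_1 \boxprod (\emptyset \to X)$ and invoking Theorem~\ref{thm:tens_comp_cof}. The bookkeeping identifications you flag are exactly the ones the paper leaves implicit.
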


\begin{proof}
  The \oo-functor $\J \otimes X \to X$ is a retract of any of the
  \oo-functors $X \to \J \otimes X$ considered in the previous
  proposition and is hence a weak equivalence by this same proposition.
  This proves the first point. As for the second one, it follows from
  Theorem~\ref{thm:tens_comp_cof} since $k_1$ is a cofibration and $X$ is
  cofibrant.
\end{proof}

\begin{remark}
  The \oo-functor $X \amalg X \to \J \otimes X$ is not a folk cofibration in
  general. To see this, recall first that if $p : y_0 \to y_1$ is a $1$-cell
  in an \oo-category $Y$ and $f : x_0 \to x_1$, $g : x_1 \to x_2$ are two
  $1$-cells in an \oo-category $X$, then the following relation holds in~$Y
  \otimes X$:
  \[
    ((y_1 \otimes g) \comp_0 (p \otimes f))\comp_1
    ((p \otimes g) \comp_0 (y_0 \otimes f)) =
    p \otimes (g \comp_0 f).
  \]
  Diagrammatically, this means that the following $2$-cells are equal:
   \[
     \xymatrix@C=3pc@R=3pc{
       y_0 \otimes x_0
       \ar[r]^{y_0 \otimes f}
       \ar[d]_{p \otimes x_0}
       &
       y_0 \otimes x_1
       \ar[r]^{y_0 \otimes g}
       \ar[d]_{p \otimes x_1}
       \ar@{}[ld]|-{}
       &
       y_0 \otimes x_2
       \ar[d]^{p \otimes x_2}
       \\
       y_1 \otimes x_0
       \ar[r]_{y_1 \otimes f}
       &
       y_1 \otimes x_1
       \ar[r]_{y_1 \otimes g}
       &
       y_1 \otimes x_2
       \ar@{}[lu];[ll]_(0.35){}="s1"^(0.65){}="t1"
       \ar@2"s1";"t1"_{p \otimes f}
       \ar@{}[u];[l]_(0.35){}="s2"^(0.65){}="t2"
       \ar@2"s2";"t2"_{p \otimes g}
     }
     \qquad
     \xymatrix@C=3.5pc@R=3pc{
       y_0 \otimes x_0
       \ar[r]^{y_0 \otimes (g \comp_0 f)}
       \ar[d]_{p \otimes x_0}
       &
       y_0 \otimes x_2
       \ar[d]^{p \otimes x_2}
       \\
       y_1 \otimes x_0
       \ar[r]_{y_1 \otimes (g \comp_0 f)}
       &
       y_1 \otimes x_2 \pbox{.}
       \ar@{}[u];[l]_(0.35){}="s1"^(0.65){}="t1"
       \ar@2"s1";"t1"_{p \otimes (g \comp_0 f)\,\,\,}
     }
  \]

  Now take $X$ to be the category generated by one object $x$ and one arrow $f : x \to x$,
  subject to the relation $f \comp_0 f = 1_x$. In other words, $X$ is the cyclic
  group with $2$ elements, seen as a one-object category. Let $p: y_0 \to
  y_1$ be the principal cell of $\J$. Then the previous formula (taking $g =
  f$) shows that the following equality holds in $\J \otimes X$:
  \[
    ((y_1 \otimes f) \comp_0 (p \otimes f))\comp_1 ((p \otimes f) \comp_0
  (y_0 \otimes f)) = 1_{p \otimes x}.  \]
  This relation implies that any \oo-functor from $\J \otimes X$ to a
  cofibrant \oo-category must send $p \otimes f$ to an identity. Consider
  now $u : \J \otimes X \to Z$ the pushout of the obvious \oo-functor $X
  \amalg X \to \Dn{0} \amalg \Dn{0} = \bDn{1}$ along the
  \oo-functor $X \amalg X \to \J \otimes X$. One can check that the cell $u
  (p \otimes f)$ is non-trivial in $Z$. Factoring the map $\bDn{1} \to Z$
  into a cofibration followed by a trivial fibration $t : Z' \to
  Z$, we get a commutative square
  \[
   \xymatrix{
       X \amalg X
       \ar[r]
       \ar[d]
       &
       Z'
       \ar[d]^{t}
       \\
       \J \otimes X
       \ar[r]_-{u}
       &
       Z \pbox{,}
     }
  \]
  with $Z'$ a cofibrant \oo-category. But this square cannot admit a
  lift for such a lift would map $p \otimes f$ to an identity in $Z'$,
  contradicting the fact that $u(p \otimes f)$ is non-trivial. Therefore the
  map $X \amalg X \to \J \otimes X$ is not a cofibration.
\end{remark}

\begin{corollary}\label{coro:left_right_homot}
  Let $f, g : X \to Y$ be two \oo-functors, where $X$ is a cofibrant
  \oo-category. If there exists a reversible transformation from $f$
  to $g$, then there exists a $\J$-transformation from $f$ to $g$.
\end{corollary}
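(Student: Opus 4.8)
The plan is to read the statement as the standard model\=/categorical fact that a right homotopy between maps into a fibrant object can be converted into a left homotopy, here realised concretely on the cylinder $\J \otimes X$. First I would translate both notions. By Theorem~\ref{thm:path_obj} and Remark~\ref{rem:trans_right_homot}, a reversible transformation from $f$ to $g$ is exactly a right homotopy, that is, an \oo-functor $H : X \to \CylRev(Y)$ with $\pi^- H = f$ and $\pi^+ H = g$. On the other hand, since $X$ is cofibrant, the theorem above shows that $\J \otimes X$ is a cylinder object for $X$, with structure \oo-functors $i_0 = \funcell{0} \otimes X$ and $i_1 = \funcell{1} \otimes X$ and with $(i_0, i_1) : X \amalg X \to \J \otimes X$ a cofibration; by the very definition of a $\J$-transformation, such a transformation from $f$ to $g$ is precisely a left homotopy from $f$ to $g$ with respect to this cylinder object. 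Thus it remains to produce such a left homotopy from $H$.

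To do so, I would run the usual lifting argument. Every \oo-category is fibrant for the folk model category structure, so $Y$ is fibrant; hence the first projection $Y \times Y \to Y$ is a fibration and $\pi^- : \CylRev(Y) \to Y$, being its composite with the fibration $\pi = (\pi^-, \pi^+)$, is a fibration as well. As $\iota : Y \to \CylRev(Y)$ is a weak equivalence with $\pi^- \iota = \id{Y} = \pi^+ \iota$ (the composite $\pi \iota$ being the diagonal), the two\=/out\=/of\=/three property makes $\pi^-$ a trivial fibration. Writing $q : \J \otimes X \to X$ for the collapse \oo-functor obtained by tensoring $\J \to \Dn{0}$ by $X$, I consider the square
\[
  \xymatrix@C=3pc{
    X \amalg X \ar[d]_{(i_0,\, i_1)} \ar[r]^-{(\iota f,\, H)} & \CylRev(Y) \ar[d]^{\pi^-} \\
    \J \otimes X \ar[r]_-{f q} \ar@{.>}[ur] & Y \pbox{,}
  }
\]
which commutes because $\pi^- \iota f = f = \pi^- H$ while $f q i_0 = f = f q i_1$. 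Its left\=/hand map is a cofibration and $\pi^-$ is a trivial fibration, so a lift $G : \J \otimes X \to \CylRev(Y)$ exists, with $G i_0 = \iota f$ and $G i_1 = H$.

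Setting $h := \pi^+ G : \J \otimes X \to Y$ then concludes: one has $h i_0 = \pi^+ \iota f = f$ and $h i_1 = \pi^+ H = g$, so that $h$ is the desired $\J$-transformation from $f$ to $g$. I expect no conceptual obstacle here; the delicate points are purely bookkeeping. The genuinely substantive input, which is external to the argument, is the theorem above guaranteeing---via the cofibrancy of $X$---that $(i_0, i_1) : X \amalg X \to \J \otimes X$ is a cofibration, so that $\J \otimes X$ is a \emph{good} cylinder object; one must also check that the structure \oo-functors of this cylinder are indeed $\funcell{0} \otimes X$ and $\funcell{1} \otimes X$ (so that a $\J$-transformation really is a left homotopy for it, with endpoints matching those of $H$), and that $\pi^-$ is a trivial fibration, which is exactly where the fibrancy of $Y$ is used. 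Everything else is the routine manipulation of the weak factorization system generated by $I$.
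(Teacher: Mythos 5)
Your proposal is correct and follows essentially the same route as the paper: the paper's proof simply invokes the standard fact that left and right homotopy coincide for a cofibrant source and fibrant target, using the path object $\CylRev(Y)$ and the cylinder object $\J \otimes X$, and your argument is an explicit unwinding of that fact via the usual lifting against the trivial fibration $\pi^-$. All the bookkeeping you flag (the structure maps of the cylinder being $\funcell{0} \otimes X$ and $\funcell{1} \otimes X$, and $\pi^-$ being a trivial fibration) checks out against the definitions in the paper.
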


\begin{proof}
  As all the \oo-categories are fibrant for the folk model category
  structure and $X$ is cofibrant by hypothesis, the relations of left
  homotopy and right homotopy on $\Hom_{\ooCat}(X, Y)$ coincide. We
  get the result using the path object $\CylRev(Y)$ (see
  Theorem~\ref{thm:path_obj}) and the cylinder object $\J \otimes X$
  given by the previous theorem.
\end{proof}

\begin{corollary}\label{coro:rev_retract_J1}
  Any reversible transformation retract whose target is cofibrant is a
  $\J$-transformation retract.
\end{corollary}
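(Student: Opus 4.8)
The plan is to deduce this directly from Corollary~\ref{coro:left_right_homot}. Let $i : X \to Y$ be a reversible transformation retract whose target $Y$ is cofibrant, and, following the definition of paragraph~\ref{paragr:def_retr}, fix a retraction $r : Y \to X$, so that $ri = \id{X}$, together with a reversible transformation $\alpha : ir \tod \id{Y}$. The key observation is that $ir$ and $\id{Y}$ are two \oo-functors with common \emph{source} $Y$, and that $Y$ is cofibrant by hypothesis. We may therefore apply Corollary~\ref{coro:left_right_homot} to the pair $ir, \id{Y} : Y \to Y$ and to the reversible transformation $\alpha$: this provides a $\J$-transformation from $ir$ to $\id{Y}$.

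Keeping the very same retraction $r$, this $\J$-transformation exhibits $i$ as a $\J$-transformation retract, which is exactly what we want. I do not expect any genuine obstacle here: the statement is an immediate consequence of Corollary~\ref{coro:left_right_homot}, the only point worth noticing being that the source object relevant to the application of that corollary is the cofibrant \oo-category $Y$ (the target of $i$), and not $X$, so that the cofibrancy hypothesis on $Y$ is precisely what is needed.
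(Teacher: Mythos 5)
Your proof is correct and is exactly the argument the paper intends: the paper's proof simply says the corollary "follows immediately from the previous corollary," and you have correctly spelled out why, including the key point that Corollary~\ref{coro:left_right_homot} is applied to the pair $ir, \id{Y} : Y \to Y$ whose common source is the cofibrant object $Y$.
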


\begin{proof}
  This follows immediately from the previous corollary.
\end{proof}

\begin{remark}

The following diagram sums up the relationship between the different classes
of \oo-functors considered in this section in between folk trivial
cofibrations and folk weak equivalences:
  \[
    \xymatrix@C=5pc{
      & \txt{folk trivial \\ cofibration}
      \ar@2[d]^{\text{\cite[Cor 4]{Folk}}}
      \ar@2[ld]_{\text{Prop \ref{prop:triv_cof_retr}}\quad} & \\
      \txt{$\J$-transformation \\retract}
      \ar@2[r]^{\text{Prop \ref{prop:J1_immer_are_we}}}
      &
      \txt{reversible \\ transformation \\ retract}
      \ar@2[r]_{\text{\cite[Lem 16]{Folk}}}
      \ar@{:>}@<8pt>[l]^{\text{Cor \ref{coro:rev_retract_J1}}}
      &
      \txt{folk weak \\ equivalence} \pbox{.}
  }
  \]
  (The dotted arrow means that the implication holds under the additional
  assumption that the target is cofibrant.)
\end{remark}

\section{The folk model category structure is monoidal for the tensor
  product}
\label{sec:Gray_folk}

In this section, we will end the proof of the compatibility of the Gray
tensor product with the folk model category structure and give some
supplements on the resulting monoidal model category.

\medbreak

We start by a general lemma abstracting our strategy to prove this
compatibility:

\begin{lemma}\label{lemma:model_monoidal_criterion}
  Let $\M$ be a cofibrantly generated model category endowed with a (not
  necessarily symmetric nor closed) monoidal category structure satisfying
  the following hypotheses:
  \begin{enumerate}[label=H\arabic*)]
  \item \label{hyp:unit} the unit of the tensor product $\otimes$ is cofibrant,
  \item \label{hyp:target_cof} the sources of the generating cofibrations
  are cofibrant,
  \item \label{hyp:tens_I} if $i$ is a cofibration \resp{a trivial
  cofibration}, then so are $i \otimes \emptyset$ and $\emptyset \otimes i$,
  where $\emptyset$ denotes the initial object of $\M$,
  \item \label{hyp:cof} for every generating cofibrations $i : A \to B$ and
  $j : C \to D$, the pushout-product
    \[
      i \boxprod j : B \otimes C \amalg_{A \otimes C} A \otimes D \to
      B \otimes D
    \]
  is a cofibration,
  \item \label{hyp:trivial_cof} for every generating trivial cofibration $i$
  and every cofibrant object $A$, the morphisms $i \otimes A$ and $A \otimes
  i$ are weak equivalences.
  \end{enumerate}
  Then $\M$ is a monoidal model category.
\end{lemma}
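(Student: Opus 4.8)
The plan is to verify the two axioms of a monoidal model category (see paragraph~\ref{paragr:def_monoidal_model}): the pushout\nbd-product (left Quillen bifunctor) axiom and the unit axiom. The unit axiom is immediate from hypothesis~\ref{hyp:unit}: as the unit of $\otimes$ is cofibrant, one may take its cofibrant replacement to be the identity, so that the comparison maps are isomorphisms and in particular weak equivalences. The whole content is therefore the pushout\nbd-product axiom, namely that $i \boxprod j$ is a cofibration whenever $i$ and $j$ are cofibrations, and moreover a trivial cofibration as soon as one of the two is a trivial cofibration. I would treat the ``cofibration half'' and the ``trivial half'' separately, reducing each to the case of generators.

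For the cofibration half I would invoke the classical reduction of Lemma~\ref{lemma:monoidal_model_gen}: since $\otimes$ preserves the connected colimits used to build cell complexes, and since hypothesis~\ref{hyp:tens_I} controls the interaction of $\boxprod$ with the initial object $\emptyset$, the class of maps $i$ such that $i \boxprod j$ is a cofibration for every cofibration $j$ is closed under the cellular operations (pushout, transfinite composition, coproduct, retract) in each variable. It thus suffices to treat generating cofibrations, which is exactly hypothesis~\ref{hyp:cof}. This is precisely the abstract form of Theorem~\ref{thm:tens_comp_cof}, now deduced from the axioms rather than from Steiner theory.

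The hard part is the trivial half. By the same reduction it suffices to prove that $i \boxprod j$ is a trivial cofibration when $i : A \to B$ is a generating cofibration and $j : C \to D$ a generating trivial cofibration, and symmetrically with the roles exchanged; closure under cellular operations in each variable then propagates the conclusion to all cofibrations and all trivial cofibrations. So fix such $i$ and $j$. By hypothesis~\ref{hyp:target_cof} the source $A$ is cofibrant, whence $B$ is cofibrant as well, being the target of a cofibration with cofibrant source. Using the already\nbd-established cofibration half together with hypothesis~\ref{hyp:tens_I} (to handle $\emptyset \otimes j$, which enters the factorization of $A \otimes j$ through the pushout\nbd-product of $\emptyset \to A$ with $j$), the maps $A \otimes j$ and $B \otimes j$ are cofibrations, and by hypothesis~\ref{hyp:trivial_cof} they are weak equivalences; hence they are trivial cofibrations. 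Now $i \boxprod j$ is a cofibration by the first half, so it remains only to see it is a weak equivalence. Writing $P = B \otimes C \amalg_{A \otimes C} A \otimes D$, the canonical map $B \otimes C \to P$ is the cobase change of the trivial cofibration $A \otimes j$ along $i \otimes C$, hence itself a trivial cofibration, and the composite $B \otimes C \to P \to B \otimes D$ (the second map being $i \boxprod j$) is exactly $B \otimes j$, a weak equivalence. Two\nbd-out\nbd-of\nbd-three then forces $i \boxprod j$ to be a weak equivalence, thus a trivial cofibration. The exchanged case is identical, using $j \otimes A$ and $j \otimes B$ and the other half of hypothesis~\ref{hyp:trivial_cof}.

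I expect the main obstacle to be organizing the trivial half so that the generator reduction and the two\nbd-out\nbd-of\nbd-three argument interlock correctly in this non\nbd-closed setting. Concretely, one must first know that $A \otimes j$ and $B \otimes j$ are genuine trivial cofibrations—here the cofibration half and hypothesis~\ref{hyp:tens_I} are both needed, since without closedness the tensor need not preserve the initial object and $\emptyset \otimes j$ must be dealt with by hand—before the pushout and two\nbd-out\nbd-of\nbd-three step can conclude. One must also check that all the saturation arguments in each variable rely only on connected colimits, with hypothesis~\ref{hyp:tens_I} absorbing precisely the non\nbd-connected (initial object) corner.
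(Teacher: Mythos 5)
Your argument for the pushout-product axiom is essentially the paper's: reduce to generators via Lemma~\ref{lemma:monoidal_model_gen}, use \ref{hyp:cof} for the cofibration half, and for the trivial half factor $i \boxprod j$ against one of the canonical maps into the pushout and conclude by two-out-of-three. The paper treats the case where $i$ is the generating trivial cofibration (pushing out $i \otimes C$ along $A \otimes j$ and comparing with $i \otimes D$) and declares the other case dual; you treat the dual case explicitly (pushing out $A \otimes j$ along $i \otimes C$ and comparing with $B \otimes j$), which is the same argument with the roles exchanged. The one point to tighten is the unit axiom: as stated in paragraph~\ref{paragr:def_monoidal_model} it quantifies over \emph{every} cofibrant replacement $p : QI \trivfib I$, so ``take the identity replacement'' does not suffice. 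The fix is Remark~\ref{rem:M1}: once \ref{hyp:unit}, \ref{hyp:tens_I} and the pushout-product axiom give that tensoring with a cofibrant object preserves (trivial) cofibrations, Ken Brown's lemma shows it preserves weak equivalences between cofibrant objects, and $p$ is such a weak equivalence since $I$ is cofibrant. You have all the ingredients for this already in place.
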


\begin{proof}
  First note that by Remark~\ref{rem:M1}, the hypothesis \ref{hyp:unit} and
  \ref{hyp:tens_I} imply that the unit axiom (see
  paragraph~\ref{paragr:def_monoidal_model}) is satisfied.

  Moreover, by the classical Lemma \ref{lemma:monoidal_model_gen} (see also
  Remark~\ref{rem:monoidal_model_gen}), the pushout-product axiom
  (see again paragraph~\ref{paragr:def_monoidal_model}) can be checked on
  generators. The hypothesis \ref{hyp:cof} thus implies that the pushout-product
  of two cofibrations is a cofibration, and it suffices to show that if,
  either $i : A \to B$ is a generating trivial cofibration and $j : C \to D$
  is a generating cofibration, or $i$ is a generating cofibration and $j$ is
  a generating trivial cofibration, then $i \boxprod j$ is a weak
  equivalence. Let us prove the first case, the proof of the second one
  being dual.

  Consider the commutative diagram
  \[
    \xymatrix@C=1pc@R=1pc{ A \otimes C \ar[rr]^{A \otimes j}
      \ar[dd]_{i \otimes C} && A \otimes D \ar[dd]_{\epsilon_2}
      \ar@/^3ex/[dddr]^{i \otimes D}
      \\\\
      B \otimes C \ar[rr]^-{\epsilon_1} \ar@/_3ex/[rrrd]_{B \otimes j}
      && B \otimes C \amalg_{A \otimes C} A \otimes D \ar[dr]^(.65){i
        \boxprod
        j} \\
      &&& B \otimes D \pbox{,} }
  \]
  where $\epsilon_1$ and $\epsilon_2$ are the canonical morphisms. The
  pushout-product axiom for cofibrations and hypothesis~\ref{hyp:tens_I}
  imply that the tensor product of a cofibration and a cofibrant object is
  a cofibration (see Remark~\ref{rem:M1}). It thus follows from
  \ref{hyp:target_cof} and~\ref{hyp:trivial_cof} that $i
  \otimes C$ is a trivial cofibration. The morphism $\epsilon_2$ being a
  pushout of $i \otimes C$, it is a trivial
  cofibration as well. For the same reasons as above, the morphism $i
  \otimes D$ is a weak equivalence and the two-out-of-three property implies
  that $i \boxprod j$ is also a weak equivalence, thereby proving the lemma.
\end{proof}

\begin{remark}
  Note that under the hypothesis \ref{hyp:unit}, \ref{hyp:target_cof} and
  \ref{hyp:tens_I} of the lemma, the fact that $\M$ is a monoidal model
  category is actually equivalent to the hypothesis~\ref{hyp:cof} and
  \ref{hyp:trivial_cof}.
\end{remark}

\begin{remark}\label{rem:model_monoidal_criterion}
  The hypothesis \ref{hyp:unit} and \ref{hyp:target_cof} are fulfilled by
  the folk model category structure. Moreover, the hypothesis
  \ref{hyp:tens_I} is fulfilled in any biclosed monoidal structure (we will
  also apply this lemma to the join of \oo-categories which is not
  biclosed). To prove that the folk model category structure is monoidal
  for the Gray tensor product, it thus suffices to prove \ref{hyp:cof} and
  \ref{hyp:trivial_cof}. The hypothesis \ref{hyp:cof} is
  Theorem~\ref{thm:tens_comp_cof} and will now prove
  \ref{hyp:trivial_cof}.
\end{remark}

Let us see that \ref{hyp:trivial_cof} is a direct consequence of results
from the previous section.

\begin{proposition}\label{prop:i_otimes_Z}
  Let $i : X \to Y$ be a folk trivial cofibration between cofibrant
  objects.  Then, for any \oo-category $Z$, the \oo-functor
  $i \otimes Z : X \otimes Z \to Y \otimes Z$ is a $\J$-transformation
  retract and in particular a folk weak equivalence.
\end{proposition}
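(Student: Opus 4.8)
The plan is to avoid analyzing the cells of $i \otimes Z$ one by one, and instead to promote $i$ itself to a $\J$\nbd-transformation retract \emph{before} tensoring, then transport this structure along the functor ${-} \otimes Z$. The crucial observation is that the upgrade from a reversible transformation retract to a $\J$\nbd-transformation retract requires the \emph{target} to be cofibrant (Corollary~\ref{coro:rev_retract_J1}), a hypothesis satisfied by $Y$ but \emph{not} in general by $Y \otimes Z$, since $Z$ is an arbitrary \oo-category; whereas the class of $\J$\nbd-transformation retracts, once obtained, is stable under tensoring by an object on the right with no cofibrancy assumption, exactly as already exploited in the proof of Proposition~\ref{prop:J_retr_we}. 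This is why the reduction must be carried out at the level of $i : X \to Y$ rather than $i \otimes Z$.

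Concretely, I would proceed in three steps. First, since $i$ is a folk trivial cofibration it is a (strong) reversible transformation retract by \cite[Corollary 4.30]{Folk} (see paragraph~\ref{paragr:def_retr}); as $Y$ is cofibrant, Corollary~\ref{coro:rev_retract_J1} upgrades this to a $\J$\nbd-transformation retract, yielding a retraction $r : Y \to X$ with $ri = \id{X}$ together with a $\J$-transformation $ir \tod \id{Y}$ encoded by an \oo-functor $h : \J \otimes Y \to Y$. Second, I would tensor the whole datum by $Z$ on the right: the \oo-functor $r \otimes Z$ is a retraction of $i \otimes Z$ because $(r \otimes Z)(i \otimes Z) = (ri) \otimes Z = \id{X \otimes Z}$, and, using the associativity isomorphism $(\J \otimes Y) \otimes Z \simeq \J \otimes (Y \otimes Z)$, the \oo-functor $h \otimes Z$ becomes an \oo-functor $\J \otimes (Y \otimes Z) \to Y \otimes Z$ exhibiting a $\J$-transformation from $(i \otimes Z)(r \otimes Z)$ to $\id{Y \otimes Z}$. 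Hence $i \otimes Z$ is a $\J$\nbd-transformation retract. Third, I would invoke Proposition~\ref{prop:J1_immer_are_we} to conclude that it is in particular a folk weak equivalence.

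The only point requiring a little care — and the main conceptual obstacle — is recognizing this correct order of operations, so that cofibrancy of the target is available precisely where it is needed. Once this is seen, the remaining verifications are the routine monoidal bookkeeping showing that ${-} \otimes Z$ preserves the $\J$\nbd-transformation retract structure: naturality of the associator sends $(\funcell{0} \otimes Y) \otimes Z$ and $(\funcell{1} \otimes Y) \otimes Z$ to the two boundary inclusions of $\J \otimes (Y \otimes Z)$, which is exactly the stability statement already used for Proposition~\ref{prop:J_retr_we}. Note that Proposition~\ref{prop:prod_Rm_Dn} of this section enters only indirectly, through the $\J$\nbd-transformation machinery that it underlies.
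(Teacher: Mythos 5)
Your proof is correct and follows exactly the paper's argument: upgrade $i$ to a $\J$\nbd-transformation retract via Corollary~\ref{coro:rev_retract_J1} (using cofibrancy of $Y$), tensor the retract data by $Z$ on the right, and conclude with Proposition~\ref{prop:J1_immer_are_we}. You correctly identify, and in fact articulate more explicitly than the paper does, why the upgrade must precede the tensoring.
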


\begin{proof}
  As noted in paragraph~\ref{paragr:def_retr}, it is proved in
  \cite{Folk} that such an \oo-functor $i$ is a reversible
  transformation retract. As $Y$ is cofibrant,
  Corollary~\ref{coro:rev_retract_J1} implies that $i$ is a
  $\J$-transformation retract. But it is immediate that
  $\J$-transformation retracts are stable by tensoring by an object on the
  right, hence the result by Proposition~\ref{prop:J1_immer_are_we}.
\end{proof}

\begin{corollary}\label{coro:Z_otimes_i}
  Let $i : X \to Y$ be a folk trivial cofibration between cofibrant
  objects. Then, for any \oo-category $Z$, the \oo-functor
  $Z \otimes i : Z \otimes X \to Z \otimes Y$ is a folk weak equivalence.
\end{corollary}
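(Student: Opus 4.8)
The plan is to deduce this from Proposition~\ref{prop:i_otimes_Z} by means of the duality $Z \mapsto Z^\op$ of paragraph~\ref{paragr:dualities}, in exactly the same spirit as Corollary~\ref{coro:prod_rev_dual} was deduced from Proposition~\ref{prop:prod_rev_by_1cell}. The reason a separate argument is needed at all is that Proposition~\ref{prop:i_otimes_Z} only treats tensoring by an object \emph{on the right}: its proof relies on the class of $\J$\nbd-transformation retracts being stable under right tensoring (and on $Y$ being cofibrant). Rather than redevelop the left-handed variant of this theory, I would exploit the fact that the anti-monoidality of $Z \mapsto Z^\op$ turns a left tensor into a right tensor.

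First I would record that the duality $Z \mapsto Z^\op$ carries folk trivial cofibrations between cofibrant objects to folk trivial cofibrations between cofibrant objects. This is immediate from paragraph~\ref{paragr:dualities}: an \oo-functor is a folk weak equivalence (resp.\ a folk cofibration) if and only if its opposite is, so $i^\op : X^\op \to Y^\op$ is again a folk trivial cofibration; and $X^\op$, $Y^\op$ are cofibrant since $\emptyset^\op = \emptyset$ and cofibrations are stable under $(-)^\op$.

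Next I would invoke the natural anti-monoidal isomorphism $(A \otimes B)^\op \simeq B^\op \otimes A^\op$ of paragraph~\ref{paragr:dualities} to identify the \oo-functor $(Z \otimes i)^\op : (Z \otimes Y)^\op \to (Z \otimes X)^\op$ with $i^\op \otimes Z^\op : X^\op \otimes Z^\op \to Y^\op \otimes Z^\op$. Applying Proposition~\ref{prop:i_otimes_Z} to the folk trivial cofibration $i^\op$ between cofibrant objects and to the \oo-category $Z^\op$, the \oo-functor $i^\op \otimes Z^\op$ is a folk weak equivalence. Since an \oo-functor is a folk weak equivalence if and only if its opposite is, this yields that $Z \otimes i$ is a folk weak equivalence, as desired.

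The argument is essentially formal, and the only point requiring (routine) care is checking that the anti-monoidal isomorphism is natural enough in both variables to intertwine $Z \otimes i$ with $i^\op \otimes Z^\op$; this is precisely the naturality of the assignment $x \otimes y \mapsto y \otimes x$ asserted in paragraph~\ref{paragr:dualities}, so no genuine obstacle arises.
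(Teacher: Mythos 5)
Your proof is correct and coincides with the paper's own argument: both apply the duality $Z \mapsto Z^\op$, use the anti-monoidal isomorphism $(A \otimes B)^\op \simeq B^\op \otimes A^\op$ to identify $Z \otimes i$ with $(i^\op \otimes Z^\op)^\op$, and conclude via Proposition~\ref{prop:i_otimes_Z} applied to $i^\op$ and $Z^\op$ together with the stability of folk weak equivalences, cofibrations and trivial cofibrations under $(-)^\op$.
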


\begin{proof}
  We will use the duality $T \mapsto T^\op$ introduced in
  paragraph~\ref{paragr:dualities}. By this same paragraph, this duality
  preserves cofibrations, trivial cofibrations and weak equivalences, and
  we have a natural isomorphism $(A \otimes B)^\op \simeq B^\op \otimes
  A^\op$. This implies that the \oo-functor $i^\op : X^\op \to Y^\op$ is a
  trivial cofibration between cofibrant objects and hence, by the previous
  proposition, that $i^\op \otimes Z^\op : X^\op \otimes Z^\op \to Y^\op
  \otimes Z^\op$ is a folk weak equivalence. This shows that $Z \otimes i$,
  that can be identified with $(i^\op \otimes Z^\op)^\op$, is indeed a folk
  weak equivalence.
\end{proof}

\begin{theorem}\label{thm:Gray_mon_mod}
  The folk model category structure on $\ooCat$ is monoidal for the Gray
  tensor product.
\end{theorem}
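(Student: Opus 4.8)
The plan is to invoke the general criterion of Lemma~\ref{lemma:model_monoidal_criterion}, which reduces the statement to verifying the five hypotheses H1)--H5) for the folk model category structure equipped with the Gray tensor product. As explained in Remark~\ref{rem:model_monoidal_criterion}, three of these are already at hand: H1) and H2) hold for the folk structure, since the unit $\Dn{0}$ and the sources $\bDn{n}$ of the generating cofibrations are free, hence cofibrant; and H3) holds because the Gray tensor product is biclosed, so that $i \otimes \emptyset$ and $\emptyset \otimes i$ are identities of the initial object and thus trivially (trivial) cofibrations. Hypothesis H4), that the pushout-product of two generating cofibrations is a folk cofibration, is precisely Theorem~\ref{thm:tens_comp_cof}.

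It therefore remains only to check H5): for every generating trivial cofibration $i$ and every cofibrant \oo-category $A$, both $i \otimes A$ and $A \otimes i$ are folk weak equivalences. First I would record the structural point that each generating trivial cofibration $j_n : \Dn{n-1} \to \Jn{n}$ is a folk trivial cofibration \emph{between cofibrant objects}. The source $\Dn{n-1}$ is free, hence cofibrant. For the target, recall from paragraph~\ref{paragr:def_J} that $\Jn{n}$ sits in a factorization $\bDn{n} \xto{k_n} \Jn{n} \xto{q_n} \Dn{n-1}$ with $k_n$ a folk cofibration; since $\bDn{n}$ is cofibrant, the composite $\emptyset \to \bDn{n} \xto{k_n} \Jn{n}$ is a cofibration, so $\Jn{n}$ is cofibrant as well.

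With this observation in place, H5) follows at once from the results established earlier in this section. Applying Proposition~\ref{prop:i_otimes_Z} to the folk trivial cofibration $j_n$ between cofibrant objects shows that $j_n \otimes A$ is a folk weak equivalence (indeed for every \oo-category, not only for cofibrant $A$), and Corollary~\ref{coro:Z_otimes_i} gives the same conclusion for $A \otimes j_n$. This verifies H5), so that all the hypotheses of Lemma~\ref{lemma:model_monoidal_criterion} are met and the theorem follows.

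I do not expect any genuine obstacle at this final stage: the substantive difficulty has already been absorbed into the earlier results---Theorem~\ref{thm:tens_comp_cof} for the cofibration half, and, for the trivial-cofibration half, the delicate reversibility analysis of Section~\ref{sec:cyl_obj} underlying Proposition~\ref{prop:i_otimes_Z}. The only subtlety worth flagging is that Proposition~\ref{prop:i_otimes_Z} applies solely to trivial cofibrations between cofibrant objects, which is exactly why the cofibrancy of both $\Dn{n-1}$ and $\Jn{n}$ must be checked before the argument can be closed.
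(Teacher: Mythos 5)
Your proposal is correct and follows exactly the paper's own route: apply Lemma~\ref{lemma:model_monoidal_criterion}, dispose of H1)--H3) as in Remark~\ref{rem:model_monoidal_criterion}, use Theorem~\ref{thm:tens_comp_cof} for H4), and use Proposition~\ref{prop:i_otimes_Z} together with Corollary~\ref{coro:Z_otimes_i} for H5). Your explicit check that each $j_n : \Dn{n-1} \to \Jn{n}$ has cofibrant source and target is a detail the paper leaves implicit, and it is verified correctly.
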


\begin{proof}
  This follows from Lemma~\ref{lemma:model_monoidal_criterion}, whose
  non-trivial hypothesis are fulfilled by Theorem~\ref{thm:tens_comp_cof},
  and the previous proposition and its corollary.
\end{proof}

The previous theorem implies that the tensor product of a folk trivial
cofibration and a cofibrant object is a weak equivalence. We will now
prove that this still holds if we remove the cofibrancy hypothesis.

\begin{proposition}\label{prop:triv_cof_retr}
  Any folk trivial cofibration is a strong $\J$-transformation
  retract.
\end{proposition}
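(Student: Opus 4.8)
The plan is to exhibit the strong $\J$-transformation retract structure directly as the solution of a single lifting problem, taking advantage of the monoidal model structure just established in Theorem~\ref{thm:Gray_mon_mod}. Let $i : X \to Y$ be a folk trivial cofibration. First I would record a retraction: since $i$ is already a strong reversible transformation retract by \cite[Corollary 4.30]{Folk} (as recalled in paragraph~\ref{paragr:def_retr}), it comes equipped with an \oo-functor $r : Y \to X$ satisfying $ri = \id{X}$; alternatively, one obtains such an $r$ by lifting $\id{X}$ against the trivial cofibration $i$, using that $X$ is fibrant. By the definition of paragraph~\ref{paragr:def_retr}, what then remains is to produce an \oo-functor $h' : \J \otimes Y \to Y$ that restricts along $\funcell{0} \otimes Y$ and $\funcell{1} \otimes Y$ to $ir$ and $\id{Y}$ respectively (making it a $\J$-transformation $ir \tod \id{Y}$), and that satisfies the strong condition $h' \comp (\J \otimes i) = i \comp p'$, where $p' : \J \otimes X \to X$ is the projection induced by $\J \to \Dn{0}$.

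The key observation is that these constraints prescribe $h'$ exactly on the domain of the pushout-product
\[
  k_1 \boxprod i : \J \otimes X \amalg_{\bDn{1} \otimes X} \bDn{1} \otimes Y \to \J \otimes Y.
\]
Recalling from paragraph~\ref{paragr:def_J} that the two components of $k_1 : \bDn{1} \to \J$ are exactly $\funcell{0}, \funcell{1} : \Dn{0} \to \J$, and that $\bDn{1} \otimes Y \simeq Y \amalg Y$, I would define a map out of this pushout by $i \comp p'$ on the factor $\J \otimes X$ and by $(ir, \id{Y})$ on the factor $\bDn{1} \otimes Y$. These two prescriptions agree on the shared $\bDn{1} \otimes X$: both restrict to $i$ on each of the two copies of $X$, the only nontrivial verification being $ir \comp i = i$, which holds since $ri = \id{X}$. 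Now $k_1$ is a folk cofibration and $i$ is a folk trivial cofibration, so by Theorem~\ref{thm:Gray_mon_mod} the pushout-product $k_1 \boxprod i$ is itself a folk trivial cofibration. Since every \oo-category is fibrant, the lifting problem
\[
  \xymatrix@C=3pc{
    \J \otimes X \amalg_{\bDn{1} \otimes X} \bDn{1} \otimes Y
      \ar[d]_-{k_1 \boxprod i} \ar[r]
    & Y \ar[d] \\
    \J \otimes Y \ar[r] \ar@{.>}[ur]^-{h'}
    & \Dn{0}
  }
\]
admits a lift $h'$, which is the desired \oo-functor.

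It then remains only to read off that $h'$ has the required properties, and this is immediate from the way the map out of the pushout was chosen: its restriction along $k_1 \otimes Y$ makes $h'$ a $\J$-transformation from $ir$ to $\id{Y}$, while its restriction along $\J \otimes i$ is precisely the strong condition, so $i$ is a strong $\J$-transformation retract. I expect the main (and essentially the only) obstacle to be conceptual rather than computational: recognizing that the notion of a \emph{strong} $\J$-transformation retract is tailored to be exactly a lift against $k_1 \boxprod i$, so that the statement becomes a formal consequence of the pushout-product axiom applied to the pair consisting of a cofibration and a trivial cofibration. The verification that the two faces agree on $\bDn{1} \otimes X$ is the one small computation, and it reduces to the retraction identity $ri = \id{X}$.
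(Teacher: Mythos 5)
Your proposal is correct and is essentially identical to the paper's own proof: both obtain the retraction $r$ by lifting against the trivial cofibration $i$ (using fibrancy) and then produce $h'$ as a lift of the map $(ip', (ir, \id{Y}))$ against the pushout-product $k_1 \boxprod i$, which is a trivial cofibration by Theorem~\ref{thm:Gray_mon_mod}. The only difference is that you spell out the compatibility check on $\bDn{1} \otimes X$, which the paper leaves implicit.
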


\begin{proof}
  Let $i : X \to Y$ be a trivial cofibration. As every \oo-category is
  fibrant, the lifting problem
  \[
    \xymatrix{
      X \ar[d]_i \ar@{=}[r] & X \\
      Y \ar@{.>}[ur] }
  \]
  admits a solution $r : Y \to X$ giving a retraction of
  $i$. Similarly, by Theorem~\ref{thm:Gray_mon_mod}, the lifting
  problem
  \[
    \xymatrix@C=4pc@R=3pc{ \J \otimes X \amalg_{\bDn{1} \otimes X}
      \bDn{1} \otimes Y
      \ar[d]_{k_1 \boxprod i} \ar[r]^-{(ip, (ir, \id{Y}))} & Y \\
      \J \otimes Y \ar@{.>}[ur] & \pbox{,} }
  \]
  where $p$ denotes the ``projection'' \oo-functor
  $p : \J \otimes X \to X$, admits a solution
  $h : \J \otimes Y \to Y$. Such an $h$ is precisely a
  $\J$-transformation as in the definition of a strong
  $\J$-transformation retract.
\end{proof}

\begin{proposition}
  Strong $\J$-transformation retracts are stable under pushouts.
\end{proposition}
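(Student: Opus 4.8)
The plan is to build all the data required for the pushed\-/out map by gluing, using that $\J \otimes {-}$ is cocontinuous. Concretely, write the pushout square as
\[
  \xymatrix{
    X \ar[d]_i \ar[r]^f & X' \ar[d]^{i'} \\
    Y \ar[r]_g & Y' \pbox{,}
  }
\]
so that $Y' = Y \amalg_X X'$ and $i'$ is the pushout of the strong $\J$\nbd-transformation retract $i$. Since the Gray tensor product is biclosed, the functor $\J \otimes {-}$ has a right adjoint $\HomLax(\J, {-})$ and hence preserves this pushout: tensoring the square on the left with $\J$ exhibits $\J \otimes Y'$ as $(\J \otimes Y) \amalg_{\J \otimes X} (\J \otimes X')$, with structural maps $\J \otimes g$ and $\J \otimes i'$. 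I will produce the retraction and the strong $\J$\nbd-transformation for $i'$ by the universal properties of these two pushouts.

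First I would construct the retraction. Let $r : Y \to X$ be the retraction of $i$. The \oo-functors $f r : Y \to X'$ and $\id{X'} : X' \to X'$ agree along the span $Y \xot{i} X \xto{f} X'$, since $(f r) i = f = \id{X'} f$, so they induce $r' : Y' \to X'$ with $r' g = f r$ and $r' i' = \id{X'}$; the latter equation is the sought retraction of $i'$.

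Next, the $\J$\nbd-transformation. Write $p_Z : \J \otimes Z \to Z$ for the projection induced by $\J \to \Dn{0}$, natural in $Z$, and let $h : \J \otimes Y \to Y$ be the strong $\J$\nbd-transformation $i r \tod \id{Y}$, so that the strength condition $h \circ (\J \otimes i) = i \circ p_X$ holds. I would define $h' : \J \otimes Y' \to Y'$ by gluing $g \circ h : \J \otimes Y \to Y'$ and $i' \circ p_{X'} : \J \otimes X' \to Y'$ along the pushout decomposition of $\J \otimes Y'$. These two \oo-functors agree on $\J \otimes X$: restricting the first along $\J \otimes i$ gives $g \circ h \circ (\J \otimes i) = g \circ i \circ p_X = i' \circ f \circ p_X$, by the strength condition and $g i = i' f$, while restricting the second along $\J \otimes f$ gives $i' \circ p_{X'} \circ (\J \otimes f) = i' \circ f \circ p_X$, by naturality of $p$. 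Hence $h'$ is well defined and satisfies $h' \circ (\J \otimes i') = i' \circ p_{X'}$, which is exactly the strength condition required for $i'$.

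It then remains to verify that $h'$ is a $\J$\nbd-transformation $i' r' \tod \id{Y'}$, that is, that $h' \circ (\funcell{0} \otimes Y') = i' r'$ and $h' \circ (\funcell{1} \otimes Y') = \id{Y'}$. Since $Y' = Y \amalg_X X'$, each identity can be tested after precomposition with $g$ and with $i'$. Commuting $\funcell{\epsilon} \otimes {-}$ past $g$ and $i'$ by naturality, using $p_{X'} \circ (\funcell{\epsilon} \otimes X') = \id{X'}$ for $\epsilon = 0, 1$, the fact that $h$ is a $\J$\nbd-transformation $i r \tod \id{Y}$, and the relations $r' g = f r$, $g i = i' f$ and $r' i' = \id{X'}$, each of the four checks reduces to a one-line computation. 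The only genuinely load\-/bearing step is the compatibility of the two maps on $\J \otimes X$ used to define $h'$: it is precisely there that the \emph{strong} hypothesis $h \circ (\J \otimes i) = i \circ p_X$ is indispensable, for without it the gluing would fail and $h'$ could not be formed. Everything else is formal, resting only on the cocontinuity of $\J \otimes {-}$ and on the naturality of the endpoints $\funcell{\epsilon} \otimes {-}$ and of the projection $p$.
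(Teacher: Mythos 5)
Your proof is correct and complete. The paper's own proof is a one-line citation --- it asserts that the proof of the analogous statement for strong reversible transformation retracts, namely Lemma 17 of the Lafont--Métayer--Worytkiewicz paper, applies \emph{mutatis mutandis} --- and your argument is precisely that adaptation written out: you correctly use that $\J \otimes {-}$ preserves pushouts because the Gray tensor product is biclosed, and the gluing of $g \circ h$ and $i' \circ p_{X'}$ over $\J \otimes X$, which is exactly where the strong condition $h \circ (\J \otimes i) = i \circ p_X$ is needed, is the standard mechanism.
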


\begin{proof}
  The analogous statement for strong reversible transformation
  retracts is \cite[Lemma 17]{Folk}, whose proof applies
  \forlang{mutatis mutandis}.
\end{proof}

\goodbreak

\begin{proposition}\label{prop:tens_mon_ax}\ %
  \begin{enumerate}
    \item Transfinite compositions of pushouts of
      tensor products of an object (on the left) and a folk trivial
      cofibration are folk weak equivalences.

  \item Transfinite compositions of pushouts of tensor products of a
    folk trivial cofibration and an object (on the right) are folk
    weak equivalences.
  \end{enumerate}
\end{proposition}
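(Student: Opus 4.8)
The plan is to exhibit a single class $\mathcal{C}$ of \oo-functors that contains every tensor product $i \otimes Z$ of a folk trivial cofibration $i$ and an object $Z$, that is stable under pushout and transfinite composition, and that is contained in the folk weak equivalences; both assertions then follow at once. The natural candidate is the class of strong $\J$-transformation retracts of paragraph~\ref{paragr:def_retr}. Since the duality $Z \mapsto Z^\op$ of paragraph~\ref{paragr:dualities} is an automorphism of $\ooCat$ preserving folk trivial cofibrations, pushouts, transfinite composites and folk weak equivalences, and since $Z \otimes i$ can be identified with $(i^\op \otimes Z^\op)^\op$, the first assertion follows from the second; I would therefore concentrate on tensoring by an object on the right.

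First I would check that every $i \otimes Z$ lies in $\mathcal{C}$. By Proposition~\ref{prop:triv_cof_retr}, a folk trivial cofibration $i : X \to Y$ is a strong $\J$-transformation retract, with retraction $r$ and witnessing \oo-functor $h : \J \otimes Y \to Y$. Tensoring $r$ and $h$ by $Z$ on the right and using the associativity isomorphism $(\J \otimes Y) \otimes Z \simeq \J \otimes (Y \otimes Z)$ produces a retraction $r \otimes Z$ of $i \otimes Z$ together with an \oo-functor $\J \otimes (Y \otimes Z) \to Y \otimes Z$ whose endpoints are $(i \otimes Z)(r \otimes Z)$ and $\id{Y \otimes Z}$; it is automatically a $\J$-transformation (reversibility being automatic by Proposition~\ref{prop:J-trans_rev}), and the strong condition $h(\J \otimes i) = i p'$ tensors to the strong condition for $i \otimes Z$. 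Hence $i \otimes Z$ is a strong $\J$-transformation retract, and stability of $\mathcal{C}$ under pushout is exactly the preceding proposition, so every pushout of an $i \otimes Z$ is again a strong $\J$-transformation retract.

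The crux is stability under transfinite composition, where the strong condition is used decisively. Given a transfinite sequence $(X_\beta)$ whose successive \oo-functors are strong $\J$-transformation retracts, with colimit $X_\lambda$, I would build a retraction $X_\lambda \to X_0$ by transfinite recursion (composing the stagewise retractions and passing to colimits at limit stages) and, since $\J \otimes {-}$ preserves colimits, assemble the witnessing homotopy as a compatible family $H_\beta : \J \otimes X_\beta \to X_\lambda$. At a successor stage the new homotopy is obtained by concatenating the transported previous homotopy with the stagewise one; this concatenation is legitimate because $\J \amalg_{\Dn{0}} \J \to \Dn{0}$ is a trivial fibration (it is a fibration, every object being fibrant, and a folk weak equivalence since the inclusion of a copy of $\J$ is a pushout of a folk trivial cofibration), while $k_1 : \bDn{1} \to \J$ is a cofibration, so there is an \oo-functor $\J \to \J \amalg_{\Dn{0}} \J$ realizing the concatenation of $\J$-transformations. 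The strong condition guarantees that the stagewise homotopy is constant on the image of the previous stage, which is precisely what makes the family $(H_\beta)$ compatible and the colimit homotopy well defined. Alternatively, and more cheaply, one observes that a strong $\J$-transformation retract is in particular a strong reversible transformation retract (Proposition~\ref{prop:J1_immer_are_we}, the strong condition passing along the \oo-functor $l_1 \otimes Y$), and invokes the stability of that class under transfinite composition together with \cite[Lemma 16]{Folk}. I expect this transfinite composition step, and in particular the construction of the concatenation \oo-functor $\J \to \J \amalg_{\Dn{0}} \J$, to be the main obstacle; everything else is formal once Propositions~\ref{prop:triv_cof_retr} and~\ref{prop:J-trans_rev} and the preceding stability result are in hand.
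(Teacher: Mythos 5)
Your analysis of a single stage is exactly the paper's: a folk trivial cofibration is a strong $\J$\-/transformation retract (Proposition~\ref{prop:triv_cof_retr}), this class is stable under tensoring by an object on the right essentially by definition and under pushouts by the proposition preceding the statement, and $\J$\-/transformation retracts are folk weak equivalences; the reduction of one assertion to the other by the duality $Z \mapsto Z^\op$ is also the paper's move. The gap is in the transfinite composition step, where you try to prove the much stronger claim that strong $\J$\-/transformation retracts are themselves closed under transfinite composition by concatenating the stagewise homotopies. Concatenation of $\J$\-/transformations is not strictly unital: $\J \amalg_{\Dn{0}} \J$ is not $\J$, so at a successor stage the homotopy $H_{\beta+1}$ you build, restricted to $\J \otimes X_\beta$, is the concatenation of $H_\beta$ with a constant homotopy rather than $H_\beta$ itself. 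The family $(H_\beta)$ is therefore not compatible on the nose and does not induce an \oo-functor out of the colimit; the strongness condition only says the new piece is constant, not that concatenating with a constant is the identity operation. Your fallback through strong reversible transformation retracts is better placed to succeed (vertical composition of transformations is strictly associative and unital, so the compatibility would hold literally), but you assert the closure of that class under transfinite composition without proof, and the results you cite from \cite{Folk} do not supply it.

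The paper avoids this difficulty entirely: it only needs that each map in the transfinite sequence is a weak equivalence, and then invokes the fact that folk weak equivalences are stable under transfinite compositions (\cite[Lemma 4.12]{Folk}). This stability is a genuine input about equivalences of $\omega$-categories --- it is false in a general model category --- and it is the ingredient missing from your argument. With it, everything you have already established (each pushout of a tensor product of a trivial cofibration and an object is a $\J$\-/transformation retract, hence a weak equivalence) finishes the proof in one line, and no closure property of the retract classes under transfinite composition is needed.
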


\begin{proof}
  The second assertion can be deduced from the first one using the
  duality $X \mapsto X^\op$ as in the proof of
  Corollary~\ref{coro:Z_otimes_i}. As for the first one, by
  Proposition~\ref{prop:triv_cof_retr}, trivial cofibrations are
  strong $\J$-transformation retracts. But $\J$-transformation
  retracts are stable by tensoring by an object on the left,
  essentially by definition, and by pushouts by the previous
  proposition. As $\J$-transformation retracts are weak equivalences
  (by Proposition~\ref{prop:J_retr_we}), the result follows from the
  fact that folk weak equivalences are stable under transfinite
  compositions (see \cite[Lemma 4.12]{Folk}).
\end{proof}

\begin{remark}
  In particular, the tensor product of a folk trivial cofibration by
  an object (on the left or on the right) is a folk weak equivalence.
\end{remark}

\begin{remark}\label{rem:tens_mon_ax}
  We proved more precisely that the \oo-functors of the first assertion
  of the proposition are transfinite compositions of $\J$-transformation
  retracts.
\end{remark}

\section{The case of \pdfmn{m}{n}-categories}

In this section, we fix $m$ and $n$ such that $0 \le n \le m \le \omega$.

\begin{paragraph}\label{paragr:def_mnCat}
  Recall that an \ndef{\mn{m}{n}-category} is an \oo-category $X$ such that
  \begin{itemize}
    \item $X$ is an $m$-category, that is, every $k$-cell of $X$ with $k >
      m$ is an identity,
    \item every $k$-cell $x$ of $X$, for $k > n$, is \ndef{invertible},
      meaning that there exists a $k$-cell $y$ such that
      \[ y \comp_{k-1} x = \id{sx} \quadand x \comp_{k-1} y = \id{tx}. \]
  \end{itemize}
  We will denote by $\mnCat{m}{n}$ the full subcategory of $\ooCat$
  consisting of $(m, n)$\=/categories. Note that $(m, m)$-categories are
  nothing but $m$-categories, and $(m, 0)$\=/categories are $m$-groupoids,
  whose category will be denoted by $\nGpd{m}$.

  The category $\mnCat{m}{n}$ is a reflective subcategory of $\ooCat$. In
  other words, the inclusion functor $\mnCat{m}{n} \hookto \ooCat$
  admits a left adjoint $\reflect : \ooCat \to \mnCat{m}{n}$.
\end{paragraph}

The goal of this section is to prove, first, that the Gray tensor product of
\oo-categories induces, using the \oo-functor $r : \ooCat \to \mnCat{m}{n}$,
a monoidal category structure on $\mnCat{m}{n}$ and, second, that this
monoidal category structure is compatible with the folk model category
structure on~$\mnCat{m}{n}$.

\medskip

To prove the first point, we will use Day's reflection theorem:

\begin{proposition}[Day]\label{prop:Day}
  Let $\C$ be a biclosed monoidal category and let $\D \subset \C$ be a
  reflective subcategory of $\C$. Then the following conditions are
  equivalent:
  \begin{enumerate}
    \item\label{item:Day_ideal}
      for every object $X$ of $\C$ and every object $Y$ of $\D$, the objects
      $\Homir_\C(X, Y)$ and $\Homil_\C(X, Y)$ (see
      paragraph~\ref{paragr:def_biclosed}) are in~$\D$,
    \item for every objects $X$ and $Y$ of $\C$, the canonical morphism
      \[ r(X \otimes Y) \xto{\,\sim\,} r(r(X) \otimes r(Y)), \]
      where $r : \C \to \D$ denotes the left adjoint to the inclusion
      functor, is an isomorphism.
  \end{enumerate}
  Moreover, when these conditions are satisfied, the tensor product
    \[ X \otimes_\D Y = r(X \otimes Y) \]
  defines a biclosed monoidal category structure on $\D$, whose unit and
  internal $\Hom$ are those of $\C$.
\end{proposition}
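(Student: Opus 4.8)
The plan is to prove both implications by reducing everything to the following standard principle for the reflective localization $r \colon \C \to \D$ with unit $\eta$: a morphism $f$ of $\C$ is sent to an isomorphism by $r$ if and only if it is a \emph{local equivalence}, i.e.\ $\Hom_\C(f, Z)$ is a bijection for every object $Z$ of $\D$; and an object $W$ of $\C$ lies in $\D$ if and only if $\Hom_\C(\eta_A, W)$ is a bijection for every object $A$ of $\C$ (equivalently, $\eta_W$ is an isomorphism). Both facts are immediate from the universal property of the reflection and the Yoneda lemma, using that the inclusion is fully faithful and that $\eta_{r(A)}$ is invertible. Throughout I will use the two defining adjunctions $\Hom_\C(W \otimes Y, Z) \simeq \Hom_\C(W, \Homir(Y,Z))$ and $\Hom_\C(X \otimes V, Z) \simeq \Hom_\C(V, \Homil(X, Z))$, and I note that the canonical map of condition (b) is precisely $r(\eta_X \otimes \eta_Y)$.

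For (a) $\Rightarrow$ (b), I would factor $\eta_X \otimes \eta_Y$ as
\[ X \otimes Y \xto{\eta_X \otimes Y} r(X) \otimes Y \xto{r(X) \otimes \eta_Y} r(X) \otimes r(Y) \]
and check that each factor is a local equivalence. For the first factor and any $Z$ in $\D$, the induced map $\Hom_\C(r(X) \otimes Y, Z) \to \Hom_\C(X \otimes Y, Z)$ becomes, under the first adjunction, precomposition by $\eta_X$ into $\Homir(Y, Z)$, which lies in $\D$ by (a); hence it is a bijection. The second factor is treated symmetrically with the second adjunction and $\Homil(r(X), Z) \in \D$. As a composite of local equivalences is a local equivalence, $r(\eta_X \otimes \eta_Y)$ is an isomorphism.

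The converse (b) $\Rightarrow$ (a) is the main obstacle, and I would isolate it in a sub-lemma: condition (b) implies that $r(\eta_X \otimes Y)$ and $r(X \otimes \eta_Y)$ are isomorphisms for all $X, Y$. To see this for $r(X \otimes \eta_Y)$, factor $\eta_X \otimes \eta_Y = (\eta_X \otimes r(Y)) \circ (X \otimes \eta_Y)$; applying $r$, the composite is invertible by (b), so it suffices to show $r(\eta_X \otimes r(Y))$ is invertible, which follows by applying (b) to the pair $(X, r(Y))$ together with the invertibility of $\eta_{r(Y)}$ (idempotency of the reflection) and the two-out-of-three property for isomorphisms; the assertion for $r(\eta_X \otimes Y)$ is obtained by factoring the other way. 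Granting the sub-lemma, fix $Y$ in $\D$ and $X$ in $\C$; for every $A$ the map $\Hom_\C(\eta_A, \Homir(X, Y))$ corresponds under the first adjunction to $\Hom_\C(\eta_A \otimes X, Y)$, which is a bijection since $r(\eta_A \otimes X)$ is an isomorphism and $Y$ lies in $\D$. Thus $\Homir(X, Y)$ is in $\D$, and the symmetric argument using $r(X \otimes \eta_A)$ gives $\Homil(X, Y) \in \D$.

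Finally, assuming the equivalent conditions, I would set $X \otimes_\D Y = r(X \otimes Y)$, take the unit to be $r(I)$ where $I$ is the unit of $\C$, and keep $\Homir, \Homil$ as internal $\Hom$ (which land in $\D$ by (a)). Closedness is immediate: for $Z$ in $\D$, $\Hom_\D(X \otimes_\D Y, Z) \simeq \Hom_\C(X \otimes Y, Z) \simeq \Hom_\C(X, \Homir(Y, Z))$, and the last object is computed in $\D$ since it lies there and the inclusion is fully faithful; likewise on the other side. The associativity and unit constraints are transported from those of $\C$ using the sub-lemma: for instance the associator comes from the chain $r(r(X \otimes Y) \otimes Z) \simeq r((X \otimes Y) \otimes Z) \simeq r(X \otimes (Y \otimes Z)) \simeq r(X \otimes r(Y \otimes Z))$, the outer two isomorphisms provided by the sub-lemma and the middle one by the associator of $\C$; the unit constraints are obtained the same way from $r(r(I) \otimes Y) \simeq r(I \otimes Y) \simeq r(Y) \simeq Y$. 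Coherence (pentagon and triangle) then follows formally, since every structural isomorphism is the image under $r$ of the corresponding datum of $\C$ composed with instances of the sub-lemma; I expect this last verification to be routine but bookkeeping-heavy.
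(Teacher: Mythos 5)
Your argument is correct, but it is worth noting that the paper does not actually prove this statement: its ``proof'' is a citation of Day's reflection theorem \cite{DayRefl} together with the remark that the closed symmetric case adapts \emph{mutatis mutandis} to the biclosed setting. Your proposal therefore supplies a genuinely self-contained argument, organized around the standard characterizations of $\D$-local objects and $\D$-local equivalences. The key steps all check out: the identification of the canonical map in (b) with $r(\eta_X \otimes \eta_Y)$; the factorization through $r(X)\otimes Y$ and the use of $\Homir_\C(Y,Z),\Homil_\C(r(X),Z)\in\D$ to see that each factor is a local equivalence; and, in the converse direction, the sub-lemma extracting invertibility of $r(\eta_X\otimes Y)$ and $r(X\otimes\eta_Y)$ from (b) via the invertibility of $\eta$ on objects of $\D$ and two-out-of-three. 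This sub-lemma is exactly the ``monoidality of the localization'' that makes the last part work, and your observation that it also yields biclosedness by a one-line adjunction computation is the cleanest part of the write-up. Two small points: the unit of $\otimes_\D$ is $r(I)$ in general (the paper's phrasing ``the unit of $\C$'' is only literally correct because in its application the Gray unit $\Dn{0}$ already lies in $\mnCat{m}{n}$), and your coherence verification is left as ``routine bookkeeping''---which is fair, as that is precisely the part of Day's theorem the paper also declines to spell out, but a complete account would note that the class of maps inverted by $r$ is closed under tensoring with identities (which follows from (a) by the same adjunction trick), so that the pentagon and triangle diagrams in $\D$ are images under $r$ of the corresponding diagrams in $\C$ conjugated by naturality squares.
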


\begin{proof}
  The analogous statement for closed symmetric monoidal categories is a
  particular case of \cite[Theorem 1.2]{DayRefl}. The proof applies
  \forlang{mutatis mutandis} to the case of biclosed monoidal categories.
\end{proof}

\goodbreak

We will prove that condition~\ref{item:Day_ideal} is satisfied in our case
of interest, that is, that if $X$ is an \oo-category and $Y$ is an $(m,
n)$-category, then both $\HomOpLax(X, Y)$ and $\HomLax(X, Y)$ are $(m,
n)$\nbd-categories. We start by some preliminaries on invertible cells.

\begin{paragraph}
  We will denote by $\In{n}$ the free-standing invertible $n$-cell in
  $\ooCat$. In other words, $\In{n}$ is the $n$-category obtained from
  $\Dn{n}$ by formally inverting the principal cell of $\Dn{n}$. We have
  a canonical \oo-functor $\Dn{n} \to \In{n}$. The image of the principal cell
  of $\Dn{n}$ by this \oo-functor is the \ndef{principal cell of $\In{n}$}.
  By definition, an $n$-cell $x$ of an \oo-category~$X$ is invertible if and
  only the corresponding \oo-functor $\funcell{x} : \Dn{n} \to X$ factor
  through $\In{n}$.
\end{paragraph}

\begin{proposition}\label{prop:prod_inv_is_inv}
  Let $x$ be an $m$-cell of an \oo-category $X$ and let $y$ be an $n$-cell
  of an \oo-category $Y$. If either $x$ or $y$ is invertible, then $x
  \otimes y$ is invertible in $X \otimes Y$.
\end{proposition}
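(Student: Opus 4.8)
The plan is to mimic exactly the strategy that was used for reversible cells in Proposition~\ref{prop:prod_Rm_Dn}, replacing the free-standing reversible cell $\Rn{n}$ by the free-standing invertible cell $\In{n}$ throughout. The key structural fact we exploit is the characterization recalled just before the statement: an $n$-cell $x$ of $X$ is invertible if and only if the \oo-functor $\funcell{x} : \Dn{n} \to X$ factors through the canonical \oo-functor $\Dn{n} \to \In{n}$. So, just as reversibility of $x \otimes y$ was reduced to reversibility of the tensor product $r \otimes d$ of the principal cells of $\Rn{m}$ and $\Dn{n}$, here it suffices to prove that the tensor product of the principal cell of $\In{m}$ and the principal cell of $\Dn{n}$ is invertible in $\In{m} \otimes \Dn{n}$, and then to transport this along $\funcell{x} \otimes \funcell{y}$.

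First I would establish the analogue of Lemma~\ref{lem:prod_Rk_Dn}: if $\iota$ is the principal cell of $\In{k}$ and $d$ is the principal cell of $\Dn{n}$, then $\iota \otimes d$ is invertible in $\In{k} \otimes \Dn{n}$. The cleanest route is to note that strict invertibility is a much more rigid (and purely equational) condition than reversibility, and that it is preserved by the Gray tensor product for very formal reasons: if $y$ is an invertible $n$-cell with strict inverse $y'$, then the explicit formulas for the operations in $X \otimes Y$ (or, dually, in $\Cyl(X \otimes Y)$) allow one to write down a strict inverse for $x \otimes y$ directly. Concretely, I expect the case $m = 1$ to again be the base case: one shows that if $x$ is a $1$-cell and $y$ is an invertible $n$-cell, then $x \otimes y$ is invertible, by factoring $\funcell{y}$ through $\Dn{n} \to \In{n}$, tensoring by $\funcell{x}$, and reading off a cylinder in $\Cyl(X \otimes Y)$ whose principal cell is $\langle x \otimes y\rangle$; an invertibility analogue of Lemma~\ref{lem:Gamma_rev_is_rev} (the principal cell of a strictly invertible cylinder is strictly invertible) then finishes this case.

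Having the base case, the general case follows by the same bootstrapping as in Lemma~\ref{lem:prod_Rk_Dn}: the $1$-cell result yields, for each $m$, an \oo-functor $\In{m+1} \to \In{m} \otimes \Dn{1}$ compatible with the projection to $\Dn{m} \otimes \Dn{1}$, and composing $n$ such squares together with the collapsing \oo-functor $\Dn{1}^{\otimes n} \to \Dn{n}$ of paragraph~\ref{paragr:prod_DnD1_to_Dn} exhibits a factorization of $\funcell{\iota \otimes d} : \Dn{m+n} \to \In{m} \otimes \Dn{n}$ through $\Dn{m+n} \to \In{m+n}$, which is precisely invertibility of $\iota \otimes d$. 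With the lemma in hand, the proposition itself follows verbatim from the proof of Proposition~\ref{prop:prod_Rm_Dn}: when $x$ is invertible, $x \otimes y = (\funcell{x} \otimes \funcell{y})(\iota \otimes d)$ is the image of an invertible cell and hence invertible; when $y$ is invertible, one passes to the duality $Z \mapsto Z^\op$ and uses $(X \otimes Y)^\op \simeq Y^\op \otimes X^\op$ together with the fact that this duality preserves invertible cells, reducing to the first case applied to $y^\op \otimes x^\op$.

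The main obstacle I anticipate is establishing the invertibility analogue of Lemma~\ref{lem:Gamma_rev_is_rev} cleanly — that is, showing that the principal cell of a strictly invertible cylinder is strictly invertible. For reversible cells this required a long coinductive computation with the explicit cylinder formulas; the saving grace here is that strict invertibility is a closed equational condition rather than a coinductive (``up to higher reversible cells'') one, so one should be able to produce the strict inverse by an honest formula and verify the two defining equations directly, without any coinduction. In fact, the simplest formulation may avoid cylinders altogether: since $\In{k}$ is itself obtained from $\Dn{k}$ by freely inverting the principal cell, invertibility of $\iota \otimes d$ can be checked by exhibiting, inside $\In{k} \otimes \Dn{n}$, an explicit $(k+n)$-cell together with the requisite composites equal to identities, using Proposition~\ref{prop:prod_Rm_Dn}'s infrastructure specialized to the strict setting.
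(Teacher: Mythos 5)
Your proposal matches the paper's proof, which is exactly the chain you describe: prove the invertible analogue of Lemma~\ref{lem:Gamma_rev_is_rev} by the same calculations, then adapt Proposition~\ref{prop:prod_rev_by_1cell}, Corollary~\ref{coro:prod_rev_dual}, Lemma~\ref{lem:prod_Rk_Dn} and Proposition~\ref{prop:prod_Rm_Dn} by replacing $\Rn{n}$ with $\In{n}$ throughout. The paper gives no further detail beyond this reduction, so your elaboration of the individual steps is consistent with (and slightly more explicit than) what is written there.
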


\begin{proof}
  The proof is similar to the proof of the analogous fact for reversible
  cells (Proposition~\ref{prop:prod_Rm_Dn}). More precisely, one first proves
  the statement analogous to Lemma~\ref{lem:Gamma_rev_is_rev} using the same
  calculations as in its proof and one then proves the statements analogous
  to Proposition~\ref{prop:prod_rev_by_1cell},
  Corollary~\ref{coro:prod_rev_dual}, Lemma~\ref{lem:prod_Rk_Dn} and,
  finally, Proposition \ref{prop:prod_Rm_Dn}, by a straightforward
  adaptation consisting essentially in replacing the \oo-category $\Rn{n}$
  by $\In{n}$.
\end{proof}

\begin{paragraph}\label{paragr:def_inv_cyl}
  Let $Y$ be an \oo-category. We will say that an $n$-cylinder $c = (x, y,
  \alpha)$ (see paragraph~\ref{paragr:desc_cylinder}) is \ndef{invertible} if
  all the $\alpha_k^\epsilon$, for $0 \le k \le n$ and $\epsilon = \pm$, are
  invertible cells of $Y$. For the same reasons as for reversible cylinders,
  the graded subset $\CylInv(Y)$ of~$\Cyl(Y)$ consisting of invertible
  cylinders forms a sub-\oo-category.

  We will now prove that the \oo-functor
  \[ \HomLax(\In{1}, Y) \to \HomLax(\Dn{1}, Y) = \Cyl(Y), \]
  induced by the canonical \oo-functor $\Dn{1} \to \In{1}$, gives
  an isomorphism of \oo-categories between $\HomLax(\In{1}, Y)$ and
  $\CylInv(Y) \subset \Cyl(Y)$. (This will be achieved in
  Proposition~\ref{prop:Gamma_inv}.)
\end{paragraph}

\begin{paragraph}\label{paragr:def_vert_comp}
  Let $Y$ be an \oo-category. By adjunction, $n$-cells of $\HomLax(\In{1},
  Y)$ correspond to \oo-functors $\In{1} \to \HomOpLax(\Dn{n}, Y)$, that
  is, to invertible $1$-cells of $\HomOpLax(\Dn{n}, Y)$. Note that, again by
  adjunction, $1$-cells of $\HomOpLax(\Dn{n}, Y)$ corresponds to
  $n$-cylinders $c = (x, y, \alpha)$ in $Y$ (and that the source and target
  of such a $c$ correspond to the cells $x$ and $y$ respectively). This
  means that the composition of $1$-cells in
  $\HomOpLax(\Dn{n}, Y)$ defines a composition on $n$-cylinders in $Y$, that
  we will call the \ndef{vertical composition}, and that the $n$-cells of
  $\HomLax(\In{1}, Y)$ correspond to the $n$-cylinders in $Y$ invertible
  for the vertical composition. In particular, this shows that the
  \oo-functor $\HomLax(\In{1}, Y) \to \Cyl(Y)$ identifies $\HomLax(\In{1}, Y)$
  with a sub-\oo-category of $\Cyl(Y)$.

  The vertical composition of $n$-cylinders can be described in the following
  way. Let $c = (x, y, \alpha)$ and let $d = (y, z, \beta)$ be two $n$-cylinders in
  $Y$ (see paragraph~\ref{paragr:desc_cylinder}). We define by induction on~$k$
  such that $0 \le k \le n$ four $(k+1)$-cells $a_k^\epsilon$ and
  $b_k^\epsilon$, with $\epsilon = \pm$, as follows:
  \[
    \begin{split}
      a_k^\epsilon & = b_{k-1}^+ \comp_{k-1} \cdots \comp_1 b_0^+ \comp_0
      \alpha_k^\epsilon, \\
      b_k^\epsilon & = \beta_k^\epsilon \comp_0 a_0^- \comp_1 \cdots
      \comp_{k-1} a_{k-1}^-.
  \end{split}
  \]
  The vertical composition of $d$ and $c$, denoted by $d \compv c$,
  is then given by the triple $(x,z,\gamma)$, where
  \[
    \gamma_k^\epsilon = b^\epsilon_k \comp_k a^\epsilon_k.
  \]
  Note that the unit of an $n$-cell $x$ for the vertical composition is the
  $n$-cylinder $(x, x, \alpha)$, where $\alpha_k^- = \id{s_k(x)}$ and
  $\alpha_k^+ = \id{t_k(x)}$.
\end{paragraph}

\begin{proposition}
  Let $c = (x, y, \alpha)$ be an invertible $n$-cylinder in an \oo-category~$Y$. Then
  $c$ is invertible for the vertical composition with inverse $(y, x, \beta)$ given by:
  \[
  \beta_k^\epsilon = \bar \alpha_0^+ \comp_0 ( \bar \alpha_1^+
  \comp_1 \cdots \comp_{k-2}  (\bar \alpha_{k-1}^+ \comp_{k-1} \bar
  \alpha_{k}^\epsilon \comp_{k-1} \bar \alpha_{k-1}^-) \comp_{k-2} \cdots
  \comp_1 \bar \alpha_1^- ) \comp_0 \bar \alpha_0^-,
  \]
  for $0 \le k \le n$ and $\epsilon = \pm$,
  where $\bar \alpha_l^\epsilon$ denotes the inverse of $\alpha_l^\epsilon$ in
  $Y$.
\end{proposition}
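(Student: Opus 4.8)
The plan is to verify directly, using the explicit vertical composition of paragraph~\ref{paragr:def_vert_comp}, that $(y, x, \beta)$ is a two-sided inverse of $c = (x, y, \alpha)$. Recall from that paragraph that the vertical composition is nothing but the $\comp_0$-composition of the $1$-cells of $\HomOpLax(\Dn{n}, Y)$ corresponding to $n$-cylinders; since an inverse, if it exists, is unique, the whole content of the statement is that the displayed formula produces one.

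Before composing, I would first check that $(y, x, \beta)$ is a genuine $n$-cylinder from $y$ to $x$, namely that each $\beta_k^\epsilon$ has the source and target prescribed by the characterization of paragraph~\ref{paragr:desc_cylinder}. This is a bounded computation of iterated sources and targets, using the cylinder relations satisfied by the $\alpha_k^\epsilon$ together with the fact that $\bar\alpha_l^\epsilon$ is a two-sided $\comp_l$-inverse of $\alpha_l^\epsilon$. Along the way one sees that each $\beta_k^\epsilon$, being a composite of the invertible cells $\bar\alpha_l^\epsilon$, is itself invertible, so that $(y, x, \beta)$ is again an invertible cylinder (which will be useful at the end).

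The core of the argument is to show that $(y, x, \beta) \compv (x, y, \alpha)$ equals the unit $n$-cylinder on $x$, whose components are the trivial cells $\id{s_k(x)}$ and $\id{t_k(x)}$. Denote by $a_k^\epsilon$, $b_k^\epsilon$ and $\gamma_k^\epsilon = b_k^\epsilon \comp_k a_k^\epsilon$ the cells produced by the vertical composition formula applied to these two cylinders. I would prove by induction on $k$ that $b_k^\epsilon$ is the $\comp_k$-inverse of $a_k^\epsilon$, so that $\gamma_k^\epsilon$ collapses to the required identity. The base case $k = 0$ is immediate, as $a_0^\epsilon = \alpha_0^\epsilon$ and $b_0^\epsilon = \beta_0^\epsilon = \bar\alpha_0^\epsilon$. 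In the inductive step the nested-conjugation shape of $\beta_k^\epsilon$ is exactly what is needed: after repeatedly invoking the interchange law and cancelling the pairs $\alpha_l^\epsilon \comp_l \bar\alpha_l^\epsilon$ furnished by the induction hypothesis on the lower auxiliary cells, the whiskered copy of $\bar\alpha_k^\epsilon$ sitting inside $b_k^\epsilon$ annihilates the copy of $\alpha_k^\epsilon$ sitting inside $a_k^\epsilon$.

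Finally, the reverse composite $(x, y, \alpha) \compv (y, x, \beta)$ must be shown to equal the unit $n$-cylinder on $y$. I would deduce it either by a symmetric computation or, more economically, by checking that the assignment $\alpha \mapsto \beta$ is involutive (taking inverses twice returns each cell, and, since whiskering by lower-dimensional cells commutes with $\comp_k$-inversion, the iterated conjugations unwind), so that applying the already-established first part to the invertible cylinder $(y, x, \beta)$ yields the claim. The main obstacle is the bookkeeping of the inductive step: one must track the precise form of $a_k^\epsilon$ and $b_k^\epsilon$, not merely the fact that the lower ones cancel, in order to guarantee that the interchange manipulations apply and that the nested whiskerings line up for the final cancellation.
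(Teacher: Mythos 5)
Your plan matches the paper's proof: it first checks the cylinder identities for $(y,x,\beta)$ by a direct source/target computation, then shows by induction on $k$ that the components $\gamma_k^\epsilon = b_k^\epsilon \comp_k a_k^\epsilon$ of the vertical composite collapse to identities, and disposes of the second composite by a symmetric calculation. The strengthened induction hypothesis you flag as necessary is exactly what the paper uses, namely the closed forms $a_k^\epsilon = \bar\alpha_0^+ \comp_0 \cdots \comp_{k-2} \bar\alpha_{k-1}^+ \comp_{k-1} \alpha_k^\epsilon$ and $b_k^\epsilon = \bar\alpha_0^+ \comp_0 \cdots \comp_{k-2} \bar\alpha_{k-1}^+ \comp_{k-1} \bar\alpha_k^\epsilon$, from which the cancellation is immediate.
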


\begin{proof}
  Let us first show that $(y, x, \beta)$ is an $n$-cylinder. Let $1 \le k
  \le n$. We have to show that
  \[ t(\beta^\e_k) = x_k^\epsilon \comp_0 \beta_0^- \comp_1 \cdots
  \comp_{k-1} \beta_{k-1}^-. \]
  For $i \leq k \leq n$, we set
  \[
    \beta^\e_{k, i} =
    \bar \alpha_i^+ \comp_i ( \bar \alpha_{i+1}^+ \comp_{i+1} \cdots
    \comp_{k-2}  (\bar \alpha_{k-1}^+ \comp_{k-1} \bar \alpha_{k}^\epsilon
    \comp_{k-1} \bar \alpha_{k-1}^-) \comp_{k-2} \cdots \comp_{i+1} \bar
  \alpha_{i+1}^- ) \comp_i \bar \alpha_i^- \]
  and
  \[
    u_i = (\alpha_{i-1}^+ \comp_{i-1} \cdots \comp_1 \alpha_0^+ \comp_0
  x_k^\epsilon) \comp_i \beta_{i,i}^- \comp_{i+1} \cdots
    \comp_{k-1} \beta_{k-1,i}^-.
  \]
  In particular, we have
  \[ \beta^\e_{k, 0} = \beta^\e_k, \quad
     \bar\alpha_{i-1}^+ \comp_i \beta^\e_{k,i} \comp_i \bar \alpha_{i-1}^-
       = \beta^\e_{k, i-1}
     \quadand
     \beta_{k,k}^\e = \bar\alpha_k^\e,
  \]
  and
  \[
    \begin{split}
      u_0 & = x_k^\epsilon \comp_0 \beta_0^- \comp_1 \cdots \comp_{k-1}
        \beta_{k-1}^-, \\
      u_k &= \alpha_{k-1}^+ \comp_{k-1} \cdots \comp_1 \alpha_0^+ \comp_0 x_k^\epsilon =
    s(\alpha_k^\epsilon).
    \end{split}
  \]
  We have to show the equality $t(\beta_k^\epsilon) = u_0$. More generally,
  we will prove by descending induction on $i$ such that $0 \leq i \leq
  k$ that we have $t(\beta_{k,i}^\epsilon) = u_i$.
  For $i = k$, we have
  \[ t(\beta_{k, k}^\e) = t(\bar\alpha^\e_k) = s(\alpha^\e_k) = u_k. \]
  For $0 \le i < k$, using the induction hypothesis, we have 
  \[
    \begin{split}
      t(\beta_{k, i}^\e)
      & = t(\alpha^+_i \comp_i \beta^\e_{k, i+1} \comp_i \alpha^-_i) \\
      & = \alpha^+_{i} \comp_i u_{i+1} \comp_i \alpha^-_{i} \\
      & =
      \bar \alpha_{i}^+ \\
      & \quad\quad \comp_{i}
      \big((\alpha_{i}^+ \comp_{i} \cdots \comp_1
        \alpha_0^+ \comp_0 x_k^\epsilon) \comp_{i+1} \beta_{i+1,i+1}^- \comp_{i+2}
      \cdots \comp_{k-1} \beta_{k-1, i+1}^-\big) \\
      & \quad\quad \comp_{i} \bar \alpha^-_{i} \\
      & =
      \big(\bar \alpha_{i}^+ \comp_{i} (\alpha_{i}^+ \comp_{i}
        \cdots \comp_1 \alpha_0^+ \comp_0 x_k^\epsilon) \comp_{i} \bar
        \alpha_{i}^-\big) \\
        & \quad\quad \comp_{i+1} (\bar\alpha_i^+ \comp_i \beta_{i+1,i+1}^-
        \comp_i \bar\alpha_i^-) \comp_{i+2} \cdots \comp_{k-1}
        (\bar\alpha^+_i \comp_i \beta_{k-1,i+1}^- \bar\alpha^-_i) \\
      & =
      \big(\bar \alpha_{i}^+ \comp_{i} (\alpha_{i}^+ \comp_{i}
        \cdots \comp_1 \alpha_0^+ \comp_0 x_k^\epsilon) \comp_{i} \bar
        \alpha_{i}^-\big) \\
        & \quad\quad \comp_{i+1}  \beta_{i+1,i}^- \comp_{i+2} \cdots \comp_{k-1}
        \beta_{k-1,i}^- \\
      & =
      \big(\alpha_{i-1}^+ \comp_{i-1}
        \cdots \comp_1 \alpha_0^+ \comp_0 x_k^\epsilon\big) \comp_{i}
      \beta_{i,i}^- \\
        & \quad\quad \comp_{i+1}  \beta_{i+1,i}^- \comp_{i+2} \cdots \comp_{k-1}
        \beta_{k-1,i}^- \\
     & = u_i.
    \end{split}
  \]
  A similar calculation shows that
  \[
    s(\beta^\e_k) = \beta_{k-1}^+ \comp_{k-1} \cdots \comp_1 \beta_0^+
    \comp_0 y_k^\epsilon.
  \]

  Let us now prove that $(y, x, \beta)$ is an inverse of $(x, y, \alpha)$
  for the vertical composition. Consider $(x, x, \gamma) = (y, x, \beta)
  \compv (x, y, \alpha)$. We have to prove that $\gamma_k^\e$ is an identity
  for every $0 \le k \le n$ and $\e = \pm$. Recall that by definition (see
  paragraph~\ref{paragr:def_vert_comp}), we have
  \[
      \gamma_k^\epsilon = b_k^\epsilon \comp_k a_k^\epsilon,
  \]
  where
  \[
    \begin{split}
      a_k^\epsilon & = b_{k-1}^+ \comp_{k-1} \cdots \comp_1 b_0^+ \comp_0
      \alpha_k^\epsilon, \\
      b_k^\epsilon & = \beta_k^\epsilon \comp_0 a_0^- \comp_1 \cdots
      \comp_{k-1} a_{k-1}^-.
  \end{split}
  \]
  We will start by proving, by induction on $k$, that
  \[
    \begin{split}
      a_k^\epsilon & = \bar \alpha_0^+ \comp_0 \bar \alpha_1^+ \comp_1
      \cdots \comp_{k-2} \bar \alpha_{k-1}^+ \comp_{k-1}
      \alpha_k^\epsilon, \\
      b_k^\epsilon & = \bar \alpha_0^+ \comp_0 \bar \alpha_1^+ \comp_1
      \cdots \comp_{k-2} \bar \alpha_{k-1}^+ \comp_{k-1} \bar
      \alpha_k^\epsilon.
    \end{split}
  \]
  For $k = 0$, the formulas boil down to the equalities $a_0^\epsilon =
  \alpha_0^\epsilon$ and $b_0^\epsilon = \bar \alpha_0^\epsilon$ which hold
  by definition. Suppose $k > 0$. For $0 \le i \leq j \leq n$, we set
  \[
    \begin{split}
      a_{j,i}^\epsilon & = \bar \alpha_i^+ \comp_i \bar \alpha_{i+1}^+
    \comp_{i+1} \cdots \comp_{j-2} \bar \alpha_{j-1}^+ \comp_{j-1}
    \alpha_j^\epsilon, \\
      b_{j,i}^\epsilon & = \bar \alpha_i^+ \comp_i \bar \alpha_{i+1}^+
    \comp_{i+1} \cdots \comp_{j-2} \bar \alpha_{j-1}^+ \comp_{j-1} \bar
    \alpha_j^\epsilon.
    \end{split}
  \]
  In particular, we have
  \[ a^\e_{j,j} = \bar \alpha_j^\epsilon \quadand b^\e_{j,j} = \bar
  \alpha_j^\epsilon. \]
  By induction hypothesis, for $i \leq j < k$, we have
  \[ b_j^\epsilon = b_{j,0}^\epsilon. \]
  We thus have
  \[
    \begin{split}
      a_k^\epsilon
      & = b_{k-1}^+ \comp_{k-1} \cdots \comp_1 b_0^+ \comp_0 \alpha_k^\epsilon \\
      & = b_{k-1,0}^+ \comp_{k-1} \cdots \comp_1 b_{0,0}^+ \comp_0 \alpha_k^\epsilon \\
      & =  (\bar \alpha_0^+ \comp_0 b_{k-1,1}^+) \comp_{k-1} \cdots \comp_2
      (\bar \alpha_0^+ \comp_0 b_{1,1}^+) \comp_1 \bar \alpha_0^+ \comp_0
      \alpha_k^\epsilon \\
      & = \bar \alpha_0^+ \comp_0 (b_{k-1,1}^+ \comp_{k-1} \cdots \comp_2
      b_{1,1}^+ \comp_1 \alpha_k^\epsilon) \\
      & = \cdots \\
      & = \bar \alpha_0^+ \comp_0 \bar \alpha_1^+ \comp_1 \cdots
      \comp_{i-2} \bar \alpha_{i-1}^+ \comp_{i-1} (b_{k-1,i}^+ \comp_{k-1}
      \cdots \comp_{i+1} b_{i,i}^+ \comp_i \alpha_k^\epsilon) \\
      & = \cdots \\
      & = \bar \alpha_0^+ \comp_0  \bar \alpha_1^+ \comp_1 \cdots
      \comp_{k-2} \bar \alpha_{k-1}^+ \comp_{k-1} \alpha_k^\epsilon
    \end{split}
  \]
  and
  \[
    \begin{split}
      b_k^\epsilon
      & = \beta_k^\epsilon \comp_0 a_0^- \comp_1 \cdots \comp_{k-1} a_{k-1}^-  \\
      & = \beta_{k,0}^\epsilon \comp_0 a_{0,0}^- \comp_1 \cdots \comp_{k-1}
      a_{k-1,0}^- \\
      & = \bar\alpha_0^+ \comp_0 \beta_{k,1}^\epsilon \comp_0 \bar \alpha_0^-
        \comp_0 \alpha_0^- \comp_1 (\bar \alpha_0^+ \comp_0 a_{1,1}^-) \comp_2
        \cdots \comp_{k-1} (\bar \alpha_0^+ \comp_0 a_{k-1,1}^-) \\
      & = \bar \alpha_0^+ \comp_0 (\beta_{k,1}^\epsilon \comp_1  a_{1,1}^-
        \comp_2 \cdots \comp_{k-1}  a_{k-1,1}^-) \\
      & = \cdots \\
      & = \bar \alpha_0^+ \comp_0 \bar \alpha_1^+ \comp_1 \cdots
        \comp_{i-2} \bar \alpha_{i-1}^+ \comp_{i-1} (\beta_{k, i}^\epsilon
        \comp_i  a_{i,i}^- \comp_{i+1} \cdots \comp_{k-1}  a_{k-1,i}^-) \\
      & = \cdots \\
      & = \bar \alpha_0^+ \comp_0 \bar \alpha_1^+ \comp_1 \cdots
        \comp_{k-2} \bar \alpha_{k-1}^+ \comp_{k-1} \beta^\e_{k, k} \\
      & = \bar \alpha_0^+ \comp_0 \bar \alpha_1^+ \comp_1 \cdots
        \comp_{k-2} \bar \alpha_{k-1}^+ \comp_{k-1} \bar
        \alpha_k^\epsilon,
    \end{split}
  \]
  which ends the proof of the announced formulas. Finally, we get
  that
  \[
    \begin{split}
      \gamma_k^\epsilon
      & = b_k^\epsilon \comp_k a_k^\epsilon \\
      & = \big(\bar \alpha_0^+ \comp_0 \bar \alpha_1^+ \comp_1 \cdots
      \comp_{k-2} \bar \alpha_{k-1}^+ \comp_{k-1} \bar
      \alpha_k^\epsilon\big) \\
      & \quad\quad
      \comp_k \big(\bar \alpha_0^+ \comp_0 \bar \alpha_1^+ \comp_1 \cdots
      \comp_{k-2} \bar \alpha_{k-1}^+ \comp_{k-1} \alpha_k^\epsilon\big) \\
      & = \bar \alpha_0^+ \comp_0 \bar \alpha_1^+ \comp_1 \cdots
      \comp_{k-2} \bar \alpha_{k-1}^+ \comp_{k-1} (\bar \alpha_k^\epsilon
      \comp_k \alpha_k^\epsilon) \\
      & = \id{\bar \alpha_0^+ \comp_0 \bar \alpha_1^+ \comp_1 \cdots
      \comp_{k-2} \bar \alpha_{k-1}^+}.
    \end{split}
  \]
  This proves that $(y, x, \beta) \compv (x, y, \alpha)$ is indeed the
  identity. Similar calculations show that $(x, y, \alpha) \compv (y, x,
  \beta)$ is the identity as well, thereby ending the proof.
\end{proof}

\begin{proposition}\label{prop:Gamma_inv}
  Let $Y$ be an \oo-category. The \oo-functor
  \[ \HomLax(\In{1}, Y) \to \HomLax(\Dn{1}, Y) = \Cyl(Y), \]
  induces an isomorphism between $\HomLax(\In{1}, Y)$ and
  $\CylInv(Y) \subset \Cyl(Y)$.
\end{proposition}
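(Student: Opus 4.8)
The plan is to reduce the statement to a comparison of two sub-\oo-categories of $\Cyl(Y)$. By paragraph~\ref{paragr:def_vert_comp}, the \oo-functor $\HomLax(\In1, Y) \to \Cyl(Y)$ is a monomorphism identifying $\HomLax(\In1, Y)$ with the sub-\oo-category of $\Cyl(Y)$ whose cells are the $n$-cylinders invertible for the vertical composition. Since $\CylInv(Y)$ is likewise a sub-\oo-category of $\Cyl(Y)$, it suffices to prove that an $n$-cylinder is invertible for the vertical composition if and only if it belongs to $\CylInv(Y)$, that is, if and only if all its components $\alpha_k^\epsilon$ are invertible in $Y$; the asserted isomorphism onto $\CylInv(Y)$ then follows immediately, as both sides have the same cells.

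The ``if'' direction is exactly the content of the previous proposition, which exhibits an explicit vertical inverse for a cylinder all of whose components are invertible.

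For the ``only if'' direction, which carries the new content, I would argue as follows. Let $c$ be an $n$-cylinder invertible for the vertical composition. By the identification of paragraph~\ref{paragr:def_vert_comp} together with the defining adjunction of $\HomLax$, the cylinder $c$ is the restriction along $\pi \otimes \Dn n$ of an \oo-functor $F : \In1 \otimes \Dn n \to Y$, where $\pi : \Dn1 \to \In1$ is the canonical \oo-functor. Writing $e = \pi((01))$ for the principal cell of $\In1$ and $d$ for the principal cell of $\Dn n$, and using that $(\pi \otimes \Dn n)((01) \otimes w) = e \otimes w$ for every cell $w$ of $\Dn n$, the description of the components of a cylinder (paragraph~\ref{paragr:desc_cylinder}) gives
\[
  \alpha_k^- = F(e \otimes s_k(d)) \quadand \alpha_k^+ = F(e \otimes t_k(d))
\]
for $0 \le k \le n$. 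As $e$ is invertible in $\In1$ by construction, Proposition~\ref{prop:prod_inv_is_inv} shows that each $e \otimes s_k(d)$ and $e \otimes t_k(d)$ is invertible in $\In1 \otimes \Dn n$; since \oo-functors preserve invertible cells, every $\alpha_k^\epsilon$ is invertible in $Y$, so that $c \in \CylInv(Y)$.

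The main obstacle is effectively absorbed by the two results invoked. The ``if'' direction relies entirely on the explicit, and somewhat computational, construction of the vertical inverse in the previous proposition. The heart of the ``only if'' direction is the simple observation that the components of $c$ are images under an \oo-functor of tensor products whose left factor is the invertible generator $e$ of $\In1$, which reduces the whole matter to Proposition~\ref{prop:prod_inv_is_inv} and dispenses with any direct manipulation of the vertical composition.
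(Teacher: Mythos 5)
Your proof is correct and follows essentially the same route as the paper's: injectivity via paragraph~\ref{paragr:def_vert_comp}, the inclusion of the image into $\CylInv(Y)$ via Proposition~\ref{prop:prod_inv_is_inv} applied to the invertible principal cell of $\In{1}$, and the reverse inclusion via the explicit vertical inverse of the previous proposition. The only cosmetic difference is that you phrase the components as restrictions along $\pi \otimes \Dn{n}$, whereas the paper computes them directly from the \oo-functor $\In{1} \otimes \Dn{n} \to Y$.
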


\begin{proof}
  We saw in paragraph~\ref{paragr:def_vert_comp} that this \oo-functor is
  injective. It thus suffices to prove that its image is precisely
  $\CylInv(Y)$.

  Let us first prove that the \oo-functor lands into $\CylInv(Y)$. Consider an
  $n$-cell of $\HomLax(\In{1}, Y)$, seen as an \oo-functor $c : \In{1}
  \otimes \Dn{n} \to Y$. Let $(x, y, \alpha)$ be the associated cylinder. By
  definition, we have
  \[
    \alpha_k^- = c((01) \otimes s_k(d)) \quadand \alpha_k^+ = c((01)
    \otimes t_k(d)),
  \]
  for $0 \le k \le n$, where $(01)$ and $d$ denote the principal cells of $\In{1}$
  and $\Dn{n}$, respectively. As $(01)$ is invertible in $\In{1}$, so is
  its tensor product with any cell by
  Proposition~\ref{prop:prod_inv_is_inv}, and the $\alpha_k^\epsilon$
  are thus invertible. This proves that $(x, y, \alpha)$ is an invertible
  cylinder.

  Reciprocally, if $c$ is an invertible $n$-cylinder, then, by the previous
  proposition, the cylinder $c$ is invertible for the vertical composition,
  and hence corresponds to an $n$-cell of $\HomLax(\In{1}, Y)$ (see
  paragraph~\ref{paragr:def_vert_comp}), thereby proving the result.
\end{proof}

\begin{remark}
  In particular, an $n$-cylinder is invertible in the sense of
  paragraph~\ref{paragr:def_inv_cyl} if and only if it invertible for the
  vertical composition introduced in paragraph~\ref{paragr:def_vert_comp}.
\end{remark}

\begin{proposition}\label{prop:invertible_transfo}
  An $n$-cell of $\HomOpLax(X, Y)$, seen by adjunction as an \oo-functor $H
  : \Dn{n} \otimes X \to Y$, is invertible if and only if, for every
  $m$-cell $x$ of $X$, the $(n+m)$-cell $H(d \otimes x)$, where $d$ denotes
  the principal cell of $\Dn{n}$, is invertible in $Y$.
\end{proposition}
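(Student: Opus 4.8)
The plan is to translate invertibility of the $n$-cell of $\HomOpLax(X, Y)$ across the two adjunctions and reduce it to a statement about (invertible) cylinders that is already under control in dimension~$1$ by Proposition~\ref{prop:Gamma_inv}. By the definition of an invertible cell recalled just before Proposition~\ref{prop:prod_inv_is_inv}, the $n$-cell $c$ corresponding to $H$ is invertible if and only if $\funcell{c} : \Dn{n} \to \HomOpLax(X, Y)$ factors through the canonical \oo-functor $q : \Dn{n} \to \In{n}$. Transposing along $\Hom_{\ooCat}({-}\otimes X, Y) \simeq \Hom_{\ooCat}({-}, \HomOpLax(X, Y))$ — under which $q$ becomes $q \otimes X$ and $\funcell{c}$ becomes $H$ — and then along $\Hom_{\ooCat}(\Dn{n}\otimes X, Y)\simeq \Hom_{\ooCat}(X, \HomLax(\Dn{n}, Y))$, this is equivalent to asking that the adjoint \oo-functor $\phi : X \to \HomLax(\Dn{n}, Y)$ factor through $\HomLax(q, Y) : \HomLax(\In{n}, Y) \to \HomLax(\Dn{n}, Y)$. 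Since a functor out of $\In{n}$ is determined by its restriction along $q$ (strict inverses being unique), $q$ is an epimorphism, so $\HomLax(q, Y)$ is a monomorphism and $\HomLax(\In{n}, Y)$ is a sub-\oo-category of $\HomLax(\Dn{n}, Y)$; hence $\phi$ factors through it if and only if it sends every cell of $X$ into this sub-\oo-category.

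For the ``only if'' direction this reformulation is not even needed: if $c$ is invertible, then $H = H' \circ (q \otimes X)$ for some \oo-functor $H' : \In{n}\otimes X \to Y$. Writing $\tilde d = q(d)$ for the principal cell of $\In{n}$, which is invertible by construction, Proposition~\ref{prop:prod_inv_is_inv} shows that $\tilde d \otimes x$ is invertible in $\In{n}\otimes X$ for every cell $x$ of $X$. As $H(d \otimes x) = H'(\tilde d \otimes x)$ and \oo-functors send strictly invertible cells to strictly invertible cells, each $H(d\otimes x)$ is invertible in $Y$.

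For the ``if'' direction I first treat the base case $n = 1$, where $\HomLax(\Dn{1}, Y) = \Cyl(Y)$ and, by Proposition~\ref{prop:Gamma_inv}, $\HomLax(\In{1}, Y)$ is the sub-\oo-category $\CylInv(Y)$ of invertible cylinders. Here $d$ is the principal cell $(01)$ of $\Dn{1}$, and $\phi$ sends an $m$-cell $x$ of $X$ to the cylinder whose defining data (paragraph~\ref{paragr:desc_cylinder}) are $\alpha_k^- = H((01)\otimes s_k(x))$ and $\alpha_k^+ = H((01)\otimes t_k(x))$. Since $s_k(x)$ and $t_k(x)$ are again cells of $X$, the hypothesis that all the $H(d\otimes x')$ are invertible forces every $\alpha_k^\epsilon$ to be invertible, so this cylinder lies in $\CylInv(Y)$. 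Thus $\phi$ lands in $\HomLax(\In{1}, Y)$ and $c$ is invertible.

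For general $n$ the same strategy applies once one has the analogue of Proposition~\ref{prop:Gamma_inv} identifying $\HomLax(\In{n}, Y)$ with the sub-\oo-category of $\HomLax(\Dn{n}, Y)$ consisting of those cells whose components in the principal direction are invertible. I would obtain this either by an explicit inverse for the corresponding higher vertical composition, in strict parallel with the computation preceding Proposition~\ref{prop:Gamma_inv}, or by inducting on $n$ and viewing an $n$-cell as a $1$-cell of an \oo-category of cells, as in the reduction at the start of the proof of Lemma~\ref{lem:exist_adjoint_reverse}. I expect this dimension shift to be the main obstacle: because of the interchange law of the Gray tensor product, $\tilde d \otimes x$ is \emph{not} literally a composite of $d \otimes x$ with translates, so the invertibility of the cells $H(d\otimes x)$ cannot be read off directly and must be repackaged through the (higher) cylinder structure — precisely what Proposition~\ref{prop:Gamma_inv} accomplishes in dimension one and what the general case needs in every dimension.
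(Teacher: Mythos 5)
Your ``only if'' direction and your treatment of the case $n = 1$ coincide with the paper's: factor $H$ through $\In{n} \otimes X$ and apply Proposition~\ref{prop:prod_inv_is_inv}, and, for $n=1$, use Proposition~\ref{prop:Gamma_inv} to recognize the image of $\HomLax(\In{1}, Y)$ in $\Cyl(Y)$ as the invertible cylinders. The gap is in the inductive step for $n > 1$. Your plan rests on an unproved ``analogue of Proposition~\ref{prop:Gamma_inv}'' identifying $\HomLax(\In{n}, Y)$ inside $\HomLax(\Dn{n}, Y)$ by a component-wise invertibility condition. That statement is essentially equivalent to the proposition you are trying to prove (it is the case $X = \Dn{m}$, packaged for all $m$ at once), and neither of your two suggested routes is routine: the explicit-inverse computation behind Proposition~\ref{prop:Gamma_inv} is already long for $n = 1$ and would have to be redone for the vertical composition of $n$-fold cylinders, while the ``view an $n$-cell as a $1$-cell of an \oo-category of cells'' reduction does not interact simply with the Gray tensor product (there is no evident identification of $\In{n}$ with a construction on $\In{1}$ compatible with ${-} \otimes X$). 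As written, the general case is a statement of intent rather than a proof.

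The paper closes this gap differently, and only ever uses the one-dimensional Proposition~\ref{prop:Gamma_inv}. For $n > 1$ it precomposes $H$ with the collapse \oo-functor $s : \Dn{n-1} \otimes \Dn{1} \to \Dn{n}$ of paragraph~\ref{paragr:prod_DnD1_to_Dn} to obtain $H(s \otimes X) : \Dn{n-1} \otimes (\Dn{1} \otimes X) \to Y$, then checks the hypothesis of the proposition for this $(n-1)$-cell of $\HomOpLax(\Dn{1} \otimes X, Y)$ on the generators $0 \otimes x$, $1 \otimes x$ and $(01) \otimes x$ of $\Dn{1} \otimes X$: the first two give identities because $s$ collapses $\Dn{n-1} \otimes \{0\}$ and $\Dn{n-1}\otimes\{1\}$, and the third gives exactly $H(d \otimes x)$. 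The induction hypothesis, applied with $X$ replaced by $\Dn{1} \otimes X$, then factors $H(s \otimes X)$ through $\In{n-1} \otimes \Dn{1} \otimes X$, and the invertibility of $e' \otimes (01)$ in $\In{n-1} \otimes \Dn{1}$ (Proposition~\ref{prop:prod_inv_is_inv} again) supplies an \oo-functor $\In{n} \to \In{n-1} \otimes \Dn{1}$ through which, using $s\funcell{e \otimes (01)} = \id{\Dn{n}}$, one reads off the desired factorization of $H$ through $\In{n} \otimes X$. If you want to salvage your outline, this change of variable --- inducting on $n$ by enlarging $X$ rather than by generalizing Proposition~\ref{prop:Gamma_inv} --- is the missing idea.
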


\begin{proof}
  Suppose that $H : \Dn{n} \otimes X \to Y$ is invertible as an $n$-cell of
  $\HomOpLax(X, Y)$. By universal property of $\In{n}$ and by adjunction,
  this means that $H$ factors through $\In{n} \otimes X$, so that $H(d
  \otimes x) = H'(d' \otimes x)$, where $H' : \In{n} \otimes X
  \to Y$ and $d'$ is the principal cell of $\In{n}$. As $d'$ is invertible
  in $\In{n}$, so is $d' \otimes x$ by
  Proposition~\ref{prop:prod_inv_is_inv}. This implies that $H(d \otimes x)
  = H'(d' \otimes x)$ is invertible, showing one implication.

  Let us show the converse. We will argue by induction on $n \ge 1$.
  Suppose $n = 1$. The hypothesis implies that the associated \oo-functor
  $k : X \to \Cyl(Y)$ factors through $\CylInv(Y) \subset \Cyl(Y)$. The
  result thus follows from the bijections
  \[
    \begin{split}
      \Hom_{\ooCat}(X, \CylInv(Y)) 
      & \simeq \Hom_{\ooCat}(X, \HomLax(\In{1}, Y)) \\
      & \simeq
      \Hom_{\ooCat}(\In{1}, \HomOpLax(X, Y)),
    \end{split}
  \]
  the first bijection being a consequence of the previous proposition.

  Suppose now that $n > 1$. Denote by $s : \Dn{n-1} \otimes \Dn{1} \to
  \Dn{n}$ the \oo-functor of paragraph~\ref{paragr:prod_DnD1_to_Dn} and
  consider the \oo-functor
  \[ H(s \otimes X) : \Dn{n-1} \otimes \Dn{1} \otimes X \to Y. \]
  Denote by $e$ the principal cell of $\Dn{n-1}$. We will prove that,
  for every cell $z$ of $\Dn{1} \otimes X$, the cell $H(s \otimes X)(e
  \otimes z)$ is invertible in $Y$. Since the \oo-category $\Dn{1} \otimes
  X$ is generated by cells of the form $0 \otimes x$, $1 \otimes x$ and
  $(01) \otimes x$, where $(01)$ denotes the principal cell of $\Dn{1}$ and
  $x$ is a cell of $X$, it suffices to show that $H(s \otimes X)(e \otimes
  z)$, where $z$ is one of these generators, is an
  invertible cell. Since $s(e \otimes \epsilon) \otimes x$, for $\epsilon =
  0, 1$, is an identity by definition of $s$ and the fact that $e$ is of
  dimension at least $1$, the cell $H(s \otimes X)(e \otimes \epsilon
  \otimes x)$ is invertible. Furthermore, the cell
  \[
    H(s \otimes X)(e \otimes (01) \otimes x) = H(s(e \otimes (01)) \otimes
    x) = H(d \otimes x)
  \]
  is invertible by hypothesis on $H$. By induction hypothesis, this implies
  that $H(s \otimes X)$ is invertible as an $(n-1)$-cell of
  $\HomOpLax(\Dn{1} \otimes X, Y)$. This means that
  $H(s \otimes X)$ factors trough $\In{n-1} \otimes (\Dn{1} \otimes X)$, so
  that we get an \oo-functor $H' : \In{n-1} \otimes (\Dn{1} \otimes X) \to Y$.
  Denote by $d'$ and $e'$ the principal cells of $\In{n}$ and $\In{n-1}$,
  respectively. Using the fact that, by
  Proposition~\ref{prop:prod_inv_is_inv}, the cell $e' \otimes (01)$ is
  invertible in $\In{n-1} \otimes \Dn{1}$, we get a commutative
  diagram
  \[
    \xymatrix@C=4.5pc{
      \Dn{n} \otimes X \ar[d]_{\funcell{d'} \otimes X}
        \ar[r]^-{\funcell{e \otimes (01)} \otimes X} &
      \Dn{n-1} \otimes \Dn{1} \otimes X \ar[r]^-{s \otimes X}
        \ar[d]_-{\funcell{e \otimes (01)} \otimes \Dn{1} \otimes X} &
      \Dn{n} \otimes X \ar[r] \ar[r]^-H &
      Y \\
      \In{n} \otimes X \ar[r]^-{\funcell{e' \otimes (01)} \otimes X} &
      \In{n-1} \otimes \Dn{1} \otimes X \ar[urr]_{H'}
      & & \pbox{.}
    }
  \]
  Since $s\funcell{e \otimes (01)} = \id{\Dn{n}}$, the composite of the
  three composable horizontal arrows of the diagram is $H : \Dn{n} \otimes X
  \to Y$. This implies that $H$ factors through $\In{n} \otimes X$ and hence
  that it is invertible as a cell of $\HomOpLax(X, Y)$.
\end{proof}

\begin{remark}
  In particular, an oplax transformation is invertible as a $1$-cell of
  $\HomOpLax(X, Y)$ if and only it its components are invertible in $Y$.
\end{remark}

\begin{proposition}\label{prop:Day_mn}
  If $X$ is an \oo-category and $Y$ is an $(m, n)$-category, then both
  $\HomOpLax(X, Y)$ and $\HomLax(X, Y)$ are $(m, n)$-categories.
\end{proposition}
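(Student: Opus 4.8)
The plan is to prove the statement for $\HomOpLax(X, Y)$ and to deduce the case of $\HomLax(X, Y)$ by duality: by paragraph~\ref{paragr:dualities} there is a natural isomorphism $\HomLax(X, Y) \simeq \HomOpLax(X^\op, Y^\op)^\op$, and the class of $(m, n)$-categories is stable under the duality $Z \mapsto Z^\op$ (this duality preserves dimensions, identities, and invertible cells), so that the general case reduces to the oplax one. Recall from paragraph~\ref{paragr:def_mnCat} that $\HomOpLax(X, Y)$ is an $(m, n)$-category if and only if, first, every $k$-cell with $k > m$ is trivial and, second, every $k$-cell with $k > n$ is invertible. I would establish these two conditions separately, in both cases by reducing a property of a $k$-cell $H$ of $\HomOpLax(X, Y)$, seen by adjunction as an \oo-functor $H : \Dn{k} \otimes X \to Y$, to the corresponding property of the cells $H(d \otimes x)$, where $d$ is the principal cell of $\Dn{k}$ and $x$ ranges over the cells of $X$. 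The relevant dimension bookkeeping is that if $x$ is a $p$-cell, then $H(d \otimes x)$ is a $(k+p)$-cell of $Y$, so that $k > n$ (resp.\ $k > m$) forces $H(d \otimes x)$ to have dimension $> n$ (resp.\ $> m$).

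The invertibility condition is then immediate from Proposition~\ref{prop:invertible_transfo}: if $k > n$ and $x$ is any $p$-cell of $X$, the cell $H(d \otimes x)$ has dimension $k + p \ge k > n$ in the $(m, n)$-category $Y$, hence is invertible, and Proposition~\ref{prop:invertible_transfo} yields that $H$ is invertible in $\HomOpLax(X, Y)$.

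For the truncation condition I would prove the following analogue of Proposition~\ref{prop:invertible_transfo}: a $k$-cell $H : \Dn{k} \otimes X \to Y$ of $\HomOpLax(X, Y)$ is trivial if and only if $H(d \otimes x)$ is trivial in $Y$ for every cell $x$ of $X$. Granting this, when $k > m$ every $H(d \otimes x)$ has dimension $> m$, hence is trivial since $Y$ is an $m$-category, and the criterion gives that $H$ is trivial. To prove the criterion I would follow the structure of the proof of Proposition~\ref{prop:invertible_transfo}, systematically replacing the free-standing invertible cell $\In{j}$ by the free-standing $j$-cell $\Dn{j}$ and the canonical \oo-functor $\Dn{j} \to \In{j}$ by the projection $\Dn{j} \to \Dn{j-1}$ collapsing the principal cell onto the identity of the principal cell of $\Dn{j-1}$, so that triviality of a cell $z$ of an \oo-category $Z$ becomes equivalent to $\funcell{z}$ factoring through this projection. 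In this translation, the role of Proposition~\ref{prop:prod_inv_is_inv} is played by the easy fact that the tensor product of a trivial cell by any cell is trivial, the base case $k = 1$ reduces, via the adjunction identifying $1$-cells of $\HomOpLax(X, Y)$ with \oo-functors $X \to \Cyl(Y)$, to the remark that a cylinder all of whose cells $\alpha_\ell^\epsilon$ are identities is precisely one in the image of the degeneracy \oo-functor $Y \to \Cyl(Y)$, and the inductive step $k > 1$ carries over using the same \oo-functor $s : \Dn{k-1} \otimes \Dn{1} \to \Dn{k}$ of paragraph~\ref{paragr:prod_DnD1_to_Dn} and the analogous commutative diagram. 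The main obstacle is therefore not conceptual but one of bookkeeping: one must verify that the entire inductive argument of Proposition~\ref{prop:invertible_transfo}, and in particular the commutative diagram of its final step, remains valid after these substitutions, the key compatibility being that triviality, exactly like invertibility, is detected on principal cells and is stable under the relevant tensor products.
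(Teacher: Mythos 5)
Your proposal is correct and coincides with the paper's proof on two of its three ingredients: the reduction of $\HomLax$ to $\HomOpLax$ via the isomorphism $\HomLax(X,Y) \simeq \HomOpLax(X^\op, Y^\op)^\op$ and the stability of $(m,n)$-categories under $Z \mapsto Z^\op$, and the treatment of the invertibility condition, which in both cases is an immediate application of Proposition~\ref{prop:invertible_transfo} to the cells $H(d \otimes x)$ of dimension $\ge k > n$. Where you diverge is the $m$-truncation condition: the paper does not prove it at all, but simply quotes the already known fact that $\HomOpLax(X,Y)$ is an $m$-category when $Y$ is (\cite[Proposition A.29]{AraMaltsiJoint}), whereas you propose a self-contained proof via a ``triviality is detected on components'' criterion parallel to Proposition~\ref{prop:invertible_transfo}. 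That criterion is true and your adaptation does go through; it is in fact easier than the invertible case, since the analogue of Proposition~\ref{prop:Gamma_inv} requires no vertical-composition computation at all --- the graded subset of cylinders all of whose components are identities is visibly the image of the degeneracy $Y = \HomLax(\Dn{0}, Y) \to \Cyl(Y)$, which is injective, so the base case $k=1$ reduces to $\Hom_{\ooCat}(X, Y) \simeq \Hom_{\ooCat}(\Dn{0}, \HomOpLax(X,Y))$. The only step deserving explicit care is the ``it suffices to check on the generators $0 \otimes x$, $1 \otimes x$, $(01) \otimes x$'' move in the inductive step, inherited from the proof of Proposition~\ref{prop:invertible_transfo}: its justification is that the class of cells in question is closed under the compositions and whiskerings by which components at composite cells are expressed, and this holds for trivial cells just as it does for invertible ones. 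So your route costs a page of bookkeeping but buys independence from the external reference; the paper's route is shorter at the price of importing the $m$-category statement.
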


\begin{proof}
  Since
  \[ \HomLax(X,Y) \simeq \HomOpLax(X^\op, Y^\op)^\op \]
  (see paragraph~\ref{paragr:dualities}) and $(m, n)$-categories are stable
  under the duality $Z \mapsto Z^\op$, it suffices to prove that
  $\HomOpLax(X, Y)$ is an $(m, n)$-category.

  The fact that $\HomOpLax(X, Y)$ is an $m$-category is already known (see
  for instance \cite[Proposition A.29]{AraMaltsiJoint}). Let us prove that
  this $m$-category is an $(m, n)$-category. Consider a $k$-cell of
  $\HomOpLax(X, Y)$, with $k > n$, seen as an \oo-functor $H : \Dn{k}
  \otimes X \to Y$. Let $d$ be the principal cell of $\Dn{k}$ and let $x$ be
  a cell of $X$. The cell $H(x \otimes d)$ is a cell of $Y$ of dimension at
  least $k$. It is therefore invertible as $Y$ is an $(m, n)$-category.
  Proposition~\ref{prop:invertible_transfo} thus shows that $H$
  is invertible in $\HomOpLax(X,Y)$.
\end{proof}

\begin{theorem}\label{thm:Day_mn}
  The \ndef{Gray tensor product of \mn{m}{n}-categories}
  \[ X \rotimes{m}{n} Y = \reflect (X \otimes Y), \]
  where $r : \ooCat \to \mnCat{m}{n}$ denotes the left adjoint to
  the inclusion functor $\mnCat{m}{n} \hookto \ooCat$,
  defines a monoidal category structure on $\mnCat{m}{n}$, whose unit is the
  \mn{m}{n}-category~$\Dn{0}$ and whose internal $\Hom$ are $\HomOpLax$ and
  $\HomLax$.
\end{theorem}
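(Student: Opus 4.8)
The plan is to deduce the theorem directly from Day's reflection theorem (Proposition~\ref{prop:Day}), applied to the inclusion $\mnCat{m}{n} \hookto \ooCat$. First I would set up the hypotheses of that proposition: take $\C = \ooCat$ equipped with the Gray tensor product, which is a biclosed monoidal category by Theorem~\ref{thm:def_tens}, with right and left internal $\Hom$ given by $\HomOpLax$ and $\HomLax$ respectively; and take $\D = \mnCat{m}{n}$, which is a reflective subcategory of $\ooCat$ with reflector $\reflect : \ooCat \to \mnCat{m}{n}$ by paragraph~\ref{paragr:def_mnCat}.

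The crux is then to check condition~\ref{item:Day_ideal} of Day's theorem, namely that $\Homir_\C(X, Y) = \HomOpLax(X, Y)$ and $\Homil_\C(X, Y) = \HomLax(X, Y)$ lie in $\mnCat{m}{n}$ whenever $X$ is an arbitrary \oo-category and $Y$ is an $(m,n)$\nbd-category. This is exactly the content of Proposition~\ref{prop:Day_mn}, which has already been established. This verification is really the heart of the matter, and it is where all the preceding work pays off: it ultimately rests on the characterization of the invertible cells of $\HomOpLax(X, Y)$ (Proposition~\ref{prop:invertible_transfo}), itself built on the fact that the Gray tensor product of an invertible cell with any cell is invertible (Proposition~\ref{prop:prod_inv_is_inv}).

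Granted condition~\ref{item:Day_ideal}, Day's theorem immediately yields that
\[ X \rotimes{m}{n} Y = \reflect(X \otimes Y) \]
defines a biclosed monoidal category structure on $\mnCat{m}{n}$ whose internal $\Hom$ are those of $\ooCat$, that is, $\HomOpLax$ and $\HomLax$, as asserted.

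It remains only to identify the unit. Day's theorem states that the unit of the induced structure on $\D$ coincides with that of $\C$; the unit of the Gray tensor product is the terminal \oo-category $\Dn{0}$ (paragraph~\ref{paragr:def_tens}). Since $\Dn{0}$ is a $0$-category, it is in particular an $(m,n)$\nbd-category for every $0 \le n \le m \le \omega$, so it already lies in $\mnCat{m}{n}$ and serves as the unit of $\rotimes{m}{n}$. The main obstacle throughout is thus not this final assembly, which is purely formal once Day's criterion is in hand, but rather the verification of condition~\ref{item:Day_ideal}; that difficulty has, however, been entirely absorbed into Proposition~\ref{prop:Day_mn}.
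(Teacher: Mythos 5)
Your proposal is correct and follows exactly the paper's argument: the theorem is deduced from Day's reflection theorem (Proposition~\ref{prop:Day}) by verifying condition~(\emph{a}) via Proposition~\ref{prop:Day_mn}, with the identification of the unit being immediate since $\Dn{0}$ is already an $(m,n)$\nbd-category. The only difference is that you spell out the formal assembly that the paper leaves implicit.
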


\begin{proof}
  This follows from the previous proposition by Day's reflection theorem
  (Proposition~\ref{prop:Day}).
\end{proof}

\begin{remark}
  The case of $m$-categories was already proved in \cite[Appendix
  A]{AraMaltsiJoint}.
\end{remark}

We now focus on the case of $m$-groupoids (that is, the case $n = 0$).

\begin{proposition}
  If $X$ and $Y$ are two \oo-groupoids, then $X \otimes Y$ is an
  \oo-groupoid, so that
  \[ X \otimes_{\omega, 0} Y = X \otimes Y. \]
\end{proposition}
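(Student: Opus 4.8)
The plan is to reduce everything to showing that $X \otimes Y$ is already an \oo-groupoid, that is, that every cell of $X \otimes Y$ of dimension at least $1$ is invertible. Granting this, the equality $X \otimes_{\omega, 0} Y = X \otimes Y$ is immediate: by Theorem~\ref{thm:Day_mn} we have $X \otimes_{\omega, 0} Y = \reflect(X \otimes Y)$, and since $X \otimes Y$ lies in the reflective subcategory $\ooGpd = \mnCat{\omega}{0}$, the unit of the reflection at $X \otimes Y$ is an isomorphism, whence $\reflect(X \otimes Y) = X \otimes Y$.

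To prove that $X \otimes Y$ is an \oo-groupoid, I would first recall that $X \otimes Y$ is generated, as an \oo-category, by the cells of the form $x \otimes y$ with $x$ a cell of $X$ and $y$ a cell of $Y$. If such a cell has positive dimension, that is, if $\dim x + \dim y \ge 1$, then at least one of $x$, $y$ has positive dimension and is therefore invertible, since $X$ and $Y$ are \oo-groupoids; Proposition~\ref{prop:prod_inv_is_inv} then shows that $x \otimes y$ is invertible in $X \otimes Y$. The idea is then to propagate invertibility from the generators to all cells, using that invertible cells are stable under the operations $\comp_i$ and under identities.

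The main obstacle is that the invertible cells, together with the $0$\nbd-cells, do not in general form a sub-\oo-category, because the source and target of an invertible cell of dimension $\ge 2$ need not be invertible. To circumvent this, I would work with the \emph{maximal sub-\oo-groupoid} $G$ of $X \otimes Y$, namely the graded subset consisting of those cells all of whose iterated sources and targets of positive dimension are invertible; this $G$ is genuinely closed under source, target, identities and compositions, hence is a sub-\oo-category. It then suffices to check that every generator $x \otimes y$ lies in $G$. For this, I would check that the iterated faces of $x \otimes y$ (including $x \otimes y$ itself) are, by the structure of the Gray tensor product, composites and whiskerings of cells of the form $x' \otimes y'$ with $x'$ a face of $x$ and $y'$ a face of $y$; whenever such a cell $x' \otimes y'$ has positive dimension, one of $x'$, $y'$ is of positive dimension, hence invertible, so $x' \otimes y'$ is invertible by Proposition~\ref{prop:prod_inv_is_inv}, and a composite of invertible cells is invertible. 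Thus every positive-dimensional face of $x \otimes y$ is invertible, so $x \otimes y \in G$. Since $G$ is a sub-\oo-category containing the generators and $X \otimes Y$ is generated by them, we conclude $G = X \otimes Y$, which is exactly the assertion that $X \otimes Y$ is an \oo-groupoid.
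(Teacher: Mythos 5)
Your proof is correct and follows essentially the same route as the paper: the paper's own argument simply observes that $X \otimes Y$ is generated by cells $x \otimes y$ with $x$ or $y$ of positive dimension, invokes Proposition~\ref{prop:prod_inv_is_inv}, and concludes. The only difference is that you make explicit the propagation step from generators to arbitrary cells (via the maximal sub-\oo-groupoid), which the paper leaves implicit; this is a genuine detail and your treatment of it is sound.
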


\begin{proof}
  The \oo-category $X \otimes Y$ is generated by $n$-cells of
  the form $x \otimes y$, where $x$ a $k$-cell of $X$ and $Y$ an $l$-cell of
  $Y$ with $n = k + l$. If $n > 0$, then  $k > 0$ or $l>0$, so that $x$ or $y$
  is invertible. It follows from Proposition~\ref{prop:prod_inv_is_inv} that
  $x \otimes y$ is invertible, thereby proving the result.
\end{proof}

\begin{remark}
  Note that it is not true that the tensor product of an \oo-groupoid~$X$
  and an \oo-category~$Y$ is an \oo-groupoid in general, as the tensor
  product of a $0$-cell of~$X$ and a non-invertible $n$-cell of $Y$ is not
  invertible in $X \otimes Y$.
\end{remark}

\begin{lemma}
The functor $X \mapsto X^\op$ is naturally isomorphic to the identity when
restricted to the category $\nGpd{m}$ of $m$-groupoids.
\end{lemma}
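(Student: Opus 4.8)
The plan is to exhibit a natural isomorphism $\Phi \colon \mathrm{Id} \Rightarrow ({-})^\op$ between functors $\nGpd{m} \to \nGpd{m}$, generalizing the classical isomorphism $G \cong G^\op$ of $1$\nobreakdash-groupoids given by $g \mapsto g^{-1}$. Recall from paragraph~\ref{paragr:dualities} that $X^\op$ has the same underlying cells as $X$, that it swaps the source and the target of every odd\nobreakdash-dimensional cell (equivalently, it reverses the even-indexed compositions $\comp_0, \comp_2, \dots$), and that it is an involution stabilizing $\nGpd{m}$. The key point is that, in a groupoid, the direction reversal performed by $({-})^\op$ can be undone cell by cell using inverses; this is exactly what fails for a non-invertible $1$\nobreakdash-cell and is repaired by invertibility.

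To keep the coherence under control, I would first write $({-})^\op$ as the composite, over the odd integers $k$, of the elementary dualities $\delta_k$ reversing the single composition $\comp_{k-1}$ (on each fixed cell, of finite dimension, only finitely many of the $\delta_k$ act non-trivially). Since a composite of natural isomorphisms is a natural isomorphism, and $m$\nobreakdash-groupoids are stable under each $\delta_k$, it suffices to produce a natural isomorphism $\Psi_k \colon \mathrm{Id} \Rightarrow \delta_k$ on $\nGpd{m}$ for a single odd $k$. Here $\Psi_{k,X}$ is the identity on cells of dimension $< k$, sends a $k$\nobreakdash-cell to its $\comp_{k-1}$\nobreakdash-inverse (which exists since $X$ is an $m$\nobreakdash-groupoid), and sends a higher cell to the induced \emph{mate}, obtained by whiskering with the chosen inverses of its $k$\nobreakdash-dimensional boundaries so that the functoriality constraints $s^{\delta_k}\Psi_{k,X}(x) = \Psi_{k,X}(sx)$ and $t^{\delta_k}\Psi_{k,X}(x) = \Psi_{k,X}(tx)$ hold. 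Crucially, the formula for $\Psi_{k,X}(x)$ involves only identities, the compositions $\comp_i$, and inverses of invertible cells.

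The substantive step is to verify that $\Psi_{k,X}$ is a \emph{strict} $\omega$\nobreakdash-functor, that is, that it preserves identities and intertwines each composition with the (possibly reversed) composition of $\delta_k X$. I would do this by induction on dimension above $k$, reducing at each stage to the defining relations of the $\comp_{k-1}$\nobreakdash-inverses together with the interchange laws, in the same explicit/coinductive style used earlier in the paper for the inverse of an invertible cylinder (the long computation preceding Proposition~\ref{prop:Gamma_inv}); uniqueness of inverses in an $m$\nobreakdash-groupoid is what forces the resulting cells to agree on the nose. Once $\Psi_{k,X}$ is known to be an $\omega$\nobreakdash-functor, it is an isomorphism: as $\delta_k$ is an involution, the same recipe gives $\Psi_{k,\delta_k X} \colon \delta_k X \to \delta_k\delta_k X = X$, and uniqueness of inverses yields $\Psi_{k,\delta_k X}\circ \Psi_{k,X} = \id{X}$ and its mirror, so $\Psi_{k,X}$ is bijective on cells and hence an isomorphism of $\omega$\nobreakdash-categories.

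Naturality in $X$ is then automatic, hence so is naturality of the composite $\Phi$: any $\omega$\nobreakdash-functor $F \colon X \to Y$ between $m$\nobreakdash-groupoids preserves identities and compositions and sends the inverse of an invertible cell to the inverse of its image (inverses being unique), so it commutes with the mate formula defining $\Psi_{k}$, which is precisely the equality $\delta_k(F)\circ \Psi_{k,X} = \Psi_{k,Y}\circ F$. The main obstacle is thus entirely concentrated in the functoriality check of the preceding paragraph: showing that the mate assignment, propagated upward through all dimensions, respects the direction-reversed composition coherently. Treating a single elementary duality $\delta_k$ at a time, rather than all odd directions simultaneously, is what keeps the bookkeeping of these mate formulas manageable.
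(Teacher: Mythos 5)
Your overall strategy is the same as the paper's---undo the direction reversals of $({-})^\op$ by inverting cells---but the two arguments are organized quite differently. The paper invokes the fact that in an $m$\nobreakdash-groupoid every $n$\nobreakdash-cell is invertible for \emph{every} lower composition $\comp_i$, $i<n$ (citing \cite[Proposition 1.3]{AraMetGpd}), that these inverses $w_i$ commute with one another, and then writes down a single closed formula $\delta_X(x)=w_1(w_3(\cdots w_k(x)\cdots))$ applying at once the inverse operations for all the compositions that $({-})^\op$ reverses; functoriality and naturality are left as a ``straightforward calculation''. You instead factor $({-})^\op$ into elementary dualities $\delta_k$, one per reversed composition, and build each comparison isomorphism by hand from the $\comp_{k-1}$\nobreakdash-inverses of $k$\nobreakdash-cells, propagating upward through the dimensions. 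What your route buys is self\nobreakdash-containedness (you do not need to quote the ``invertible for all lower compositions'' result--- you are effectively reproving it) and a smaller bookkeeping unit at each stage; what the paper's route buys is a one\nobreakdash-line formula and a single verification, at the price of an external citation. The remaining burden---checking strict functoriality---is essentially the same computation in both versions, and your appeal to uniqueness of inverses to get naturality and the two\nobreakdash-sided inverse $\Psi_{k,\delta_kX}$ is exactly the paper's closing argument.

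One detail in your construction needs correcting. For a cell $x$ of dimension $>k$, the ``mate'' literally obtained by whiskering $x$ on both sides with the inverses of its $k$\nobreakdash-dimensional boundaries has $k$\nobreakdash-source $w(t_k x)$ and $k$\nobreakdash-target $w(s_k x)$, i.e.\ it points the wrong way: $\delta_k$ swaps source and target only in dimension $k$, so the functoriality constraint requires a cell with $k$\nobreakdash-source $\Psi_{k,X}(s_k x)$. The cell you actually need is the $\comp_{k-1}$\nobreakdash-inverse of $x$ itself (which does have the required boundaries), not its mate; it can still be produced by a whiskering formula involving the top\nobreakdash-dimensional inverse of $x$, but this is precisely the content of the invertibility-for-lower-compositions statement that the paper cites. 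Since you stipulate that the mate be chosen ``so that the functoriality constraints hold'', the fix is forced and the argument goes through, but as written the formula would fail the globularity check.
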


\begin{proof}
  Let $X$ be an $m$-groupoid. Recall that for any $n$-cell $x$ of $X$ and
any $i < n$, the cell $x$ is invertible for the composition $\comp_i$ (see
for instance \cite[Proposition 1.3]{AraMetGpd}). We will denote this inverse
by $w_i(x)$. Note that for any $i, j < n$, we have $w_i(w_j(x)) =
w_j(w_i(x))$.
We define an \oo-functor $\delta_X : X^\op \to X$ by setting
$\delta_X(x) = w_1(w_3(\cdots w_k(x)\cdots))$,
for any
$n$-cell $x$ of $X$, where $k$ is the largest odd integer strictly smaller
than $n$. The fact that this defines
an \oo-functor natural in $X$ follows from a straightforward calculation.
Clearly, the \oo-functor $\delta_X$ is an isomorphism with inverse
$\delta_{X^\op}$, thereby proving the result.
\end{proof}

\begin{proposition}
  The Gray tensor product on $\nGpd{m}$ is symmetric.
\end{proposition}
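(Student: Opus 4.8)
The plan is to combine the anti\-/monoidality of the duality $Z \mapsto Z^\op$ (paragraph~\ref{paragr:dualities}) with the previous lemma, which provides a natural isomorphism $\delta \colon (-)^\op \Rightarrow \mathrm{Id}$ on $\nGpd{m}$. Since $(-)^\op$ is an involutive automorphism of $\ooCat$ preserving $\nGpd{m}$, it is self\-/adjoint and commutes up to canonical isomorphism with the reflector $\reflect \colon \ooCat \to \nGpd{m}$; both $\reflect \circ (-)^\op$ and $(-)^\op \circ \reflect$ are left adjoint to the same functor. Hence the anti\-/monoidal isomorphisms $(X \otimes Y)^\op \simeq Y^\op \otimes X^\op$ descend, along $\rotimes{m}{0} = \reflect({-} \otimes {-})$, to natural isomorphisms
\[
  \tau_{X,Y} \colon (X \rotimes{m}{0} Y)^\op \tosim Y^\op \rotimes{m}{0} X^\op
\]
for $m$\-/groupoids $X$ and $Y$. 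I would carry out the whole argument directly inside $\nGpd{m}$, so that no appeal to the ambient non\-/symmetric structure beyond these isomorphisms is needed.

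First I would define the candidate braiding $\beta_{X,Y} \colon X \rotimes{m}{0} Y \to Y \rotimes{m}{0} X$ as the composite
\[
  X \rotimes{m}{0} Y
  \xto{\delta_{X \rotimes{m}{0} Y}^{-1}}
  (X \rotimes{m}{0} Y)^\op
  \xto{\tau_{X,Y}}
  Y^\op \rotimes{m}{0} X^\op
  \xto{\delta_Y \rotimes{m}{0} \delta_X}
  Y \rotimes{m}{0} X,
\]
which makes sense because $X \rotimes{m}{0} Y = \reflect(X \otimes Y)$ is itself an $m$\-/groupoid, so that $\delta$ applies to it. Naturality of $\beta$ in both variables is then immediate from the naturality of $\delta$ and of $\tau$.

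The cleanest way to obtain the hexagon axioms, I think, is to read the construction abstractly: the structure $\tau$ makes $(-)^\op$ a strong monoidal equivalence from $(\nGpd{m}, \rotimes{m}{0})$ to its reverse $(\nGpd{m}, \rotimes{m}{0}^{\mathrm{rev}})$, where $X \rotimes{m}{0}^{\mathrm{rev}} Y = Y \rotimes{m}{0} X$. Transporting this monoidal structure along the natural isomorphism $\delta \colon (-)^\op \Rightarrow \mathrm{Id}$ endows the \emph{identity} functor $(\nGpd{m}, \rotimes{m}{0}) \to (\nGpd{m}, \rotimes{m}{0}^{\mathrm{rev}})$ with a strong monoidal structure, and unwinding it recovers the isomorphisms $\beta_{X,Y}$. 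A strong monoidal structure on the identity functor relating a monoidal product to its reverse is precisely a braiding, the two coherence conditions of the monoidal functor being exactly the two hexagon identities.

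It remains to prove that $\beta$ is a symmetry, that is, $\beta_{Y,X} \circ \beta_{X,Y} = \id{X \rotimes{m}{0} Y}$. Here I would use that $(-)^\op$ is an involution together with the identity $\delta_{Z^\op} = (\delta_Z)^{-1}$ recorded in the previous lemma, and the corresponding involutivity coherence of $\tau$ (expressing that the square of $(-)^\op$ is the identity monoidal functor). Expanding $\beta_{Y,X} \circ \beta_{X,Y}$ and repeatedly applying the naturality of $\delta$ and $\tau$, these facts should collapse the composite to the identity. I expect this last bookkeeping — tracking the interplay between $\delta$, $\tau$, and the involution on both factors — together with the verification that $\tau$ genuinely satisfies the monoidal\-/functor coherence invoked in the previous paragraph, to be the only real obstacle; everything else is formal.
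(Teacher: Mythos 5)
Your proposal is correct and follows essentially the same route as the paper: the symmetry is defined as the composite $X \rotimes{m}{0} Y \simeq (X \rotimes{m}{0} Y)^\op \simeq Y^\op \rotimes{m}{0} X^\op \simeq Y \rotimes{m}{0} X$, using the anti\-/monoidality of $Z \mapsto Z^\op$ and the natural isomorphism $\delta$ of the previous lemma. The paper simply asserts ``one checks that this isomorphism defines a symmetry'' where you sketch the coherence verification in more detail (via the strong monoidal structure on $(-)^\op$ and the identity $\delta_{Z^\op} = \delta_Z^{-1}$), but the underlying argument is identical.
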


\begin{proof}
  If $X$ and $Y$ are two $m$-groupoids, then, using the natural isomorphism
  of the previous lemma, we get an isomorphism
  \[
    X \rotimes{m}{0} Y = X \otimes Y \simeq (X \otimes
    Y)^\op \simeq Y^\op \otimes X^\op \simeq Y \otimes
    X = Y \rotimes{m}{0} X.
  \]
  One checks that this isomorphism defines a symmetry for the Gray tensor
  product.
\end{proof}

Let us come back to the general case $0 \le n \le m \le \omega$. We will now
show that the tensor product of $(m, n)$-categories is compatible with the
so-called \ndef{folk model category structure on $\mnCat{m}{n}$} that we now
recall:

\begin{theorem}[Lafont--Métayer--Worytkiewicz,
  Ara--Métayer]\label{thm:folk_mn}
  % TOCHECK
  \noindent
  The folk model category structure on $\ooCat$ can be transferred along the
  adjunction
  \[ r : \ooCat \to \mnCat{m}{n}, \qquad \mnCat{m}{n} \hookto \ooCat. \]
  In particular, we get a model category structure on $\mnCat{m}{n}$, whose
  weak equivalences are the folk equivalences between $(m, n)$\nbd-categories
  and which is cofibrantly generated by $r(I)$ and $r(J)$ (see
  paragraphs~\ref{paragr:def_cof} and~\ref{paragr:def_J}).
\end{theorem}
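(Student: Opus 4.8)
The plan is to establish this as an instance of a general transfer theorem for cofibrantly generated model structures along an adjunction whose left adjoint is the reflection $\reflect : \ooCat \to \mnCat{m}{n}$, then to invoke the cited literature for the technical verifications. Concretely, a model structure transfers along an adjunction $\reflect \dashv \iota$ (with $\iota : \mnCat{m}{n} \hookto \ooCat$ the inclusion) precisely when one can define weak equivalences and fibrations in $\mnCat{m}{n}$ by $\iota$-reflection of those in $\ooCat$, and when Quillen's path-object argument (or the small object argument together with a check that $\reflect(J)$-cell complexes are weak equivalences) goes through.

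First I would set up the candidate generating sets $\reflect(I)$ and $\reflect(J)$ and observe that, since $\ooCat$ is locally presentable and $\mnCat{m}{n}$ is a reflective subcategory, it is again locally presentable; in particular it is cocomplete and the small object argument applies to $\reflect(I)$ and $\reflect(J)$. The weak equivalences of the transferred structure are declared to be those \oo-functors $f$ in $\mnCat{m}{n}$ such that $\iota(f)$ is a folk weak equivalence, and the fibrations those $f$ with $\iota(f)$ a folk fibration; since all objects of $\ooCat$ are fibrant, this should make all $(m,n)$-categories fibrant as well, which is exactly the situation in which the transfer criterion is easiest to verify. The key point to check is the acyclicity condition: every relative $\reflect(J)$-cell complex is a weak equivalence.

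The hard part, and the genuine content, is this acyclicity condition, because $\reflect$ need not preserve the relevant cofibrations or weak equivalences on the nose, so one cannot simply transport the folk structure formally. This is exactly where I would lean on the cited sources \cite{Folk} and \cite{AraMetGpd}: the transfer for $m$-categories (the case $n = m$) and for $m$-groupoids, and more generally for $(m,n)$-categories, has been carried out there, typically by identifying the fibrant objects and using the characterization of folk weak equivalences together with a lifting argument showing that pushouts of $\reflect(j_n)$ along maps in $\mnCat{m}{n}$ remain folk trivial cofibrations after applying $\iota$. Since the statement of the theorem attributes the result jointly to Lafont--Métayer--Worytkiewicz and Ara--Métayer, I would present the proof simply as an assembly of these references, verifying only that their hypotheses (local presentability of $\mnCat{m}{n}$, fibrancy of all objects, and the reflection being the left adjoint of the inclusion as recorded in paragraph~\ref{paragr:def_mnCat}) are met in the present setting, and concluding that the transferred structure exists with the asserted generating sets and weak equivalences.

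Thus the proof I would write is short: cite \cite{Folk} and \cite{AraMetGpd} for the existence of the transferred model structure and its description, noting that the only facts needed beyond those references are that $\mnCat{m}{n}$ is a reflective, hence locally presentable, subcategory of $\ooCat$ with reflection $\reflect$, and that the weak equivalences are the folk equivalences between $(m,n)$-categories. I expect the main obstacle, were one to prove it from scratch rather than cite it, to be verifying that $\reflect$ sends the generating folk trivial cofibrations $J$ to maps whose cell complexes are weak equivalences in $\mnCat{m}{n}$; but given the cited results this reduces to a bookkeeping check that their hypotheses apply verbatim.
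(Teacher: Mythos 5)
Your proposal is correct and follows essentially the same route as the paper, which simply cites \cite{Folk} for the case of $m$-categories and \cite{AraMetGpd} for the case of \oon{n}-categories and notes that combining the two proofs yields the general $(m,n)$ case. The one small imprecision is your suggestion that the general $(m,n)$ case is already ``carried out'' in those references: neither treats it verbatim, and the paper is explicit that one must merge the two arguments (the truncation argument of \cite{Folk} with the invertibility argument of \cite{AraMetGpd}) rather than just check that existing hypotheses apply.
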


\begin{proof}
  The case of $m$-categories is \cite[Theorem 5]{Folk} and the case of
  \oon{n}-categories is \cite[Theorem 3.19 and Remark 3.20]{AraMetGpd}.
  Combining these two proofs, one easily gets the general case.
\end{proof}

The compatibility between the tensor product of $(m, n)$-categories and the
folk model category structure on $\mnCat{m}{n}$ will follow formally from
the following general statement:

{\renewcommand\N{\mathcal{N}}
\begin{proposition}\label{prop:Day_model}
  Let $\M$ be a biclosed monoidal model category, which is cofibrantly
  generated by sets $I$ and $J$, and whose unit for the tensor product is
  cofibrant, and let $\N \subset \M$ be a reflective subcategory of $\M$.
  Denote by $r : \M \to \N$ the left adjoint to the
  inclusion functor. Suppose that
  \begin{enumerate}
    \item $\N \subset \M$ satisfies the equivalent conditions of Day's
      reflection theorem (Proposition~\ref{prop:Day}), so that
    \[ X \otimes_\N Y = r(X \otimes Y) \]
    defines a biclosed monoidal category structure on $\N$,
    \item $\N$ is endowed with a model category structure
      cofibrantly generated by $r(I)$ and~$r(J)$.
  \end{enumerate}
  Then $\N$ endowed with the tensor product $\otimes_\N$ is a monoidal model
  category.
\end{proposition}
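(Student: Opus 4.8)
The plan is to verify directly the two axioms of a monoidal model category for $\N$ equipped with $\otimes_\N$, namely the pushout\=/product axiom and the unit axiom, transporting everything from $\M$ along the reflection $r$. Since $\N$ is already a model category (hypothesis~(2)) and $\otimes_\N$ already a biclosed monoidal structure (hypothesis~(1), via Proposition~\ref{prop:Day}), only these two axioms remain. The first preliminary observation is that $r\colon\M\to\N$ is left Quillen: the model structure on $\N$ is cofibrantly generated by $r(I)$ and $r(J)$, so by construction $r$ sends the generating cofibrations and generating trivial cofibrations of $\M$ into cofibrations and trivial cofibrations of $\N$; being a left adjoint, it then preserves all cofibrations and all trivial cofibrations. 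By the classical Lemma~\ref{lemma:monoidal_model_gen} it suffices to check the pushout\=/product axiom on the generators $r(I)$ and $r(J)$.

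The technical heart is the identification, for any two morphisms $f$ and $g$ of $\M$, of the pushout\=/product $r(f)\boxprod_\N r(g)$ computed with $\otimes_\N$ with $r(f\boxprod g)$, the image under $r$ of the pushout\=/product computed with the tensor product of $\M$. Indeed, by definition $X\otimes_\N Y=r(X\otimes Y)$ for $X,Y$ in $\N$, and the isomorphism of condition~(b) of Day's theorem (part of hypothesis~(1)) identifies $r(r(X)\otimes r(Y))$ with $r(X\otimes Y)$. Applying this at each of the four corners of the pushout\=/product square attached to $r(f)$ and $r(g)$, and using that $r$, as a left adjoint, commutes with the pushout defining the source of the pushout\=/product, one obtains a natural isomorphism $r(f)\boxprod_\N r(g)\cong r(f\boxprod g)$. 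This is the only point requiring genuine care; it is nonetheless purely formal, the whole content being to commute $r$ past the tensor and the pushout by means of Day's condition and the cocontinuity of $r$.

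With this isomorphism the conclusion is immediate. As $\M$ is a monoidal model category, for $i,i'\in I$ the pushout\=/product $i\boxprod i'$ is a cofibration of $\M$, and for $i\in I$ and $j\in J$ both $i\boxprod j$ and $j\boxprod i$ are trivial cofibrations of $\M$ (both orders being needed since $\otimes_\N$ is biclosed, not symmetric). Applying the left Quillen functor $r$ and the isomorphism above, $r(i)\boxprod_\N r(i')$ is a cofibration of $\N$ while $r(i)\boxprod_\N r(j)$ and $r(j)\boxprod_\N r(i)$ are trivial cofibrations of $\N$; by Lemma~\ref{lemma:monoidal_model_gen} the pushout\=/product axiom holds in $\N$. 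For the unit axiom, the unit of $\otimes_\N$ is the unit $S$ of $\M$ (Proposition~\ref{prop:Day}), which is cofibrant in $\M$ by hypothesis; since $S$ lies in $\N$ we have $r(S)\cong S$, so $S$ is cofibrant in $\N$ because $r$ is left Quillen, and the unit axiom follows (see Remark~\ref{rem:M1}). Therefore $\N$ endowed with $\otimes_\N$ is a monoidal model category.
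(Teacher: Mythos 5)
Your proposal is correct and follows essentially the same route as the paper's proof: observe that $r$ is left Quillen, reduce to generators via Lemma~\ref{lemma:monoidal_model_gen}, identify $r(i)\boxprod_\N r(j)$ with $r(i\boxprod j)$ using Day's condition and the cocontinuity of $r$, and conclude from the pushout\-/product axiom in $\M$; the unit axiom is handled the same way via cofibrancy of the unit. The only difference is that you spell out a few steps the paper leaves implicit.
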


\begin{proof}
  The hypothesis implies that $r$ is a left Quillen functor. In particular,
  the unit of $\otimes_\N$ is cofibrant. By
  Lemma~\ref{lemma:monoidal_model_gen}, it suffices to show that if $i$ is
  in $I$ and $j$ is in $I$, then $r(i) \boxprod_\N r(j)$ is a cofibration of
  $\N$, and that if either $i$ is in $I$ and $j$ is in $J$, or $i$ is
  in $J$ and $j$ is in $I$, then $r(i) \boxprod_\N r(j)$ is a trivial
  cofibration. Using the definition of $\otimes_\N$, the natural isomorphism
  $r(X \otimes Y) \simeq r(r(X) \otimes r(Y))$ and the fact that $r$
  preserves pushouts, we get that $r(i) \boxprod_\N r(j)$ can be identified
  with $r(i \boxprod j)$. The result thus follows from the pushout-product
  axiom in $\M$ and the fact that $r$ is a left Quillen functor.
\end{proof}
}

\begin{theorem}\label{thm:mnCat_mod_mon}
  The folk model category structure on $\mnCat{m}{n}$ is monoidal for the
  Gray tensor product of \mn{m}{n}-categories.
\end{theorem}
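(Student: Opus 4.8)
The plan is to invoke the general Proposition~\ref{prop:Day_model} with $\M = \ooCat$ equipped with the folk model category structure and the Gray tensor product, and with the reflective subcategory being $\mnCat{m}{n} \hookto \ooCat$, whose left adjoint is the functor $\reflect : \ooCat \to \mnCat{m}{n}$ of paragraph~\ref{paragr:def_mnCat}. All that then remains is to check, one by one, that the hypotheses of that proposition are met by results already established.

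First, $\ooCat$ with the Gray tensor product is a biclosed monoidal category (paragraph~\ref{paragr:def_tens}) whose unit $\Dn{0}$ is cofibrant, since $\Dn{0}$ is freely generated by its single $0$-cell and is thus free in the sense of polygraphs. By Theorem~\ref{thm:Gray_mon_mod}, the folk model category structure on $\ooCat$ is monoidal for this tensor product, so $\ooCat$ is indeed a biclosed monoidal model category. It is cofibrantly generated by the sets $I$ and $J$ of paragraphs~\ref{paragr:def_cof} and~\ref{paragr:def_J}, by the theorem of Lafont--Métayer--Worytkiewicz recalled in Section~1.

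It then suffices to verify the two hypotheses of Proposition~\ref{prop:Day_model} that concern the subcategory. Hypothesis~(1), namely that $\mnCat{m}{n}$ satisfies Day's reflection condition, so that $X \rotimes{m}{n} Y = \reflect(X \otimes Y)$ defines a biclosed monoidal structure, is exactly the content of Theorem~\ref{thm:Day_mn}. Hypothesis~(2), namely that $\mnCat{m}{n}$ carries a model category structure cofibrantly generated by $\reflect(I)$ and $\reflect(J)$, is Theorem~\ref{thm:folk_mn}. With all the hypotheses in place, Proposition~\ref{prop:Day_model} yields at once that $\mnCat{m}{n}$ endowed with $\rotimes{m}{n}$ is a monoidal model category.

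Since every ingredient has been prepared in advance, I expect no genuine obstacle here: the proof is purely a matter of matching the hypotheses of the abstract proposition to the earlier theorems. The only point needing a moment's care is recording that the unit $\Dn{0}$ of the Gray tensor product is cofibrant, which is immediate from Métayer's characterization of the cofibrant objects as the polygraphs.
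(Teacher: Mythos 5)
Your proposal is correct and follows exactly the paper's own route: the paper likewise deduces the theorem from Proposition~\ref{prop:Day_model}, with hypothesis~(1) supplied by Proposition~\ref{prop:Day_mn} (equivalently, Theorem~\ref{thm:Day_mn}, which you cite) and hypothesis~(2) by Theorem~\ref{thm:folk_mn}. The verification of the ambient hypotheses (biclosedness, cofibrant unit, cofibrant generation by $I$ and $J$, Theorem~\ref{thm:Gray_mon_mod}) is exactly as you describe.
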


\begin{proof}
  This follows from the previous proposition, whose hypothesis are fulfilled
  by Proposition~\ref{prop:Day_mn} and Theorem~\ref{thm:folk_mn}.
\end{proof}

\begin{remark}
  In \cite{LackFolk2}, slightly corrected by \cite{LackFolkBi}, Lack proves
  that the folk model category structure on $\nCat{2}$ is monoidal for the
  \emph{pseudo} Gray tensor product. This is different from the result we
  get from the previous theorem in the case $m = 2$ and $n = 2$, which deals
  with the \emph{oplax} Gray tensor product.
\end{remark}

\begin{proposition}
  The folk model category structure on $\mnCat{m}{n}$ satisfies the two
  following properties:
  \begin{enumerate}
  \item Transfinite compositions of pushouts of tensor products of an
    object (on the left) and a folk trivial cofibration are folk weak
    equivalences.

  \item Transfinite compositions of pushouts of tensor products of a
    folk trivial cofibration and an object (on the right) are folk
    weak equivalences.
  \end{enumerate}
\end{proposition}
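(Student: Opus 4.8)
The plan is to transport the argument of Proposition~\ref{prop:tens_mon_ax} from $\ooCat$ to $\mnCat{m}{n}$ along the reflection $r : \ooCat \to \mnCat{m}{n}$. Two facts make this transport possible. First, $r$ is a left adjoint, so it preserves all colimits (in particular pushouts and transfinite compositions), and by Day's theorem (Theorem~\ref{thm:Day_mn}) it is compatible with the tensor products, in the sense that $A \rotimes{m}{n} B \simeq r(A \otimes B)$ naturally for $(m,n)$-categories $A$ and $B$. Second, by Theorem~\ref{thm:folk_mn} a morphism of $\mnCat{m}{n}$ is a weak equivalence if and only if it is a folk weak equivalence in $\ooCat$, so that all reversibility and weak-equivalence statements ultimately reduce to $\ooCat$. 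As in the proof of Proposition~\ref{prop:tens_mon_ax}, the second assertion follows from the first by the duality $X \mapsto X^\op$: the subcategory $\mnCat{m}{n}$ is stable under this duality, $r$ commutes with it (by uniqueness of adjoints, since the duality preserves the inclusion $\mnCat{m}{n} \hookto \ooCat$), and $\rotimes{m}{n}$ is anti-monoidal for it, all of these being obtained by applying $r$ to the corresponding statements of paragraph~\ref{paragr:dualities}, exactly as in Corollary~\ref{coro:Z_otimes_i}. So I would only treat the first assertion.

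For the first assertion, I would introduce the interval $r(\J)$ together with the attendant notion of an $r(\J)$-transformation retract in $\mnCat{m}{n}$, defined exactly as in paragraph~\ref{paragr:def_retr} but with $\J$ replaced by $r(\J)$ and $\otimes$ by $\rotimes{m}{n}$, and then rerun Propositions~\ref{prop:J-trans_rev}, \ref{prop:J1_immer_are_we}, \ref{prop:triv_cof_retr} and the pushout-stability statement preceding Proposition~\ref{prop:tens_mon_ax} in this setting. The crucial new input is that the principal cell of $r(\J)$ is reversible in $r(\J)$: this holds because it is the image under the functor $r$ of the principal cell of $\J$, which is reversible by the lifting argument in the proof of Proposition~\ref{prop:J-trans_rev}, and functors preserve reversible cells. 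Combined with Proposition~\ref{prop:prod_Rm_Dn}, which guarantees that the tensor product of this reversible cell with any cell is reversible and hence that its image under $r$ is again reversible, this shows that the components of the oplax transformation underlying an $r(\J)$-transformation are reversible. Such a transformation is therefore a reversible transformation in $\ooCat$, so that the retract is a folk weak equivalence and, being between $(m,n)$-categories, a weak equivalence in $\mnCat{m}{n}$.

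With this in place, the end of the argument mirrors Proposition~\ref{prop:tens_mon_ax}. All $(m,n)$-categories are fibrant for the transferred structure, which follows by adjunction from the fibrancy of all objects of $\ooCat$ (a lifting problem against $r(\J)$ is adjoint to one against $\J$); together with the monoidality of $\mnCat{m}{n}$ for $\rotimes{m}{n}$ (Theorem~\ref{thm:mnCat_mod_mon}), the analogue of Proposition~\ref{prop:triv_cof_retr} shows that every folk trivial cofibration of $\mnCat{m}{n}$ is a strong $r(\J)$-transformation retract. These retracts are stable under $\rotimes{m}{n}$ by an object on the left (essentially by definition) and under pushout (by the transported pushout-stability statement), and they are weak equivalences; since folk weak equivalences are stable under transfinite composition (see \cite[Lemma 4.12]{Folk}), transfinite compositions of pushouts of left tensor products $A \rotimes{m}{n} i$ of an object by a folk trivial cofibration are weak equivalences.

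The main obstacle I expect is the bookkeeping forced by the reflective subcategory rather than any new idea. One must keep track of the fact that colimits in $\mnCat{m}{n}$ are reflected $\ooCat$-colimits, so that the compatibility $A \rotimes{m}{n} B \simeq r(A \otimes B)$ and the preservation of pushouts and transfinite compositions by $r$ are used coherently throughout; and one must check that the reversibility arguments of Section~\ref{sec:cyl_obj}, originally stated for $\otimes$, survive the passage to $\rotimes{m}{n} = r({-} \otimes {-})$. Both points reduce to the single observation that $r$, being a functor, preserves reversible cells and is compatible with the relevant tensor products. Once this is granted, every step of the proof of Proposition~\ref{prop:tens_mon_ax} goes through \emph{mutatis mutandis}.
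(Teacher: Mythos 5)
Your proof is correct, but it takes a genuinely different route from the paper's. The paper stays in $\ooCat$: using that $r$ and the biclosed tensor product on $\mnCat{m}{n}$ commute with colimits, it writes the transfinite composition in question as $r(f)$ for $f$ the corresponding transfinite composition in $\ooCat$, invokes Remark~\ref{rem:tens_mon_ax} to see that $f$ is a transfinite composition of $\J$\nobreakdash-transformation retracts, and then needs only one new observation: $r$ sends $\J$-transformation retracts to $\J$-transformation retracts, because a $\J$-transformation $h : \J \otimes X \to Y$ can be pushed forward by precomposing $r(h)$ with the canonical comparison \oo-functor $\J \otimes r(X) \to r(\J) \otimes r(X) \to r(r(\J) \otimes r(X)) \simeq r(\J \otimes X)$. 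You instead rebuild the interval machinery internally to $\mnCat{m}{n}$, taking $r(\J)$ as the interval and re-proving the analogues of Propositions~\ref{prop:J-trans_rev}, \ref{prop:J1_immer_are_we}, \ref{prop:triv_cof_retr} and the pushout-stability statement; your key new input --- that the principal cell of $r(\J)$ is reversible because \oo-functors preserve reversible cells, combined with Proposition~\ref{prop:prod_Rm_Dn} --- is sound, and so is the transport of the duality argument and of fibrancy along the adjunction. The paper's route is shorter and avoids re-verifying several statements \emph{mutatis mutandis}, at the price of the one comparison map; your route is more self-contained in $\mnCat{m}{n}$ and yields the slightly stronger structural fact that folk trivial cofibrations of $\mnCat{m}{n}$ are themselves strong $r(\J)$-transformation retracts, rather than merely images under $r$ of such retracts in $\ooCat$. (Both proofs lean equally on the assertion, already present in Proposition~\ref{prop:tens_mon_ax}, that these retracts are stable under tensoring by an object on the left ``essentially by definition''; whatever care the non-symmetry of the tensor requires there is inherited identically by your argument and is not a gap you introduce.)
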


\begin{proof}
  The second assertion can be deduced from the first one using the duality
  $X \mapsto X^\op$. As for the first one, using the fact that the tensor
  product of $(m, n)$\nbd-categories is biclosed and hence commutes with
  colimits in each variable, it suffices to consider transfinite
  compositions of pushouts of tensor products of an object and
  an element of~$r(J)$. As the functor~$r$
  commutes with colimits, such a transfinite composition is of the form
  $r(f)$, where $f$ is a transfinite composition of pushouts of tensor
  products of an object and an element of~$J$. By
  Remark~\ref{rem:tens_mon_ax}, such an $f$ is a
  transfinite composition of $\J$-transformation retracts. As folk weak
  equivalences are stable under transfinite compositions, it suffices
  to show that $r$ sends $\J$-transformation retracts to weak equivalences.
  But if $h : \J \otimes X \to Y$ is a $\J$\=/transformation from an
  \oo-functor $u : X \to Y$ to an \oo-functor $v : X \to Y$, then by
  precomposing $r(h) : r(\J \otimes X) \to r(Y)$ by the natural \oo-functor
  \[ \J \otimes r(X) \to r(\J) \otimes r(X) \to r(r(\J) \otimes r(X)) \simeq
  r(\J \otimes X), \]
  one gets an \oo-functor $\J \otimes r(X) \to r(Y)$ defining a
  $\J$-transformation from $r(u)$ to~$r(v)$. This proves that $r$ sends
  $\J$-transformation retracts to $\J$-transformation retracts, hence the
  result by Proposition~\ref{prop:J1_immer_are_we}.
\end{proof}

\begin{remark}
  In the case $n = 0$, in which the tensor product is symmetric, the
  previous proposition asserts that the so-called \ndef{monoid axiom} of Schwede
  and Shipley~\cite{SchwedeShipley} holds in $\nGpd{m}$.
\end{remark}

\begin{corollary}\ %
  Let us endow $\nGpd{m}$ with the Gray tensor product.
\begin{enumerate}
  \item If $\mathcal P$ is a non-symmetric $(\nGpd{m})$-operad, then the
    category of $\mathcal P$\=/algebras in $m$\=/groupoids is endowed with a
    right proper combinatorial model category structure whose weak
    equivalences \resp{fibrations} are the morphisms whose
    underlying $m$-functor is a folk weak equivalence \resp{a folk
    fibration} of $m$-groupoids.
  \item If $\mathcal P$ is a non-symmetric $(\nGpd{m})$-operad such that
    $\mathcal P(n)$ is a cofibrant $m$\=/groupoid for every $n \ge 0$, and
    if $A$ is a cofibrant $\mathcal P$-algebra in $\nGpd{m}$, then the
    underlying $m$-groupoid of $A$ is cofibrant.
\end{enumerate}
\end{corollary}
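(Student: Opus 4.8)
The plan is to deduce both assertions formally from the general transfer
  results for algebras over non-symmetric operads due to Harper~\cite{Harper}
  and Muro~\cite{Muro}, once we have checked that $\nGpd{m}$, endowed with the
  Gray tensor product, fits into their framework. No new computation is
  needed; the content of the proof is the verification of hypotheses.

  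First I would record the structural properties of $\nGpd{m}$. By
  Theorem~\ref{thm:mnCat_mod_mon} in the case $n = 0$, together with the
  proposition asserting that the Gray tensor product on $\nGpd{m}$ is
  symmetric, the category $\nGpd{m}$ endowed with the Gray tensor product is a
  symmetric monoidal model category; it is moreover biclosed, hence closed and
  cocontinuous in each variable. It is combinatorial: the category
  $\nGpd{m} = \mnCat{m}{0}$ is an accessibly embedded reflective subcategory of
  the locally presentable category $\ooCat$, hence is itself locally
  presentable, and the transferred folk model category structure of
  Theorem~\ref{thm:folk_mn} is cofibrantly generated. Finally, the previous
  proposition, in the case $n = 0$, is precisely the statement that the monoid
  axiom of Schwede and Shipley~\cite{SchwedeShipley} holds in $\nGpd{m}$.

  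For assertion (a), I would then invoke the transfer theorem for algebras over
  a non-symmetric operad: in a combinatorial closed symmetric monoidal model
  category satisfying the monoid axiom, the category of $\mathcal P$-algebras
  carries the projective model category structure in which the weak
  equivalences and fibrations are exactly the morphisms created by the
  forgetful functor to the underlying category. This structure is again
  combinatorial, since the free--forgetful adjunction between $\nGpd{m}$ and
  $\mathcal P$-algebras is an accessible adjunction between locally presentable
  categories. Right properness follows from the observation that every object
  of $\nGpd{m}$ is fibrant, this being inherited from $\ooCat$ through the
  transfer of Theorem~\ref{thm:folk_mn}; a model category all of whose objects
  are fibrant is right proper, and right properness passes to the projective
  structure on $\mathcal P$-algebras.

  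For assertion (b), I would appeal to the refinement of the transfer theorem
  that controls underlying cofibrant objects: when all the components
  $\mathcal P(n)$ are cofibrant, the operad is cofibrant enough that the
  forgetful functor from $\mathcal P$-algebras to $\nGpd{m}$ preserves cofibrant
  objects, which is exactly the claim. The only genuinely delicate point, and
  the main obstacle, is bookkeeping: one must match our hypotheses with the
  precise hypotheses of \cite{Harper} and~\cite{Muro}, checking that the form of
  the monoid axiom established above is the one they require and, for (b), that
  the cofibrancy of every $\mathcal P(n)$ is the correct cofibrancy condition in
  the non-symmetric setting guaranteeing preservation of underlying cofibrant
  objects.
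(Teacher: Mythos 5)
Your treatment of assertion (a) is correct and is exactly what the paper does: the monoid axiom established in the preceding proposition (for $n=0$) is fed into \cite[Theorem 1.2]{Harper} or \cite[Theorem 1.3]{Muro}, and your supplementary remarks on combinatoriality (local presentability of $\nGpd{m}$ plus cofibrant generation of the transferred structure) and on right properness (every object of $\nGpd{m}$ is fibrant, hence so is every algebra) are sound.

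For assertion (b), however, there is a genuine gap. You write that one should ``appeal to the refinement of the transfer theorem that controls underlying cofibrant objects'' and then concede that matching this with the hypotheses of \cite{Harper} and \cite{Muro} is ``the main obstacle''---but that obstacle is precisely the content of the proof, and you have not supplied the idea that resolves it. The paper does not find such a refinement in the non-symmetric references; instead it passes to the free symmetric operad $\widetilde{\mathcal{P}}$ on $\mathcal{P}$, whose components are $\widetilde{\mathcal{P}}(n) = \Sigma_n \times \mathcal{P}(n)$ with the free $\Sigma_n$-action. The point is that componentwise cofibrancy of $\mathcal{P}(n)$ together with freeness of the action makes $\widetilde{\mathcal{P}}$ \emph{$\Sigma$-cofibrant} in the sense of Berger--Moerdijk (in the genuinely symmetric setting, componentwise cofibrancy alone would not suffice for preservation of underlying cofibrant objects, which is why your phrase ``cofibrant enough'' hides the real issue). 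Since $\widetilde{\mathcal{P}}$-algebras are isomorphic to $\mathcal{P}$-algebras via a functor constant on underlying objects, part (a) shows that $\widetilde{\mathcal{P}}$ is admissible, and \cite[Corollary 5.5]{BergerMoerdijk} then yields that a cofibrant algebra has a cofibrant underlying $m$-groupoid. Without this symmetrization step (or an explicit non-symmetric analogue with a verified citation), your argument for (b) is an assertion of the conclusion rather than a proof.
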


\begin{proof}
\renewcommand\P{\mathcal{P}}
\newcommand\Pt{\widetilde{\mathcal{P}}}
Using the previous remark, the first point is \cite[Theorem 1.2]{Harper} (or
also \cite[Theorem 1.3]{Muro}, taking $\mathcal V = \mathcal C = \nGpd{m}$).

Let us prove the second point using results and terminology from
\cite{BergerMoerdijk}. Let $\Pt$ be the free symmetric operad on the
non-symmetric operad $\P$. We have $\Pt(n) = \Sigma_n \times \P(n)$, where
$\Sigma_n$ denotes the symmetric group, the action of $\Sigma_n$ on $\Pt(n)$
being the obvious one. Since $\P(n)$ is cofibrant by hypothesis, this
implies that, in the terminology of~\cite{BergerMoerdijk}, the operad $\Pt$
is $\Sigma$-cofibrant. Moreover, the category of $\Pt$-algebras is
isomorphic to the category of $\P$-algebras via a functor constant on
the underlying object and, by the first point, this category is thus
endowed with a model category structure compatible with the forgetful
functor to $\nGpd{m}$. In the terminology of \cite{BergerMoerdijk}, this
means that $\Pt$ is an admissible operad. The result thus follows from
\cite[Corollary 5.5]{BergerMoerdijk}.
\end{proof}

\begin{remark}
  In particular, the first point applied to the operad of monoids (seen as
  an $(\nGpd{m})$-operad by using the inclusion functor of sets into
  $m$\nbd-groupoids) gives a model category structure on the category of monoids
  in the category of $m$\nbd-groupoids endowed with the Gray tensor product.
\end{remark}

\begin{remark}
  The monoid axiom implies many other interesting properties of the homotopy
  theory of operads and their algebras. Another important setting to obtain
  these kinds of results has been introduced by Berger and Moerdijk in
  \cite{BergerMoerdijk}. One easily checks that the folk model category structure on
  $\nGpd{m}$, equipped with the Gray tensor product,
  satisfies the hypothesis of \cite[Theorem 3.1]{BergerMoerdijk}, the ``Hopf
  interval'' being simply the $m$-groupoid $\In{1}$.
\end{remark}

\section{The folk model category structure is monoidal for the join}
\label{sec:join}

In this section, we will recall the definition of the join of
\oo-categories, introduced by the first author and Maltsiniotis in
\cite{AraMaltsiJoint}, and we will prove that the resulting monoidal
category structure is compatible with the folk model category structure.

\medbreak

The strategy to define the join is similar to the one for the Gray tensor
product. In particular, we start by defining the join at the level of
augmented directed complexes.

\begin{paragraph}\label{paragr:def_join_ADC}
  The \ndef{join} $K \join L$ of two augmented directed
  complexes $K$ and $L$ is defined in the following way:
  \begin{itemize}
    \item For $n \ge 0$, we have
      \[ (K \join L)_n = \bigoplus_{\substack{i + 1 + j = n \\ i \ge -1, j
      \ge -1}} K_i \otimes L_j, \]
      where by convention $K_{-1} = \Z$ and $L_{-1} = \Z$. The positive
      generator of $K_{-1}$ or~$L_{-1}$ will be denoted by $\emptyset$. If
      $x$ is in $K_i$ and $y$ is in $K_j$, we will denote by $x \join y$
      the element $x \otimes y$ seen as an $(i+1+j)$-chain of $K \join L$.
    \item For $x$ in $K_i$ and $y$ in $K_j$ with $i + 1 + j \ge 1$, we have
      \[ d(x \join y) = dx \join y + (-1)^{i+1} x \join dy, \]
      where by convention $dz = e(z)\emptyset$ if the degree of $z$ is $0$, and $dz =
      0$ if the degree of $z$ is $-1$.
    \item For $x$ in $K_0$ and $y$ in $L_0$, we have
      \[ e(x \join \emptyset) = e(x) \quadand e(\emptyset \join y) = e(y). \]
  \item The submonoid $(K \join L)^\ast_n$ is defined to be generated
    by the subset
    \[ \bigoplus_{\substack{i + 1 + j = n \\ i \ge -1, j \ge -1}}
      K^\ast_i \otimes L^\ast_j \] of $(K \join L)_n$.
  \end{itemize}
  The join defines a (non-symmetric) monoidal category structure on the
  category of augmented directed complexes. Moreover, the first author
  and Maltsiniotis proved (see \cite[Corollary 6.21]{AraMaltsiJoint}) that
  this monoidal category structure restricts to the full subcategory of
  strong Steiner complexes.
\end{paragraph}

\begin{theorem}[Ara--Maltsiniotis]\label{thm:def_join}
  There exists a unique, up to unique isomorphism, locally biclosed monoidal
  category structure (see paragraph~\ref{paragr:def_loc_biclosed})
  on $\ooCat$ making the functor $\nu_{|\Stf} :
  \Stf \to \ooCat$ a monoidal functor, where $\Stf$ is endowed with the
  monoidal category structure given by the join.
\end{theorem}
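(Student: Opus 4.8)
The plan is to follow the proof of the analogous statement for the Gray tensor product (Theorem~\ref{thm:def_tens}), replacing ``biclosed'' by ``locally biclosed'' throughout. The engine of both arguments is the density of $\Stf$ in $\ooCat$: by Theorem~\ref{thm:Steiner} the functor $\nu_{|\Stf}$ is fully faithful, and Steiner's theory exhibits every \oo-category as the canonical colimit of the strong Steiner complexes mapping into it. Since, by paragraph~\ref{paragr:def_join_ADC}, the join restricts to a monoidal structure on $\Stf$, one is forced to define the join on all of $\ooCat$ by the left Kan extension formula
\[
  X \join Y = \limind_{\substack{\nu(K) \to X,\, K \in \Stf\\ \nu(L) \to Y,\, L \in \Stf}} \nu(K \join L),
\]
exactly mirroring the formula defining the Gray tensor product in paragraph~\ref{paragr:def_tens}. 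By construction this bifunctor restricts to the join on $\Stf$ and makes $\nu_{|\Stf}$ monoidal.

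First I would equip this bifunctor with associativity and unit constraints. The unit is the empty \oo-category $\emptyset$, which is $\nu$ of the zero complex, since the unit of the join of complexes is that complex: the only contribution to $(E \join L)_n$ comes from the degree $-1$ part $E_{-1} = \Z$, giving $\emptyset \join Y \simeq Y$ and symmetrically $X \join \emptyset \simeq X$. The coherence isomorphisms and their pentagon and triangle identities are obtained by density: they are canonically determined by the corresponding data on $\Stf$, where they hold because $\nu_{|\Stf}$ is monoidal for the join of complexes. Uniqueness up to unique isomorphism then follows from the universal property of the colimit presentation: the defining cocontinuity of a locally biclosed structure (see paragraph~\ref{paragr:def_loc_biclosed}) forces any such structure to be determined, on all of $\ooCat$, by its restriction to $\Stf$, hence to agree with the Kan-extension formula.

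The main obstacle, and the genuine point of departure from the Gray tensor product, is establishing local biclosedness rather than biclosedness. The functor $Y \mapsto X \join Y$ is not cocontinuous: it fails to preserve the initial object, since $X \join \emptyset = X \neq \emptyset$ for $X \neq \emptyset$, and therefore admits no global right adjoint. The correct statement is that the slice-augmented functor
\[
  \ooCat \to \cotr{\ooCat}{X}, \qquad Y \mapsto (X \join Y,\ X \hookto X \join Y),
\]
is cocontinuous---note that it now sends the initial object $\emptyset$ to the initial object $(X, \id{X})$ of $\cotr{\ooCat}{X}$---and symmetrically on the other side. The hard part is to verify this colimit-preservation property; once it is in place, a right adjoint (the generalized slice $\cotr{Z}{u}$) exists by the adjoint functor theorem, using that $\ooCat$ is locally presentable. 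Proving cocontinuity requires controlling, at the level of Steiner complexes, how the join interacts both with the canonical inclusion of $X$ and with colimits in the second variable; the explicit formula for $(K \join L)_n$, with its summation over $i, j \ge -1$, makes this bookkeeping tractable, and it is here that the bulk of the technical work resides.
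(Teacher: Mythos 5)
The paper offers no argument of its own here: it simply cites \cite[Theorem 6.29]{AraMaltsiJoint}, just as Theorem~\ref{thm:def_tens} defers to Steiner and \cite[Theorem A.15]{AraMaltsiJoint}. Your sketch reconstructs the expected strategy of that reference --- left Kan extension of $\join$ from the dense subcategory $\Stf$, unit the empty \oo-category, local rather than global biclosedness because $X \join \emptyset = X$ --- and these ingredients are all correct.

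There are, however, two genuine gaps. First, an ordering problem: you propose to obtain the associativity constraints ``by density'' \emph{before} addressing cocontinuity, but identifying $(X \join Y) \join Z$ and $X \join (Y \join Z)$ with the triple colimit $\limind \nu(K \join L \join M)$ already requires knowing that ${-} \join Z$ and $X \join {-}$ carry the density colimits to colimits --- i.e., exactly the preservation property you postpone to the end. This circularity is not cosmetic: it is the very point on which Steiner's original construction of the Gray tensor product was incomplete and had to be repaired by \cite[Theorem A.15]{AraMaltsiJoint}. Second, the preservation of connected colimits by $X \join {-}$ and ${-} \join Y$ (equivalently, cocontinuity of the coslice-augmented functors) is the entire mathematical content of the theorem, and you only assert that the explicit formula for $(K \join L)_n$ makes the verification ``tractable''; no argument is given, and it does not follow formally from the Kan-extension definition. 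Relatedly, your uniqueness argument should state explicitly that the canonical colimit $X \simeq \limind_{\nu(K) \to X} \nu(K)$ is \emph{connected} (the zero complex is initial in $\Stf$, so the index category has an initial object): a locally biclosed tensor preserves only connected colimits in each variable, so without this observation the restriction to $\Stf$ would not determine the structure on all of $\ooCat$.
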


\begin{proof}
  This is \cite[Theorem 6.29]{AraMaltsiJoint}. 
\end{proof}

\begin{paragraph}\label{paragr:def_join}
  We define the \ndef{join} of \oo-categories to be the monoidal product given
  by the previous theorem. If $X$ and $Y$ are two \oo-categories, their join
  will be denoted by~$X \join Y$. Explicitly, one has
  \[
    X \join Y =
    \limind_{\substack{\nu(K) \to X, \, K \in \Stf\\
      \nu(L) \to Y, \, L \in \Stf}}
      \nu(K \join L).
  \]
  The unit of the join is the empty \oo-category. As a consequence, if $X$
  and $Y$ are two \oo-categories, we get canonical \oo-functors $\iota_1 : X
  \to X \join Y$ and $\iota_2 : Y \to X \join Y$.

  The fact that the join is locally biclosed means that the functors
  \[
    \begin{split}
      \ooCat & \to \cotr{\ooCat}{X} \\
      Y & \mapsto (X \join Y, \iota_1 : X \to X \join Y)
    \end{split}
  \]
  and
  \[
    \begin{split}
      \ooCat & \to \cotr{\ooCat}{Y} \\
      X & \mapsto (X \join Y, \iota_2 : Y \to X \join Y)
    \end{split}
  \]
  admit right adjoints. We thus get pairs of adjoint functors
  \[
    \begin{split}
      \ooCat & \to \cotr{\ooCat}{X}, \\
      Y & \mapsto (X \join Y, \iota_1)
    \end{split}
    \qquad \qquad
    \begin{split}
      \cotr{\ooCat}{X} & \to \ooCat \\
      (Z, X \xto{u} Z) & \mapsto \cotr{Z}{u} \\
    \end{split}
  \]
  and
  \[
    \begin{split}
      \ooCat & \to \cotr{\ooCat}{Y}, \\
      X & \mapsto (X \join Y, \iota_2)
    \end{split}
    \qquad \qquad
    \begin{split}
      \cotr{\ooCat}{Y} & \to \ooCat, \\
      (Z, Y \xto{v} Z) & \mapsto \trm{Z}{v} \\
    \end{split}
  \]
  so that, if $X$ and $Y$ are \oo-categories and $u : X \to Z$ and $v : Y
  \to Z$ are \oo-functors, we have natural bijections
  \[
    \begin{split}
      \Hom_{\cotr{\ooCat}{X}}((X \join Y, \iota_1), (Z, u))
      & \simeq \Hom_{\ooCat}(Y, \cotr{Z}{u}), \\
      \Hom_{\cotr{\ooCat}{Y}}((X \join Y, \iota_2), (Z, v))
      & \simeq \Hom_{\ooCat}(X, \trm{Z}{v}). \\
    \end{split}
  \]
  (See \cite[Remark 6.37]{AraMaltsiJoint} for the reason for the decoration
  ``co'' in $\trm{Z}{v}$.)

  One important consequence of the existence of these adjoints is that the
  join commutes with connected colimits in each variable.
\end{paragraph}

\begin{examples}
  Here are some examples of joints of \oo-categories:
  \[
      \Dn{0} \join \Dn{0} = \xymatrix{\bullet \ar[r] & \bullet} = \Dn{1}
      \pbox{,}
      \qquad
      \qquad
      \Dn{0} \join \Dn{1} =
      \raisebox{2.0pc}{
      \xymatrix@R=1pc{
        & \bullet \ar[dd] \\
        \bullet \ar[ru] \ar[rd]_(0.50){}="s" \\
        & \bullet \pbox{,}
        \ar@{}"s";[uu]_(0.15){}="ss"_(0.65){}="tt"
        \ar@2"ss";"tt"
      }}
  \]
  \[
      \Dn{1} \join \Dn{1} =
    \raisebox{1.5pc}{
    \xymatrix{
      \bullet \ar[r]_(0.60){}="03" \ar[d] \ar[dr]_{}="02"_(0.60){}="02'" &
      \bullet
      &
      \bullet \ar[r]_(0.40){}="03'" \ar[d]_{}="t3"
        &
      \bullet
      \\
      \bullet \ar[r] & \bullet \ar[u]_{}="s3"
      &
      \bullet \ar[r] \ar[ur]_{}="13"_(0.40){}="13'" & \bullet \ar[u] \pbox{.}
      \ar@{}"s3";"t3"_(0.20){}="ss3"_(0.80){}="tt3"
      \ar@3"ss3";"tt3"
      \ar@{}"13";[]_(0.05){}="s123"_(0.85){}="t123"
      \ar@2"s123";"t123"
      \ar@{}"02";[lll]_(0.05){}="s012"_(0.85){}="t012"
      \ar@2"s012";"t012"
      \ar@{}"03'";"13'"_(0.05){}="s013"_(0.85){}="t013"
      \ar@2"s013";"t013"
      \ar@{}"03";"02'"_(0.05){}="s023"_(0.85){}="t023"
      \ar@2"s023";"t023"
      \\
    }}
  \]
\end{examples}

We now begin to prove that the join is compatible with the folk model
category structure.

\begin{proposition}\label{prop:join_comp_cof}
  If
  \[ i : X \to Y \quadand j : Z \to T \]
  are two folk cofibrations, then the \oo-functor
  \[
    i \boxjoin j :
    Y \join Z \amalg_{X \join Z} X \join T \to Y \join T
  \]
  is also a folk cofibration.
\end{proposition}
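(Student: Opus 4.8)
The plan is to transcribe, step by step, the proof of the analogous statement for the Gray tensor product (Theorem~\ref{thm:tens_comp_cof}), replacing the tensor product of augmented directed complexes by the join of paragraph~\ref{paragr:def_join_ADC} at every stage. First I would invoke the classical reduction Lemma~\ref{lemma:monoidal_model_gen} to reduce to the case where $i$ and $j$ are generating cofibrations, say $i = i_p$ and $j = i_q$. This reduction is legitimate for the join: although it is only locally biclosed, it commutes with connected colimits in each variable (see paragraph~\ref{paragr:def_join}), and the colimits used to build the pushout-join of arbitrary cofibrations out of generators, namely pushouts and transfinite composites, are all connected.

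Next, by Lemma~\ref{lemma:gen_cof_rig} each generating cofibration is of the form $\nu(\lambda(i_n))$ for a rigid monomorphism $\lambda(i_n)$ between strong Steiner complexes, so it suffices to prove the join-analogue of Proposition~\ref{prop:boxprod_rigid_mono}: for rigid monomorphisms $i : K \to L$ and $j : M \to N$ between strong Steiner complexes, the \oo-functor $\nu(i) \boxjoin \nu(j)$ is a folk cofibration. I would first establish the join-analogue of Proposition~\ref{prop:boxprod_basis}: since colimits of augmented directed complexes are computed degreewise and $(K \join L)_n = \bigoplus_{i+1+j=n} K_i \otimes L_j$, the pushout-join $i \boxjoin j$ is a rigid monomorphism identifying its source $L \join M \amalg_{K \join M} K \join N$ with the subcomplex of $L \join N$ generated by $(B_L \join B_M) \cup (B_K \join B_N)$, where $B_K \join B_M$ denotes $\{b \join c \mid b \in B_K \cup \{\emptyset\},\, c \in B_M \cup \{\emptyset\}\} \setminus \{\emptyset \join \emptyset\}$, with $\emptyset$ the positive generator of degree $-1$ adjoined in the join.

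I would then check that the corresponding pushout square of strong Steiner complexes satisfies the hypotheses of Proposition~\ref{prop:pushout_ADC}: strong Steiner complexes are stable under the join by \cite[Corollary~6.21]{AraMaltsiJoint}, rigid monomorphisms are stable under the join by the corresponding result of \cite{AraMaltsiJoint}, and the pushout is a strong Steiner complex because, by the previous step, it is a subcomplex of the strong Steiner complex $L \join N$ generated by a subset of its basis. Applying Proposition~\ref{prop:pushout_ADC} together with the fact that $\nu_{|\Stf}$ is monoidal for the join (Theorem~\ref{thm:def_join}), the \oo-functor $\nu(i) \boxjoin \nu(j)$ is identified with $\nu(i \boxjoin j)$. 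By Theorem~\ref{thm:Steiner} both source and target are freely generated by their atoms, so $\nu(L \join N)$ is obtained from the source by freely adding the cells indexed by $(B_L \setminus B_K) \join (B_N \setminus B_M)$ (the cells that are genuinely new in both factors, the $\emptyset$-terms always belonging to the $K$- or $M$-side), and hence the inclusion is a folk cofibration.

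The one genuinely new piece of bookkeeping compared with the tensor product, and the step I expect to be the main (if modest) obstacle, is the treatment of the basis of $K \join L$: the join introduces the degenerate summands $K_i \otimes L_{-1}$ and $K_{-1} \otimes L_j$ coming from the empty complex, so one must keep careful track of the $\emptyset$-terms in the degreewise free-abelian-group computation and when identifying the pushout with a subcomplex of $L \join N$. Once this is handled, every remaining step transcribes verbatim from Section~\ref{sec:Gray_cof}, which is why the author can afford to present the proof only as a sketch.
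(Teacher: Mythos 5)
Your proposal is correct and follows essentially the same route as the paper: reduce to generating cofibrations via Lemma~\ref{lemma:monoidal_model_gen} and Lemma~\ref{lemma:gen_cof_rig}, then transcribe Propositions~\ref{prop:boxprod_basis} and~\ref{prop:boxprod_rigid_mono} for the join, the only new point being the extra $\emptyset$-terms in the basis of $K \join L$. Your description of the basis of the pushout agrees with the paper's formula $B_L \join B_M \cup B_K \join B_N \cup B_L \join \{\emptyset\} \cup \{\emptyset\} \join B_N$, and your identification of the freely added cells as those indexed by $(B_L \setminus B_K) \join (B_N \setminus B_M)$ is exactly right.
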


\begin{proof}
  The proof is essentially the same as the one of
  Theorem~\ref{thm:tens_comp_cof}. It is immediate that if $K$ and $L$ are
  two augmented directed complexes with basis (that we denote
  by~$B_K$ and $B_L$, respectively, following the notation introduced in
  paragraph~\ref{paragr:conv_B_K}), then $K \join L$ (see
  paragraph~\ref{paragr:def_join_ADC}) is an augmented directed complex with
  basis
  \[
    B_{K \join L} =
      \{ x \join y \mid x \in B_K, \, y \in B_L\}
        \cup \{x \join \emptyset \mid x \in B_K\}
        \cup \{\emptyset \join y \mid y \in B_L\}.
  \]
  From this, one deduces as in Proposition~\ref{prop:boxprod_basis} that
  if $i : K \to L$ and $j : M \to N$ are two rigid monomorphisms between
  augmented directed complexes with basis, then the morphism
  \[
    i \boxjoin j : L \join M \amalg_{K \join M} K \join N \to L \join N
  \]
  is a rigid monomorphism between augmented directed complexes with basis
  which identifies $L \join M \amalg_{K \join M} K \join N$ with the
  subcomplex generated by
  \[ B_L \join B_M \cup B_K \join B_N \cup B_L \join \{\emptyset\} \cup
  \{\emptyset\} \join B_N. \]
  One then deduces, as in Proposition~\ref{prop:boxprod_rigid_mono}, that if
  $K$, $L$, $M$ and $N$ are assumed to be strong Steiner complexes, then the
  \oo-functor
  \[
    \nu(i) \boxjoin \nu(j) : \nu(L) \join \nu(M) \amalg_{\nu(K) \join
    \nu(M)} \nu(K) \join \nu(N) \to \nu(L) \join \nu(N)
  \]
  is a folk cofibration. To do so, one needs the fact that rigid
  monomorphisms and strong Steiner complexes are stable under join (see
  \cite[Proposition 6.17 and Corollary~6.21]{AraMaltsiJoint}) and that
  the functor $\nu_{|\Stf} : \Stf \to \ooCat$ is monoidal for the join
  (see \cite[Theorem~6.29]{AraMaltsiJoint}). The result then follows from
  the fact that generating cofibrations are of the form $\nu(i)$ for $i$ a
  rigid monomorphism between strong Steiner complexes
  (see Proposition~\ref{lemma:gen_cof_rig}).
\end{proof}

\begin{corollary}
  The join of two cofibrant \oo-categories is a cofibrant
  \oo-category.
\end{corollary}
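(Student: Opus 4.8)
The plan is to mimic the proof of the corresponding corollary for the Gray tensor product, applying Proposition~\ref{prop:join_comp_cof} to the two folk cofibrations $\emptyset \to X$ and $\emptyset \to Y$. The one point requiring care is that, in contrast with the Gray tensor product, where the unit is the terminal \oo-category $\Dn{0}$ and the initial object $\emptyset$ is absorbing, the unit of the join \emph{is} the empty \oo-category $\emptyset$. Consequently the source of the pushout-join will not collapse to $\emptyset$ but to the coproduct $X \amalg Y$, and one extra observation, namely that this coproduct is cofibrant, is needed.

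Concretely, I would form the pushout-join of $\emptyset \to X$ and $\emptyset \to Y$, the \oo-functor
\[
  X \join \emptyset \amalg_{\emptyset \join \emptyset} \emptyset \join Y
  \to X \join Y.
\]
Using that $\emptyset$ is the unit of the join, there are canonical isomorphisms $X \join \emptyset \simeq X$, $\emptyset \join Y \simeq Y$ and $\emptyset \join \emptyset \simeq \emptyset$, so the source of this morphism is identified with the coproduct $X \amalg Y$ and the pushout-join becomes an \oo-functor $X \amalg Y \to X \join Y$. By Proposition~\ref{prop:join_comp_cof}, this \oo-functor is a folk cofibration.

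It then remains to observe that $X \amalg Y$ is itself cofibrant. Since $X$ and $Y$ are cofibrant, the \oo-functor $\emptyset \to X \amalg Y$ factors as $\emptyset \to X \to X \amalg Y$, where the second arrow is a pushout of the cofibration $\emptyset \to Y$ along $\emptyset \to X$ and hence a cofibration; being a composite of cofibrations, $\emptyset \to X \amalg Y$ is a cofibration. Composing with the cofibration $X \amalg Y \to X \join Y$ obtained above shows that $\emptyset \to X \join Y$ is a folk cofibration, that is, that $X \join Y$ is cofibrant. There is no genuine obstacle here beyond correctly accounting for the unit of the join; the argument is purely formal once Proposition~\ref{prop:join_comp_cof} is available.
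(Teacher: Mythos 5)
Your proof is correct and follows the same route the paper intends: the paper's proof is simply ``this follows immediately from the previous proposition,'' i.e.\ from Proposition~\ref{prop:join_comp_cof} applied to $\emptyset \to X$ and $\emptyset \to Y$. You have merely made explicit the detail the paper leaves implicit — that because $\emptyset$ is the unit of the join rather than absorbing, the source of the pushout-join is $X \amalg Y$ rather than $\emptyset$, so one must also note that this coproduct is cofibrant — and that observation is handled correctly.
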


\begin{proof}
  This follows immediately from the previous proposition.
\end{proof}

To end the proof of the compatibility of the join with the folk model
category structure, we will apply
Lemma~\ref{lemma:model_monoidal_criterion}. The only non-trivial remaining
hypothesis to be checked is \ref{hyp:trivial_cof}. To do so, we will prove
that the class of $\J$-transformations is stable by taking the join by an object
on the right (a fact which was trivial for the tensor product) by showing
that, for every \oo-category $T$, the functor ${-} \join T$ can be endowed
with what is called a tensorial strength for the Gray tensor product. We
start by defining this tensorial strength at the level of augmented directed
complexes.

\begin{paragraph}
  Let $K$, $L$ and $M$ be three augmented directed complexes. We define a
  morphism
  \[ \sigma : K \otimes (L \join M) \to (K \otimes L) \join M, \]
  natural in $K$, $L$ and $M$, in the following way. For $n \ge 0$, we set
  \[
    \sigma_n(x \otimes (y \join z))
    =
    \begin{cases}
      e(x)(y \join z) & \text{if $|y| = -1$,} \\
      (x \otimes y) \join z & \text{if $|y| \ge 0$,}
    \end{cases}
  \]
  where by convention $e(x) = 0$ if $x$ is not of degree $0$.
\end{paragraph}

\begin{proposition}\label{prop:def_strength_ADC}
  The morphisms $\sigma_n$ define a morphism of augmented directed
  complexes $\sigma : K \otimes (L \join M) \to (K \otimes L) \join M$.
\end{proposition}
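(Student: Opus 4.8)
The plan is to verify directly that the degree-preserving graded maps $\sigma_n$ assemble into a morphism of augmented directed complexes, that is, that they are compatible with the differential, the augmentation and the positive elements. Well-definedness and the correct behaviour on degrees are immediate from the direct sum decompositions of $K \otimes (L \join M)$ and $(K \otimes L) \join M$: a generator $x \otimes (y \join z)$ with $x \in K_i$, $y \in L_p$, $z \in M_q$ and $i + p + q + 1 = n$ is sent either to $(x \otimes y) \join z$, of degree $(i + p) + 1 + q = n$, or, when $|y| = -1$, to $e(x)(\varnothing \join z)$, which vanishes unless $i = 0$ (by the convention that $e(x) = 0$ for $|x| \ne 0$) and then has degree $q = n$. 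Compatibility with the augmentation is checked on the two types of degree-$0$ generators $x \otimes (\varnothing \join z)$ and $x \otimes (y \join \varnothing)$, both sides giving $e(x)e(z)$, respectively $e(x)e(y)$. For the positive elements, a positive generator $x \otimes (y \join z)$ is sent to the positive element $(x \otimes y) \join z$ when $|y| \ge 0$, and to $e(x)(\varnothing \join z)$ when $y = \varnothing$, which is again positive since $e$ is nonnegative on positive $0$-chains.

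The main work, and the only genuinely delicate point, is the compatibility with the differential, namely the identity $d \circ \sigma_n = \sigma_{n-1} \circ d$ on each generator $x \otimes (y \join z)$. I would establish it by a case analysis according to whether $|y| \ge 1$, $|y| = 0$, or $|y| = -1$, expanding $d(x \otimes (y \join z)) = dx \otimes (y \join z) + (-1)^{|x|} x \otimes (dy \join z) + (-1)^{|x| + |y| + 1} x \otimes (y \join dz)$ and applying $\sigma$ term by term, while on the other side expanding $d$ of the image. The subtlety is that two different conventions for the differential of a $0$-chain are simultaneously in play: in the tensor product one has $dw = 0$ when $|w| = 0$, whereas in the join one has $dw = e(w)\varnothing$. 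Consequently, when $|y| = 0$ the term $dy \join z$ produces $e(y)(\varnothing \join z)$, and after applying $\sigma$ this must be matched against the $\varnothing$-term coming from $d(x \otimes y)$ in the target $(K \otimes L) \join M$, which appears exactly when $x \otimes y$ has degree $0$; the case $|y| = -1$ with $|x| > 0$ forces both sides to vanish and relies on $e \circ d = 0$. What remains is the bookkeeping of the signs $(-1)^{|x|}$, $(-1)^{|y| + 1}$ and $(-1)^{|x| + |y| + 1}$, which I expect to match on the nose once the degree shifts and the two $0$-chain conventions are tracked carefully. This interplay of signs and conventions across the boundary between the cases $|y| = 0$ and $|y| = -1$ is the step I expect to be the main obstacle.
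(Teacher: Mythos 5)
Your proposal follows essentially the same route as the paper: a direct verification, with compatibility with the differential checked by a case analysis on $|y|$ (the paper further splits the case $|y|=0$ according to whether $|z|=-1$ or $|z|\ge 0$), and you correctly isolate the two delicate points, namely the clash between the conventions $dw=0$ (tensor) and $dw=e(w)\emptyset$ (join) for $0$\nobreakdash-chains, and the vanishing via $e\circ d=0$ and $e(x)=0$ for $|x|>0$ in the case $|y|=-1$. The only difference is that you stop short of carrying out the sign bookkeeping, which is precisely the content of the paper's proof; it goes through as you expect.
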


\begin{proof}
  It is immediate that $\sigma_n$ respects positive elements. Let us prove
  that $\sigma_0$ is compatible with the augmentations. If the degree of $x
  \otimes (y \join z)$ is $0$ then $|x| = 0$ and
  \begin{itemize}
    \item either $|y| = -1$ and $|z| = 0$, in which
      case we have
      \[
        e(\sigma_0(x \otimes (y \join z))) = e(e(x)(y \join
        z)) = e(x)e(y \join z) = e(x \otimes (y \join z)),
      \]
    \item or $|y| = 0$ and $|z| = -1$, in which case we can assume that $z =
    \emptyset$ and we have
      \[
        \begin{split}
          e(\sigma_0(x \otimes (y \join \emptyset))) & =
          e((x \otimes y) \join \emptyset) = e(x \otimes y) \\ & = e(x)e(y)
          = e(x)e(y \join \emptyset) = e(x \otimes (y \join \emptyset)).
        \end{split}
      \]
  \end{itemize}

  Suppose now that the degree $n$ of $x \otimes (y \join z)$ is at least
  $1$ and let us prove that
  \[
    \sigma_{n-1}d(x \otimes (y \join z)) = d\sigma_n(x \otimes (y \join z)).
  \]
  We will freely use the conventions for the differentials of tensors and
  joins introduced in paragraphs~\ref{paragr:def_tens_ADC}
  and~\ref{paragr:def_join_ADC}. We distinguish four cases:
  \begin{itemize}
    \item If $|y| = -1$, then we have
      {
        \allowdisplaybreaks
      % \[
        \begin{align*}
          \sigma_{n-1}d(x \otimes (y \join z))
          & =
          \sigma_{n-1}\big(dx \otimes (y \join z) + (-1)^{|x|} x
          \otimes d(y \join z)\big) \\*
          & =
          e(dx) (y \join z) + (-1)^{|x|} e(x)
          d(y \join z) \\*
         & \phantom{\simeq 1} \text{(being careful with the case $|z| = 0$)} \\ 
          & =
          e(x) d(y \join z) \\*
          & \phantom{\simeq 1} \text{(as $ed = 0$ and $e(x) = 0$ if $|x|
              \neq 0$)} \\
          & = d(e(x) (y \join z)) \\*
          & = d\sigma_n(x \otimes (y \join z)).
        \end{align*}
      % \]
      }%
    \item If $|y| = 0$ and $|z| = -1$, then we have
      \[
        \begin{split}
          \sigma_{n-1}d(x \otimes (y \join z))
          & =
          \sigma_{n-1}(dx \otimes (y \join z)) \\
          & =
          (dx \otimes y) \join z \\
          & =
          d(x \otimes y) \join z \\
          & =
          d((x \otimes y) \join z) \\
          & =
          d\sigma_n(x \otimes (y \join z)).
        \end{split}
      \]
    \item If $|y| = 0$ and $|z| \ge 0$, then we have
    { \allowdisplaybreaks
      % \[
        \begin{align*}
          \MoveEqLeft
          \sigma_{n-1}d(x \otimes (y \join z)) \\*
          & =
          \sigma_{n-1}\big(dx \otimes (y \join z) + (-1)^{|x|}x \otimes d(y
          \join z)\big) \\*
          & =
          \sigma_{n-1}\big(dx \otimes (y \join z) + (-1)^{|x|}x \otimes
            (e(y)\emptyset
          \join z) + (-1)^{|x|+|y|+1} x \otimes (y \join dz)\big) \\
          & =
          (dx \otimes y) \join z + (-1)^{|x|}e(x)e(y)(\emptyset \join z)
          + (-1)^{|x|+|y|+1} (x \otimes y) \join dz \\*
          & \phantom{\simeq 1} \text{(being careful with the case $|z| =
          0$)} \\
          & =
          (dx \otimes y) \join z + (e(x \otimes y)\emptyset \join z)
          + (-1)^{|x \otimes y|+1} (x \otimes y) \join dz \\*
          & \phantom{\simeq 1} \text{(as the second term is null if $|x|
        \neq 0$)} \\
        & = d(x \otimes y) \join z + (-1)^{|x \otimes y|+1} (x \otimes y)
        \join dz \\
          & \phantom{\simeq 1} \text{(distinguishing the cases
          $|x| = 0$ and $|x| \neq 0$)} \\
          & =
          d((x \otimes y) \join z) \\*
          & =
          d\sigma_n(x \otimes (y \join z)).
        \end{align*}
      % \]
      }%
    \item If $|y| \ge 1$, then we have
    {
    \allowdisplaybreaks
      % \[
        \begin{align*}
          \MoveEqLeft
          \sigma_{n-1}d(x \otimes (y \join z)) \\*
          & =
          \sigma_{n-1}\big(dx \otimes (y \join z) + (-1)^{|x|}x \otimes d(y
          \join z)\big) \\*
          & =
          \sigma_{n-1}\big(dx \otimes (y \join z) + (-1)^{|x|}x \otimes (dy
          \join z) + (-1)^{|x|+|y|+1} x \otimes (y \join dz)\big) \\
          & =
          (dx \otimes y) \join z + (-1)^{|x|}(x \otimes dy)
          \join z + (-1)^{|x|+|y|+1} (x \otimes y) \join dz \\*
          & \phantom{\simeq 1} \text{(being careful with the cases $|z| =
        -1$ and $|z| = 0$)} \\
          & =
          d(x \otimes y) \join z + (-1)^{|x \otimes y| + 1} (x \otimes y) \join dz \\
          & =
          d((x \otimes y) \join z) \\*
          & =
          d\sigma_n(x \otimes (y \join z)),
        \end{align*}
      % \]
      }%
      thereby proving the result. \qedhere
  \end{itemize}
\end{proof}

\begin{proposition}\label{prop:strenth_ADC}
  For any augmented directed complex $T$, the morphism $\sigma$ of the
  previous proposition defines a tensorial strength for the tensor product
  on the functor
  \[
    \begin{split}
      \ADC & \to \ADC \\
      K & \mapsto K \join T,
    \end{split}
  \]
  meaning that, for every augmented directed complexes $K$, $L$ and $M$, the
  triangles
  \[
    \xymatrix@C=0pc{
      K \otimes L \otimes (M \join T)
      \ar[rr]^{K \otimes s} \ar[dr]_s
      &&
      K \otimes ((L \otimes M) \join T)
      \ar[dl]^s
      \\
      &
      (K \otimes L \otimes M) \join T
    }
  \]
  and
  \[
    \xymatrix@C=1pc{
      \Z \otimes (K \join T)
      \ar[rr]^s \ar[dr]_\lambda
      &&
      (\Z \otimes K) \join T
      \ar[dl]^{\lambda \join T}
      \\
      &
      K \join T
      & \pbox{,}
    }
  \]
  where we have neglected the associativity constraints and $\lambda$
  denotes the left unit constraint, are commutative.
\end{proposition}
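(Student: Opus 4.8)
The plan is to verify both coherence triangles by reducing to a computation on pure tensors. Every arrow appearing in the two diagrams is a morphism of augmented directed complexes: the occurrences of the strength (written $s$ in the diagrams, and which I shall denote $\sigma$) are morphisms by Proposition~\ref{prop:def_strength_ADC}, while $K \otimes s$, $\lambda$ and $\lambda \join T$ are morphisms of augmented directed complexes by functoriality of the tensor product and the join together with the fact that $\lambda$ is the structural left unit isomorphism. Consequently the two composites in each triangle are morphisms of augmented directed complexes, so to prove they coincide it suffices to check that the underlying morphisms of abelian groups agree. Since the underlying groups of iterated tensor products and joins are spanned by pure tensors, it is enough to evaluate both composites on elements of the form $x \otimes y \otimes (z \join t)$ for the first triangle and $1 \otimes (x \join t)$ for the second, where $1$ denotes the positive generator of $\Z$.

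For the first triangle I would fix $x$ in $K$, $y$ in $L$, $z$ in $M$ (possibly the generator $\emptyset$ of $M_{-1}$) and $t$ in $T$ (possibly $\emptyset$), and compute along the two paths using the defining formula for $\sigma$. Both composites split into the same two cases according to the degree of $z$. If $|z| = -1$, the lower arrow gives $e(x \otimes y)(\emptyset \join t)$, while applying $K \otimes s$ and then $s$ produces first $e(y)\bigl(x \otimes (\emptyset \join t)\bigr)$ and then $e(x)e(y)(\emptyset \join t)$; these agree since $e(x \otimes y) = e(x)e(y)$. If $|z| \ge 0$, both paths yield $(x \otimes y \otimes z) \join t$. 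The only point requiring care is the repeated reinterpretation of the degree $-1$ generator $\emptyset$ as it passes between the complexes $M \join T$, $(L \otimes M) \join T$ and $(K \otimes L \otimes M) \join T$, together with the convention that $e$ vanishes outside degree $0$.

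For the second triangle I would evaluate on the generator $1 \otimes (x \join t)$ and again distinguish $|x| = -1$ from $|x| \ge 0$. When $x = \emptyset$, the map $\sigma$ sends $1 \otimes (\emptyset \join t)$ to $\emptyset \join t$ in $(\Z \otimes K) \join T$, which $\lambda \join T$ carries to $\emptyset \join t$, matching $\lambda\bigl(1 \otimes (\emptyset \join t)\bigr) = \emptyset \join t$. When $|x| \ge 0$, one obtains $(1 \otimes x) \join t$ followed by $x \join t$, again matching $\lambda\bigl(1 \otimes (x \join t)\bigr) = x \join t$.

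Neither triangle involves any sign, since $\sigma$ is itself sign-free, so there is no Koszul sign bookkeeping to perform. The main --- and essentially only --- obstacle is therefore purely notational: keeping track of which complex each occurrence of the empty element $\emptyset$ lives in, and applying the degree conventions for $e$ consistently across the case distinctions. Once these conventions are fixed, both verifications are immediate, and the two triangles commute.
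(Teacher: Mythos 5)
Your proposal is correct and follows exactly the route the paper intends: the paper's proof is simply ``this follows from direct calculations,'' and your case analysis on pure tensors (splitting on whether the middle factor is the degree $-1$ generator $\emptyset$, and using the convention that $e$ vanishes outside degree $0$ so that $e(x\otimes y)=e(x)e(y)$ in all cases) is precisely that calculation, carried out correctly for both triangles.
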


\begin{proof}
  This follows from direct calculations.
\end{proof}

\begin{paragraph}
  Let $X$, $Y$ and $Z$ be three \oo-categories. We define an \oo-functor
  \[
    s : X \otimes (Y \join Z) \to (X \otimes Y) \join Z,
  \]
  natural in $X$, $Y$ and $Z$, in the following way. First, recall that any
  \oo-category $T$ is a canonical colimit of \oo-categories associated to
  strong Steiner complexes in the sense that we have a canonical isomorphism
  \[ T \simeq \limind_{\substack{\nu(K) \to T, \, T \in \Stf}} \nu(K). \]
  (This follows from the fact that $\nu(\Stf)$ contains Joyal's category
  $\Theta$ which is dense in~$\ooCat$, see also \cite[Theorem
  7.1]{Steiner}.) This colimit is connected as the null augmented directed
  complex is an initial object of the category of strong Steiner complexes. As by
  Theorems~\ref{thm:def_tens} and~\ref{thm:def_join}, both the Gray tensor
  product and the join commute with connected colimits in each variable and
  are compatible with the functor $\nu_{|\Stf} : \Stf \to \ooCat$, we get
  canonical isomorphisms
  \[
    \begin{split}
    X \otimes (Y \join Z)
    & \simeq
    \limind_{\substack{\nu(K) \to X, \, K \in \Stf\\
      \nu(L) \to Y, \, L \in \Stf\\
      \nu(M) \to Z, \, M \in \Stf}}
      \nu(K \otimes (L \join M)), \\
    (X \otimes Y) \join Z
    & \simeq
    \limind_{\substack{\nu(K) \to X, \, K \in \Stf\\
      \nu(L) \to Y, \, L \in \Stf\\
      \nu(M) \to Z, \, M \in \Stf}}
      \nu((K \otimes L) \join M).
    \end{split}
  \]
  We thus obtain our \oo-functor $s :  X \otimes (Y \join Z)\to (X \otimes
  Y) \join Z$ by taking the colimit over $K$, $L$ and $M$ of the
  \oo-functors
  \[
    \nu(\sigma) : \nu(K \otimes (L \join M)) \to \nu((K \otimes L) \join M),
  \]
  where $\sigma : K \otimes (L \join M) \to (K \otimes L) \join M$ is the
  morphism of Proposition~\ref{prop:def_strength_ADC}.
\end{paragraph}

\begin{proposition}
  For any \oo-category $T$, the \oo-functor $s$ of the previous paragraph
  defines a tensorial strength for the Gray tensor product on the functor
  \[
    \begin{split}
      \ooCat & \to \ooCat \\
      X & \mapsto X \join T,
    \end{split}
  \]
  meaning that, for every \oo-categories $X$, $Y$ and $Z$, the triangles
  \[
    \xymatrix@C=0pc{
      X \otimes Y \otimes (Z \join T)
      \ar[rr]^{X \otimes s} \ar[dr]_s
      &&
      X \otimes ((Y \otimes Z) \join T)
      \ar[dl]^s
      \\
      &
      (X \otimes Y \otimes Z) \join T
    }
  \]
  and
  \[
    \xymatrix@C=1pc{
      \Dn{0} \otimes (X \join T)
      \ar[rr]^s \ar[dr]_\lambda
      &&
      (\Dn{0} \otimes X) \join T
      \ar[dl]^{\lambda \join T}
      \\
      &
      X \join T
      & \pbox{,}
    }
  \]
  where we have neglected the associativity constraints and $\lambda$
  denotes the left unit constraint, are commutative.
\end{proposition}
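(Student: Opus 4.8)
The plan is to deduce both coherence triangles directly from their augmented\=/directed\=/complex counterparts established in Proposition~\ref{prop:strenth_ADC}, by transporting them through the functor $\nu$ and passing to connected colimits. The structural fact that makes this work is the one already used to define $s$: every \oo-category is a connected colimit of objects of the form $\nu(K)$ with $K$ a strong Steiner complex, the colimit being connected because the null complex is initial in $\Stf$, and both the Gray tensor product and the join commute with connected colimits in each variable (Theorems~\ref{thm:def_tens} and~\ref{thm:def_join}). Consequently, once $X$, $Y$, $Z$ and $T$ are written as such colimits, every object occurring in the two triangles — such as $X \otimes Y \otimes (Z \join T)$, $X \otimes ((Y \otimes Z) \join T)$ and $(X \otimes Y \otimes Z) \join T$ — is canonically a connected colimit, indexed by strong Steiner complexes mapping to $X$, $Y$, $Z$ and $T$, of $\nu$ applied to the corresponding tensors and joins of complexes, and each edge of the triangles (including the various instances of $s$ and of $X \otimes s$) is the colimit of $\nu$ of the corresponding morphism of augmented directed complexes.

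First I would make this reduction precise. By naturality of $s$, of $\lambda$ and of the associativity constraints in all variables, it suffices to check commutativity of the two triangles after restricting to $X = \nu(K)$, $Y = \nu(L)$, $Z = \nu(M)$ and $T = \nu(P)$ for strong Steiner complexes $K$, $L$, $M$, $P$; the general case is then recovered by taking the connected colimit over these presentations, using again that $\otimes$ and $\join$ distribute over such colimits in each variable. At this point I would invoke the very definition of $s$ as the colimit of the \oo-functors $\nu(\sigma)$, so that in the restricted triangles each arrow is literally $\nu$ of the corresponding arrow of augmented directed complexes appearing in Proposition~\ref{prop:strenth_ADC}.

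Once restricted to the strong Steiner case, the two triangles are precisely the images under $\nu$ of the two triangles of Proposition~\ref{prop:strenth_ADC}. Since $\nu_{|\Stf}$ is monoidal for both the tensor product and the join (Theorems~\ref{thm:def_tens} and~\ref{thm:def_join}), the associativity and left\=/unit constraints that have been neglected in the diagrams correspond, under $\nu$, to those at the level of augmented directed complexes. Thus the two triangles of the proposition are the images under the functor $\nu$ of commuting triangles, and therefore commute.

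The main obstacle I expect is not a conceptual difficulty but careful bookkeeping: one must verify that the colimit presentations of the several triples of monoidal products are mutually compatible, so that the two sides of each triangle are genuinely colimits of the \emph{same} diagram, and that the neglected coherence isomorphisms transport correctly through $\nu$ via the monoidality of $\nu_{|\Stf}$. The connectedness of the defining colimit is essential precisely here, as it is what guarantees that $\otimes$ and $\join$ commute with it in each variable; the strength $s$ was constructed so that this transport goes through, which is exactly why the entire verification reduces to Proposition~\ref{prop:strenth_ADC}.
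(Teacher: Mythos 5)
Your proposal is correct and follows essentially the same route as the paper: the paper's proof likewise observes that the two triangles are connected colimits of the images under $\nu$ of the corresponding triangles of Proposition~\ref{prop:strenth_ADC}, whose commutativity then gives the result. Your additional remarks on the compatibility of the colimit presentations and the transport of the coherence constraints through the monoidality of $\nu_{|\Stf}$ simply make explicit the bookkeeping that the paper leaves implicit.
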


\begin{proof}
  Using the same arguments as for the definition of $s$ in terms of
  $\sigma$, we get that these two triangles are colimits of the image by
  $\nu$ of triangles as in Proposition~\ref{prop:strenth_ADC}. The result
  thus follows from this proposition, which asserts that these triangles are
  commutative.
\end{proof}

\begin{proposition}
  Let $f, g : X \to Y$ be two \oo-functors. If $h : \J \otimes X \to Y$
  is a $\J$-transformation from $f$ to $g$, then, for any \oo-category $Z$,
  the \oo-functor
  \[
    \J \otimes (X \join Z) \xto{s} (\J \otimes X) \join Z \xto{h
    \join Z} Y \join Z
  \]
  defines a $\J$-transformation from $f \join Z$ to $g \join Z$.
\end{proposition}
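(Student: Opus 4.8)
The plan is to unwind the definition of a $\J$-transformation and to reduce the whole statement to two formal properties of the tensorial strength $s$: its naturality in the first variable and its compatibility with the left unit constraint $\lambda$. Write $H = (h \join Z)\,s : \J \otimes (X \join Z) \to Y \join Z$ for the composite in the statement. By definition of a $\J$-transformation, $H$ is a $\J$-transformation from $f \join Z$ to $g \join Z$ exactly when, for $\epsilon = 0, 1$, the composite $H\,(\funcell{\epsilon} \otimes (X \join Z))$, viewed as an \oo-functor $\Dn{0} \otimes (X \join Z) \to Y \join Z$, equals $(f \join Z)\,\lambda$ for $\epsilon = 0$ and $(g \join Z)\,\lambda$ for $\epsilon = 1$, where $X \join Z$ is identified with $\Dn{0} \otimes (X \join Z)$ via $\lambda$. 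The hypothesis that $h$ is a $\J$-transformation from $f$ to $g$ furnishes the analogous identities $h\,(\funcell{0} \otimes X) = f\,\lambda$ and $h\,(\funcell{1} \otimes X) = g\,\lambda$ one level up.

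First I would apply the naturality of $s$ in its first variable to the \oo-functor $\funcell{\epsilon} : \Dn{0} \to \J$, which yields
\[
  s\,(\funcell{\epsilon} \otimes (X \join Z))
  = \big((\funcell{\epsilon} \otimes X) \join Z\big)\,s_0,
\]
where $s_0 : \Dn{0} \otimes (X \join Z) \to (\Dn{0} \otimes X) \join Z$ is the instance of the strength with first variable $\Dn{0}$. Composing with $h \join Z$ and using the functoriality of the join together with the $\J$-transformation property of $h$, I obtain for $\epsilon = 0$
\[
  H\,(\funcell{0} \otimes (X \join Z))
  = \big((h\,(\funcell{0} \otimes X)) \join Z\big)\,s_0
  = (f \join Z)\,(\lambda \join Z)\,s_0,
\]
and likewise with $g$ in place of $f$ for $\epsilon = 1$.

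Finally, the unit coherence triangle of the tensorial strength (the second diagram in the statement of the strength, taken with $T = Z$) gives $(\lambda \join Z)\,s_0 = \lambda$, whence $H\,(\funcell{0} \otimes (X \join Z)) = (f \join Z)\,\lambda$, which is precisely the required identity; the case $\epsilon = 1$ is verbatim the same with $g$. I expect no genuine obstacle here: the entire content is packaged in the two coherence axioms already established for $s$, and the argument is a purely formal diagram chase. The only point demanding a little care is to match the identification of $X \join Z$ with $\Dn{0} \otimes (X \join Z)$ used in the target $\J$-transformation with the identification coming from $h$, and this matching is exactly what the unit coherence of $s$ guarantees.
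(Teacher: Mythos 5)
Your proof is correct and is essentially the paper's own argument: the paper presents it as one large commutative diagram whose middle squares commute by naturality of $s$, whose triangles commute by the unit coherence of the strength, and whose outer cells commute by the hypothesis on $h$ — exactly the three ingredients you chain together as equations.
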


\begin{proof}
  This follows from the commutativity of the diagram
  \[
    \xymatrix@C=2.5pc{
      X \join Z \ar[d]_{\simeq} \ar[dr]^{\simeq}
      \ar@/^7ex/[ddrr]^{f \join Z}
      \\
      \Dn{0} \otimes (X \join Z)
      \ar[r]^s \ar[d]_{\funcell{0} \otimes (X \join Z)}
      &
      (\Dn{0} \otimes X) \join Z
      \ar[d]^{(\funcell{0} \otimes X) \join Z}
      \\
      \J \otimes (X \join Z)
      \ar[r]^s
      &
      (\J \otimes X) \join Z
      \ar[r]^-{h \join Z}
      &
      Y \join Z
      \\
      \Dn{0} \otimes (X \join Z)
      \ar[r]^s \ar[u]^{\funcell{1} \otimes (X \join Z)}
      &
      (\Dn{0} \otimes X) \join Z
      \ar[u]_{(\funcell{1} \otimes X) \join Z}
      \\
      X \join Z \ar[u]^{\simeq} \ar[ur]_{\simeq}
      \ar@/_7ex/[uurr]_{g \join Z}
      && \pbox{,}
    }
  \]
  the two squares in the middle of the diagram being commutative by
  naturality of $s$, the two triangles by the previous proposition, and the
  two other small diagrams by hypothesis on $h$.
\end{proof}

\begin{remark}
  The analogous statement for oplax transformations, obtained by replacing
  $\J$ by $\Dn{1}$, is true as well, the proof applying \forlang{mutatis
  mutandis}.
\end{remark}

\begin{theorem}\label{thm:join_mon}
  The folk model category structure on $\ooCat$ is monoidal for the join.
\end{theorem}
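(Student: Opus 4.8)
The plan is to deduce the theorem from the abstract criterion of Lemma~\ref{lemma:model_monoidal_criterion}, applied to $\ooCat$ equipped with the join, exactly as Theorem~\ref{thm:Gray_mon_mod} was obtained in the case of the Gray tensor product. First I would dispose of the easy hypotheses. By Remark~\ref{rem:model_monoidal_criterion}, hypotheses \ref{hyp:unit} and \ref{hyp:target_cof} already hold for the folk model category structure (the sources $\bDn{n}$ of the generating cofibrations being free in the sense of polygraphs, hence cofibrant). Hypothesis~\ref{hyp:tens_I} is immediate here: the unit of the join is the initial \oo-category $\emptyset$ (paragraph~\ref{paragr:def_join}), so that $i \join \emptyset = i = \emptyset \join i$ for every $i$; this is precisely where the join, although only locally biclosed, causes no trouble. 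Hypothesis~\ref{hyp:cof} is nothing but Proposition~\ref{prop:join_comp_cof}. Thus the whole content of the argument reduces to checking~\ref{hyp:trivial_cof}: for every generating trivial cofibration $j_n$ and every cofibrant \oo-category $A$, both $j_n \join A$ and $A \join j_n$ are folk weak equivalences.

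For the right-hand factor $j_n \join A$, I would mirror Proposition~\ref{prop:i_otimes_Z}. By Proposition~\ref{prop:triv_cof_retr}, the trivial cofibration $j_n$ is a $\J$-transformation retract; and the proposition established just above, built on the tensorial strength $s : X \otimes (Y \join Z) \to (X \otimes Y) \join Z$, shows that $\J$-transformations, and hence $\J$-transformation retracts, are stable under taking the join with a fixed object on the right. Therefore $j_n \join A$ is again a $\J$-transformation retract (with retraction $r \join A$), and so a folk weak equivalence by Proposition~\ref{prop:J1_immer_are_we}. Note that cofibrancy of $A$ is not actually needed for this half.

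For the left-hand factor $A \join j_n$, I would argue by duality as in Corollary~\ref{coro:Z_otimes_i}: using the natural isomorphism $(X \join Y)^\op \simeq Y^\op \join X^\op$ (see \cite{AraMaltsiJoint}) together with the $\op$-invariance of cofibrations, trivial cofibrations and weak equivalences (paragraph~\ref{paragr:dualities}), one identifies $A \join j_n$ with $(j_n^\op \join A^\op)^\op$ and reduces to the right-hand case applied to the folk trivial cofibration $j_n^\op$. Once \ref{hyp:trivial_cof} is in hand, Lemma~\ref{lemma:model_monoidal_criterion} yields the theorem.

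As for where the difficulty lies: essentially all of it has already been front-loaded into the construction of the strength $\sigma$ at the level of augmented directed complexes (Proposition~\ref{prop:def_strength_ADC}) and its transport to \oo-categories, so that the theorem itself is a routine verification of the hypotheses of the abstract lemma. The one point requiring genuine care is the left-hand factor: unlike for the Gray tensor product, there is no evident second strength handling the functor $Z \join {-}$, so the argument must pass through the $\op$-duality for the join, and I would make sure beforehand that this duality does interchange the two arguments of the join in the required way rather than merely reindexing dimensions.
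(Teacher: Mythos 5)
Your proposal is correct and follows essentially the same route as the paper: apply Lemma~\ref{lemma:model_monoidal_criterion}, note that \ref{hyp:unit}--\ref{hyp:tens_I} are automatic (the join unit being the initial object), get \ref{hyp:cof} from Proposition~\ref{prop:join_comp_cof}, handle $j \join A$ via the tensorial strength and the stability of $\J$-transformation retracts, and handle $A \join j$ by the duality $(X \join Y)^\op \simeq Y^\op \join X^\op$. The only cosmetic difference is that you invoke Proposition~\ref{prop:triv_cof_retr} to see that a generating trivial cofibration is a $\J$-transformation retract, where the paper points back to the proof of Proposition~\ref{prop:i_otimes_Z}; both are valid.
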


\begin{proof}
  We apply Lemma~\ref{lemma:model_monoidal_criterion}. The hypothesis
  \ref{hyp:unit} and \ref{hyp:target_cof} are true for the folk model
  category structure and the hypothesis \ref{hyp:tens_I} is true for any
  locally biclosed monoidal category, as the initial object is the tensor
  unit. The hypothesis \ref{hyp:cof} follows from
  Proposition~\ref{prop:join_comp_cof}.

  It remains to prove \ref{hyp:trivial_cof}. Let $i$ be a generating trivial
  cofibration and let $Z$ be an \oo-category. As seen in the proof of
  Proposition~\ref{prop:i_otimes_Z}, the \oo-functor $i$ is a
  $J_1$\=/deformation retract. The previous proposition thus implies that $i
  \join Z$ is also a $J_1$-deformation retract, and therefore a weak
  equivalence by Proposition~\ref{prop:J1_immer_are_we}. The fact that $Z
  \join i$ is also a weak equivalence follows from a duality argument, as in
  the proof of Corollary~\ref{coro:Z_otimes_i}, using the canonical natural
  isomorphism
  \[ (X \join Y)^\op \simeq Y^\op \join X^\op \]
  (see \cite[Proposition 6.35]{AraMaltsiJoint}). This ends the proof of
  \ref{hyp:trivial_cof} and hence of the theorem.
\end{proof}

\goodbreak

The statement analogous to Proposition~\ref{prop:tens_mon_ax} holds as well,
the proof being a direct adaption:

\begin{proposition}\ %
  \begin{enumerate}
  \item 
  Transfinite compositions of pushouts of join of an object (on
  the left) and a folk trivial cofibration are folk weak equivalences.

  \item
  Transfinite compositions of pushouts of join of a
  folk trivial cofibration and an object (on the right) are folk weak
  equivalences.
  \end{enumerate}
\end{proposition}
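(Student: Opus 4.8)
The plan is to follow the proof of Proposition~\ref{prop:tens_mon_ax} almost verbatim, with the join in place of the Gray tensor product. First I would reduce the first assertion to the second by means of the duality $X \mapsto X^\op$ of paragraph~\ref{paragr:dualities}: this duality is an automorphism of $\ooCat$, hence preserves all colimits, in particular pushouts and transfinite compositions; it preserves folk weak equivalences and folk trivial cofibrations by that same paragraph; and by the natural isomorphism $(X \join Y)^\op \simeq Y^\op \join X^\op$ (see \cite[Proposition 6.35]{AraMaltsiJoint}) it carries a map of the form $Z \join i$, with $i$ a folk trivial cofibration, to a map of the form $i^\op \join Z^\op$, where $i^\op$ is again a folk trivial cofibration. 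A transfinite composition of pushouts of maps $Z \join i$ is therefore sent to one of pushouts of maps $i^\op \join Z^\op$, so that the first assertion follows from the second exactly as Corollary~\ref{coro:Z_otimes_i} was deduced from Proposition~\ref{prop:i_otimes_Z}.

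For the second assertion, I would argue as in Proposition~\ref{prop:tens_mon_ax}. By Proposition~\ref{prop:triv_cof_retr}, a folk trivial cofibration $i : X \to Y$ is a strong $\J$-transformation retract, say with retraction $r : Y \to X$ and witnessing \oo-functor $h : \J \otimes Y \to Y$. The proposition expressing the tensorial strength of ${-} \join Z$ (the one turning a $\J$-transformation from $f$ to $g$ into one from $f \join Z$ to $g \join Z$) then shows that $i \join Z$, with retraction $r \join Z$, is again a $\J$-transformation retract. Granting that it is moreover strong, the stability of strong $\J$-transformation retracts under pushouts shows that every pushout of a map $i \join Z$ is a strong $\J$-transformation retract, hence a folk weak equivalence by Proposition~\ref{prop:J1_immer_are_we}; and since folk weak equivalences are stable under transfinite compositions (see \cite[Lemma 4.12]{Folk}), the conclusion follows. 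As in Remark~\ref{rem:tens_mon_ax}, the maps in question are in fact transfinite compositions of $\J$-transformation retracts.

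The step requiring genuine verification is the claim that joining on the right preserves the strongness of the retract, for it is only for strong retracts that stability under pushouts has been established. Concretely, writing $p : \J \otimes X \to X$ for the projection, I would check that the strongness identity $h \circ (\J \otimes i) = i \circ p$ for $i$ implies the analogous identity for $i \join Z$ with witnessing \oo-functor $(h \join Z) \circ s : \J \otimes (Y \join Z) \to Y \join Z$. Using the naturality of the strength $s$ in its join variable, this reduces the relevant composite to $(i \join Z) \circ (p \join Z) \circ s$, and the unit triangle $(\lambda \join Z) \circ s = \lambda$ of the strength proposition yields $(p \join Z) \circ s = p'$, the projection $\J \otimes (X \join Z) \to X \join Z$; this is the short diagram chase at the heart of the adaptation. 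Apart from this bookkeeping, the proof is a transcription of the tensor case.
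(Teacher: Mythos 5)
Your proof is correct and is essentially the adaptation the paper has in mind: the paper's own proof of this proposition consists of the single phrase ``the proof being a direct adaption'' of Proposition~\ref{prop:tens_mon_ax}, and the intended adaptation is exactly what you wrote --- prove the assertion about $i \join Z$ directly, using Proposition~\ref{prop:triv_cof_retr}, the tensorial strength to see that ${-}\join Z$ preserves (strong) $\J$-transformation retracts, stability of strong $\J$-transformation retracts under pushouts, and stability of folk weak equivalences under transfinite composition, and then deduce the assertion about $Z \join i$ via the duality $X \mapsto X^\op$ and the isomorphism $(X \join Y)^\op \simeq Y^\op \join X^\op$, just as in the proof of Theorem~\ref{thm:join_mon}. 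Your explicit check that joining on the right preserves the \emph{strongness} of the retract, via naturality of $s$ and its unit triangle, is a detail the paper leaves implicit, and it is correct.
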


\begin{remark}
  In particular, the join of a folk trivial cofibration by an object
  (on the left or on the right) is a folk weak equivalence.
\end{remark}

Finally, as for the Gray tensor product, the join of \oo-categories induces a
join of $m$\nbd-categories, which is compatible with the folk model category
structure on $\nCat{m}$:

\begin{theorem}[Ara--Maltsiniotis]
  Let $m \ge 0$. The \ndef{join of $m$-categories}
  \[ X \rjoin{m} Y = \reflect (X \join Y), \]
  where $r : \ooCat \to \nCat{m}$ denotes the left adjoint to the inclusion
  functor $\nCat{m} \hookto \ooCat$, defines a locally biclosed monoidal
  category structure on $\nCat{m}$.
\end{theorem}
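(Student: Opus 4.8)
The plan is to run, in the local setting, the exact argument used for the Gray tensor product in Theorem~\ref{thm:Day_mn}, with Day's reflection theorem (Proposition~\ref{prop:Day}) replaced by its locally biclosed analogue. First I would record this local reflection theorem: if $\C$ is a locally biclosed monoidal category (such as $\ooCat$ with the join, see paragraph~\ref{paragr:def_join}) and $\D \subset \C$ is a reflective subcategory with reflector $r$, and if for every object $Z$ of $\D$ and all morphisms $u : X \to Z$ and $v : Y \to Z$ the generalized slices $\cotr{Z}{u}$ and $\trm{Z}{v}$ again lie in $\D$, then
\[ X \rjoin{m} Y = r(X \join Y) \]
defines a locally biclosed monoidal structure on $\D$ whose unit is that of $\C$ and whose local internal $\Hom$ are the restricted generalized slices. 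As for Proposition~\ref{prop:Day}, this is a formal adjunction argument: the hypothesis says precisely that the right adjoints of $Y \mapsto (X \join Y, \iota_1)$ and $X \mapsto (X \join Y, \iota_2)$ take values in $\D$, and the coherence isomorphisms are transported along $r$ using the canonical comparison $r(X \join Y) \simeq r(rX \join rY)$.

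The whole content then reduces to checking the hypothesis, i.e.\ to the \oo-categorical statement analogous to Proposition~\ref{prop:Day_mn}: \emph{if $Z$ is an $m$-category and $u : X \to Z$ is any \oo-functor, then $\cotr{Z}{u}$ is an $m$-category}; the case of $\trm{Z}{v}$ then follows by the duality $(X \join Y)^\op \simeq Y^\op \join X^\op$ together with the fact that $Z \mapsto Z^\op$ preserves $m$-categories. By the defining adjunction of the generalized slice (paragraph~\ref{paragr:def_join}), a $k$-cell of $\cotr{Z}{u}$ is the same datum as an \oo-functor $X \join \Dn{k} \to Z$ restricting to $u$ along $\iota_1$, and such a cell is trivial exactly when this functor factors through $X \join p_k$, where $p_k : \Dn{k} \to \Dn{k-1}$ is the degeneracy collapsing the principal cell of $\Dn{k}$ to the identity on that of $\Dn{k-1}$. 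Thus it suffices to show that for $k > m$ every \oo-functor $X \join \Dn{k} \to Z$ under $X$ factors through $X \join p_k$.

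To prove this I would show that the \oo-functor
\[ X \join p_k : X \join \Dn{k} \to X \join \Dn{k-1} \]
becomes an isomorphism after applying $r$, for $k > m$; since $Z$ is an $m$-category, maps into $Z$ factor through $r$, so this at once yields the desired factorization. At the level of globes, $r(p_k)$ is an isomorphism: for $k > m$ one has $r(\Dn{k}) \simeq \Dn{m} \simeq r(\Dn{k-1})$, because an $m$-category has only trivial cells above dimension $m$, and $r(p_k)$ is the induced map, which is the identity of $\Dn{m}$. To propagate this to the join I would write $X$ as a connected colimit of objects $\nu(K)$ with $K \in \Stf$ and use that the join commutes with connected colimits in each variable (paragraph~\ref{paragr:def_join}) and that $r$ preserves colimits, reducing to free objects $\nu(K \join \lambda(\Dn{k}))$. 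On such a free \oo-category the atoms are the joins $x \join y$, $x \join \emptyset$ and $\emptyset \join y$ with $x \in B_K$ and $y$ a non-trivial cell of $\Dn{k}$ (as in paragraph~\ref{paragr:conv_B_K} and Proposition~\ref{lemma:gen_cof_rig}); the cells involving the principal $k$-cell $d$ all have dimension $|x| + 1 + k > m$, and these are exactly the cells that $X \join p_k$ sends to identities. As the $m$-reflection of a polygraph is obtained by making every cell above dimension $m$ trivial, $r$ kills precisely these cells, so that $r(X \join p_k)$ is an isomorphism.

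The main obstacle is this last cellular bookkeeping: one must verify that the cells made degenerate by $X \join p_k$ are exactly the cells of dimension $> m$ of $X \join \Dn{k}$ that are not already degenerate in $X \join \Dn{k-1}$, so that passing to the $m$-reflection identifies the two joins; the comparison of source and target composites of the cells $x \join d$ requires an induction on the cells of $X$, entirely parallel to the truncation arguments underlying Proposition~\ref{prop:prod_inv_is_inv}. Once this Key Lemma is established, the theorem follows immediately from the local reflection theorem of the first paragraph, the unit being the empty \oo-category, which is an $m$-category for every $m \ge 0$.
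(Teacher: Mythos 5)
The paper does not actually prove this statement: its ``proof'' is a citation of \cite[Chapter 8]{AraMaltsiJoint}, so there is no internal argument to compare yours against. That said, your reconstruction follows the right strategy, and it is essentially the strategy of the cited reference, transposed from the paper's own treatment of the Gray tensor product: a local analogue of Day's reflection theorem (Proposition~\ref{prop:Day}) reduces everything to showing that the generalized slices $\cotr{Z}{u}$ and $\trm{Z}{v}$ of an $m$-category $Z$ are again $m$-categories (the analogue of Proposition~\ref{prop:Day_mn}). Your reformulation of that statement --- every \oo-functor $X \join \Dn{k} \to Z$ under $X$ factors through $X \join p_k$ for $k > m$, which follows once $r(X \join p_k)$ is invertible --- is correct, as is the reduction to $X = \nu(K)$ with $K \in \Stf$ via connected colimits, and the formal local reflection theorem of your first paragraph goes through as in Proposition~\ref{prop:Day} (the same mechanism reappears in the paper's Proposition~\ref{prop:Day_model}).

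The one place where your sketch is genuinely too quick is the assertion that ``$r$ kills precisely these cells, so that $r(X\join p_k)$ is an isomorphism.'' The \oo-functor $X \join p_k$ does not only send the generators $x \join d$ to identities; it also identifies in pairs the generators $x \join s(d)$ and $x \join t(d)$, which have dimension $|x| + k$ and hence, for $k = m+1$ and $x = \emptyset$, dimension exactly $m$. Since the reflection $r$ does not merely discard cells above dimension $m$ but quotients the $m$-cells by the congruence generated by the $(m+1)$-cells, invertibility of $r(X \join p_{m+1})$ requires showing that this congruence identifies the Steiner-composite sources and targets of the atoms $x \join d$ in exactly the pattern imposed by $X \join p_{m+1}$, i.e.\ that the congruences generated on either side correspond under the map on generators. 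This is the content of the ``Key Lemma'' you correctly flag as the main obstacle but do not prove, and it is precisely where the cited Chapter 8 does its real work. With that lemma supplied, the rest of your argument is sound.
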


\begin{proof}
  This is the main result of \cite[Chapter 8]{AraMaltsiJoint}.
\end{proof}

\begin{theorem}
  The folk model category structure on $\nCat{m}$ is monoidal for
  the join of $m$-categories.
\end{theorem}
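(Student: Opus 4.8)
The plan is to mimic the proof of Theorem~\ref{thm:mnCat_mod_mon}, deducing the statement from the monoidality of the join on $\ooCat$ (Theorem~\ref{thm:join_mon}) by reflecting along the left adjoint $r \colon \ooCat \to \nCat{m}$. The one genuine difference is that the join is only \emph{locally} biclosed, so Proposition~\ref{prop:Day_model}, which is stated for biclosed monoidal structures, cannot be cited verbatim; instead I would re-run its proof, checking that each ingredient survives in the locally biclosed setting.

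First, recall that by the case $n = m$ of Theorem~\ref{thm:folk_mn} the folk model structure on $\nCat{m}$ is cofibrantly generated by $r(I)$ and $r(J)$, so that $r$ is a left Quillen functor. I would then invoke the reduction of the pushout-product axiom to generators (Lemma~\ref{lemma:monoidal_model_gen}): it suffices to show that for $i_0, j_0 \in I \cup J$ the pushout-product of $r(i_0)$ and $r(j_0)$ for $\rjoin{m}$ is a folk cofibration, and a trivial one as soon as one of $i_0, j_0$ lies in $J$. The crucial step is to identify this pushout-product with $r(i_0 \boxjoin j_0)$, using that $r$ preserves pushouts, that $X \rjoin{m} Y = r(X \join Y)$ by definition, and the natural isomorphism $r(X \join Y) \simeq r(r(X) \join r(Y))$, exactly as in the proof of Proposition~\ref{prop:Day_model}. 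Granting this identification, Theorem~\ref{thm:join_mon} tells us that $i_0 \boxjoin j_0$ is a folk cofibration in $\ooCat$, trivial when one of $i_0, j_0$ is in $J$, and applying the left Quillen functor $r$ concludes; since the unit $\emptyset = r(\emptyset)$ of $\rjoin{m}$ is cofibrant, the unit axiom is automatic and $\nCat{m}$ is a monoidal model category.

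The hard part is exactly what prevents this from being a mere citation of Proposition~\ref{prop:Day_model}: I must verify that both the reduction to generators and the isomorphism $r(X \join Y) \simeq r(r(X) \join r(Y))$ remain valid when the monoidal structure is only locally biclosed. For the reduction, the relevant point is that $\rjoin{m}$ commutes with connected colimits in each variable --- a consequence of local biclosedness, just as for the join on $\ooCat$ in Theorem~\ref{thm:join_mon} --- and since pushouts and transfinite composites are connected colimits, the usual saturation argument still goes through. For the isomorphism, it is implicit in the previous theorem, the result of Ara--Maltsiniotis establishing $\rjoin{m}$ as a locally biclosed monoidal structure (the isomorphism being the locally biclosed analogue of the second condition of Day's reflection theorem, Proposition~\ref{prop:Day}); all that remains is to confirm the naturality needed to splice it into the pushouts defining the pushout-product, which holds.
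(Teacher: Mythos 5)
Your proposal is correct and follows essentially the same route as the paper, whose proof is precisely ``adapt Proposition~\ref{prop:Day_model}, observing that its argument only uses connected colimits, with which any locally biclosed tensor commutes''; you have simply spelled out the adaptation (reduction to generators, the identification of $r(i_0)\boxjoin' r(j_0)$ with $r(i_0\boxjoin j_0)$ via $r(X\join Y)\simeq r(r(X)\join r(Y))$, and the left Quillen property of $r$) in more detail. No discrepancy to report.
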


\begin{proof}
  This follows from a straightforward adaption of the proof of
  Proposition~\ref{prop:Day_model}, which only uses connected colimits, with
  whom any locally biclosed monoidal tensor commutes.
\end{proof}

\appendix

\section{Monoidal model categories and derived tensor products}

In this appendix, we recall classical results on biclosed monoidal
model categories and extend them to locally biclosed monoidal model
categories. In particular, we will get that the ``local internal $\Homi$''
of the join, the so-called generalized slices, can be right-derived as
functors of two variables.

\begin{paragraph}\label{paragr:def_monoidal_model}
  A \ndef{monoidal model category} is a model category whose underlying
  category is endowed with a monoidal category structure satisfying
  the following compatibility axioms:
  \begin{enumerate}[label=M\arabic*)]
  \item the tensor product
    $\otimes : \M \times \M \to \M$ satisfies the
    \ndef{pushout-product axiom}: if $i : A \to B$ and $j : C \to D$
    are two cofibrations, then the morphism
    \[
      i \boxprod j : B \otimes C \amalg_{A \otimes C} A \otimes D \to
      B \otimes D,
    \]
    induced by the commutative square
    \[
      \xymatrix{ A \otimes C \ar[d]_{i \otimes C} \ar[r]^{A \otimes j}
        &
        A \otimes D \ar[d]^{i \otimes D} \\
        B \otimes C \ar[r]_{B \otimes j} & B \otimes D \pbox{,} }
    \]
    is a cofibration. Moreover, if either $i$ or $j$ is a trivial
    cofibration, then so is $i \boxprod j$.
  \item the tensor product satisfies the \ndef{unit axiom}:
    for every cofibrant replacement $p : QI \trivfib I$ of the tensor unit
    and every cofibrant object $A$, both $p \otimes A : QI \otimes A \to I
    \otimes A$ and $A \otimes p : A \otimes QI \to A \otimes I$ are weak
    equivalences.
  \end{enumerate}
\end{paragraph}

\begin{remark}\label{rem:M1}
  The pushout-product axiom implies that, if the cofibrations \resp{the
  trivial cofibrations} are stable by tensoring by the initial object
  $\emptyset$, then they are stable by tensoring by any cofibrant object
  $X$. In particular, if this condition for trivial cofibrations is
  satisfied and the tensor unit $I$ is cofibrant, then, by Ken Brown's
  lemma, the pushout-product axiom implies the unit axiom.
\end{remark}

\begin{lemma}\label{lemma:monoidal_model_gen}
  Let $\M$ be a cofibrantly generated model category with sets of
  generating cofibrations $I$ and of generating trivial cofibrations $J$
  and let $\otimes : \M \times \M \to \M$ be a functor which commutes
  with pushouts and transfinite compositions in each variable. Then
  for $\otimes$ to satisfy the pushout-product axioms, it suffices
  that it holds for cofibrations in $I$ and trivial cofibrations in
  $J$.
\end{lemma}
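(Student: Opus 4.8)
The plan is to exploit the fact that in a cofibrantly generated model category the cofibrations (resp.\ the trivial cofibrations) form the \emph{weakly saturated} class generated by $I$ (resp.\ by $J$): the smallest class of maps containing $I$ (resp.\ $J$) and closed under pushouts, transfinite compositions and retracts. Granting this, the whole lemma rests on a single structural observation about the pushout-product (Leibniz) construction, followed by a standard ``fix one variable'' closure argument.

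First I would establish the key lemma: for a fixed morphism $i$, the functor $j \mapsto i \boxprod j$ from the arrow category of $\M$ to itself preserves pushouts, transfinite compositions and retracts, and symmetrically for $i \mapsto i \boxprod j$ with $j$ fixed. Preservation of retracts is automatic, this being a functor. Preservation of pushouts and transfinite compositions follows from the hypothesis that $\otimes$ commutes with pushouts and transfinite compositions in each variable: the pushout-product is assembled out of the functors $A \otimes ({-})$ and $B \otimes ({-})$ and a single pushout, all of which are stable under the relevant colimits, so a cobase change (resp.\ a transfinite composite) of $j$ is carried to a cobase change (resp.\ a transfinite composite) of $i \boxprod j$. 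Since the cofibrations and the trivial cofibrations are themselves closed under these operations, I conclude that for fixed $i$ the class $\{\, j : i \boxprod j \text{ is a cofibration}\,\}$, and likewise its trivial-cofibration analogue, is weakly saturated; the same holds in the other variable.

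With this in hand the cofibration half is immediate. Fixing $i \in I$, the class of $j$ such that $i \boxprod j$ is a cofibration is weakly saturated and, by hypothesis, contains $I$; hence it contains every cofibration. Fixing now an arbitrary cofibration $j$, the class of $i$ with $i \boxprod j$ a cofibration is weakly saturated and contains $I$ by what was just proved, so it contains every cofibration. This yields the pushout-product axiom for cofibrations.

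For the trivial-cofibration half I would run the same two-step scheme, but separately on each side because $\otimes$ is not symmetric. Fixing $i \in J$, the weakly saturated class of $j$ with $i \boxprod j$ a trivial cofibration contains $I$ by hypothesis, hence all cofibrations; dually, fixing $j \in J$, the class of $i$ with $i \boxprod j$ a trivial cofibration contains $I$, hence all cofibrations. Extending once more---fixing a cofibration $j$ and letting $i$ range over the weak saturation of $J$, and symmetrically with the roles exchanged---gives that $i \boxprod j$ is a trivial cofibration whenever one of $i$, $j$ is a trivial cofibration and the other a cofibration. The only real content is the colimit-preservation lemma of the first step; once that is in place everything else is formal, and the single point requiring care, the mild obstacle here, is the bookkeeping forced by the non-symmetry of $\otimes$, namely running the ``trivial cofibration on the left'' and ``trivial cofibration on the right'' cases as two independent arguments, each seeded by the corresponding half of the hypothesis.
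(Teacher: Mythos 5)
Your argument is correct and is precisely the standard saturation argument: the paper does not write out a proof but simply cites \cite[Lemma 4.1.4]{Morel}, and that reference proceeds exactly as you do, by showing that for fixed $i$ the class of $j$ with $i \boxprod j$ a (trivial) cofibration is weakly saturated and then running the two-variable reduction to generators, with the left and right trivial-cofibration cases treated separately since $\otimes$ need not be symmetric. The only point to phrase with a little more care is that $i \boxprod ({-})$ sends a transfinite composite of the $j_\alpha$ to a transfinite composite of \emph{pushouts} of the $i \boxprod j_\alpha$, which still suffices since cofibrations and trivial cofibrations are closed under both operations.
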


\begin{proof}
  See for instance (the proof of) \cite[Lemma 4.1.4]{Morel}.
\end{proof}

\begin{remark}\label{rem:monoidal_model_gen}
  More precisely, in the situation of the previous lemma, each of the three
  conditions appearing in the pushout-product axiom can be checked on
  ``generators''.
\end{remark}

\begin{proposition}\label{prop:derived_tens}
  If $\M$ is a monoidal model category having the additional property that
  trivial cofibrations are stable by tensoring by the initial object, then
  the tensor product admits a total left derived functor
  $\Lotimes : \Ho(\M) \times \Ho(\M) \to \Ho(\M)$ and this derived
  tensor product defines a monoidal category structure on $\Ho(\M)$.
\end{proposition}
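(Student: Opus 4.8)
The statement is the classical fact that a left Quillen bifunctor can be left-derived and that the resulting functor inherits the monoidal coherence of $\M$; I would organize the proof so that, once the tensor product is known to preserve weak equivalences between cofibrant objects, everything else follows from the universal property of total left derived functors and a formal descent of the structure isomorphisms. The plan is thus: first establish this preservation property via Ken Brown's lemma applied in each variable, then define $\Lotimes$ by cofibrant replacement and check its universal property, and finally transport the associativity and unit constraints to $\Ho(\M)$, the unit axiom being exactly what is needed for the latter.

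For the key step, fix a cofibrant object $A$. The pushout-product axiom (axiom M1 of paragraph~\ref{paragr:def_monoidal_model}) together with the hypothesis that trivial cofibrations are stable by tensoring with the initial object $\emptyset$ yields, via Remark~\ref{rem:M1}, that trivial cofibrations are stable under tensoring with $A$ on either side. Hence each of $A \otimes -$ and $- \otimes A$ carries trivial cofibrations between cofibrant objects to weak equivalences, so Ken Brown's lemma shows that both preserve \emph{all} weak equivalences between cofibrant objects. Given a pair of weak equivalences $f : A \to A'$ and $g : B \to B'$ between cofibrant objects, I would factor $f \otimes g$ as $A \otimes B \xto{A \otimes g} A \otimes B' \xto{f \otimes B'} A' \otimes B'$; since $A$ and $B'$ are cofibrant, both factors are weak equivalences by the single-variable statement, and therefore so is $f \otimes g$.

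I would then set $X \Lotimes Y = QX \otimes QY$, where $Q$ is a cofibrant replacement functor. The preceding step shows that $(X, Y) \mapsto QX \otimes QY$ sends weak equivalences to weak equivalences, so it descends to a functor $\Lotimes : \Ho(\M) \times \Ho(\M) \to \Ho(\M)$ which, by the standard recognition criterion for derived functors of maps preserving weak equivalences between cofibrant objects, is the total left derived functor of $\otimes$. A point worth recording at this stage is that the tensor product of two cofibrant objects is again cofibrant: applying M1 to the cofibrations $\emptyset \cof X$ and $\emptyset \cof Y$ and using that $\otimes$ preserves the initial object in each variable (automatic in the biclosed setting considered here), the pushout-product reduces to the cofibration $\emptyset \cof X \otimes Y$. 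This is what lets one iterate $\Lotimes$ without inserting further cofibrant replacements.

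It remains to promote $\Lotimes$ to a monoidal structure on $\Ho(\M)$. For cofibrant $A, B, C$ the associativity isomorphism $(A \otimes B) \otimes C \simeq A \otimes (B \otimes C)$ of $\M$ is an isomorphism between cofibrant objects, so its image under the localization functor $\gamma$ provides the associator for $\Lotimes$; naturality and the pentagon identity are inherited from $\M$ because $\gamma$ preserves the commuting diagrams. The unit is $\gamma(I)$, and here the unit axiom (axiom M2) enters decisively: it guarantees that the structural maps $QI \otimes A \to I \otimes A \simeq A$ are weak equivalences for cofibrant $A$, so that the left and right unit constraints descend to isomorphisms in $\Ho(\M)$, with the triangle identity again inherited from $\M$. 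The main obstacle I anticipate is precisely this last, bookkeeping-heavy descent: one must verify that the coherence diagrams, which only commute in $\M$ after the relevant cofibrant replacements are inserted, still commute in $\Ho(\M)$, and that M2 --- rather than the purely cofibration-level axiom M1 --- is the exact input required to invert the unit constraints.
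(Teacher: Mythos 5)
Your proof is correct and follows essentially the same route as the paper's: Remark~\ref{rem:M1} plus Ken Brown's lemma give preservation of weak equivalences between cofibrant objects, Quillen's classical criterion then yields $\Lotimes$, and the monoidal coherence descends as in Hovey's Theorem 4.3.2. One small caveat: your parenthetical that $\otimes$ preserves the initial object ``in the biclosed setting'' is not part of the stated hypotheses (the paper also applies this proposition to the join, whose unit \emph{is} the initial object, so $X \join \emptyset \simeq X \neq \emptyset$), but since that remark only serves the inessential observation about iterating $\Lotimes$ without further cofibrant replacements, it does not affect the main argument.
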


\begin{proof}
  It follows from Remark~\ref{rem:M1} that the tensor
  product of two trivial cofibrations between cofibrant objects is a weak
  equivalence. By Ken Brown's lemma, this implies that the tensor product
  preserves weak equivalences between cofibrant objects and hence, by a
  classical result of Quillen \cite[I.4, Proposition 1]{Quillen},
  that its total left derived functor $\Lotimes$ exists. Checking that
  $\Lotimes$ indeed defines a monoidal category structure on $\Ho(\M)$ is
  not difficult (see the proof of \cite[Theorem~4.3.2]{Hovey}).
\end{proof}

\begin{paragraph}\label{paragr:def_biclosed}
  Recall that a monoidal category $\C$ is said to be \ndef{biclosed}
  if, for every object~$X$ of $\C$, the functor
  \[
    \begin{split}
      \C & \to \C \\
      Y & \mapsto X \otimes Y
    \end{split}
  \]
  and, for every object $Y$ of $\C$, the functor
  \[
    \begin{split}
      \C & \to \C \\
      X & \mapsto X \otimes Y
    \end{split}
  \]
  admit right adjoints. In this case, we get pairs of adjoint
  functors
  \[
    \begin{split}
      \C & \to \C \\
      Y & \mapsto X \otimes Y
    \end{split}
    \qquad \qquad
    \begin{split}
      \C & \to \C \\
      Z & \mapsto \Homil_\C(X, Z)
    \end{split}
  \]
  and
  \[
    \begin{split}
      \C & \to \C \\
      X & \mapsto X \otimes Y
    \end{split}
    \qquad \qquad
    \begin{split}
      \C & \to \C \\
      Z & \mapsto \Homir_\C(Y, Z) .
    \end{split}
  \]
  Moreover, $\Homil_\C$ and $\Homir_\C$ extend to
  functors
  \[
    \Homil_\C, \Homir_\C : \C^\op \times \C \to \C
  \]
  and, if $X$, $Y$ and $Z$ are three objects, we get natural
  bijections
  \[
    \Hom_\C(X, \Homir_\C(Y, Z)) \simeq \Hom_\C(X \otimes Y, Z) \simeq
    \Hom_\C(Y, \Homil_\C(X, Z)).
  \]
\end{paragraph}

\begin{remark}
  Let $\C$ be a monoidal category. If $\C$ is biclosed, then its
  tensor product preserves colimits in each variable. By a classical
  adjoint theorem, the converse holds provided that the category $\C$
  is locally presentable.
\end{remark}

\begin{proposition}\label{prop:def_closed_mon_model}
  Let $\M$ be a model category endowed with a biclosed monoidal
  category structure. Then the following conditions are equivalent:
  \begin{enumerate}[label=\roman*)]
  \item the tensor product satisfies the pushout-product axiom,
  \item for every cofibration $i : A \to B$ and every fibration
    $p : X \to Y$, the induced map
    \[
      \Homil_\M(B, X) \to \Homil_\M(A, X) \times_{\Homil_\M(A, Y)}
      \Homil_\M(B, Y)
    \]
    is a fibration that is trivial if either $i$ or $p$ is,
  \item for every cofibration $j : C \to D$ and every fibration
    $p : X \to Y$, the induced map
    \[
      \Homir_\M(D, X) \to \Homir_\M(C, X) \times_{\Homir_\M(C, Y)}
      \Homir_\M(D, Y)
    \]
    is a fibration that is trivial if either $j$ or $p$ is.
  \end{enumerate}
\end{proposition}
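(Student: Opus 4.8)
The plan is to deduce all three equivalences from a single formal fact about the two-variable (``Leibniz'') adjunction underlying the biclosed structure, together with the standard characterization of the four classes of maps in a model category by lifting properties. For morphisms $i : A \to B$, $j : C \to D$ and $p : X \to Y$ of $\M$, write $\Homil_\M(i, p)$ for the map displayed in condition ii) and $\Homir_\M(j, p)$ for the one displayed in condition iii). The key lemma, which I would establish first, is that the adjunction isomorphisms relating $\otimes$, $\Homil_\M$ and $\Homir_\M$ induce natural bijections between the sets of solutions of the three lifting problems posed by $i \boxprod j$ against $p$, by $j$ against $\Homil_\M(i, p)$, and by $i$ against $\Homir_\M(j, p)$. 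In particular, $i \boxprod j$ has the left lifting property with respect to $p$ if and only if $j$ does with respect to $\Homil_\M(i, p)$, if and only if $i$ does with respect to $\Homir_\M(j, p)$.

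Granting this lemma, I would prove i) $\Rightarrow$ ii) as follows. Recall that in a model category a map is a fibration (resp.\ a trivial fibration) exactly when it has the right lifting property with respect to all trivial cofibrations (resp.\ all cofibrations), and dually for (trivial) cofibrations. To see that $\Homil_\M(i, p)$ is a fibration when $i$ is a cofibration and $p$ a fibration, I test it against an arbitrary trivial cofibration $j$: by the lemma this reduces to checking $i \boxprod j$ against $p$, which holds since $i \boxprod j$ is a trivial cofibration by the pushout-product axiom and $p$ is a fibration. If moreover $i$ is trivial, then for every cofibration $j$ the map $i \boxprod j$ is a trivial cofibration, so it lifts against the fibration $p$, whence $\Homil_\M(i, p)$ is a trivial fibration; and if instead $p$ is trivial, then for every cofibration $j$ the cofibration $i \boxprod j$ lifts against the trivial fibration $p$, giving the same conclusion. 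This proves ii), and the identical argument with $\Homir_\M$ in place of $\Homil_\M$ proves i) $\Rightarrow$ iii).

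For the converse ii) $\Rightarrow$ i) I would run the same dictionary backwards. To show $i \boxprod j$ is a cofibration for cofibrations $i, j$, I test it against an arbitrary trivial fibration $p$; by the lemma this amounts to lifting $j$ against $\Homil_\M(i, p)$, which holds because $\Homil_\M(i, p)$ is a trivial fibration by ii) and $j$ is a cofibration. The two triviality clauses are handled the same way: if $i$ is a trivial cofibration then $\Homil_\M(i, p)$ is a trivial fibration for every fibration $p$ by ii), and if $j$ is a trivial cofibration we instead use that $j$ lifts against the fibration $\Homil_\M(i, p)$; in both cases $i \boxprod j$ lifts against every fibration and is therefore a trivial cofibration. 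The implication iii) $\Rightarrow$ i) is proved symmetrically using $\Homir_\M$, so all three conditions are equivalent.

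The only non-formal point, and the step I expect to require the most care, is the key lemma itself. Its proof is a diagram chase: one uses the adjunction isomorphisms $\Hom_\M(X \otimes Y, Z) \simeq \Hom_\M(Y, \Homil_\M(X, Z)) \simeq \Hom_\M(X, \Homir_\M(Y, Z))$ to transpose the data of a lifting square, checking that the pushout defining the domain of $i \boxprod j$ corresponds, under transposition, precisely to the pullback defining the codomain of $\Homil_\M(i, p)$ (resp.\ of $\Homir_\M(j, p)$), and that commuting squares correspond to commuting squares and lifts to lifts. This is where biclosedness is used in full: the two adjunctions allow transposition in either variable, which is exactly what produces the symmetric pair of conditions ii) and iii).
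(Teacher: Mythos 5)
Your proof is correct and is essentially the standard argument: the paper simply cites \cite[Lemma 4.2.2]{Hovey} here, and Hovey's proof proceeds exactly as you do, by transposing lifting problems through the two adjunctions and invoking the characterization of (trivial) cofibrations and fibrations by lifting properties. No gaps; your key lemma and the back-and-forth between i), ii), iii) match the cited proof.
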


\begin{proof}
  See for instance \cite[Lemma 4.2.2]{Hovey}.
\end{proof}

\begin{paragraph}
  A \ndef{biclosed monoidal model category} is a monoidal model
  category whose underlying monoidal category is biclosed.
  Our example of interest in this paper is the folk model category structure
  on $\ooCat$ (or more generally $\mnCat{m}{n}$) endowed with the Gray
  tensor product (see Theorems~\ref{thm:Gray_mon_mod} and
  \ref{thm:def_tens}). Since tensoring any object by the initial object gives the
  initial object in a biclosed monoidal model category, the hypothesis of
  Proposition~\ref{prop:derived_tens} are satisfied in such a model category
  and the monoidal tensor $\otimes$ thus admits a total right derived
  functor $\Lotimes$.
\end{paragraph}

\begin{proposition}
  Let $\M$ be a biclosed monoidal model category. Then, if $X$ is a
  cofibrant object of $\M$, the adjoint pair
  \[
    \begin{split}
      \M & \to \M \\
      Y & \mapsto X \otimes Y
    \end{split}
    \qquad \qquad
    \begin{split}
      \M & \to \M \\
      Z & \mapsto \Homil_\M(X, Z)
    \end{split}
  \]
  is a Quillen pair and, likewise, if $Y$ is a cofibrant object of
  $\M$, the adjoint pair
  \[
    \begin{split}
      \M & \to \M \\
      X & \mapsto X \otimes Y
    \end{split}
    \qquad \qquad
    \begin{split}
      \M & \to \M \\
      Z & \mapsto \Homir_\M(Y, Z)
    \end{split}
  \]
  is a Quillen pair.
\end{proposition}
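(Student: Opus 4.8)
The plan is to prove that the left adjoints $X \otimes {-}$ and ${-} \otimes Y$ are left Quillen functors; the assertion that they form Quillen pairs with their right adjoints $\Homil_\M(X, {-})$ and $\Homir_\M(Y, {-})$ is then immediate from the adjunctions recalled in paragraph~\ref{paragr:def_biclosed}. First I would recall that a biclosed monoidal model category satisfies the pushout-product axiom (paragraph~\ref{paragr:def_monoidal_model}) and that tensoring any object by the initial object $\emptyset$ yields $\emptyset$. The key observation is that, for a morphism $j : C \to D$, the map $X \otimes j$ can be identified with the pushout-product $(\emptyset \to X) \boxprod j$: since $\emptyset \otimes C = \emptyset \otimes D = \emptyset$, the domain $X \otimes C \amalg_{\emptyset \otimes C} \emptyset \otimes D$ of $(\emptyset \to X) \boxprod j$ collapses to $X \otimes C$, and the induced map is exactly $X \otimes j : X \otimes C \to X \otimes D$.

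Since $X$ is cofibrant, the map $\emptyset \to X$ is a cofibration. Hence, if $j$ is a cofibration (resp.\ a trivial cofibration), the pushout-product axiom applied to $\emptyset \to X$ and $j$ shows that $(\emptyset \to X) \boxprod j = X \otimes j$ is a cofibration (resp.\ a trivial cofibration). Thus $X \otimes {-}$ preserves cofibrations and trivial cofibrations, so it is left Quillen, and $(X \otimes {-}, \Homil_\M(X, {-}))$ is a Quillen pair. The second assertion is entirely symmetric: for $Y$ cofibrant and a morphism $j : C \to D$, one identifies $j \otimes Y$ with $j \boxprod (\emptyset \to Y)$ using $C \otimes \emptyset = D \otimes \emptyset = \emptyset$, and the pushout-product axiom shows that ${-} \otimes Y$ is left Quillen, so that $({-} \otimes Y, \Homir_\M(Y, {-}))$ is a Quillen pair.

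There is essentially no serious obstacle here: the only point requiring care is the identification of the two one-sided pushout-products with the corresponding one-sided tensors, which rests on the vanishing of tensors with the initial object—a fact already noted for biclosed monoidal model categories in the excerpt. Alternatively, one could argue dually through Proposition~\ref{prop:def_closed_mon_model}: for $X$ cofibrant, applying its second condition to the cofibration $\emptyset \to X$ and to a fibration $p$, and using that $\Homil_\M(\emptyset, {-})$ is terminal (so that the relevant pullback collapses), shows directly that $\Homil_\M(X, {-})$ preserves fibrations and trivial fibrations, hence is right Quillen; and symmetrically for $\Homir_\M(Y, {-})$ via the third condition.
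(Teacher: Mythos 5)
Your proposal is correct and follows essentially the same route as the paper: the paper's proof also reduces to showing the left adjoints preserve (trivial) cofibrations and invokes the pushout-product axiom via Remark~\ref{rem:M1}, which is exactly your identification of $X \otimes j$ with $(\emptyset \to X) \boxprod j$ using $\emptyset \otimes C = \emptyset$ in the biclosed setting. The dual argument you sketch at the end is a valid alternative but not needed.
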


\begin{proof}
  It suffices to show that the left adjoints respect cofibrations and
  trivial cofibrations. This follows from the pushout-product axiom (see
  Remark~\ref{rem:M1}).
\end{proof}

\begin{theorem}[Hovey]
  Let $\M$ be a biclosed monoidal model category. Then the monoidal
  category structure on $\Ho(\M)$ defined by the derived tensor
  product is biclosed.  Moreover, the functors
  \[ \Homil_\M, \Homir_\M : \M^\op \times \M \to \M \] admit total
  right derived functors and we have
  \[ \RHomil_\M = \Homil_{\Ho(\M)} \quadand \RHomir_\M =
    \Homir_{\Ho(\M)}. \]
\end{theorem}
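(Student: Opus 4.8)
The plan is to deduce everything from the observation that, by Proposition~\ref{prop:def_closed_mon_model}, the triple $(\otimes, \Homil_\M, \Homir_\M)$ is a two-variable adjunction which is moreover a Quillen two-variable adjunction, and to show that such a structure passes to the homotopy category. Recall from Proposition~\ref{prop:derived_tens} that $\Lotimes$ already exists and defines a monoidal structure on $\Ho(\M)$; biclosedness amounts to showing that, for each object of $\Ho(\M)$, the functors $X \Lotimes {-}$ and ${-} \Lotimes X$ admit right adjoints, and the content of the theorem is that these right adjoints are computed by the right-derived internal $\Homi$'s.

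First I would construct the total right derived functors of $\Homil_\M$ and $\Homir_\M$, deriving the contravariant variable using cofibrant objects and the covariant one using fibrant objects. For $X$ cofibrant, the previous proposition says $\Homil_\M(X, {-})$ is right Quillen, hence by Ken Brown's lemma preserves weak equivalences between fibrant objects. For $Z$ fibrant, applying Proposition~\ref{prop:def_closed_mon_model}~(ii) to an arbitrary cofibration and to the fibration $Z \to \ast$ (and using that $\Homil_\M(A, \ast)$ is terminal, since $\Homil_\M(A,{-})$ is a right adjoint) shows that $\Homil_\M({-}, Z)$ sends trivial cofibrations between cofibrant objects to trivial fibrations; Ken Brown's lemma in $\M^\op$ then shows it sends all weak equivalences between cofibrant objects to weak equivalences. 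Thus $(X, Z) \mapsto \Homil_\M(QX, RZ)$, with $Q$ and $R$ cofibrant and fibrant replacements, is homotopy-invariant and descends to a functor $\RHomil_\M : \Ho(\M)^\op \times \Ho(\M) \to \Ho(\M)$; the same argument, using condition~(iii), produces $\RHomir_\M$.

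Next I would assemble the adjunction. Fixing a cofibrant object $X$, the previous proposition gives a Quillen pair $(X \otimes {-}, \Homil_\M(X, {-}))$, whose derived functors are therefore adjoint: $\mathbb{L}(X \otimes {-}) \dashv \mathbb{R}\Homil_\M(X,{-})$. Since $X$ is cofibrant, $\mathbb{L}(X \otimes {-})$ agrees with the partial functor $X \Lotimes {-}$ of the derived bifunctor, and $\mathbb{R}\Homil_\M(X,{-})$ agrees with the restriction of $\RHomil_\M$ (the cofibrant replacement in the first variable being inessential when $X$ is already cofibrant). Hence $X \Lotimes {-} \dashv \RHomil_\M(X,{-})$ for $X$ cofibrant; as every object of $\Ho(\M)$ is isomorphic to a cofibrant one and all functors involved are homotopy-invariant, this holds for every object, showing that $\Lotimes$ is left-closed with $\Homil_{\Ho(\M)} = \RHomil_\M$. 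The second Quillen pair $({-} \otimes Y, \Homir_\M(Y, {-}))$ yields symmetrically ${-} \Lotimes Y \dashv \RHomir_\M(Y, {-})$, so that $\Lotimes$ is right-closed with $\Homir_{\Ho(\M)} = \RHomir_\M$, and $\Ho(\M)$ is therefore biclosed.

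I expect the main obstacle to be the derivation of the contravariant variable together with the bookkeeping needed to promote the collection of fixed-argument derived adjunctions into a genuine adjunction of two variables: one must check that the right-derived internal $\Homi$, obtained by replacing only the variable being derived, is compatible with the one-variable derived functors of the Quillen pairs, and that the resulting adjunction isomorphisms are natural simultaneously in all three arguments. Once this naturality is in place---it follows formally from the universal properties of the total derived functors together with Proposition~\ref{prop:def_closed_mon_model}---the identifications $\RHomil_\M = \Homil_{\Ho(\M)}$ and $\RHomir_\M = \Homir_{\Ho(\M)}$ are immediate.
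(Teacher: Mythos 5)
Your proposal is correct and is essentially the proof of Hovey's Theorem 4.3.2, which is exactly what the paper cites here: the paper's own proof consists of that citation together with the same observation you make, namely that Proposition~\ref{prop:def_closed_mon_model} plus Ken Brown's lemma shows the internal $\Homi$'s preserve weak equivalences between suitably (co)fibrant objects and hence admit total right derived functors. The remaining bookkeeping you flag (compatibility of the partial derived adjunctions with the total derived bifunctors, and simultaneous naturality) is carried out in \emph{loc.~cit.} exactly as you outline.
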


\begin{proof}
  This is \cite[Theorem 4.3.2]{Hovey}. Let us just briefly recall why
  these functors admit total right derived functors: one deduces from
  Proposition~\ref{prop:def_closed_mon_model} that these functors
  preserve trivial fibrations between fibrant objects; by Ken Brown's
  lemma, this implies that they preserve weak equivalences between
  fibrant objects and hence that they admit total right derived
  functors.
\end{proof}

We now move on to locally biclosed monoidal categories, as introduced in
\cite{AraMaltsiJoint}.

\begin{paragraph}\label{paragr:def_loc_biclosed}
  Let $\C$ be a monoidal category. Let us denote by $\join$ the tensor
  product of $\C$ and suppose that its tensor unit is an initial
  object $\emptyset$. If $X$ and $Y$ are two objects, we get morphisms
  \[ X \xto{\iota_1} X \join Y \xot{\iota_2} Y \]
  by precomposing the morphisms
  \[ X \join \emptyset \xto{X \join \emptyset_Y} X \join Y
    \xot{\emptyset_X \join Y} \emptyset \join Y, \]
  where $\emptyset_Z$ denotes the unique morphism from $\emptyset$ to $Z$,
  with the unit constraints. Using these morphisms, we obtain, for every
  object $X$ of $\C$, a functor
  \[
    \begin{split}
      \C & \to \cotr{\C}{X} \\
      Y & \mapsto (X \join Y, \iota_1)
    \end{split}
  \]
  and, for every object $Y$ of $\C$, a functor
  \[
    \begin{split}
      \C & \to \cotr{\C}{Y} \\
      X & \mapsto (X \join Y, \iota_2).
    \end{split}
  \]

  We say that $\C$ is \ndef{locally biclosed} if its tensor unit is an
  initial object and if the two above functors admit right
  adjoints. In this case, we thus get pairs of adjoint functors
  \[
    \begin{split}
      \C & \to \cotr{\C}{X} \\
      Y & \mapsto (X \join Y, \iota_1)
    \end{split}
    \qquad \qquad
    \begin{split}
      \cotr{\C}{X} & \to \C \\
      (Z, X \xto{u} Z) & \mapsto \cotr{Z}{u}
    \end{split}
  \]
  and
  \[
    \begin{split}
      \C & \to \cotr{\C}{Y} \\
      X & \mapsto (X \join Y, \iota_2)
    \end{split}
    \qquad \qquad
    \begin{split}
      \cotr{\C}{Y} & \to \C \\
      (Z, Y \xto{v} Z) & \mapsto \tr{Z}{v}.
    \end{split}
  \]
  By abuse of notation, we will often denote $\cotr{Z}{u}$ by
  $\cotr{Z}{X}$ and, similarly, $\tr{Z}{v}$ by~$\tr{Z}{Y}$. These
  functors are called the \ndef{slice functors}. By definition, we
  have natural bijections
  \[
    \Hom_{\cotr{\C}{X}}((X \join Y, \iota_1), (Z, u)) \simeq \Hom_\C(Y,
    \cotr{Z}{X})
  \]
  and
  \[
    \Hom_{\cotr{\C}{Y}}((X \join Y, \iota_2), (Z, v)) \simeq \Hom_\C(X,
    \tr{Z}{Y}).
  \]
  Similarly to what happened in the case of biclosed monoidal
  categories, the functors $(Z, u : X \to Z) \mapsto \cotr{Z}{X}$ and
  $(Z, v : Y \to Z) \mapsto \tr{Z}{Y}$ can be made functorial in $X$
  and~$Y$, respectively. More precisely, they canonically extend to
  functors
  \[
    \begin{split}
      \twAr(\C) & \to \C \\
      X \to Z & \mapsto \cotr{Z}{X}
    \end{split}
    \qquad \qquad
    \begin{split}
      \twAr(\C) & \to \C \\
      Y \to Z & \mapsto \tr{Z}{Y},
    \end{split}
  \]
  where $\twAr(\C)$ denotes the twisted arrow category of
  $\C$. (Recall that the objects of $\twAr(\C)$ are arrows
  $f : X \to Y$ of $\C$ and that a morphism of $\twAr(\C)$ from an
  object $f : X \to Y$ to an object $f' : X' \to Y'$ is a pair of
  morphisms $g : X' \to X$ and $h : Y \to Y'$ of $\C$ making the
  square
  \[
    \xymatrix{
      X \ar[d]_f & X' \ar[l]_g \ar[d]^{f'} \\
      Y \ar[r]_h & Y' }
  \]
  commute.)  Indeed, if
  \[
    \xymatrix{
      X \ar[d]_u & X' \ar[l]_g \ar[d]^{u'} \\
      Z \ar[r]_h & Z' }
  \]
  is a commutative square, one defines a morphism
  \[ (g^\ast, h_\ast) : \cotr{Z}{X} \to \cotr{Z'}{X'} \] using the
  Yoneda lemma. If $T$ is any object of $\C$, then
  \[
    \begin{split}
      \Hom_{\C}(T, \cotr{Z}{X}) & \simeq \Hom_{\cotr{\C}{X}}((X \join
      T,
      \iota_1), (Z, u)) \\
      \Hom_{\C}(T, \cotr{Z'}{X'}) & \simeq \Hom_{\cotr{\C}{X'}}((X'
      \join T, \iota_1), (Z', u'))
    \end{split}
  \]
  and the natural map
  \[
    \Hom_{\C}(g \join T, h) : \Hom_{\C}(X \join T, Z) \to \Hom_{\C}(X'
    \join T, Z')
  \]
  induces the desired morphism. Note that this morphism
  $(g^\ast, h_\ast)$ is the diagonal of the commutative square
  \[
    \xymatrix{ \cotr{Z}{X} \ar[r]^{h_\ast} \ar[d]_{g^\ast} &
      \cotr{Z'}{X}
      \ar[d]^{g^\ast} \\
      \cotr{Z}{X'} \ar[r]_{h_\ast} & \cotr{Z'}{X'} \pbox{,} }
  \]
  where $g^\ast = (g^\ast, {\id{}}_\ast)$ and
  $h_\ast = ({\id{}}^\ast, h_\ast)$. A similar construction applies to
  the other slice functor.
\end{paragraph}

\begin{remark}
  Let $\C$ be a monoidal category. If $\C$ is locally biclosed, then its
  tensor product preserves connected colimits in each variable (as
  the forgetful functor $\C \to \cotr{\C}{Z}$ preserves these
  colimits). By a classical adjoint theorem, the converse holds provided
  that the category $\C$ is locally presentable (as colimits in
  $\cotr{\C}{Z}$ can be computed as connected colimits in $\C$).
\end{remark}

\begin{paragraph}
  A \ndef{locally biclosed monoidal model category} is a monoidal
  model category whose underlying monoidal category is locally
  biclosed. Our example of interest in this paper is the folk model category
  structure on $\ooCat$ (or more generally $\nCat{n}$) endowed with the join
  (see Theorems~\ref{thm:join_mon} and \ref{thm:def_join}).
  Note that in the locally biclosed setting, the unit axiom is a consequence
  of the pushout-product axiom. This follows from Remark~\ref{rem:M1} as the
  tensor unit is the initial object and is thus cofibrant. Moreover,
  Proposition~\ref{prop:derived_tens} shows that the monoidal tensor $\join$
  of such a model category admits a total right derived functor $\Ljoin$.
\end{paragraph}

\begin{proposition}\label{prop:loc_closed_Quillen_adj}
  Let $\M$ be a locally biclosed monoidal model category. For every
  cofibrant object $X$ of $\M$, the adjoint pair
  \[
    \begin{split}
      \M & \to \cotr{\M}{X} \\
      Y & \mapsto (X \join Y, \iota_1)
    \end{split}
    \qquad \qquad
    \begin{split}
      \cotr{\M}{X} & \to \M \\
      (Z, X \to Z) & \mapsto \cotr{Z}{X}
    \end{split}
  \]
  is a Quillen pair and, likewise, for every cofibrant object $Y$ of
  $\M$, the adjoint pair
  \[
    \begin{split}
      \M & \to \cotr{\M}{Y} \\
      X & \mapsto (X \join Y, \iota_2)
    \end{split}
    \qquad \qquad
    \begin{split}
      \cotr{\M}{Y} & \to \M \\
      (Z, Y \to Z) & \mapsto \tr{Z}{Y}
    \end{split}
  \]
  is a Quillen pair.
\end{proposition}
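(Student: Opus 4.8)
The plan is to verify, for each of the two adjoint pairs, that the left adjoint is a left Quillen functor, that is, that it preserves cofibrations and trivial cofibrations. First I would recall the standard fact that the coslice category $\cotr{\M}{X}$ of objects under $X$ carries a model category structure in which a morphism is a cofibration, a fibration, or a weak equivalence if and only if its image under the forgetful functor $U \colon \cotr{\M}{X} \to \M$ is one; in particular $U$ both preserves and reflects cofibrations and trivial cofibrations. Writing $L \colon \M \to \cotr{\M}{X}$ for the left adjoint $Y \mapsto (X \join Y, \iota_1)$, the composite $UL$ is the functor $Y \mapsto X \join Y$. Thus $L$ will be left Quillen as soon as the functor $X \join {-}$ preserves cofibrations and trivial cofibrations, and, by the symmetric argument, the second pair reduces to the statement that ${-} \join Y$ preserves cofibrations and trivial cofibrations.

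It therefore remains to show that joining by a cofibrant object, on either side, preserves cofibrations and trivial cofibrations, which follows from Remark~\ref{rem:M1}. The hypothesis of that remark is that cofibrations \resp{trivial cofibrations} are stable under tensoring by the initial object $\emptyset$. The point specific to the locally biclosed setting is that $\emptyset$ is here the tensor unit, so that $\emptyset \join {-}$ and ${-} \join \emptyset$ are, up to the unit constraints, the identity functor, and hence trivially preserve cofibrations and trivial cofibrations. The hypothesis of Remark~\ref{rem:M1} is thus satisfied, and since $X$ \resp{$Y$} is cofibrant, the remark yields that $X \join {-}$ \resp{${-} \join Y$} preserves cofibrations and trivial cofibrations, as required.

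The step I expect to require the most care is this last invocation of Remark~\ref{rem:M1}, because the locally biclosed situation differs from the biclosed one treated earlier. In the biclosed case, tensoring by the initial object is the constant functor at $\emptyset$, so the pushout-product of $\emptyset \to X$ with a cofibration $f \colon Y \to Y'$ collapses directly to $X \otimes f$; here, since $\emptyset$ is the unit, this collapse fails, and one must instead read off $X \join f$ from the factorization underlying Remark~\ref{rem:M1}: the canonical map $X \join Y \to X \join Y \amalg_{\emptyset \join Y} \emptyset \join Y'$ is a cobase change of $\emptyset \join f \cong f$, hence a (trivial) cofibration, and the pushout-product $(\emptyset \to X) \boxjoin f$ is a (trivial) cofibration by the pushout-product axiom, so that their composite $X \join f$ is a (trivial) cofibration; the right-hand case is entirely symmetric. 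Once this factorization is in hand, the remaining verifications — the coslice model structure and the detection of (trivial) cofibrations by $U$ — are routine.
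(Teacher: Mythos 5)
Your proposal is correct and follows essentially the same route as the paper: reduce to showing that the left adjoints preserve cofibrations and trivial cofibrations via the coslice model structure created by the forgetful functor, then deduce that $X \join {-}$ and ${-} \join Y$ preserve them from Remark~\ref{rem:M1}, whose hypothesis holds because in the locally biclosed setting the initial object is the tensor unit. Your explicit factorization of $X \join f$ as a cobase change of $\emptyset \join f \cong f$ followed by the pushout-product $(\emptyset \to X) \boxjoin f$ is exactly the argument the paper leaves implicit in its appeal to that remark.
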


\begin{proof}
  It suffices to show that the left adjoints respect cofibrations and
  trivial cofibrations. As the cofibrations and trivial cofibrations
  of $\cotr{\M}{Z}$ are defined using the forgetful functor
  $\cotr{\M}{Z} \to \M$, this follows from the pushout-product axiom by
  Remark~\ref{rem:M1}.
\end{proof}

The goal of the rest of this appendix is to derive the slice functors of a
locally biclosed monoidal model category $\M$, as functors of source
$\twAr(\M)$, where the weak equivalences of $\twAr(\M)$ are the level-wise
weak equivalences. Unfortunately, the category $\twAr(\M)$ is neither
finitely cocomplete (it does not even have an initial object) nor finitely
complete in general, and therefore cannot be endowed with a model category
structure.  We will see that it can be endowed with a right simplicial
derivability structure in the sense of Kahn and Maltsiniotis (see
\cite[Definition 6.7]{KahnMaltsi}) and that this is enough to derive the
slice functors.

\medbreak

We start by recalling this notion of right simplicial derivability
structure and the corresponding result of derivation.

\begin{paragraph}\label{paragr:localizer}
  A \ndef{localizer}, or \ndef{relative category}, is a pair
  $(\C, \W)$, where $\C$ is a category and $\W$ is a class of morphisms of
  $\C$ called \ndef{weak equivalences}. Such a localizer is said to be
  \ndef{multiplicative} if $\W$ contains all the identities and is stable
  under composition. In this case, the class $\W$ can be identified with a
  subcategory of $\C$ with same objects as $\C$.

  A \ndef{morphism} from a localizer $(\C, \W)$ to a localizer $(\C', \W')$
  is a functor $F : \C \to \C'$ such that $F(\W) \subset \W'$.

  If $(\C, \W)$ is a localizer and $I$ is a small category, then we get a
  localizer $(\C_I, \W_I)$, where $\C_I$ denotes the category $\Homi(I, \C)$
  of functors from $I$ to $\C$ and $\W_I$ the class of natural
  transformations between these functors which are object-wise weak
  equivalences. This construction is functorial in $I$ in an obvious way: if
  $F : (\C, \W) \to (\C', \W')$ is a morphism of localizers, we get a
  morphism $F_I : (\C_I, \W_I) \to (C'_I, \W'_I)$.
\end{paragraph}

\begin{paragraph}
  Fix $K : (\C_0, \W_0) \to (\C, \W)$ a morphism of multiplicative
  localizers and denote by $K^\flat$ the induced functor $K^\flat : \W_0 \to \W$.
  If $X$ is an object of $\C$, the \ndef{category of right $K\!$-resolutions}
  of $X$ is the comma category $X \comma K^\flat$, that is, the category whose
  objects are pairs $(Y, X \xto{w} KY)$, where $Y$ is an object of $\C_0$
  and $w$ is a weak equivalence of $\C$, and whose morphisms from an object
  $(Y, w)$ to an object $(Y', w')$ are the weak equivalences $w_0 : Y \to Y'$ of
  $\C_0$ such that $K(w_0)w = w'$.

  If $I$ is a small category and $F : I \to \C$ is a functor, then,
  by considering the induced morphism of localizers $K_I$, we get a notion of
  category of right $K_I$-resolutions for $F$. In particular, considering
  $I = \{0 < 1\}$, we get a notion of \ndef{category of right
  $K\!$-resolutions of an arrow} of $\C$, and taking $I = \{0 < 1 < 2\}$, we
  get a notion of \ndef{category of right $K\!$-resolutions of a pair of
  composable arrows} of $\C$.
\end{paragraph}

\begin{paragraph}
  Let $(\C, \W)$ be a multiplicative localizer. A \ndef{right simplicial
  derivability structure} on $(\C, \W)$ consists of a multiplicative
  localizer $(\C_0, \W_0)$ and a morphism of localizers $K : (\C_0, \W_0)
  \to (\C, \W)$ satisfying the following conditions:
  \begin{enumerate}
    \item for every object $X$ of $\C$, the category of right $K$-resolutions of
      $X$ is $1$-connected (that is, simply connected and non-empty),
    \item for every arrow $f$ of $\C$, the category of right $K$-resolutions of
      $f$ is $0$-connected (that is, connected and non-empty),
    \item for every pair $(g, f)$ of composable arrows of $\C$, the category
      of right $K$\=/resolutions of $(g, f)$ is $-1$-connected (that is, non-empty).
  \end{enumerate}
\end{paragraph}

\begin{example}\label{ex:model_deriv}
  If $\M$ is a model category, then $(\M, \W)$, where $\W$ is the class of
  weak equivalences of $\M$, is naturally endowed with a right simplicial
  derivability structure $K : (\M_0, \W_0) \to (\M, \W)$, where $\M_0$
  denotes the full subcategory of $\M$ consisting of fibrant objects and
  $\W_0$ the class of weak equivalences between fibrant objects (see the
  ``table of implications'' at the very end of \cite{KahnMaltsi}).
\end{example}

\begin{proposition}[Kahn--Maltsiniotis]\label{prop:Kahn-Maltsi}
  Let $F : (\C, \W) \to (\C', \W')$ be a morphism of localizers. If there
  exists a right simplicial derivability structure $K : (\C_0, \W_0) \to
  (\C, \W)$ on $(\C, \W)$ such that $FK(\W_0) \subset \W'$, then $F$ admits
  a total right derived functor $\mathbb{R}F : \C[\W^{-1}] \to
  \C'[\W'^{-1}]$.
\end{proposition}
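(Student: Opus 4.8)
The plan is to read the conclusion as the existence of a suitable Kan extension and to build it directly out of the resolutions supplied by the derivability structure. First I would record the two functors between localized categories that the hypotheses produce. Since $K$ is a morphism of localizers we have $K(\W_0) \subset \W$, and by assumption $FK(\W_0) \subset \W'$; hence both $K : \C_0 \to \C$ and $FK : \C_0 \to \C'$ send weak equivalences to weak equivalences and, composed with the relevant localization functors, descend to functors
\[
  \bar K : \C_0[\W_0^{-1}] \to \C[\W^{-1}]
  \quadand
  \overline{FK} : \C_0[\W_0^{-1}] \to \C'[\W'^{-1}].
\]
The whole purpose of a right simplicial derivability structure is to force $\bar K$ to be an equivalence of categories. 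Granting this, one would set $\mathbb{R}F = \overline{FK} \circ \bar K^{-1}$, where $\bar K^{-1}$ is a chosen quasi-inverse of $\bar K$.

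The heart of the argument is thus the proof that $\bar K$ is an equivalence, and this is exactly where the three connectivity conditions enter, each governing one further simplicial dimension while requiring one less degree of connectivity. For essential surjectivity, condition (1) provides, for every object $X$ of $\C$, a right $K$-resolution $(Y, w : X \to KY)$; as $w$ is a weak equivalence, $\bar K(Y) = KY$ becomes isomorphic to $X$ in $\C[\W^{-1}]$. For full faithfulness one must reconstruct morphisms and their composites in $\C[\W^{-1}]$ from data lying over $\C_0$: the simple connectedness in condition (1) rigidifies the comparisons between two resolutions of a single object, the $0$-connectedness of the resolutions of an arrow (condition (2)) shows that every morphism of $\C[\W^{-1}]$ between resolved objects is represented, essentially uniquely, by a morphism of $\C_0[\W_0^{-1}]$, and the nonemptiness of the resolutions of a composable pair (condition (3)) guarantees that this representation is compatible with composition. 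Assembling these representations produces a functor quasi-inverse to $\bar K$.

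With the equivalence $\bar K$ and the functor $\mathbb{R}F = \overline{FK} \circ \bar K^{-1}$ in hand, the last step is to exhibit the canonical comparison $2$-cell and verify the universal property of a total right derived functor. Denoting by $\gamma : \C \to \C[\W^{-1}]$ and $\gamma' : \C' \to \C'[\W'^{-1}]$ the localizations, applying $F$ to a resolution $w : X \to KY$ gives a morphism $\gamma' F(X) \to \mathbb{R}F(\gamma X)$, natural in $X$, hence a natural transformation $\gamma' F \Rightarrow (\mathbb{R}F)\gamma$; that this pair is initial among all such, i.e.\ that it realizes $\mathbb{R}F$ as the total right derived functor, follows from the universal properties of the localizations together with the fact that $\bar K$ identifies $\C_0[\W_0^{-1}]$ with $\C[\W^{-1}]$ while carrying $\gamma_0$ to the restriction of $\gamma$ along $K$. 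The main obstacle is squarely the full faithfulness of $\bar K$: it rests on a genuine ``calculus of resolutions'', in the spirit of a calculus of fractions but organized along the low-dimensional simplices of the nerves of $\C$ and $\C_0$, and it is precisely this calculus that the axioms of a right simplicial derivability structure make available. The complete argument is carried out in \cite{KahnMaltsi}.
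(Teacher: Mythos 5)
The paper does not actually reprove this statement---it cites \cite{KahnMaltsi}---so what matters is whether your sketch stands on its own, and it does not: its load-bearing claim is false. You assert that the three connectivity axioms force the induced functor $\bar K : \C_0[\W_0^{-1}] \to \C[\W^{-1}]$ to be an equivalence and then set $\mathbb{R}F = \overline{FK} \circ \bar K^{-1}$. Here is a counterexample to that claim. Let $\C$ have two objects $a$ and $c$, with $c$ terminal, $\Hom(c,a) = \emptyset$ and $\Hom(a,a) = \Z/2\Z$; let $\W$ consist of the identities together with the unique morphism $p : a \to c$ (this is multiplicative); let $\C_0$ be the terminal category, $\W_0$ its identity, and $K$ the functor with value $c$. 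Since $c$ is terminal and the only weak equivalence of $\C_0$ is an identity, every category of right $K$-resolutions---of an object, of an arrow, or of a composable pair---is the terminal category, so all three axioms hold and $K$ is a right simplicial derivability structure. Yet $\bar K$ is not full: the functor $\C \to B(\Z/2\Z)$ that is the identity on $\Hom(a,a)$ and sends $p$ to the identity inverts $\W$ and hence factors through $\C[\W^{-1}]$, so the two elements of $\Hom(a,a)$ stay distinct after localization and $\Hom_{\C[\W^{-1}]}(c,c)$ contains the nontrivial element $\gamma(p)\gamma(g)\gamma(p)^{-1}$, whereas $\C_0[\W_0^{-1}]$ is the point. (The Proposition itself survives in this example: $\mathbb{R}F$ is the constant functor at the image of $F(c)$, which factors through the point, but not via an inverse of $\bar K$.) So the ``heart of the argument'' as you describe it cannot be carried out.

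The axioms are calibrated not to invert $\bar K$ but to make the pointwise formula $\mathbb{R}F(X) = \gamma' F(KY)$, for a chosen resolution $(Y, w : X \to KY)$, well defined and functorial: $1$-connectedness for objects gives existence of a value together with a canonical, coherent identification of the values attached to two resolutions of the same $X$; $0$-connectedness for arrows lets one define $\mathbb{R}F$ on morphisms independently of choices; non-emptiness for composable pairs gives compatibility with composition. One then verifies directly that the resulting pair $(\mathbb{R}F,\ \gamma' F \Rightarrow \mathbb{R}F\,\gamma)$ is initial among all such pairs, using only that the structural maps $w$ become invertible in $\C[\W^{-1}]$. This is the route taken in \cite{KahnMaltsi}; at no point does one need, or obtain, full faithfulness of $\bar K$. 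Your sketch would need to be rebuilt along these lines rather than patched.
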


\begin{proof}
  See \cite[Proposition 5.9 and paragraph 6.8]{KahnMaltsi}.
\end{proof}

We will now prove a general result allowing to lift a right simplicial
derivability structure along a discrete opfibration, result that we will
then apply to the discrete opfibration $\twAr(\M) \to \M^\op \times \M$.

\begin{proposition}\label{prop:disc_opfib_simpl}
  Let $(\C, \W)$ be a multiplicative localizer endowed with a right
  simplicial derivability structure $K : (\C_0, \W_0) \to (\C, \W)$ and let
  $p : \Ct \to \C$ be a discrete opfibration. Set $\Wt = p^{-1}(\W)$. Then
  $(\Ct, \Wt)$ is endowed with a natural simplicial derivability structure
  $\Kt : (\Ct_0, \Wt_0) \to (\Ct, \Wt)$, obtained by pulling back $K$ along
  $p$.
\end{proposition}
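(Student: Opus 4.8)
The plan is to realize the lifted structure as a strict pullback of categories and to transport the three connectivity conditions across $p$. Concretely, I would set $\Ct_0 := \Ct \times_{\C} \C_0$, the pullback of $K \colon \C_0 \to \C$ along $p$, and let $\Kt \colon \Ct_0 \to \Ct$ and $p_0 \colon \Ct_0 \to \C_0$ be the two projections, so that $p\Kt = Kp_0$; I would take $\Wt_0 := p_0^{-1}(\W_0)$. Since $p$ and $p_0$ preserve identities and composites and $\W$, $\W_0$ are multiplicative, both $(\Ct, \Wt)$ and $(\Ct_0, \Wt_0)$ are multiplicative localizers; and because $p\Kt = Kp_0$ and $K(\W_0) \subseteq \W$, one has $\Kt(\Wt_0) \subseteq p^{-1}(\W) = \Wt$, so $\Kt$ is a morphism of localizers. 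The whole content is thus to verify the three resolution conditions.

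I would isolate two lemmas. First, for every small category $I$, the functor $\Homi(I,-)$ sends discrete opfibrations to discrete opfibrations: given $\tilde F \colon I \to \Ct$ and a natural transformation $\alpha$ with source $p\tilde F$, the unique $p$-lifts of the components $\alpha_i$ assemble, by uniqueness of lifts, into a functor $\tilde G \colon I \to \Ct$ lifting the target of $\alpha$ together with a unique natural transformation $\tilde\alpha \colon \tilde F \to \tilde G$ lying over $\alpha$. As $\Homi(I,-)$ preserves limits, this gives $(\Ct_0)_I = \Ct_I \times_{\C_I} (\C_0)_I$, i.e. $\Kt_I$ is exactly the pullback of $K_I$ along the discrete opfibration $p_I$. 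Second, and this is the heart of the argument, for the discrete opfibration $p$ and the pullback $\Kt$ above, the functor $p$ induces, for every object $\tilde X$ of $\Ct$, an \emph{isomorphism} of categories $\tilde X \comma \Kt^\flat \tosim p(\tilde X) \comma K^\flat$ between the right $\Kt$-resolutions of $\tilde X$ and the right $K$-resolutions of $p(\tilde X)$; the same statement applies verbatim to $(p_I, K_I, \Kt_I)$.

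To prove this second lemma I would exhibit the inverse of the comparison functor $(\tilde Y, \tilde w) \mapsto (p_0\tilde Y, p\tilde w)$ by unique lifting. Given a right $K$-resolution $(Y, w \colon p(\tilde X) \to KY)$, let $\tilde w \colon \tilde X \to \tilde Z$ be the unique arrow of $\Ct$ over $w$ with source $\tilde X$; then $p\tilde Z = KY$, so $(\tilde Z, Y)$ is an object $\tilde Y$ of $\Ct_0$ with $\Kt\tilde Y = \tilde Z$, giving a right $\Kt$-resolution $(\tilde Y, \tilde w)$, and uniqueness of lifts makes this inverse to the comparison functor on objects. For morphisms, a map $w_0 \colon Y \to Y'$ in $\W_0$ with $K(w_0)w = w'$ lifts to a unique arrow of $\Ct$ over $K(w_0)$ with source $\tilde Z$; its composite with $\tilde w$ lifts $w' = K(w_0)w$ and therefore, by uniqueness, equals $\tilde w'$ and targets $\Kt\tilde Y'$, producing the unique arrow of $\Ct_0$ over $w_0$ compatible with the structure maps. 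Hence the comparison is bijective on objects and fully faithful, so an isomorphism.

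Finally I would conclude by applying the second lemma to $p$ itself and to the discrete opfibrations $p_I$ for $I = \{0<1\}$ and $I = \{0<1<2\}$ (discrete opfibrations by the first lemma, with $\Kt_I$ the corresponding pullbacks). This identifies the category of right $\Kt$-resolutions of an object (resp.\ an arrow, resp.\ a pair of composable arrows) of $\Ct$ with the category of right $K$-resolutions of its image, which is $1$-connected (resp.\ $0$-connected, resp.\ $(-1)$-connected) by the hypothesis that $K$ is a right simplicial derivability structure. As these connectivity properties are invariant under isomorphism of categories, the three conditions hold for $\Kt$, and naturality of the construction is immediate since it is a pullback. The main obstacle is precisely the on-the-nose matching of resolution categories in the second lemma: it is the \emph{uniqueness} of cocartesian lifts in a discrete opfibration (equivalently, strict functoriality of transport) that upgrades the comparison from an equivalence to an isomorphism and lets the connectivity transfer go through verbatim.
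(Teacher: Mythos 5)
Your proposal is correct and follows essentially the same route as the paper: realize $\Kt$ as the strict pullback of $K$ along $p$, use that $\Homi(I,-)$ preserves pullbacks and discrete opfibrations, and reduce the three connectivity conditions to an isomorphism of comma (resolution) categories induced by the discrete opfibration. The only difference is that you spell out the proof of that comma-category isomorphism via unique lifting, which the paper states as a lemma and leaves as an exercise.
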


\begin{proof}
  Let $I$ be a small category and let $\Ft : I \to \Ct$ be a functor. We are
  going to show that the categories of right $\Kt_I$-resolutions of $\Ft$
  and of right $K_I$-resolutions of~$p\Ft$ are isomorphic. This will
  immediately imply the result.

  By definition, we have a pullback square
  \[
    \xymatrix{
      (\Ct_0, \Wt_0) \ar[r]^{\Kt} \ar[d] & (\Ct, \Wt) \ar[d]^{p} \\
      (\C_0, \W_0) \ar[r]_{K} & (\C, \W)
    }
  \]
  in the category of localizers. Note that pullbacks in this category are
  computed component-wise. By applying the $\Homi(I, {-})$ functor, we get
  a commutative square
  \[
    \xymatrix{
      ((\Ct_0)_I, (\Wt_0)_I) \ar[r]^-{\Kt_I} \ar[d] & (\Ct_I, \Wt_I) \ar[d]^{p_I} \\
      ((\C_0)_I, (\W_0)_I) \ar[r]_-{K_I} & (\C_I, \W_I)
      \pbox{,}
    }
  \]
  that is easily seen to still be a pullback square. As the $\Homi(I, {-})$
  functor preserves discrete opfibrations, the functor $p_I : \Ct_I \to
  \C_I$ and therefore its restriction $(p_I)^\flat : \Wt_I \to \W_I$ are
  still discrete opfibrations.

  By definition, the category of right $\Kt_I$-resolutions of $\Ft : I \to
  \Ct$ is the comma category $\Ft \comma (\Kt_I)^\flat$, while the category of
  right $K_I$-resolutions of $p_I(\Ft) = p\Ft : I \to \C$ is the comma
  category $p_I(\Ft) \comma (K_I)^\flat$. The result thus follows from the
  following lemma, applied to the pullback square
  \[
    \xymatrix@C=2.5pc{
      (\Wt_0)_I \ar[r]^-{(\Kt_I)^\flat} \ar[d] & \Wt_I
      \ar[d]^{(p_I)^\flat} \\
      (\W_0)_I \ar[r]_-{(K_I)^\flat} & \W_I \pbox{,}
    }
  \]
  lemma which is probably well known and whose proof is left as an easy exercise
  to the reader.
\end{proof}

\begin{lemma}
  \newcommand\X{\mathcal{X}}
  \newcommand\B{\mathcal{B}}
  Let
  \[
    \xymatrix{
      \X' \ar[r]^G \ar[d] & \X \ar[d]^p \\
      \B' \ar[r]_F & \B
    }
  \]
  be a pullback square of categories, where $p$ is a discrete opfibration.
  Then, for every object $X$ of $\X$, the functor $p$ induces an
  isomorphism between the comma categories~$X \comma G$ and $p(X) \comma
  F$.
\end{lemma}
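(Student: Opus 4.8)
The plan is to exhibit the functor $\Phi \colon (X \comma G) \to (p(X) \comma F)$ induced by $p$ and to check directly that it is bijective on objects and fully faithful, the only genuine input being the defining lifting property of the discrete opfibration $p$. I would first make the pullback $\mathcal{X}' = \mathcal{X} \times_{\mathcal{B}} \mathcal{B}'$ concrete: an object of $\mathcal{X}'$ is a pair $(a, b)$ with $a$ an object of $\mathcal{X}$, $b$ an object of $\mathcal{B}'$ and $p(a) = F(b)$, the functor $G$ is the first projection, and I write $q \colon \mathcal{X}' \to \mathcal{B}'$ for the second. On objects, $\Phi$ sends $(Y', f \colon X \to GY')$ to $(qY', p(f))$; this lands in $p(X) \comma F$ because $p(GY') = F(qY')$ by the pullback relation, so that $p(f) \colon p(X) \to F(qY')$. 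On morphisms, $\Phi$ sends a morphism $h$ of $\mathcal{X}'$ to its second component $q(h)$; functoriality is then manifest.

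First I would establish bijectivity on objects. Given an object $(B', \beta \colon p(X) \to F(B'))$ of $p(X) \comma F$, the morphism $\beta$ starts at $p(X)$, so by the discrete opfibration property there is a unique morphism $\xi \colon X \to X''$ of $\mathcal{X}$ with $p(\xi) = \beta$; in particular $p(X'') = F(B')$, so $(X'', B')$ is a well-defined object of $\mathcal{X}'$ and $\bigl((X'', B'), \xi\bigr)$ is an object of $X \comma G$ mapping to $(B', \beta)$. Conversely, any preimage must have $\mathcal{B}'$-component $B'$ and structure map a lift of $\beta$ starting at $X$, so the uniqueness of the lift forces it to be exactly this one; hence $\Phi$ is a bijection on objects.

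Then I would prove full faithfulness. Fix objects $(Y', f)$ and $(Z', g)$ of $X \comma G$ with images $(B', \beta)$ and $(C', \gamma)$, and let $\eta \colon B' \to C'$ be a morphism of $p(X) \comma F$, i.e.\ $F(\eta) \circ \beta = \gamma$. A morphism $(Y', f) \to (Z', g)$ lying over $\eta$ is a pair $(h_{\mathcal{X}}, \eta)$ with $p(h_{\mathcal{X}}) = F(\eta)$ and $h_{\mathcal{X}} \circ f = g$. I would construct $h_{\mathcal{X}}$ by lifting $F(\eta) \colon p(GY') \to F(C')$ along $p$ starting at $GY'$, obtaining a unique $h_{\mathcal{X}} \colon GY' \to W$ with $p(h_{\mathcal{X}}) = F(\eta)$. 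The composite $h_{\mathcal{X}} \circ f$ is then a lift of $F(\eta) \circ \beta = \gamma$ starting at $X$; since $g$ is also such a lift and lifts are unique, one gets simultaneously $W = GZ'$ and $h_{\mathcal{X}} \circ f = g$ at no extra cost. This produces a unique morphism $h = (h_{\mathcal{X}}, \eta)$ of $\mathcal{X}'$ realizing $\eta$, so the map on hom-sets induced by $\Phi$ is a bijection.

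The main obstacle, and the only step beyond bookkeeping, is precisely this last compatibility: one must verify that the opcartesian lift of $F(\eta)$ automatically lands in the correct object $GZ'$ and makes the comma-category triangle commute. Both facts follow from the uniqueness clause in the definition of a discrete opfibration applied to the composite $\gamma = F(\eta) \circ \beta$, once one observes that $f$ and $g$ are themselves the unique lifts of $\beta$ and $\gamma$ starting at $X$. Having shown $\Phi$ bijective on objects and fully faithful, I would conclude that it is an isomorphism of categories, which is the assertion of the lemma.
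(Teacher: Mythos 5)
The paper leaves this lemma as an exercise ("probably well known... left as an easy exercise to the reader"), so there is no proof of record to compare against; your argument is correct and complete, and it is exactly the intended one. Constructing $\Phi$ on the concrete model of the pullback and then running the unique-lifting property of the discrete opfibration twice — once to get bijectivity on objects, once (applied to $F(\eta)$ and to the composite $\gamma = F(\eta)\circ\beta$) to get that each morphism of $p(X)\comma F$ has exactly one preimage, with the target of the lift automatically identified with $G(Z')$ — settles the statement with no gaps.
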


\begin{paragraph}
  Let $\M$ be a model category. We will say that a morphism
  \[
    \xymatrix{
      X \ar[d]_f & X' \ar[l]_g \ar[d]^{f'} \\
      Y \ar[r]_h & Y'
    }
  \]
  of $\twAr(\M)$ from $f$ to $f'$ is
  \begin{itemize}
    \item a \ndef{weak equivalence} if $g$ and $h$ are,
    \item a \ndef{fibration} if $g$ is a cofibration and $h$ is a
     fibration.
  \end{itemize}
  The category $\twAr(\M)$ admits as a terminal object the unique arrow
  $\emptyset \to \ast$ from the initial object of $\M$ to the terminal object
  of $\M$, and we will say that an object $X \to Y$ of $\twAr(\M)$ is
  \ndef{fibrant} if the unique morphism from this object to the terminal
  object is a fibration. This amounts to saying that $X$ is cofibrant and
  $Y$ is fibrant.

  We will denote by $(\twAr(\M), \Wt)$ the resulting localizer and by
  $(\twAr(\M)_0, \Wt_0)$ the induced localizer on the full subcategory of
  $\twAr(\M)$ consisting of fibrant objects.
\end{paragraph}

\begin{proposition}\label{prop:der_struct_tw}
  If $\M$ is a model category, then the inclusion morphism
  \[ (\twAr(\M)_0, \Wt_0) \hookto (\twAr(\M), \Wt) \]
  is a right simplicial derivability structure.
\end{proposition}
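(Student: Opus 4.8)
I need to show that the inclusion $(\twAr(\M)_0, \Wt_0) \hookto (\twAr(\M), \Wt)$ is a right simplicial derivability structure in the sense recalled above. The key idea is that I already have, by Example~\ref{ex:model_deriv}, a right simplicial derivability structure on the underlying model category $\M$ given by fibrant objects, and $\twAr(\M)$ maps to $\M^\op \times \M$ by a discrete opfibration. So the plan is to realize the derivability structure on $\twAr(\M)$ as one pulled back along this discrete opfibration, using Proposition~\ref{prop:disc_opfib_simpl}.

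**The plan.** First I would recall that there is a canonical functor $p : \twAr(\M) \to \M^\op \times \M$ sending an arrow $f : X \to Y$ to the pair $(X, Y)$ and a morphism $(g, h)$ to $(g^\op, h)$. I would check that $p$ is a discrete opfibration: given an object $f : X \to Y$ of $\twAr(\M)$ and a morphism $(g^\op, h) : (X, Y) \to (X', Y')$ in $\M^\op \times \M$, that is, morphisms $g : X' \to X$ and $h : Y \to Y'$ in $\M$, there is a unique lift, namely the morphism of $\twAr(\M)$ with target $f' = h f g : X' \to Y'$. Next, by Example~\ref{ex:model_deriv}, the localizer $(\M, \W)$ carries a right simplicial derivability structure with fibrant objects; since $\M^\op$ carries the dual structure given by cofibrant objects, the product localizer $(\M^\op \times \M, \W^\op \times \W)$ carries the product right simplicial derivability structure, whose distinguished objects are pairs $(X, Y)$ with $X$ cofibrant and $Y$ fibrant. (Here I would either invoke a product-stability statement for right simplicial derivability structures or observe that the resolution categories of a product are products of the resolution categories, so the connectivity conditions are inherited.)

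**Applying the lifting proposition.** With the discrete opfibration $p$ and the right simplicial derivability structure on the base $\M^\op \times \M$ in hand, Proposition~\ref{prop:disc_opfib_simpl} produces a right simplicial derivability structure on $(\twAr(\M), p^{-1}(\W^\op \times \W))$ by pullback. I then need to identify this pulled-back structure with the one described in the statement. The weak equivalences $p^{-1}(\W^\op \times \W)$ are exactly the morphisms $(g, h)$ of $\twAr(\M)$ with $g$ and $h$ weak equivalences, which is the definition of $\Wt$. The distinguished subcategory $\Ct_0$ obtained by pullback consists of the objects of $\twAr(\M)$ lying over distinguished objects of $\M^\op \times \M$, that is, the arrows $X \to Y$ with $X$ cofibrant and $Y$ fibrant; by the paragraph preceding the statement, these are precisely the fibrant objects $\twAr(\M)_0$. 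Thus the pulled-back structure coincides with the inclusion $(\twAr(\M)_0, \Wt_0) \hookto (\twAr(\M), \Wt)$, which proves the result.

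**The main obstacle.** The delicate point is not the discrete-opfibration lifting, which is handled cleanly by Proposition~\ref{prop:disc_opfib_simpl}, but rather justifying that the product $\M^\op \times \M$ inherits a right simplicial derivability structure from its factors, and in particular that $\M^\op$ carries one with cofibrant objects. For $\M^\op$ this is just the dual of Example~\ref{ex:model_deriv}, since the opposite of a model category is a model category with fibrations and cofibrations swapped, so fibrant objects of $\M^\op$ are cofibrant objects of $\M$. The product statement requires checking that the resolution categories for the product localizer are the products of the individual resolution categories; this is formal because the functor $\Homi(I, {-})$ and the comma-category construction both commute with finite products of categories, so the $1$-, $0$-, and $(-1)$-connectivity conditions for the product follow from those for each factor (a product of $1$-connected, $0$-connected, or non-empty categories has the same property). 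I would state this product-stability as a short lemma or remark rather than grinding through it, as it is genuinely routine once the commutation with products is observed.
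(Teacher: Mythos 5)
Your proof is correct and follows essentially the same route as the paper: identify $\twAr(\M) \to \M^\op \times \M$ as a discrete opfibration and apply Proposition~\ref{prop:disc_opfib_simpl} to the derivability structure on the base. The only difference is that what you flag as the ``main obstacle'' --- equipping $\M^\op \times \M$ with a right simplicial derivability structure via a product-stability lemma --- is bypassed in the paper by simply observing that $\M^\op \times \M$ is itself a model category and applying Example~\ref{ex:model_deriv} to it directly, whose fibrant objects are exactly the pairs $(X,Y)$ with $X$ cofibrant and $Y$ fibrant.
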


\begin{proof}
  It is immediate that the functor
  \[
    \begin{split}
      \twAr(\M) & \to \M^\op \times \M \\
      X \to Y & \mapsto (X, Y)
    \end{split}
  \]
  is a discrete opfibration. We thus get the result by applying
  Proposition~\ref{prop:disc_opfib_simpl} to this functor and to the right
  simplicial derivability structure associated to the model category~$\M^\op
  \times \M$ (see Example~\ref{ex:model_deriv}).
\end{proof}

To use Proposition~\ref{prop:Kahn-Maltsi} to derive the slice functors, we
now need to prove that these functors preserve weak equivalences between fibrant
objects. To do so, we will generalize
Proposition~\ref{prop:def_closed_mon_model} to the locally biclosed setting.

\begin{paragraph}\label{paragr:def_trp}
  Let $\C$ be a locally biclosed monoidal category. If $i : A \to B$
  and $j : C \to D$ are two morphisms of $\C$, note that the morphism
  \[ i \boxjoin j : B \join C \amalg_{A \join C} A \join D \to B \join
    D
  \]
  is naturally above both $B$ and $D$. If now $p : (X, f) \to (Y, g)$
  is a morphisms of $\cotr{\C}{B}$, using $i$ and $p$ we get a morphism
  \[
    \cotrp{p}{i} : \cotr{X}{B} \to \cotr{X}{A} \times_{\cotr{Y}{A}}
    \cotr{Y}{B}
  \]
  induced by the commutative square
  \[
    \xymatrix{ \cotr{X}{B} \ar[r]^{p_\ast} \ar[d]_{i^\ast} &
      \cotr{Y}{B}
      \ar[d]^{i^\ast} \\
      \cotr{X}{A} \ar[r]_{p_\ast} & \cotr{Y}{A} \pbox{.}  }
  \]
  Similarly, from $j$ and a morphism $p : (X, f) \to (Y, g)$ of
  $\cotr{\C}{D}$, we get a morphism
  \[
    \trp{p}{j} : \tr{X}{D} \to \tr{X}{C} \times_{\tr{Y}{C}} \tr{Y}{D}.
  \]
\end{paragraph}

\begin{lemma}[Joyal]
  Let $\C$ be a locally biclosed monoidal category. If $i : A \to B$
  and $j : C \to D$ are two morphisms of $\C$ and $p : X \to Y$ is a
  morphism of $\C$ above $D$, then we have
  \[ i \join' j \perp_{\cotr{\C}{D}} p \quadiff j \perp_{\C}
    \cotrp{p}{i} \quadiff i \perp_{\C} \trp{p}{j},
  \]
  where $\perp_{\mathcal{D}}$ denotes the relation of weak
  orthogonality in the category $\mathcal{D}$.
\end{lemma}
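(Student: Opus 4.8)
The plan is to prove the two non\-/trivial equivalences by Joyal's standard device: to set up a natural bijection between lifting problems under which diagonal fillers correspond, so that weak orthogonality transfers from one side to the other. Since the join is locally biclosed, the defining natural bijections
\[ \Hom_{\cotr{\C}{Z}}\big((W \join Z, \iota_2), (T, v)\big) \simeq \Hom_{\C}(W, \tr{T}{v}), \]
for an object $Z$ and $(T,v) \in \cotr{\C}{Z}$, together with the dual family involving the $\cotr{(-)}{(-)}$ slices, are exactly the two\-/variable adjunction data I would feed into this device. Abstractly, both equivalences amount to the single assertion that the Leibniz cornering of the (sliced) two\-/variable adjunction furnished by the locally biclosed structure is again a two\-/variable adjunction, for which the classical equivalence of the three lifting conditions holds verbatim.

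First I would treat $i \boxjoin j \perp_{\cotr{\C}{D}} p \iff i \perp_{\C} \trp{p}{j}$, using that $X$ and $Y$, being above $D$, are also above $C$ through $j : C \to D$, so that $\trp{p}{j}$ is defined. A lifting problem for $i \boxjoin j$ against $p$ in $\cotr{\C}{D}$ is a commutative square whose source is the pushout $B \join C \amalg_{A \join C} A \join D$ and whose right\-/hand side is $p : X \to Y$ above $D$. I would first observe that this pushout is computed in $\C$ and that the canonical map $D \to B \join C \amalg_{A \join C} A \join D$ factors through the summand $A \join D$ via $\iota_2$; hence a morphism out of it in $\cotr{\C}{D}$ is the same as a compatible pair consisting of a $\C$\-/morphism out of $B \join C$ and a $\cotr{\C}{D}$\-/morphism out of $A \join D$ agreeing on $A \join C$.

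Transposing each piece across the adjunctions above --- applying $\tr{(-)}{C}$ to the factors built on $C$ and $\tr{(-)}{D}$ to the factors built on $D$ --- turns the defining pushout of the source into the defining pullback $\tr{X}{C} \times_{\tr{Y}{C}} \tr{Y}{D}$ of $\trp{p}{j}$, and rewrites the whole square as a lifting problem for $i$ against $\trp{p}{j}$ in $\C$. By naturality of the transposition, diagonal fillers correspond bijectively, giving the equivalence. The remaining equivalence, with $\cotrp{p}{i}$, I would obtain by the entirely analogous argument, now using the dual slice adjunction and the structure of $i \boxjoin j$ as a morphism above $B$ (recall that it is naturally above both $B$ and $D$); there one transposes the factors built on $A$ and $B$ and identifies the adjoint transform of the source pushout with the pullback $\cotr{X}{A} \times_{\cotr{Y}{A}} \cotr{Y}{B}$ defining $\cotrp{p}{i}$.

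The main obstacle is bookkeeping rather than conceptual. One must verify that the left adjoints $W \mapsto W \join Z$ preserve the connected colimits in play and that, under transposition, the pushout presenting $\mathrm{dom}(i \boxjoin j)$ matches the pullback presenting the codomain of $\trp{p}{j}$ (resp.\ $\cotrp{p}{i}$), while keeping every intermediate object in the correct coslice. Care is needed because several factors, such as $B \join C$ and $A \join C$, are not themselves above $D$: one therefore works with their images under the relevant adjunction and only assembles the coslice\-/compatible pieces at the end. Checking that the assembled square and its fillers are genuinely morphisms of $\cotr{\C}{D}$ (resp.\ of $\C$ over the correct base) is the point where the argument must be carried out with precision.
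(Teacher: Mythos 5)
Your proposal is correct and follows essentially the same route as the paper, whose ``proof'' simply defers to Joyal's argument (transposing the lifting square for $i \boxjoin j$ across the slice adjunctions so that the pushout defining its source is traded for the pullback defining the codomain of $\trp{p}{j}$, with diagonal fillers corresponding by naturality). The one point to keep straight is the bookkeeping you already flag: for the equivalence involving $\cotrp{p}{i}$ the transposition takes place over $B$ rather than $D$, the structure of $p$ above $B$ being read off from the top arrow of the lifting square, exactly as in the paper's subsequent use of the lemma.
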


\begin{proof}
  The lemma is inspired by \cite[Lemma 3.6]{JoyalQuasiKan}, whose
  proof applies \forlang{mutatis mutandis}.
\end{proof}

\begin{proposition}\label{prop:def_loc_closed_model_cat}
  Let $\M$ be a model category endowed with a locally biclosed
  monoidal category structure. Then the following conditions are
  equivalent:
  \begin{enumerate}[label=\roman*)]
  \item the tensor product $\join$ satisfies the pushout-product
    axiom,
  \item for every cofibration $i : A \to B$, every fibration
    $p : X \to Y$ and every map $f : B \to X$, the induced map
    \[
      \cotrp{p}{i} : \cotr{X}{B} \to \cotr{X}{A} \times_{\cotr{Y}{A}}
      \cotr{Y}{B}
    \]
    is a fibration that is trivial if either $i$ or $p$ is,
  \item for every cofibration $j : C \to D$, every fibration
    $p : X \to Y$ and every map $f : D \to X$, the induced map
    \[
      \trp{p}{j} : \tr{X}{D} \to \tr{X}{C} \times_{\tr{Y}{C}}
      \tr{Y}{D}
    \]
    is a fibration that is trivial if either $j$ or $p$ is.
  \end{enumerate}
\end{proposition}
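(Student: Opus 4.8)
The plan is to mimic the proof of the biclosed case (Proposition~\ref{prop:def_closed_mon_model}), replacing the pushout-product/pullback-cotensor adjunction by the adjunction lemma of Joyal stated just above. I will prove the equivalence of (i) and (ii); the equivalence of (i) and (iii) is entirely symmetric, using the second half of Joyal's lemma, namely $i \boxjoin j \perp_\M p \iff i \perp_\M \trp{p}{j}$, in place of the first half $i \boxjoin j \perp_\M p \iff j \perp_\M \cotrp{p}{i}$, and interchanging the roles of $i$ and $j$ throughout.

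First I would recall the standard lifting-property characterizations of the classes of a model category: a map $p$ is a fibration \resp{a trivial fibration} if and only if $j \perp_\M p$ for every trivial cofibration $j$ \resp{for every cofibration $j$}, and dually a map is a cofibration \resp{a trivial cofibration} if and only if it is weakly left orthogonal to every trivial fibration \resp{to every fibration}. Using these, I would reformulate the pushout-product axiom (i) as the conjunction of three families of weak orthogonality statements in $\M$: that $i \boxjoin j \perp_\M p$ for all cofibrations $i$, $j$ and every trivial fibration $p$; that $i \boxjoin j \perp_\M p$ whenever $i$ is a trivial cofibration, $j$ a cofibration, and $p$ a fibration; and the same with the roles of $i$ and $j$ interchanged. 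The bridge to (ii) is the observation that, since the source $B \join C \amalg_{A \join C} A \join D$ of $i \boxjoin j$ is canonically an object of $\cotr{\M}{D}$, weak orthogonality $i \boxjoin j \perp_\M p$ against a map $p : X \to Y$ is equivalent to slice-orthogonality $i \boxjoin j \perp_{\cotr{\M}{D}} p$ ranging over all structure maps $f : D \to X$ making $p$ an object of $\cotr{\M}{D}$: the forward implication is immediate, and conversely any lifting square in $\M$ determines such an $f$ by composing its top edge with the canonical map from $D$ into the source of $i \boxjoin j$, so that the $\M$-square becomes a slice-square for this $f$ and the lift exists.

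Applying Joyal's lemma then converts each statement $i \boxjoin j \perp_{\cotr{\M}{D}} p$ into $j \perp_\M \cotrp{p}{i}$. I would finish by applying the same lifting-property characterizations to the map $\cotrp{p}{i}$, reading off that condition (ii) is itself the conjunction of three families of weak orthogonality statements: that $j \perp_\M \cotrp{p}{i}$ for $i$ a cofibration, $p$ a fibration, and every trivial cofibration $j$; and that $j \perp_\M \cotrp{p}{i}$ for every cofibration $j$ when, additionally, either $i$ is a trivial cofibration or $p$ is a trivial fibration. A termwise comparison then shows these three families coincide with the three reformulated families of (i) obtained above, which establishes (i) $\iff$ (ii). The argument is essentially a bookkeeping matching of orthogonality conditions once Joyal's lemma is available, exactly as in the biclosed setting; the only points requiring care are the slice-versus-ambient orthogonality correspondence and the precise alignment of the ``trivial if either factor is trivial'' clauses across the conditions, and I expect no genuine obstacle beyond this careful tracking.
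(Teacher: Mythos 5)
Your argument is correct and is essentially the paper's own proof: the paper likewise reduces everything to the lifting-property characterizations of the four classes, identifies the ambient orthogonality $i \boxjoin j \perp_\M p$ with slice orthogonality over all structure maps, and then invokes Joyal's lemma. One bookkeeping correction to your sketch: for the equivalence of (i) with (ii) the relevant slice is $\cotr{\M}{B}$ with $f$ ranging over maps $B \to X$ (since $\cotrp{p}{i}$ is only defined for $p$ above $B$, and (ii) quantifies over $f : B \to X$), whereas $\cotr{\M}{D}$ with $f : D \to X$ is the slice that pairs with $\trp{p}{j}$ and condition (iii).
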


\begin{proof}
  This follows directly from the previous lemma and the fact that
  \[
    \begin{split}
      i \boxjoin j \perp_\M p & \quadiff \text{for every
        $f : B \to X$, we have
        $i \boxjoin j \perp_{\cotr{\M}{B}} p$,}  \\
      & \quadiff \text{for every $f : D \to X$, we have
        $i \boxjoin j \perp_{\cotr{\M}{D}} p$.} \qedhere
    \end{split}
  \]
\end{proof}

\begin{proposition}\label{prop:twisted_tr_Quillen}
  If $\M$ is a locally biclosed monoidal model category, then the
  functors
  \[
    \begin{split}
      \twAr(\M) & \to \M \\
      X \to Z & \mapsto \cotr{Z}{X}
    \end{split}
    \qquad \qquad
    \begin{split}
      \twAr(\M) & \to \M \\
      Y \to Z & \mapsto \tr{Z}{Y}
    \end{split}
  \]
  both send fibrations \resp{trivial fibrations} between fibrant
  objects to fibrations \resp{trivial fibrations}. Moreover, they preserve
  weak equivalences between fibrant objects.
\end{proposition}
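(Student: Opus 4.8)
The plan is to analyse each slice functor one variable at a time. A morphism of $\twAr(\M)$ between fibrant objects is a pair $(g\colon X'\to X,\ h\colon Z\to Z')$ fitting in the square of paragraph~\ref{paragr:def_loc_biclosed}, with $X,X'$ cofibrant and $Z,Z'$ fibrant. By that paragraph the induced map $\cotr{Z}{X}\to\cotr{Z'}{X'}$ is the diagonal of a commutative square, hence factors as
\[
  \cotr{Z}{X}\xto{\,h_\ast\,}\cotr{Z'}{X}\xto{\,g^\ast\,}\cotr{Z'}{X'},
\]
and I would treat the two factors separately. The key elementary observation is that $\cotr{\ast}{X}=\ast$ (the terminal object), since by adjunction $\Hom_\M(T,\cotr{\ast}{X})\simeq\Hom_{\cotr{\M}{X}}((X\join T,\iota_1),(\ast,!))$ is a singleton. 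Applying Proposition~\ref{prop:def_loc_closed_model_cat}(ii) to the cofibration $g$ and the fibration $Z'\to\ast$, the map $\cotrp{(Z'\to\ast)}{g}$ lands in $\cotr{Z'}{X}\times_{\ast}\ast=\cotr{Z'}{X'}$ and equals $g^\ast$, which is therefore a fibration, trivial whenever $g$ is a trivial cofibration. On the other side, $h_\ast=\cotr{h}{X}$ is the image of $h$ under the right Quillen functor $\cotr{(-)}{X}$ of Proposition~\ref{prop:loc_closed_Quillen_adj} (here $X$ is cofibrant, and $h$ is a fibration of $\cotr{\M}{X}$ because such fibrations are detected in $\M$), so it is a fibration, trivial whenever $h$ is. Composing, each slice functor sends fibrations (resp.\ trivial fibrations) between fibrant objects to fibrations (resp.\ trivial fibrations).

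For weak equivalences I would use the same factorization. The factor $h_\ast=\cotr{h}{X}$ is a weak equivalence by Ken Brown's lemma applied in the genuine model category $\cotr{\M}{X}$, since $\cotr{(-)}{X}$ is right Quillen and $h$ is a weak equivalence between fibrant objects. It remains to show that $g^\ast\colon\cotr{T}{X}\to\cotr{T}{X'}$ is a weak equivalence whenever $g$ is a weak equivalence between cofibrant objects and $T$ is a fibrant object under $X$. Factoring $g=p\circ i$ with $i$ a trivial cofibration and $p$ a trivial fibration, the map $i^\ast$ is a trivial fibration by the case already treated, so everything reduces to a trivial fibration $p\colon W\to X$ sitting in the contravariant slot.

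This last case is where the main difficulty lies, because the statements of Proposition~\ref{prop:def_loc_closed_model_cat} only control \emph{cofibrations} in the base variable, never trivial fibrations there. Here is how I would get around it. Choose a section $s\colon X\to W$ of $p$ (it exists as $X$ is cofibrant), so that $s^\ast p^\ast=\id{}$; it then suffices to prove that $p^\ast s^\ast=(sp)^\ast$ is a weak equivalence. Viewing $(W,p)$ as a cofibrant object of $\tr{\M}{X}$ and $p$ as a trivial fibration onto the terminal object $(X,\id{X})$, a cylinder object for $(W,p)$ and a lift against $p$ produce a fibrewise left homotopy $H\colon C\to W$ over $X$ from $\id{W}$ to $sp$, whose structural inclusions $i_0,i_1\colon W\to C$ are trivial cofibrations in $\M$ lying over $T$. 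Applying $\cotr{T}{-}$ turns $i_0,i_1$ into \emph{trivial fibrations} $i_0^\ast,i_1^\ast\colon\cotr{T}{C}\to\cotr{T}{W}$ (the controlled case), while $H$ yields $H^\ast\colon\cotr{T}{W}\to\cotr{T}{C}$ with $i_0^\ast H^\ast=\id{}$ and $i_1^\ast H^\ast=(sp)^\ast$. From $i_0^\ast H^\ast=\id{}$ and $i_0^\ast$ a weak equivalence I deduce that $H^\ast$ is a weak equivalence, whence $(sp)^\ast=i_1^\ast H^\ast$ is one too; since also $s^\ast p^\ast=\id{}$, the map $p^\ast$ becomes invertible in $\Ho(\M)$ and is thus a weak equivalence.

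Assembling the pieces proves the statement for the functor $(X\to Z)\mapsto\cotr{Z}{X}$, and the functor $(Y\to Z)\mapsto\tr{Z}{Y}$ is handled identically, using part~(iii) of Proposition~\ref{prop:def_loc_closed_model_cat} and the dual identity $\tr{\ast}{Y}=\ast$.
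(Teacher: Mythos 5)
Your proof is correct, and its skeleton coincides with the paper's: factor $(g^\ast,h_\ast)$ through the two one\-/variable functors, identify $g^\ast$ with $\cotrp{p}{g}$ for $p : Z' \to \ast$ using $\cotr{\ast}{X} = \ast$ so that Proposition~\ref{prop:def_loc_closed_model_cat} controls the contravariant slot, and use Proposition~\ref{prop:loc_closed_Quillen_adj} plus Ken Brown's lemma for the covariant slot. The only genuine divergence is the step you single out as ``where the main difficulty lies'': in the paper this difficulty does not arise, because the statement you need --- the functor $(X, X \to Z) \mapsto \cotr{Z}{X}$ on $(\tr{\M}{Z})^\op$ preserves weak equivalences between cofibrant objects --- is itself an instance of Ken Brown's lemma. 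Indeed, you have already shown that this functor sends trivial cofibrations between cofibrant objects of $\tr{\M}{Z}$ to trivial fibrations, hence to weak equivalences, and that is exactly the hypothesis of (the contravariant form of) Ken Brown; the conclusion is preservation of all weak equivalences between cofibrant objects. Your section\-/plus\-/fibrewise\-/cylinder argument is in effect a correct by\-/hand proof of that lemma in this special case; it is fine, but it costs you an extra appeal to the saturation of model categories (a morphism invertible in $\Ho(\M)$ is a weak equivalence) at the very end, which the direct application of Ken Brown avoids. So: nothing to fix, but you can compress the second half of your argument to two lines by recognizing the lemma you are reproving.
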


\begin{proof}
  Let us prove the result for the first functor, the proof for the
  second one being similar. Let
  \[
    \xymatrix{
      X \ar[d]_u & X' \ar[l]_g \ar[d]^{u'} \\
      Z \ar[r]_h & Z'
    }
  \]
  be a morphism of $\twAr(\M)$ between fibrant objects $u$ and $u'$. The
  morphism
  \[ (g^\ast, h_\ast) : \cotr{Z}{X} \to \cotr{Z'}{X'} \]
  factors as
  \[ \cotr{Z}{X} \xto{g^\ast} \cotr{Z}{X'} \xto{h_\ast} \cotr{Z'}{X'}. \]
  These morphisms $g^\ast$ and $h_\ast$ are the images of $g$ and
  $h$ by the functors
  \[
    \begin{split}
      (\tr{\M}{Z})^\op & \to \M \\
      (X, X \to Z) & \mapsto \cotr{Z}{X}
    \end{split}
    \qquad \qquad
    \begin{split}
      \cotr{\M}{X'} & \to \M \\
      (Z, X' \to Z) & \mapsto \cotr{Z}{X'},
    \end{split}
  \]
  and it therefore suffices to show that the first of these functors sends
  cofibrations \resp{trivial cofibrations} of $\tr{\M}{Z}$ to fibrations
  \resp{trivial fibrations} of~$\M$ and that the second one preserves
  fibrations and trivial fibrations. Note that by Ken Brown's lemma (which
  cannot be applied directly to $\twAr(\M)$), this will imply that the first
  functor preserves weak equivalences between cofibrant objects of
  $\tr{\M}{Z}$ (an object of $\tr{\M}{Z}$ being cofibrant if its underlying
  object in $\M$ is) and that the second functor preserves weak equivalences
  between fibrant objects in $\cotr{\M}{X'}$ (an object of $\cotr{\M}{X'}$
  being fibrant if its underlying object in $\M$ is), thereby proving the
  second assertion.

  For the first functor, observe that the morphism $g^\ast$ can be
  identified with the morphism
  \[
    \cotrp{p}{g} : \cotr{Z}{X} \to \cotr{Z}{X'}
    \times_{\cotr{\ast}{X'}} \cotr{\ast}{X}
  \]
  of paragraph~\ref{paragr:def_trp}, where $p : Z \to \ast$ denotes
  the unique morphism from $Z$ to the terminal object $\ast$.
  As $Z$ is fibrant, Proposition~\ref{prop:def_loc_closed_model_cat} implies
  that this first functor sends cofibrations \resp{trivial cofibrations} of
  $\tr{\M}{Z}$ to fibrations \resp{trivial fibrations} of $\M$. As for the
  second functor, since $X'$ is cofibrant, it preserves fibrations and trivial
  fibrations by Proposition~\ref{prop:loc_closed_Quillen_adj}.
\end{proof}

\begin{theorem}
  If $\M$ is a locally biclosed monoidal model category, then the
  functors
  \[
    \begin{split}
      \twAr(\M) & \to \M \\
      X \to Z & \mapsto \cotr{Z}{X}
    \end{split}
    \qquad \qquad
    \begin{split}
      \twAr(\M) & \to \M \\
      Y \to Z & \mapsto \tr{Z}{Y}
    \end{split}
  \]
  both admit total right derived functors.
\end{theorem}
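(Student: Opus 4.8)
The plan is to invoke the machinery of right simplicial derivability structures developed in the preceding propositions, reducing the existence of the total right derived functors to a direct application of Proposition~\ref{prop:Kahn-Maltsi}. The key point is that we have assembled exactly the two ingredients required by that proposition: a right simplicial derivability structure on the source localizer, and the fact that the functors in question preserve weak equivalences between fibrant objects.

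First I would recall that, by Proposition~\ref{prop:der_struct_tw}, the inclusion morphism
\[ (\twAr(\M)_0, \Wt_0) \hookto (\twAr(\M), \Wt) \]
is a right simplicial derivability structure on the localizer $(\twAr(\M), \Wt)$, where $\twAr(\M)_0$ is the full subcategory of fibrant objects (that is, arrows $X \to Y$ with $X$ cofibrant and $Y$ fibrant in $\M$). Next, Proposition~\ref{prop:twisted_tr_Quillen} asserts precisely that each of the two slice functors
\[
  \twAr(\M) \to \M, \quad
  (X \to Z) \mapsto \cotr{Z}{X}
  \quadand
  (Y \to Z) \mapsto \tr{Z}{Y}
\]
preserves weak equivalences between fibrant objects. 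In the language of Proposition~\ref{prop:Kahn-Maltsi}, writing $F$ for either slice functor and $\Kt$ for the inclusion of the derivability structure, this says exactly that $F\Kt(\Wt_0) \subset \W$, where $\W$ denotes the class of weak equivalences of $\M$.

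I would then simply apply Proposition~\ref{prop:Kahn-Maltsi} to the morphism of localizers $F : (\twAr(\M), \Wt) \to (\M, \W)$, using the right simplicial derivability structure $\Kt$ just recalled and the verification that $F\Kt(\Wt_0) \subset \W$. This yields directly that $F$ admits a total right derived functor $\mathbb{R}F$, for each of the two slice functors, thereby proving the theorem.

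There is no genuine obstacle remaining at this stage: all the substantial work has already been carried out in the preceding results. The main conceptual difficulty of the whole appendix — namely that $\twAr(\M)$ is neither finitely complete nor finitely cocomplete and so cannot carry a model structure — was circumvented by transporting a derivability structure along the discrete opfibration $\twAr(\M) \to \M^\op \times \M$ (Proposition~\ref{prop:disc_opfib_simpl}), and the Quillen-type preservation statement was established in Proposition~\ref{prop:twisted_tr_Quillen} via the locally biclosed analogue of the pushout-product/\penalty0 pullback-hom correspondence (Proposition~\ref{prop:def_loc_closed_model_cat}). The present theorem is therefore purely a matter of assembling these two inputs into the hypotheses of the Kahn--Maltsiniotis derivation criterion.
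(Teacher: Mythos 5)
Your argument is exactly the paper's proof: both apply the Kahn--Maltsiniotis criterion (Proposition~\ref{prop:Kahn-Maltsi}) to the right simplicial derivability structure of Proposition~\ref{prop:der_struct_tw}, using Proposition~\ref{prop:twisted_tr_Quillen} to check that the slice functors preserve weak equivalences between fibrant objects. Nothing is missing.
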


\begin{proof}
  Since by the previous proposition these functors preserve weak equivalences
  between fibrant objects, the result follows from the derivability
  condition of Kahn and Maltsiniotis (Proposition~\ref{prop:Kahn-Maltsi})
  applied to the right simplicial derivability structure of
  Proposition~\ref{prop:der_struct_tw}.
\end{proof}

\begin{corollary}
  The functors
  \[
    \begin{split}
      \twAr(\ooCat) & \to \ooCat \\
      X \xto{u} Z & \mapsto \cotr{Z}{u}
    \end{split}
    \qquad \qquad
    \begin{split}
      \twAr(\ooCat) & \to \ooCat \\
      Y \xto{v} Z & \mapsto \trm{Z}{v}
    \end{split}
  \]
  (see paragraph~\ref{paragr:def_join} for the notation), where $\ooCat$ is
  endowed with the folk model category structure, admit total right derived
  functors.
\end{corollary}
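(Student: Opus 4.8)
The plan is to recognize this corollary as a direct instance of the previous theorem, applied to the category $\ooCat$ endowed with the folk model category structure and the join. First I would verify that this data constitutes a locally biclosed monoidal model category: by Theorem~\ref{thm:def_join} the join defines a locally biclosed monoidal category structure on $\ooCat$, and by Theorem~\ref{thm:join_mon} the folk model category structure is monoidal for the join. Together these exhibit $\ooCat$ as a monoidal model category whose underlying monoidal structure is locally biclosed, that is, as a locally biclosed monoidal model category in the sense of the relevant definition (the unit axiom being automatic here, since the unit of the join is the initial, hence cofibrant, \oo-category).

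Next I would simply invoke the previous theorem. It asserts that for any locally biclosed monoidal model category $\M$, the two slice functors $\twAr(\M) \to \M$ sending $X \to Z$ to $\cotr{Z}{X}$ and $Y \to Z$ to $\tr{Z}{Y}$ both admit total right derived functors. Specializing to $\M = \ooCat$ yields the desired conclusion once the notation is matched: the functor $(X \xto{u} Z) \mapsto \cotr{Z}{u}$ is the first abstract slice functor, and the functor $(Y \xto{v} Z) \mapsto \trm{Z}{v}$ is the second abstract slice functor $\tr{Z}{v}$, the ``co'' decoration being merely the name chosen for the join in paragraph~\ref{paragr:def_join}.

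Consequently there is no genuine obstacle: the whole argument reduces to checking that the hypotheses of the previous theorem hold for $\ooCat$, which is immediate from Theorems~\ref{thm:def_join} and~\ref{thm:join_mon}. The only point deserving a moment's attention is the identification of the abstract slice functors $\cotr{Z}{X}$ and $\tr{Z}{Y}$ with their concrete incarnations $\cotr{Z}{u}$ and $\trm{Z}{v}$ for the join, which is exactly the content of the local internal $\Homi$ adjunctions recalled in paragraph~\ref{paragr:def_join}.
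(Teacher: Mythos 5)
Your proof is correct and follows exactly the paper's argument: the corollary is obtained by applying the preceding theorem to $\ooCat$ with the folk model category structure and the join, citing Theorems~\ref{thm:def_join} and~\ref{thm:join_mon} to verify that this is a locally biclosed monoidal model category. The extra remarks on matching the abstract slice functors with $\cotr{Z}{u}$ and $\trm{Z}{v}$ are accurate but routine.
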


\begin{proof}
  This follows from the previous theorem applied to the folk model category
  structure on $\ooCat$ endowed with the join (see
  Theorems~\ref{thm:join_mon} and \ref{thm:def_join}).
\end{proof}

\bibliography{biblio}

\providecommand{\bysame}{\leavevmode ---\ }
\providecommand{\og}{``}
\providecommand{\fg}{''}
\providecommand{\smfandname}{\&}
\providecommand{\smfedsname}{\'eds.}
\providecommand{\smfedname}{\'ed.}
\providecommand{\smfmastersthesisname}{M\'emoire}
\providecommand{\smfphdthesisname}{Th\`ese}
\begin{thebibliography}{10}

\bibitem{AlAglSteiner}
{\scshape F.~A. Al-Agl {\normalfont \smfandname} R.~Steiner} -- {\og Nerves of
  multiple categories\fg}, \emph{Proc. London Math. Soc. (3)} \textbf{66}
  (1993), no.~1, p.~92--128.

\bibitem{AraMaltsiJoint}
{\scshape D.~Ara {\normalfont \smfandname} G.~Maltsiniotis} -- {\og Joint et
  tranches pour les $\infty$-cat{\'e}gories strictes\fg}, \emph{M\'{e}m. Soc.
  Math. Fr. (N.S.)} \textbf{165} (2020).

\bibitem{AraMetGpd}
{\scshape D.~Ara {\normalfont \smfandname} F.~M{\'e}tayer} -- {\og The
  {B}rown-{G}olasi\'nski model structure on strict {$\infty$}\nbd-groupoids
  revisited\fg}, \emph{Homology Homotopy Appl.} \textbf{13} (2011), no.~1,
  p.~121--142.

\bibitem{BergerMoerdijk}
{\scshape C.~Berger {\normalfont \smfandname} I.~Moerdijk} -- {\og Axiomatic
  homotopy theory for operads\fg}, \emph{Comment. Math. Helv.} \textbf{78}
  (2003), no.~4, p.~805--831.

\bibitem{CransThese}
{\scshape S.~Crans} -- {\og On combinatorial models for higher dimensional
  homotopies\fg}, \smfphdthesisname, Utrecht University, 1995, under the
  supervision of I. Moerdijk et D. van Dalen.

\bibitem{DayRefl}
{\scshape B.~Day} -- {\og A reflection theorem for closed categories\fg},
  \emph{J. Pure Appl. Algebra} \textbf{2} (1972), no.~1, p.~1--11.

\bibitem{GrayFCT}
{\scshape J.~W. Gray} -- \emph{{\selectlanguage{english}Formal category theory:
  adjointness for {$2$}-categories}}, Lecture Notes in Mathematics, vol. 391,
  Springer-Verlag, 1974.

\bibitem{HadziThesis}
{\scshape A.~Hadzihasanovic} -- {\og The algebra of entanglement and the
  geometry of composition\fg}, \smfphdthesisname, University of Oxford, 2017,
  under the supervision of B. Coecke.

\bibitem{Harper}
{\scshape J.~E. Harper} -- {\og Homotopy theory of modules over operads and
  non-{$\Sigma$} operads in monoidal model categories\fg}, \emph{J. Pure Appl.
  Algebra} \textbf{214} (2010), no.~8, p.~1407--1434.

\bibitem{Hovey}
{\scshape M.~Hovey} -- \emph{Model categories}, Mathematical Surveys and
  Monographs, vol.~63, American Mathematical Society, 1999.

\bibitem{JoyalQuasiKan}
{\scshape A.~Joyal} -- {\og Quasi-categories and {K}an complexes\fg}, \emph{J.
  Pure Appl. Algebra} \textbf{175} (2002), no.~1-3, p.~207--222, Special volume
  celebrating the 70th birthday of Professor Max Kelly.

\bibitem{KahnMaltsi}
{\scshape B.~Kahn {\normalfont \smfandname} G.~Maltsiniotis} -- {\og Structures
  de d\'{e}rivabilit\'{e}\fg}, \emph{Adv. Math.} \textbf{218} (2008), no.~4,
  p.~1286--1318.

\bibitem{LackFolk2}
{\scshape S.~Lack} -- {\og A {Q}uillen model structure for 2-categories\fg},
  \emph{$K$-Theory} \textbf{26} (2002), no.~2, p.~171--205.

\bibitem{LackFolkBi}
\bysame , {\og A {Q}uillen model structure for bicategories\fg},
  \emph{$K$-Theory} \textbf{33} (2004), no.~3, p.~185--197.

\bibitem{Folk}
{\scshape Y.~Lafont, F.~M{\'e}tayer {\normalfont \smfandname} K.~Worytkiewicz}
  -- {\og A folk model structure on omega-cat\fg}, \emph{Adv. Math.}
  \textbf{224} (2010), no.~3, p.~1183--1231.

\bibitem{LucasThesis}
{\scshape M.~Lucas} -- {\og Cubical categories for homotopy and rewriting\fg},
  \smfphdthesisname, University Paris Diderot -- Paris 7, 2018, under the
  supervision of Y. Guiraud and P.-L. Curien.

\bibitem{MetCof}
{\scshape F.~M{\'e}tayer} -- {\og Cofibrant objects among higher-dimensional
  categories\fg}, \emph{Homology, Homotopy Appl.} \textbf{10} (2008), no.~1,
  p.~181--203.

\bibitem{Morel}
{\scshape F.~Morel} -- {\og Th\'{e}orie homotopique des sch\'{e}mas\fg},
  \emph{Ast\'{e}risque} (1999), no.~256, p.~vi+119.

\bibitem{Muro}
{\scshape F.~Muro} -- {\og Homotopy theory of nonsymmetric operads\fg},
  \emph{Algebr. Geom. Topol.} \textbf{11} (2011), no.~3, p.~1541--1599.

\bibitem{Quillen}
{\scshape D.~G. Quillen} -- \emph{Homotopical algebra}, Lecture Notes in
  Mathematics, vol.~43, Springer-Verlag, 1967.

\bibitem{SchwedeShipley}
{\scshape S.~Schwede {\normalfont \smfandname} B.~E. Shipley} -- {\og Algebras
  and modules in monoidal model categories\fg}, \emph{Proc. London Math. Soc.
  (3)} \textbf{80} (2000), no.~2, p.~491--511.

\bibitem{Steiner}
{\scshape R.~Steiner} -- {\og Omega-categories and chain complexes\fg},
  \emph{Homology Homotopy Appl.} \textbf{6} (2004), no.~1, p.~175--200.

\bibitem{StreetOrient}
{\scshape R.~Street} -- {\og The algebra of oriented simplexes\fg}, \emph{J.
  Pure Appl. Algebra} \textbf{49} (1987), no.~3, p.~283--335.

\end{thebibliography}
\bibliographystyle{mysmfplain}

\end{document}

%%% Local Variables:
%%% mode: latex
%%% TeX-master: t
%%% End: